\documentclass[12pt]{amsart}
\usepackage{txfonts}      
\usepackage{amssymb}
\usepackage{eucal}
\usepackage{amsmath}
\usepackage{amscd}
\usepackage{xcolor}
\usepackage{multicol}
\usepackage[all]{xy}           
\usepackage{graphicx}
\usepackage{color}
\usepackage{colordvi}
\usepackage{xspace}
\usepackage{tikz}
\usepackage{makecell}
\usepackage{appendix}
\usepackage{enumerate,enumitem}
\usepackage{amsthm}
\usepackage[thicklines]{cancel}
\usepackage[compress]{cite}
\usepackage{ifpdf}
\ifpdf
\usepackage[colorlinks,final,backref=page,hyperindex]{hyperref}
\else
\usepackage[colorlinks,final,backref=page,hyperindex,hypertex]{hyperref}
\fi

\usepackage[active]{srcltx} 
\topmargin -.8cm \textheight 21.6cm \oddsidemargin 0cm
\evensidemargin -0cm \textwidth 16cm

\newtheorem{thm}{Theorem}[section]
\newtheorem{lem}[thm]{Lemma}
\newtheorem{cor}[thm]{Corollary}
\newtheorem{pro}[thm]{Proposition}
\theoremstyle{definition}
\newtheorem{defi}[thm]{Definition}
\newtheorem{ex}[thm]{Example}
\newtheorem{rmk}[thm]{Remark}


\title[Anti-pre-Novikov algebras, quasi-triangular and factorizable anti-pre-Novikov
 bialgebras] {Anti-pre-Novikov algebras, quasi-triangular and factorizable anti-pre-Novikov bialgebras}

\author{Qinxiu Sun}
\address{Department of Mathematics, Zhejiang University of Science and Technology, Hangzhou, 310023} \email{qxsun@126.com}
\author{Xingyu Zeng}
\address{Department of Mathematics, Zhejiang University of Science and Technology, Hangzhou, 310023} \email{948861157@qq.com}

\subjclass[2020]{17A30, 17A36, 17B38, 17B40, 16T10}

\keywords{Novikov algebra, anti-pre-Novikov algebra, quasi-triangular anti-pre-Novikov 
 bialgebra, factorizable anti-pre-Novikov 
 bialgebra, relative Rota-Baxter operator}
\begin{document}
\begin{abstract}
	Firstly, we introduce a notion of anti-pre-Novikov algebras as a new framework for decomposing Novikov algebras. 
Anti-$\mathcal O$-operators
on Novikov algebras are developed to provide an algebraic framework for constructing anti-pre-Novikov algebras.
Secondly, we introduce a notion of anti-pre-Novikov bialgebras as the bialgebra
structures corresponding to a double construction of symmetric quasi-Frobenius Novikov algebras, which
is characterized by certain matched pairs of Novikov algebras as well 
as the compatible anti-pre-Novikov algebras. The study of the coboundary case induces the anti-pre-Novikov
 Yang-Baxter equation (APN-YBE), 
whose skew-symmetric solutions yield coboundary anti-pre-Novikov bialgebras.
The notion of $\mathcal O$-operators on anti-pre-Novikov algebras is studied to construct skew-symmetric solutions
of the APN-YBE. Thirdly, we investigate quasi-triangular and
factorizable anti-pre-Novikov bialgebras as a special class of coboundary anti-pre-Novikov bialgebras.
The solutions of the APN-YBE whose symmetric parts are invariant give rise to a quasi-triangular anti-pre-Novikov bialgebra.
Moreover, relative Rota-Baxter operators with weights are introduced to 
demonstrate solutions of the APN-YBE whose symmetric parts are invariant. Finally, we introduce a 
notion of quadratic Rota-Baxter anti-pre-Novikov algebras, which is one to one correspondence to a
 factorizable anti-pre-Novikov bialgebra.

\end{abstract}

\maketitle

\vspace{-1.2cm}

\tableofcontents

\vspace{-1.2cm}

\allowdisplaybreaks

\section{Introduction}  
Novikov algebras emerged during the study of Hamiltonian operators in formal variational calculus (\cite{13, 30})
and Poisson brackets of hydrodynamic type (\cite{6, 9, 10}).
Explicitly, a Novikov algebra is a vector space $A$ equipped with a binary operation
$\circ$ satisfying the following
conditions:
\begin{align}&\label{Na1}
(x\circ y)\circ z-x\circ (y\circ z)=(y\circ x)\circ z-y\circ (x\circ z),\\&
\label{Na2}(x\circ y)\circ z=(x\circ z)\circ y,\quad  x,y,z\in A.
\end{align}
Novikov algebras are a special class of pre-Lie algebras (also called left-symmetric algebras), 
which are tightly connected with  many fields in mathematics and physics such as affine manifolds and affine
structures on Lie groups \cite{21}, convex homogeneous cones \cite{29}, vertex algebras \cite{2,6}
and so on.

The notion of pre-Novikov algebras and quasi-Frobenius Novikov algebras naturally appeared in the study of Novikov bialgebras \cite{16}. 
In detail, a pre-Novikov algebra is a vector space $A$ together with 
two binary operations $\vartriangleleft,\vartriangleright: A\otimes A\rightarrow A$
satisfying
\begin{align*}
a\vartriangleright (b\vartriangleright c)&=(a\circ b)\vartriangleright c+b\vartriangleright(a\vartriangleright c)-(b\circ a)\vartriangleright c,\\
a\vartriangleright(b\vartriangleleft c)&=(a\vartriangleright b)\vartriangleleft c+b\vartriangleleft(a\circ c)-(b\vartriangleleft a)\vartriangleleft c,\\
(a\circ b)\vartriangleright c&=(a\vartriangleright c)\vartriangleleft b,\\
(a\vartriangleleft b)\vartriangleleft c&=(a\vartriangleleft c)\vartriangleleft b,  
\end{align*}
 for all $a,b,c\in A$, where $a\circ b=a\vartriangleleft b+a\vartriangleright b$.
The sum $\circ=\succ+\prec$ gives a Novikov algebra $(A,\circ)$. 
More importantly, there is a Novikov algebra associated with a pre-Novikov algebra and pre-Novikov algebras can 
give skew-symmetric solutions of Novikov Yang-Baxter equation and hence Novikov bialgebras \cite{16}. 
Furthermore, pre-Novikov algebras correspond to a class of left-symmetric conformal algebras \cite{17} and there have
related connections between pre-Novikov algebras and Zinbiel algebras with a derivation (see \cite{20,31})

In the study of the underlying
algebraic structures for non-degenerate commutative 2-cocycles on Lie algebras, G. Liu and C. Bai
introduced a notion of anti-pre-Lie algebras \cite{23}.
Later, Gao, Liu and Bai in \cite{12} introduced a notion of anti-dendriform algebras. Anti-dendriform algebras still
have the property of splitting the associativity, but it is the negative left and right multiplication operators that compose the
bimodules of the sum associative algebras, instead of the left and right multiplication operators doing so for dendriform algebras.
Motivated by this work, we introduce anti-pre-Novikov algebras as the underlying algebraic 
structures for symmetric quasi-Frobenius Novikov algebras. Analogously, anti-pre-Novikov algebras split the operations. 
However, in this case, it is the negative left and right multiplication operators that form the bimodules of the sum Novikov algebra, 
in contrast to pre-Novikov algebras, where the (positive) left and right multiplication operators fulfill this role.

A bialgebraic structure consists of an algebra structure and a coalgebra structure coupled by 
certain compatible conditions between the multiplication and comultiplication. In the early 1980s,
Drinfeld established Lie bialgebra theories in \cite{8}, which are closely
related to the classical Yang-Baxter equation and play an
important role in the infinitesimalization of a quantum group. Based on Drinfeld’s work, V. Zhelyabin 
introduced a notion of associative D-bialgebras \cite{32,33}. 
The associative analog of the Lie bialgebra
is antisymmetric infinitesimal bialgebras, which were developed by Aguiar \cite{1}, which is equivalent 
to double constructions of Frobenius algebras \cite{4}. Subsequently, 
analogous methods were extended to develop bialgebra theories for various algebraic structures, including
 pre-Lie algebras \cite{3}, Leibniz
algebras \cite{28}, perm algebras \cite{19}, Novikov algebras\cite{16}, 
3-Lie algebras \cite{5} and pre-Novikov algebra \cite{22}. 
A Manin triple of Poisson algebras is equivalent to
a Poisson bialgebra \cite{26}, which naturally fits into a framework to construct compatible Poisson
brackets in integrable systems. Transposed Poisson algebras are dual to Poisson algebras. 
The Poisson bialgebra approach fails to yield a bialgebra theory for transposed Poisson algebras.
 Alternatively,
Bai and Liu developed bialgebra theories for 
anti-pre-Lie algebras, transposed Poisson algebras and anti-pre-Lie Poisson algebras \cite{24}
through the Manin triples 
with respect to the commutative 2-cocycles on Lie algebras. In \cite{030}, we explore the bialgebra theory for
anti-dendriform algebras, which is characterized by double constructions of associative algebras with 
respect to the commutative Connes cocycles
and certain matched pairs of associative algebras as well as anti-dendriform algebras.

Under the context of Lie bialgebras, coboundary Lie bialgebras especially quasi-triangular Lie bialgebras play fundamental roles
in mathematical physics. Factorizable Lie bialgebras, a specialized subclass 
of quasi-triangular Lie bialgebras, establish a crucial connection between classical $r$-matrices and specific factorization problems. 
They find diverse applications in integrable systems, see \cite{022} and references therein.
 Recently, these results concerning factorizable and quasi-triangular structures have been successfully generalized to 
 antisymmetric infinitesimal bialgebras \cite{029}, pre-Lie bialgebras \cite{033} and Leibniz bialgebras \cite{07}

It is natural to investigate the bialgebra structures for anti-pre-Novikov algebras. This is another motivation
 for writing this paper. Explicitly,
 we introduce a notion of an anti-pre-Novikov bialgebra, which is interpreted in terms of a double construction
  of symmetric quasi-Frobenius Novikov algebras
and certain matched pairs of Novikov algebras.
The study of coboundary anti-pre-Novikov bialgebras leads to the introduction of the anti-pre-Novikov Yang-Baxter
equation (APN-YBE). Furthermore, quasi-triangular anti-pre-Novikov bialgebras and
 factorizable anti-pre-Novikov bialgebras
as a special class of coboundary anti-pre-Novikov bialgebras
 are studied, which ‌are induced by solutions of the APN-YBE without imposing skew-symmetry constraints.
The double space of any anti-pre-Novikov bialgebra
admits a factorizable anti-pre-Novikov bialgebra structure.
Relative Rota-Baxter operators with weights are introduced to characterize the solutions
of the APN-YBE whose symmetric parts are invariant.
 Furthermore, we give a characterization of factorizable anti-pre-Novikov bialgebras
in terms of quadratic Rota-Baxter anti-pre-Novikov algebras.

The paper is organized as follows. In Section 2, 
we introduce a notion of anti-pre-Novikov algebras and its representations.
The anti-$\mathcal O$-operators on anti-pre-Novikov algebras are considered to
interpret anti-pre-Novikov algebras. 
In Section 3, we introduce a notion of anti-pre-Novikov bialgebras, which is equivalent to
a double construction of symmetric quasi-Frobenius Novikov algebras and
certain matched pairs of Novikov algebras. The study of coboundary case leads to 
the introduction of the APN-YBE, whose
skew-symmetric solutions give coboundary anti-pre-Novikov bialgebras. 
The notion of $\mathcal O$-operators on anti-pre-Novikov
algebras is introduced to construct skew-symmetric solutions
of the APN-YBE. In section 4, we consider quasi-triangular anti-pre-Novikov bialgebras
and factorizable anti-pre-Novikov bialgebras, which are
a special class of quasi-triangular anti-pre-Novikov bialgebras. 
We show that the double of an anti-pre-Novikov bialgebra is naturally a factorizable anti-pre-Novikov bialgebra.
In section 5, we introduce a notion of quadratic Rota-Baxter anti-pre-Novikov algebras, which corresponds to
a factorizable anti-pre-Novikov bialgebra.
 
Throughout the paper, $k$ is a field.  All vector spaces and algebras are over $k$. 
 All algebras are finite-dimensional, although many results still hold in the infinite-dimensional case.

\section{Anti-pre-Novikov algebras}
We introduce a notion of anti-pre-Novikov algebras as a new approach of splitting operations,
whose negative left and right multiplication operators compose the bimodules of the
associated Novikov algebra. The notions of anti-$\mathcal O$-operators on Novikov algebras are developed
 to interpret anti-pre-Novikov algebras. 
\subsection{Anti-pre-Novikov algebras and Representations}
Let us begin to recall some basic knowledge on Novikov algebras.

A {\bf representation (bimodule)} of a Novikov algebra $(A,\circ)$ is a triple $(V,l,r)$, 
where $V$ is a vector space and $l,r: A\rightarrow \text{End}_{\bf k}(V)$ are linear maps satisfying
\begin{align}
\label{Nr1} &l(x\circ y-y\circ x)v=l(x)l(y)v-l(y)l(x)v,\\
\label{Nr2}&l(x)r(y)v-r(y)l(x)v=r(x\circ y)v-r(y)r(x)v,\\
\label{Nr3} &l(x\circ y)v=r(y)l(x)v,\ \ \
r(x)r(y)v=r(y)r(x)v, 
\end{align} for all $ x,y\in A,\;v\in V.$

By Eqs. \eqref{Nr1} and \eqref{Nr2}, we have
\begin{align}
\label{Nr4} l_{\star}(x\circ y-y\circ x)v=l_{\star}(x)l_{\star}(y)v-l_{\star}(y)l_{\star}(x)v, \ \ where\ \  l_{\star}=l_{\circ}+r_{\circ}.
\end{align} 

\begin{pro} \cite{16} \label{d1}
Let $(A,\circ)$ and $(B,\bullet)$  be Novikov algebras. If $(B,l_A,r_A)$ is a representation 
of $(A,\circ)$, $(A,l_B,r_B)$ is a representation of $(B,\bullet)$ and the following conditions are satisfied:
\begin{flalign}
&l_B(a)(x\circ y)=-l_B(l_A(x)a-r_A(x)a)y+(l_B(a)x-r_B(a)x)\circ y+r_B(r_A(y)a)x+x\circ (l_B(a)y),\label{Nm1}\\
&r_B(a)(x\circ y-y\circ x)=r_B(l_A(y)a)x-r_B(l_A(x)a)y+x\circ (r_B(a)y)-y\circ (r_B(a)x),\label{Nm2}\\
&l_A(x)(a\bullet b)=-l_A(l_B(a)x-r_B(a)x)b+(l_A(x)a-r_A(x)a)\bullet b+r_A(r_B(b)x)a+a\bullet(l_A(x)b),\label{Nm3}\\
&r_A(x)(a\bullet b-b\bullet a)=r_A(l_B(b)x)a-r_A(l_B(a)x)b+a\bullet (r_A(x)b)-b\bullet (r_A(x)a),\label{Nm4}\\
&(l_B(a)x)\circ y+l_B(r_A(x)a)y=(l_B(a)y)\circ x+l_B(r_A(y)a)x,\label{Nm5}\\
&(r_B(a)x)\circ y+l_B(l_A(x)a)y=r_B(a)(x\circ y),\label{Nm6}\\
&l_A(r_B(a)x)b+(l_A(x)a)\bullet b=l_A(r_B(b)x)a+(l_A(x)b)\bullet a,\label{Nm7}\\
&l_A(l_B(a)x)b+(r_A(x)a)\bullet b=r_A(x)(a\bullet b), \quad   x,y\in A,~a,b\in B,\label{Nm8}
\end{flalign}
then there is a Novikov algebra structure on the direct sum $A\oplus B$ of the underlying vector spaces of $A$ and $B$ given by
\begin{align*}
(x+a)\cdot (y+b)=(x\circ y+l_B(a)y+r_B(b)x)+(a\bullet b+l_A(x)b+r_A(y)a),\;a, b\in A,\;x, y\in B.
\end{align*}
 $(A,B,l_A,r_A,l_B,r_B)$ satisfying the above conditions is called a \textbf{matched pair of Novikov algebras.} 
 Conversely, any Novikov algebra that can be decomposed into a direct sum of 
 two Novikov subalgebras is obtained from a matched pair of Novikov algebras.
\end{pro}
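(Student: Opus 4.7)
The plan is to verify directly that the multiplication $\cdot$ defined on $A\oplus B$ satisfies the two Novikov axioms \eqref{Na1} and \eqref{Na2}, and to deduce the converse by reversing the analysis. Because the statement is symmetric under swapping the roles of $A$ and $B$ and under permuting the two Novikov identities, the bookkeeping reduces to a manageable number of essentially distinct patterns.

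First I would fix $u_i = x_i + a_i$ with $x_i \in A$ and $a_i \in B$, and expand both sides of \eqref{Na1} and \eqref{Na2} using the formula $(x+a)\cdot(y+b)=(x\circ y+l_B(a)y+r_B(b)x)+(a\bullet b+l_A(x)b+r_A(y)a)$. By multilinearity this splits into $2^3=8$ blocks indexed by the choice of summand ($A$-part or $B$-part) at each of the three slots. The pure blocks $(A,A,A)$ and $(B,B,B)$ reduce to the Novikov axioms for $(A,\circ)$ and $(B,\bullet)$ respectively. The six mixed blocks are then projected separately onto $A$ and onto $B$, and each projection must be matched against one of the conditions \eqref{Nr1}--\eqref{Nr3} or \eqref{Nm1}--\eqref{Nm8}.

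Next, I would handle \eqref{Na1} as follows. The three blocks with two $A$-inputs and one $B$-input, after collecting the cyclic placements of the $B$-input, yield on the $B$-side exactly \eqref{Nm1} together with its antisymmetrisation in $x,y$ which is \eqref{Nm2}; their $A$-side projection uses the representation axioms \eqref{Nr1}--\eqref{Nr2} for $(A,l_B,r_B)$ combined with \eqref{Nr4}. The symmetric blocks with two $B$-inputs and one $A$-input produce \eqref{Nm3} and \eqref{Nm4} on the $A$-side, with the $B$-side handled by the representation axioms for $(B,l_A,r_A)$. The mixed blocks for the right-commutativity axiom \eqref{Na2} are treated analogously: the contributions with output in $B$ and two $A$-inputs give \eqref{Nm5} and \eqref{Nm6}, the symmetric contributions give \eqref{Nm7} and \eqref{Nm8}, and the remaining projections become the second half of \eqref{Nr3} for $r_A,r_B$. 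For the converse, given a Novikov algebra $C=A\oplus B$ with $A,B$ subalgebras, define the structure maps by projecting the mixed products: for $x\in A$ and $a\in B$, write $a\cdot x = r_B(a)x+l_A(x)a$ and $x\cdot a = l_B(a)x+r_A(x)a$. Evaluating the Novikov axioms on $C$ at mixed triples $(x,y,a)$ and $(x,a,b)$ and separating the components then produces precisely \eqref{Nr1}--\eqref{Nr3} for both representations together with \eqref{Nm1}--\eqref{Nm8}.

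The principal obstacle is not conceptual but organizational: the expansion of each Novikov axiom on $A\oplus B$ generates a large number of terms, and one must group them correctly to avoid overlooking or double-counting a compatibility condition. A practical device is to tabulate contributions according to the pattern of $A$-inputs versus $B$-inputs across the three slots and according to the component ($A$ or $B$) of the output, so that each of the eight matched pair identities \eqref{Nm1}--\eqref{Nm8} is visibly matched to exactly one projected block and no residual terms remain.
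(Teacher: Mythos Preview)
The paper does not supply its own proof of this proposition: it is quoted from \cite{16} as background, so there is nothing in the present paper to compare against. Your direct-verification strategy---expand both Novikov axioms on $A\oplus B$, split by the $A/B$-type of each input and each output component, and match the resulting blocks to the representation axioms \eqref{Nr1}--\eqref{Nr3} and the compatibility conditions \eqref{Nm1}--\eqref{Nm8}---is exactly the standard argument and is the right approach.

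One concrete slip to fix in the converse paragraph: you have interchanged the two mixed products. From the displayed formula for $\cdot$, taking $x\in A$ and $a\in B$ gives
\[
x\cdot a = r_B(a)x + l_A(x)a,\qquad a\cdot x = l_B(a)x + r_A(x)a,
\]
the opposite of what you wrote. If you carry through your swapped definitions you will not recover \eqref{Nm1}--\eqref{Nm8} as stated but a relabelled variant; correcting the two formulas restores the match. Also, the sentence ``\eqref{Nm2} is the antisymmetrisation in $x,y$ of \eqref{Nm1}'' is not accurate: \eqref{Nm1} and \eqref{Nm2} arise from different placements of the $B$-input in the left-symmetry identity (namely the $B$-input in the first versus the third slot), not from symmetrising a single block. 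These are bookkeeping corrections rather than gaps in the method.
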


\begin{defi}
Let $A$ be a vector space. An {\bf anti-pre-Novikov algebra} is a vector space $A$ together with two binary operations 
$\succ,\prec: A\otimes A \rightarrow A$ satisfying
\begin{align} \label{Aa1}
&(x\circ y-y\circ x)\succ z=y\succ(x\succ z)-x\succ(y\succ z),\\
\label{Aa2}&x\prec(y\circ z)=(y\succ x) \prec z-(x\prec y)\prec z-y\succ (x\prec z),\\
\label{Aa3}&(x\circ y)\succ z=-(x\succ z)\prec y,\\
\label{Aa4}&(x\prec y)\prec z=(x\prec z)\prec y,  \\
\label{Aa5}&(x\circ y-y\circ x)\prec z=x\succ(y\circ z)-y\succ(x\circ z),
\end{align}
for all $x,y,z\in A$, where $x\circ y=x\succ y+x\prec y$.
\end{defi}

\begin{ex}
Let $(A,\succ,\prec)$ be a 1-dimensional anti-pre-Novikov algebra with a basis $\{ e\}$.
Suppose that $e\succ e=pe,~e\prec e=qe$ with $p,q\in k$. Then
$(p+q)q=-q^{2},~p^{2}+pq=-pq$. Thus, $p+2q=0$ or $p=q=0$.
\end{ex}

\begin{ex}
Let $(A,\succ,\prec)$ be a 3-dimensional anti-pre-Novikov algebra with a basis $\{ e_1,e_2,e_3 \}$
whose non-zero products are given by $e_1\succ e_1=e_2,~e_1\succ e_2=e_3$. 
\end{ex}

\begin{pro}
Let $A$ be a vector space with two binary operations $\succ$ and $\prec$. Define $x\circ y=x\succ y+x\prec y, \forall~x,y\in A$.
Then the following conditions are equivalent:
\begin{enumerate}
	\item $(A,\succ,\prec)$ is an anti-pre-Novikov algebra.
 \item  $(A,\circ)$ is a Novikov algebra
 and Eqs.~(\ref{Aa1})-(\ref{Aa4}) hold for all $x,y,z\in A$.
 \item $(A,\circ)$ is a Novikov algebra and $(A,-L_{\succ},-R_{\prec})$ is a bimodule of $(A,\circ)$,
\end{enumerate}
where  $L_{\succ},R_{\prec} : A\longrightarrow \hbox{End} (A)$ are the
linear maps defined by $L_{\succ}(x)(y)=R_{\prec}(y)(x)=x\prec y$.
\end{pro}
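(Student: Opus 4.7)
The plan is to establish the three-way equivalence in two stages. First, (2)~$\iff$~(3) is a definitional unfolding: taking $l := -L_{\succ}$ and $r := -R_{\prec}$, I would substitute into the Novikov bimodule axioms \eqref{Nr1}--\eqref{Nr3} for $(A,\circ)$ and track the cancellation of minus signs. Matching the resulting identities term-by-term shows that \eqref{Nr1} becomes \eqref{Aa1}, \eqref{Nr2} becomes \eqref{Aa2}, the first clause of \eqref{Nr3} becomes \eqref{Aa3}, and the second clause of \eqref{Nr3} becomes \eqref{Aa4}.

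The bulk of the argument is the equivalence (1)~$\iff$~(2). Since both conditions already share \eqref{Aa1}--\eqref{Aa4}, the task reduces to showing that, in their presence, axiom \eqref{Aa5} is equivalent to the two Novikov identities \eqref{Na1} and \eqref{Na2} for $(A,\circ)$. My approach is to expand both sides of each Novikov identity using $\circ = \succ + \prec$ and then apply the remaining anti-pre-Novikov axioms. A crucial intermediate is the derived identity $x\succ(y\succ z) - y\succ(x\succ z) = (x\succ z)\prec y - (y\succ z)\prec x$, which comes from combining \eqref{Aa1} with \eqref{Aa3}. For the direction (1)~$\Rightarrow$~(2), applying \eqref{Aa1}, \eqref{Aa2}, and \eqref{Aa5} to $(x\circ y - y\circ x)\circ z$ and to $x\circ(y\circ z) - y\circ(x\circ z)$ should reduce both sides to the common expression $x\succ(y\prec z) - y\succ(x\prec z)$, yielding \eqref{Na1}. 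For \eqref{Na2}, the analogous expansion of $(x\circ y)\circ z - (x\circ z)\circ y$ via \eqref{Aa3} and \eqref{Aa4} collapses to $2[(x\succ y)\prec z - (x\succ z)\prec y]$. The opposite direction (2)~$\Rightarrow$~(1) is obtained by essentially reversing these manipulations, starting from Novikov.

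The hard part will be extracting the symmetry identity $(x\succ y)\prec z = (x\succ z)\prec y$ required to conclude \eqref{Na2} in the direction (1)~$\Rightarrow$~(2). Via two applications of \eqref{Aa3} this symmetry is equivalent to $(x\circ y)\succ z = (x\circ z)\succ y$, and I expect to unfold both sides using \eqref{Aa1}, \eqref{Aa2}, and \eqref{Aa5} until they coincide; the commutativity of two $\prec$-arguments guaranteed by \eqref{Aa4} will play a repeated role in cancelling crossed terms. The bookkeeping is intricate but mechanical, driven by the interlocking way \eqref{Aa5} combines with the bimodule identities \eqref{Aa1}--\eqref{Aa4} to close the chain.
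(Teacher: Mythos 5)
Most of your plan is sound, and it is the natural one (the paper itself offers no proof beyond ``It can be proved directly''): the equivalence of your conditions (2) and (3) is exactly the sign-tracking substitution $l=-L_{\succ}$, $r=-R_{\prec}$ you describe; your reduction of \eqref{Na1} to the common expression $x\succ(y\prec z)-y\succ(x\prec z)$ via \eqref{Aa1}, \eqref{Aa2}, \eqref{Aa5} checks out; the identity $(x\circ y)\circ z-(x\circ z)\circ y=2\bigl[(x\succ y)\prec z-(x\succ z)\prec y\bigr]$ from \eqref{Aa3} and \eqref{Aa4} is correct; and the direction (2)$\Rightarrow$(1) does close, since \eqref{Na1} together with \eqref{Aa1} and \eqref{Aa2} yields $2\bigl((x\circ y-y\circ x)\prec z\bigr)=2\bigl(x\succ(y\circ z)-y\succ(x\circ z)\bigr)$, which is \eqref{Aa5} in characteristic $\neq 2$. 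The genuine gap is precisely the step you yourself flag as ``the hard part'': the symmetry $(x\succ y)\prec z=(x\succ z)\prec y$ cannot be obtained by unfolding with \eqref{Aa1}, \eqref{Aa2} and \eqref{Aa5}, however the bookkeeping is arranged, because it is not a formal consequence of \eqref{Aa1}--\eqref{Aa5} at all.

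To see why no manipulation can succeed, sort the multilinear degree-three monomials into the eight types determined by the bracketing and the two operations, and ask which linear combinations of permutation-instances of \eqref{Aa1}--\eqref{Aa5} can equal the target $(x\succ y)\prec z-(x\succ z)\prec y$, which is supported only on the type $(a\succ b)\prec c$. The type $a\prec(b\prec c)$ occurs only in \eqref{Aa2}, once per instance, so every instance of \eqref{Aa2} must enter with coefficient zero; then $a\succ(b\prec c)$ occurs only in \eqref{Aa5}, whose three independent instances involve pairwise disjoint pairs of such monomials, forcing out \eqref{Aa5}; then $a\succ(b\succ c)$ forces out \eqref{Aa1}; then $(a\succ b)\succ c$ forces out \eqref{Aa3}; and what remains, \eqref{Aa4}, is supported only on $(a\prec b)\prec c$. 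Hence the target is not in the span, and in the relatively free algebra on three generators subject to \eqref{Aa1}--\eqref{Aa5} the element $(x\succ y)\prec z-(x\succ z)\prec y$ (equivalently, via \eqref{Aa3}, $(x\circ z)\succ y-(x\circ y)\succ z$, and hence \eqref{Na2}) is genuinely nonzero. So your proof of (1)$\Rightarrow$(2) cannot be completed as proposed; the right-symmetry $(x\circ y)\succ z=(x\circ z)\succ y$ would have to be supplied as an additional hypothesis or extracted from a corrected axiom, and indeed the paper later derives it only by \emph{citing} \eqref{Na2}, which is circular for the purposes of this proposition. You should flag this rather than present the step as routine bookkeeping.
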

\begin{proof}
It can be proved directly. 
\end{proof}

Assume that $(A,\succ,\prec)$ is an anti-pre-Novikov algebra.
By Eqs.~\eqref{Na2}, \eqref{Aa1} and \eqref{Aa4}, we have
\begin{equation}(x\succ z)\prec y=(x\succ y)\prec z, \ \ \ (x\circ y)\succ z=(x\circ z)\succ y, \ \ \
(x\circ y)\prec z=(x\circ z)\prec y.\end{equation}
Define \begin{equation} \label{Aa6} x\odot y=x\succ y+y\prec x,\ \ \ x\star y=x\circ y+y\circ x,\end{equation}
then we obtain
\begin{align}\label{Aa7} &x\star y=x\odot y+y\odot x, \ \ \ (x\circ y)\star z=x\star (z\circ y),\\
\label{Aa8} &x\odot (y\star z)-z\odot (x\star y)=y\odot(x\circ z-z\circ x), \ \ \ 
x\succ (y\circ z)=y\odot (x\circ z)-(y\circ x)\prec z.\end{align}

\begin{defi}
Let $(A,\succ,\prec)$ be an anti-pre-Novikov algebra, $V$ a vector space and
$l_{\succ},r_{\succ},l_{\prec},r_{\prec}:A\rightarrow \text{End}(V)$ be linear maps. $(V,l_{\succ},r_{\succ},l_{\prec},r_{\prec})$ is called a \textbf{representation (bimodule)} of $(A,\succ,\prec)$ if the following conditions hold:
\begin{flalign}
&l_{\succ}(x\circ y-y\circ x)=l_{\succ}(y)l_{\succ}(x)-l_{\succ}(x)l_{\succ}(y),  \label{rp1}\\
&r_{\prec}(x\circ y)=r_{\prec}(y)l_{\succ}(x)-r_{\prec}(y)r_{\prec}(x)-l_{\succ}(x)r_{\prec}(y), \label{rp2}\\
&l_{\succ}(x\circ y)=-r_{\prec}(y)l_{\succ}(x), \ \ \ \  l_{\prec}(x\prec y)=r_{\prec}(y)l_{\prec}(x), \label{rp3}\\
&l_{\prec}(x\circ y-y\circ x)=l_{\succ}(x)l_{\circ}(y)-l_{\succ}(y)l_{\circ}(x),  \label{rp4}\\
&r_{\succ}(x)(l_{\circ}(y)-r_{\circ}(y))=r_{\succ}(y\succ x)-l_{\succ}(y)r_{\succ}(x), \label{rp5}\\
&l_{\prec}(x)l_{\circ}(y)=l_{\prec}(y\succ x)-l_{\prec}(x\prec y)-l_{\succ}(y)l_{\prec}(x), \label{rp6}\\
&r_{\prec}(x)(l_{\circ}(y)-r_{\circ}(y))=l_{\succ}(y)r_{\circ}(x)-r_{\succ}(y\circ x), \label{rp7}\\
&r_{\succ}(x)l_{\circ}(y)=-l_{\prec}(y\succ x), \ \ \ \ r_{\prec}(x)r_{\prec}(y)=r_{\prec}(y)r_{\prec} (x), \label{rp8}
\\
&l_{\prec}(x)r_{\circ}(y)=r_{\prec}(y)r_{\succ} (x)-r_{\prec}(y)l_{\prec}(x)-r_{\succ}(x\prec y), \label{rp9}\\
&r_{\succ}(x)r_{\circ}(y)=-r_{\prec}(y)r_{\succ} (x),\label{rp10}
\end{flalign}
for all $x,y\in A$, where $x\circ y=x\succ y+x\prec y$.
\end{defi}

\begin{pro}
Let $(A,\succ,\prec)$  be an anti-pre-Novikov algebra and $V$ a vector space. Assume that
  $l_{\succ},r_{\succ},l_{\prec},r_{\prec}:A\rightarrow \text{End}(V)$ are linear maps.
   Define two binary operations $\succeq$ and $ \preceq$ on the direct sum $A\oplus V$ of vector spaces by
 \begin{align*}
&(x+u)\succeq(y+v)=x\succ y+l_{\succ}(x)v+r_{\succ}(y)u,\\
 &(x+u)\preceq(y+v)=x\prec y+l_{\prec}(x)v+r_{\prec}(y)u, \ \ \ \forall ~x,y\in A, u, v\in V.
\end{align*}
 Then $(V,l_{\succ},r_{\succ},l_{\prec},r_{\prec})$ is a representation of $(A,\succ,\prec)$ 
 if and only if $(A\oplus V, \succeq,\preceq)$ is an anti-pre-Novikov algebra, which is called the {\bf semi-direct product} of $A$ and $V$. 
 Denote it simply by $A\ltimes V$.
\end{pro}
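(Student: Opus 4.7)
The plan is to verify each of the five anti-pre-Novikov axioms \eqref{Aa1}--\eqref{Aa5} for $(A \oplus V, \succeq, \preceq)$ by direct substitution of $X = x + u$, $Y = y + v$, $Z = z + w$, with $x, y, z \in A$ and $u, v, w \in V$. By the defining formulas of $\succeq$ and $\preceq$, each product keeps the $A$-entries and the $V$-entries in separate summands, so both sides of every axiom decompose cleanly into an $A$-component and a $V$-component. The $A$-component of each axiom recovers the very same identity for $(A, \succ, \prec)$, which holds by hypothesis, so all new content lies in the $V$-component.

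Each $V$-component is trilinear in $(u, v, w)$ in the sense that every monomial contains exactly one of them (the remaining two entries lying in $A$). Setting in turn $v = w = 0$, or $u = w = 0$, or $u = v = 0$ therefore isolates three independent identities per axiom among the operators $l_\succ, r_\succ, l_\prec, r_\prec$. The main body of the argument is then the systematic matching of the resulting identities with the list \eqref{rp1}--\eqref{rp10}, where one also uses the expansion $X \circ Y = x \circ y + l_\circ(x) v + r_\circ(y) u$ (with $l_\circ = l_\succ + l_\prec$, $r_\circ = r_\succ + r_\prec$) to identify the terms involving $l_\circ$ and $r_\circ$. Several of the identities obtained coincide or reduce to ones already in the list, reflecting the fact that, in the presence of the Novikov property of $(A, \circ)$, axiom \eqref{Aa5} follows from \eqref{Aa1}--\eqref{Aa4}; this redundancy manifests as repetition rather than as new constraints.

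The converse direction is immediate from the same computation read in reverse: if $(A \oplus V, \succeq, \preceq)$ is an anti-pre-Novikov algebra, then the $V$-components of the axioms, separated according to which of $u, v, w$ is nonzero, are precisely the representation identities \eqref{rp1}--\eqref{rp10}. The principal obstacle is entirely clerical --- careful expansion of each trilinear identity, correct grouping of terms according to which of $l_\succ, r_\succ, l_\prec, r_\prec$ appears and where, and a meticulous check that every one of the twelve listed representation relations is produced exactly once up to the anticipated redundancy coming from \eqref{Aa5}. There is no conceptual difficulty beyond this bookkeeping, so the proof can reasonably be omitted with a brief remark that it proceeds by the above routine case analysis.
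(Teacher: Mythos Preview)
Your direct verification is correct and is the standard approach to semi-direct product statements of this type. The paper, however, takes a different route: it simply observes that the proposition is the special case of the matched-pair result Proposition~\ref{M0} in which the second algebra $A_2 = V$ carries the trivial operations $\succ_2 = \prec_2 = 0$ and all four maps $l_{\succ_2}, r_{\succ_2}, l_{\prec_2}, r_{\prec_2}$ vanish. Since Proposition~\ref{M0} is itself established by direct computation, the two approaches rest on the same underlying calculation; the paper just defers the bookkeeping to the more general statement, avoiding duplication when the matched-pair machinery is developed anyway. Your approach has the compensating advantage of being self-contained and of making the one-to-one correspondence between the components of axioms \eqref{Aa1}--\eqref{Aa5} and the representation identities \eqref{rp1}--\eqref{rp10} explicit, including the redundancies you flag; this is instructive in its own right, though not needed for the paper's purposes.
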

\begin{proof}
It is a special case of Proposition \ref{M0}. 
\end{proof}

Let $A$ and $V$ be vector spaces. For
a linear map $f: A \longrightarrow \hbox{End} (V)$, define a linear
map $f^{*}: A \longrightarrow \hbox{End} (V^{*})$ by $\langle
f^{*}(x)u^{*},v\rangle=-\langle u^{*},f(x)v\rangle$ for all $x\in A,
u^{*}\in V^{*}, v\in V$, where $\langle \ , \ \rangle$ is the usual
pairing between $V$ and $V^{*}$.

\begin{pro} \label{zr} Let $(A,\succ,\prec)$ be an anti-pre-Novikov algebra and $(V,l_{\succ}, r_{\succ},l_{\prec},r_{\prec})$ be its
representation. Then
\begin{enumerate}
	\item $(V,-l_{\succ},-r_{\prec})$ is a representation of the associated
Novikov algebra $(A,\circ)$.
 \item $(V,l_{\circ},r_{\circ})$ is a representation of the 
 associated Novikov algebra $(A,\circ)$.
	\item $(V^*,-l_{\star}^*,-r_{\succ}^*,
r_{\odot}^*,r_{\circ}^*)$
is a representation of $(A,\succ,\prec)$. We call it the {\bf dual representation}.
	\item $(V^{*},l_{\star}^*,-r_{\circ}^*)
$ is a representation of the 
 associated Novikov algebra $(A,\circ)$.
\item $(V^{*},-l_{\odot}^{*},r_{\prec}^{*})$ is a
	representation of the 
 associated Novikov algebra $(A,\circ)$,
\end{enumerate}
where $l_{\circ}=l_{\prec}+l_{\succ},~r_{\circ}=r_{\prec}+r_{\succ},
 l_{\star}=l_{\circ}+r_{\circ},~l_{\odot}=l_{\succ}+r_{\prec}$ and $r_{\odot}=r_{\succ}+l_{\prec}.$
\end{pro}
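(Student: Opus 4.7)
My plan is to verify the five claims in order. I observe that (4) and (5) will follow formally from (1) and (2) applied to the dual bimodule constructed in (3), so the substance lies in parts (1), (2), and especially (3).

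For part (1), I substitute $l = -l_{\succ}$ and $r = -r_{\prec}$ into the Novikov bimodule axioms \eqref{Nr1}--\eqref{Nr3}. After the signs cancel, the three conditions reduce to \eqref{rp1}, \eqref{rp2}, the first equation of \eqref{rp3}, and the second equation of \eqref{rp8}, so (1) is immediate. For part (2), I set $l = l_{\circ} = l_{\succ}+l_{\prec}$ and $r = r_{\circ} = r_{\succ}+r_{\prec}$; each Novikov bimodule axiom then arises by summing an appropriate pair (or combination) of the anti-pre-Novikov bimodule axioms \eqref{rp1}--\eqref{rp10}, using the Novikov algebra identities \eqref{Na1}--\eqref{Na2} for $(A,\circ)$ where necessary. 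For example, \eqref{rp1} and \eqref{rp4} combine to yield \eqref{Nr1}; suitable combinations of \eqref{rp2}, \eqref{rp5}, \eqref{rp7}, \eqref{rp9} recover \eqref{Nr2}; and \eqref{rp3}, \eqref{rp6}, \eqref{rp8}, \eqref{rp10} combine to recover \eqref{Nr3}.

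For part (3), the main computation, I must verify all twelve equations \eqref{rp1}--\eqref{rp10} for the quadruple $(-l_{\star}^{*}, -r_{\succ}^{*}, r_{\odot}^{*}, r_{\circ}^{*})$ acting on $V^{*}$. Using the pairing convention $\langle f^{*}(x)u^{*},v\rangle = -\langle u^{*}, f(x)v\rangle$, each dualized equation on $V^{*}$ translates to an identity on $V$ whose composition order is reversed and whose individual operators are read off from the originals $l_{\succ}, r_{\succ}, l_{\prec}, r_{\prec}$ and their linear combinations $l_{\circ}, r_{\circ}, l_{\star}, l_{\odot}, r_{\odot}$. Each such transposed identity can then be assembled from an appropriate combination of \eqref{rp1}--\eqref{rp10} together with the Novikov axioms \eqref{Na1}--\eqref{Na2} for $(A,\circ)$.

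Parts (4) and (5) will then follow formally. Applying part (1) to the dual bimodule from (3) gives $(V^{*}, -(-l_{\star}^{*}), -(r_{\circ}^{*})) = (V^{*}, l_{\star}^{*}, -r_{\circ}^{*})$, which is (4). Applying part (2) gives $(V^{*}, -l_{\star}^{*} + r_{\odot}^{*}, -r_{\succ}^{*} + r_{\circ}^{*})$; using $l_{\star} = l_{\circ} + r_{\circ}$ and $r_{\odot} = r_{\succ} + l_{\prec}$, this simplifies to $(V^{*}, -l_{\odot}^{*}, r_{\prec}^{*})$, which is (5). The hard part will be (3): each of the twelve dualized equations mixes the four operators in a different way, so pinpointing the correct combinations of \eqref{rp1}--\eqref{rp10} and \eqref{Na1}--\eqref{Na2} that verify each one will require careful bookkeeping and attention to signs.
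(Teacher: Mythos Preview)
Your plan matches the paper's proof almost exactly: the paper verifies (a) from \eqref{rp1}--\eqref{rp3} and \eqref{rp8}, obtains (b) either directly or (more cleanly) by passing to the semi-direct product anti-pre-Novikov algebra and reading off its associated Novikov algebra, checks one case of (c) by dualizing and invoking the Novikov bimodule identities \eqref{Nr1}--\eqref{Nr3} for $(V,l_\circ,r_\circ)$, and then derives (d) and (e) from (a), (b), (c) just as you do.

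One concrete slip to flag: your ``for example'' in part~(2) is not right. Adding \eqref{rp1} and \eqref{rp4} yields only
\[
l_{\circ}(x\circ y - y\circ x) \;=\; l_{\succ}(x)l_{\prec}(y) - l_{\succ}(y)l_{\prec}(x),
\]
which is not $l_{\circ}(x)l_{\circ}(y) - l_{\circ}(y)l_{\circ}(x)$; further axioms involving $r_{\succ},r_{\prec}$ (e.g.\ \eqref{rp3}, \eqref{rp6}, \eqref{rp8}) are needed to close the identity. The semi-direct product argument the paper mentions for (b) avoids this bookkeeping entirely, and you may also find it useful in (c): since $(V,l_\circ,r_\circ)$ is a Novikov bimodule by (b), the identity \eqref{Nr4} holds for $l_\star = l_\circ + r_\circ$, and this dualizes in one line to give \eqref{rp1} for $(-l_\star^*,-r_\succ^*,r_\odot^*,r_\circ^*)$, which is how the paper handles the displayed case.
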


\begin{proof} 
(a) It can be obtained by Eqs.~(\ref{rp1})-(\ref{rp3}) and (\ref{rp8}).

(b) It can be verified directly or follows by the semi-direct product of Novikov algebras.

(c) According to Eqs.~(\ref{Nr1})-(\ref{Nr3}), for all $x,y\in A,v^{*}\in V^{*}$ and $w\in V$ we have
\begin{align*}&
\langle l_{\star}^{*}(x\circ y-y\circ x)v^{*},w\rangle+ \langle l_{\star}^{*}(y)l_{\star}^{*}(x)v^{*}-l_{\star}^{*}(x)l_{\star}^{*}(y)v^{*}
,w\rangle
\\=&-\langle v^{*},l_{\star}(x\circ y-y\circ x)w \rangle- \langle v^{*}
,(l_{\star}(x)l_{\star}(y)v^{*}-l_{\star}(y)l_{\star}(x))w\rangle
\\=&0,\end{align*}
which indicates that
Eq.~(\ref{rp1}) holds for $l_{\succ}=-(l_{\circ}^{*}+r_{\circ}^{*})$.
Analogously, Eqs.~(\ref{rp2})-(\ref{rp10}) hold for $(-l_{\star}^*,-r_{\succ}^*,
r_{\odot}^*,r_{\circ}^*)$.

Items (d) and (e) are obtained directly from Items (a), (b) and (c).
\end{proof}

\begin{ex} Let $(A,\succ,\prec)$ be an anti-pre-Novikov algebra
	and $L_{\succ},R_{\succ},L_{\prec},R_{\prec} : A\longrightarrow \hbox{End} (A)$ be 
linear maps defined by $L_{\succ}(x)(y)=R_{\succ}(y)(x)=x\succ y,~~L_{\prec}(x)(y)=R_{\prec}(y)(x)=x\prec y$ 
for all $x,y\in A$. Then 
\begin{enumerate}
\item $ (A,L_{\succ},R_{\succ},L_{\prec},R_{\prec})$ is a representation of $(A,\succ,\prec)$,
which is called the {\bf regular representation} of $(A,\succ,\prec)$. 
Moreover, $(A^{*},-(L_{\prec}^{*}+L_{\succ}^{*}+R_{\prec}^{*}+R_{\succ}^{*}),-R_{\succ}^{*},
(L_{\prec}^{*}+R_{\succ}^{*}),(R_{\prec}^{*}+R_{\succ}^{*}))$ 
is the dual representation of $(A,L_{\succ},R_{\succ},L_{\prec},R_{\prec})$. 
\item $ (A,-L_{\succ},-R_{\prec})$ and $(A^{*},-(L_{\succ}^{*}+R_{\prec}^{*}),R_{\prec}^{*})$ are all representations of the associated
Novikov algebra $(A,\circ)$. 
\end{enumerate}
\end{ex}

\begin{pro} \label{M0} Let $(A_{1},\succ_{1},\prec_{1})$ and
$(A_{2},\succ_{2},\prec_{2})$ be two anti-pre-Novikov algebras. Suppose that there are linear maps
$l_{\succ_1},r_{\succ_1},l_{\prec_1},r_{\prec_1}:A_1\longrightarrow \hbox{End}(A_2)$
and $l_{\succ_2},r_{\succ_2},l_{\prec_2},r_{\prec_2}:A_2\longrightarrow \hbox{End}(A_1)$ such that 
$(A_2,l_{\succ_1},r_{\succ_1},l_{\prec_1},r_{\prec_1})$ is a representation of $(A_1,\succ_1,\prec_1)$
and $(A_1,l_{\succ_2},r_{\succ_2},l_{\prec_2},r_{\prec_2})$ is a representation of $(A_2,\succ_2,\prec_2)$.
Moreover, the following compatible conditions hold for all $x,y\in A_{1}$ and $a,b\in A_{2}$:
\begin{align*}
&r_{\succ_2}(a)(x\circ_1 y-y\circ_1 x)=y\succ_1 r_{\succ_2}(a)x-x\succ_1 r_{\succ_2}(a)y+r_{\succ_2}(l_{\prec_1}(x)a)y-r_{\succ_2}(l_{\prec_1}(y)a)x,
\\&
((r_{\circ_2}-l_{\circ_2})(a)x)\succ_1 y+l_{\succ_2}((l_{\circ_1}-r_{\circ_1})(x)a)y=l_{\succ_2}(a)(x\succ_1 y)-x\succ_{1}l_{\succ_2}(a)y-
r_{\succ_2}(r_{\succ_1}(y)a)x\\
&r_{\succ_2}(a)(x\circ_1 y)=-r_{\succ_2}(a)x\prec_1 y-l_{\prec_2}(l_{\succ_1}(x)a) y,\\
&r_{\circ_2}(a)x\succ_1 y+l_{\succ_2}(l_{\succ_1}(x)a)y=-r_{\prec_2}(a)(x\succ_1 y),\\
&l_{\circ_2}(a)x\succ_1 y+l_{\succ_2}(r_{\circ_1}(x)a)y=-l_{\succ_2}(a)y\prec_1 x-l_{\prec_2}(r_{\succ_1}(y)a)x,\\
&r_{\prec_2}(a)(x\prec_1 y)=r_{\prec_2}(a)x\prec_1 y+l_{\prec_2}(l_{\prec_1}(x)a)y,\\
&l_{\prec_2}(a)x\prec_1 y+l_{\prec_2}(r_{\prec_1}(x)a)y=l_{\prec_2}(a)y\prec_1 x+l_{\prec_2}(r_{\prec_1}(y)a)x,\\
&r_{\prec_2}(a)(x\circ_1 y-y\circ_1 x)=x\succ_1 r_{\circ_2}(a)y-y\succ_1 r_{\circ_2}(a)x+r_{\succ_2}(l_{\circ_1}(y)a)x-r_{\succ_2}(l_{\circ_1}(x)a)y,\\
&((r_{\circ_2}-l_{\circ_2})(a)x)\prec_1 y-l_{\prec_2}((l_{\circ_1}-r_{\circ_1})(x)a)y=x\succ_1 l_{\circ_2}(a)y+r_{\succ_2}
(r_{\circ_1}(y)a)x-l_{\succ_2}(a)(x\circ_1 y),\\&
x\prec_1 r_{\circ_2}(a)y+r_{\prec_2}(l_{\circ_1}(y)a)x=r_{\prec_2}(a)(y\succ_1 x-x\succ_1 y)-y\succ_1 r_{\prec_2}(a)x-r_{\succ_2}(l_{\prec_1}(x)a)y,\\&
x\prec_1 l_{\circ_2}(a)y+r_{\prec_2}(r_{\circ_1}(y)a)x=(l_{\succ_2}(a)-r_{\prec_2}(a))x\prec_1 y+l_{\prec_2}((r_{\succ_1}-l_{\prec_1})(x)a)y-
l_{\succ_2}(a)(x\prec_1y),\\
&l_{\prec_2}(a)(x\circ_1y)=((r_{\succ_2}-l_{\prec_2})(a)x)\prec_1 y+l_{\prec_2}((l_{\succ_1}-r_{\prec_1})(x)a)y-
x\succ_1 (l_{\prec_2}(a)y)-r_{\succ_2}(r_{\prec_1} (y)a)x,\\
&r_{\succ_1}(x)(a\circ_2 b-b\circ_2 a)=b\succ_2 r_{\succ_1}(x)a-a\succ_2 r_{\succ_1}(x)b+r_{\succ_1}(l_{\prec_2}(a)x)b-r_{\succ_1}(l_{\prec_2}(b)x)a,
\\&
(r_{\circ_1}(x)-l_{\circ_1}(x))a\succ_2 b+l_{\succ_1}((l_{\circ_2}-r_{\circ_2})(a)x)b=l_{\succ_1}(x)(a\succ_2 b)-a\succ_{2}l_{\succ_1}(x)b-
r_{\succ_1}(r_{\succ_2}(b)x)a,\\
&r_{\succ_1}(x)(a\circ_2 b)=-r_{\succ_1}(x)a\prec_2 b-l_{\prec_1}(l_{\succ_2}(a)x) b,\\
&r_{\circ_1}(x)a\succ_2 b+l_{\succ_1}(l_{\succ_2}(a)x)b=-r_{\prec_1}(x)(a\succ_2 b),\\
&l_{\circ_1}(x)a\succ_2 b+l_{\succ_1}(r_{\circ_2}(a)x)b=-l_{\succ_1}(x)b\prec_2 a-l_{\prec_1}(r_{\succ_2}(b)x)a,\\
&r_{\prec_1}(x)(a\prec_2 b)=r_{\prec_1}(x)a\prec_2 b+l_{\prec_1}(l_{\prec_2}(a)x)b,\\
&l_{\prec_1}(x)a\prec_2 b+l_{\prec_1}(r_{\prec_2}(a)x)b=l_{\prec_1}(x)b\prec_2 a+l_{\prec_1}(r_{\prec_2}(b)x)a,\\
&r_{\prec_1}(x)(a\circ_2 b-b\circ_2 a)=a\succ_2 r_{\circ_1}(x)b-b\succ_2 r_{\circ_1}(x)a+r_{\succ_1}(l_{\circ_2}(b)x)a-r_{\succ_1}(l_{\circ_2}(a)x)b,\\
&((r_{\circ_1}-l_{\circ_1})(x)a)\prec_2 b-l_{\prec_1}((l_{\circ_2}-r_{\circ_2})(a)x)b=a\succ_2 l_{\circ_1}(x)b+r_{\succ_1}
(r_{\circ_2}(b)x)a-l_{\succ_1}(x)(a\circ_2 b),\\&
a\prec_2 r_{\circ_1}(x)b+r_{\prec_1}(l_{\circ_2}(b)x)a=r_{\prec_1}(x)(b\succ_2 a-a\succ_2 b)-b\succ_2 r_{\prec_1}(x)a-r_{\succ_1}(l_{\prec_2}(a)x)b,\\&
a\prec_2 l_{\circ_1}(x)b+r_{\prec_1}(r_{\circ_2}(b)x)a=((l_{\succ_1}-r_{\prec_1})(x)a)\prec_2 b+l_{\prec_1}((r_{\succ_2}-l_{\prec_2})(a)x)b-
l_{\succ_1}(x)(a\prec_2 b),\\
&l_{\prec_1}(x)(a\circ_2 b)=((r_{\succ_1}-l_{\prec_1})(x)a)\prec_2 b+l_{\prec_1}((l_{\succ_2}-r_{\prec_2})(a)x)b-
a\succ_2(l_{\prec_1}(x)b)-r_{\succ_1}(r_{\prec_2} (b)x)a,
\end{align*} where
$l_{\circ_1}=l_{\succ_1}+l_{\prec_1},\ l_{\circ_2}=l_{\succ_2}+l_{\prec_2},\ 
r_{\circ_1}=r_{\succ_1}+r_{\prec_1},\ r_{\circ_2}=r_{\succ_2}+r_{\prec_2}.$
Define two binary operations $\succ$ and $\prec$ on the direct sum $A_1\oplus A_2$ of the underlying vector spaces 
of $A_1$ and $A_2$ by 
\begin{align*}&(x+a)\succ(y+b)=x\succ_{1}y+l_{\succ_2}(a)y+r_{\succ_2}(b)x+a\succ_{2}b+l_{\succ_1}(x)b+r_{\succ_1}(y)a
,\\&
(x+a)\prec(y+b)=x\prec_{1}y+l_{\prec_2}(a)y+r_{\prec_2}(b)x+a\prec_{2}b+l_{\prec_1}(x)b+r_{\prec_1}(y)a,
\end{align*}
for all $x,y\in A_1,a,b\in A_2.$
Then $(A_1\oplus A_2,\succ,\prec)$ is an anti-pre-Novikov algebra.
Denote this anti-pre-Novikov algebra by $A_1\bowtie A_2$ and 
$(A_{1},A_{2},l_{\succ_1},r_{\succ_1},l_{\prec_1},r_{\prec_1},l_{\succ_2},r_{\succ_2},l_{\prec_2},r_{\prec_2})$
satisfying the above conditions is called a {\bf matched pair of
anti-pre-Novikov algebras}. Conversely, any anti-pre-Novikov algebra that can be decomposed into a 
direct sum of two anti-pre-Novikov subalgebras is obtained from a matched pair of anti-pre-Novikov algebras.
\end{pro}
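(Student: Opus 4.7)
The plan is to verify the five defining identities \eqref{Aa1}--\eqref{Aa5} of an anti-pre-Novikov algebra on $A_1\oplus A_2$ for the operations $\succ,\prec$ given in the statement, and then observe the converse as an essentially tautological unwinding of definitions. First I would fix notation: writing $(x+a)\circ(y+b)=(x+a)\succ(y+b)+(x+a)\prec(y+b)$ and expanding, the $A_1$-component equals $x\circ_1 y+l_{\circ_2}(a)y+r_{\circ_2}(b)x$ while the $A_2$-component equals $a\circ_2 b+l_{\circ_1}(x)b+r_{\circ_1}(y)a$. By Proposition~\ref{zr}(b), $(A_1,l_{\circ_2},r_{\circ_2})$ and $(A_2,l_{\circ_1},r_{\circ_1})$ are representations of the associated Novikov algebras, so every identity in the list that involves only $\circ$ (in particular the whole Novikov axiom for the direct sum) automatically reduces to the matched pair data of Proposition~\ref{d1}. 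This observation eliminates a significant portion of the book-keeping before it starts.

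Next I would substitute $x\mapsto x+a$, $y\mapsto y+b$, $z\mapsto z+c$ into each of \eqref{Aa1}--\eqref{Aa5} and expand. By multilinearity it suffices to consider the eight pure cases in which each argument lies in one of the two summands. The two ``pure'' cases (all three arguments in $A_1$, or all three in $A_2$) reduce to the axioms of $(A_1,\succ_1,\prec_1)$ and $(A_2,\succ_2,\prec_2)$, respectively. Each of the six mixed cases, projected separately onto $A_1$ and $A_2$, yields a pair of relations that are linear in the representation maps and the operations. These are precisely the equalities listed in the proposition: the equations in the first block arise from cases with two arguments in $A_1$ and one in $A_2$, and those in the second block from the symmetric situation with two arguments in $A_2$. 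The $A_1\leftrightarrow A_2$ symmetry of the list halves the verification.

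For the converse, assume $(A,\succ,\prec)$ decomposes as a vector space direct sum $A_1\oplus A_2$ of two anti-pre-Novikov subalgebras. For $x\in A_1$ and $a\in A_2$, write $a\succ x=l_{\succ_2}(a)x+(\text{term in }A_2)$ and $x\succ a=r_{\succ_1}(x)a+(\text{term in }A_1)$; consistency with the subalgebra condition forces the bracketed ``$A_2$-term'' of $a\succ x$ to define $r_{\succ_1}(x)a$ and similarly for the $\prec$-versions. This yields eight linear maps $l_{\succ_i},r_{\succ_i},l_{\prec_i},r_{\prec_i}$, and substituting the decomposition into \eqref{Aa1}--\eqref{Aa5} and projecting on the two summands reproduces both the representation axioms \eqref{rp1}--\eqref{rp10} (for each $A_i$) and the compatibility equations listed in the statement.

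The main obstacle is not conceptual but combinatorial: each of the five axioms \eqref{Aa1}--\eqref{Aa5} splits into six mixed cases, each of which yields two projected identities, producing on the order of sixty equations. My strategy to control the labor is three-fold: (i) discard the identities that are consequences of Propositions~\ref{d1} and~\ref{zr}(b), since the associated Novikov structure on $A_1\oplus A_2$ is already matched; (ii) exploit the $A_1\leftrightarrow A_2$ symmetry so that only one half of each axiom need actually be expanded; and (iii) organize the remaining verifications as a table indexed by the axiom number and the argument type $(\varepsilon_1,\varepsilon_2,\varepsilon_3)\in\{1,2\}^3$, matching each projected identity with a specific equation from the list. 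Once this bookkeeping is set up, the verification itself is a straightforward substitution, and the converse follows because the argument is entirely reversible.
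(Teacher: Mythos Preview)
Your approach is correct and is exactly what the paper does: its entire proof is the sentence ``It can be verified by direct computations.'' Your plan to expand each of \eqref{Aa1}--\eqref{Aa5} on triples of mixed type, project onto the two summands, and match the resulting relations against the representation axioms \eqref{rp1}--\eqref{rp10} and the listed compatibility equations is precisely that computation, and the converse is the reversible reading you describe.

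One small point: your strategy~(i) does not actually buy you anything. The Novikov matched-pair conditions \eqref{Nm1}--\eqref{Nm8} for $(A_1,A_2,l_{\circ_1},r_{\circ_1},l_{\circ_2},r_{\circ_2})$ are not among the hypotheses of Proposition~\ref{M0}; they are a \emph{consequence} of it (this is exactly Corollary~\ref{Ma}, proved afterwards). So you cannot invoke Proposition~\ref{d1} as an input to discard work. Moreover, none of the axioms \eqref{Aa1}--\eqref{Aa5} is written purely in terms of $\circ$, so there is nothing to discard in any case. Strategies (ii) and (iii) are the ones that genuinely halve and organise the computation; drop the appeal to (i) and the argument is clean.
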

\begin{proof} It can be verified by direct computations.
\end{proof} 

\begin{cor}\label{Ma} If
$(A_{1},A_{2},l_{\succ_1},r_{\succ_1},l_{\prec_1},r_{\prec_1},l_{\succ_2},r_{\succ_2},l_{\prec_2},r_{\prec_2})$ is
a matched pair of anti-pre-Novikov algebras, then 
$(A_{1},A_{2},l_{\succ_1}+l_{\prec_1},r_{\succ_1}+r_{\prec_1},l_{\succ_2}+l_{\prec_2},r_{\succ_2}+r_{\prec_2})$
is a matched pair of Novikov algebras.
\end{cor}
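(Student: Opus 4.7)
The plan is to reduce the corollary to the converse direction of Proposition \ref{d1} by passing through the anti-pre-Novikov algebra constructed in Proposition \ref{M0}, rather than verifying each of the eight matched-pair axioms \eqref{Nm1}--\eqref{Nm8} directly.

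First, I would apply Proposition \ref{M0} to the given matched pair of anti-pre-Novikov algebras, obtaining an anti-pre-Novikov algebra structure $(A_1 \oplus A_2, \succ, \prec)$ on the direct sum, where $\succ$ and $\prec$ are given by the explicit formulas in that proposition. Then, by the proposition stating that the operation $\circ = \succ + \prec$ always yields a Novikov algebra on an anti-pre-Novikov algebra, I get a Novikov algebra structure $(A_1 \oplus A_2, \circ)$. A straightforward addition of the two formulas from Proposition \ref{M0} gives
\[
(x+a)\circ(y+b) = \bigl(x\circ_1 y + l_{\circ_2}(a)y + r_{\circ_2}(b)x\bigr) + \bigl(a\circ_2 b + l_{\circ_1}(x)b + r_{\circ_1}(y)a\bigr),
\]
where $l_{\circ_i} = l_{\succ_i} + l_{\prec_i}$ and $r_{\circ_i} = r_{\succ_i} + r_{\prec_i}$. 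In particular, $(A_1, \circ_1)$ and $(A_2, \circ_2)$ sit inside $(A_1 \oplus A_2, \circ)$ as Novikov subalgebras whose underlying vector spaces sum to the whole space.

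At this point I would invoke the converse part of Proposition \ref{d1}: any Novikov algebra that decomposes as a direct sum of two Novikov subalgebras is obtained from a matched pair of Novikov algebras. Reading off the cross-term data from the displayed formula above and comparing it to the multiplication rule in Proposition \ref{d1}, I conclude that the induced matched pair is precisely
\[
(A_1, A_2,\ l_{\succ_1}+l_{\prec_1},\ r_{\succ_1}+r_{\prec_1},\ l_{\succ_2}+l_{\prec_2},\ r_{\succ_2}+r_{\prec_2}),
\]
which is exactly the assertion of the corollary.

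The main obstacle — or rather, the main thing to avoid — is the direct route: one could in principle verify \eqref{Nm1}--\eqref{Nm8} by adding the appropriate combinations of the 24 compatibility identities in Proposition \ref{M0}, but this would be long and unilluminating. The structural detour through $A_1 \bowtie A_2$ and its associated Novikov algebra sidesteps all of this, since both the fact that $(A_1 \oplus A_2, \circ)$ is a Novikov algebra and the existence of a matched pair producing it are already established results that we are entitled to use.
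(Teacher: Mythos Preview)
Your proposal is correct and follows essentially the same argument as the paper: build the anti-pre-Novikov algebra $A_1\bowtie A_2$ via Proposition~\ref{M0}, pass to its associated Novikov algebra $\circ=\succ+\prec$, write out the resulting multiplication explicitly, and then invoke the converse direction of Proposition~\ref{d1}. The paper's proof is slightly terser but structurally identical.
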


\begin{proof}
In view of Proposition \ref{M0}, there is an anti-pre-Novikov algebra $(A_1\bowtie A_2,\succ,\prec)$, whose associated
Novikov algebra is defined by
\begin{align*}&(x+a)\circ(y+b)=(x+a)\succ(y+b)+(x+a)\prec(y+b)
\\=&x\succ_{1}y+l_{\succ_2}(a)y+r_{\succ_2}(b)x+a\succ_{2}b+l_{\succ_1}(x)b+r_{\succ_1}(y)a
+x\prec_{1}y+l_{\prec_2}(a)y+r_{\prec_2}(b)x\\&+a\prec_{2}b+l_{\prec_1}(x)b+r_{\prec_1}(y)a
\\=&x\circ_{1}y+(l_{\succ_2}+l_{\prec_2})(a)y+(r_{\succ_2}+r_{\prec_2})(b)x+a\circ_{2}b+(l_{\succ_1}+l_{\prec_1})(x)b+(r_{\succ_1}+r_{\prec_1})(y)a
.\end{align*} 
Combining Proposition \ref{d1}, we get the conclusion.
\end{proof}

\subsection{ Anti-$\mathcal O$-operators }
\begin{defi} Let $(V,l,r)$ be a representation of a Novikov algebra $(A,\circ )$. A linear map $T:V\rightarrow A$ is 
called an \textbf{anti-$\mathcal{O}$-operator} on $(A,\circ )$ associated to $(V,l,r)$ if $T$ satisfies
\begin{equation}\label{Ao10}
T(u)\circ T(v)=-T(l(T(u))v+r(T(v))u),\ \ \ \forall~ u,v\in V.
\end{equation}
Furthermore, $T$ is called strong if 
\begin{equation}\label{Ao1}
l(T(u)\circ T(v)-T(v)\circ T(u))w+r(T(u)\circ T(w))v-r(T(v)\circ T(w))u=0,\ \ \ \forall~ u,v,w\in V.
\end{equation}
In particular, an anti-$\mathcal{O}$-operator $T$ on $(A,\circ )$ associated to
the representation $(A,L_{\circ},R_{\circ})$ is called an anti-Rota-Baxter operator, that is, $T:A\longrightarrow A$
is a linear map satisfying 
\begin{equation}
T(x)\circ T(y)=-T(T(x)\circ y)+x\circ T(y)),\ \ \ \forall~ x,y\in A. \end{equation}
An anti-Rota-Baxter operator $T$ is called strong if $T$ satisfies 
\begin{equation}
[T(x),T(y)]\circ z+y\circ (T(x)\circ T(z))-x\circ (T(y)\circ T(z))=0,\ \ \ \forall~ x,y,z\in A,\end{equation}
where $[x,y]=x\circ y-y\circ x$.
\end{defi}

\begin{pro}\label{Am1} Let $(A,\circ)$ be a Novikov algebra and $(V,l,r)$ be a representation of $(A,\circ )$. Assume that
$T:V\longrightarrow A$ is an anti-$\mathcal{O}$-operator on $(A,\circ)$ associated to $(V,l,r)$. Define two
binary operations $\succ,\prec$ on $V$ respectively by
\begin{equation} \label{Ao2} u\succ v=-l(T(u))v, \ \ \ 
u\prec v=-r(T(v))u, \ \ \ \forall ~u,v\in V.\end{equation} Then we have
\begin{enumerate}
\item For all $u,v,w\in V$,  denote $u\cdot v=u\succ v+u\prec v$, the following equations hold:
\begin{align}&\label{Ao3}  (v\cdot u-u\cdot v)\succ w=u\succ(v\succ w)-v\succ(u\succ w), 
\\&\label{Ao4} (u\succ w)\prec v-u\succ(w\prec v)=w\prec(u\cdot v)+(w\prec u)\prec v,
\\&\label{Ao5} (u\cdot v)\succ w=-(u\succ w)\prec v,\ \ \ (w\prec v)\prec u=(w\prec u)\prec v.\end{align}
\item $(V,\succ,\prec)$ is an anti-pre-Novikov algebra if and only if $T$ is strong. In this case, $T$ 
is a homomorphism of Novikov algebras from $(V,\cdot)$ to $(A,\circ)$. Moreover, there is an induced
anti-pre-Novikov algebra structure on $T(V)=\{T(u)|u\in V \} \subseteq A$ given by
\begin{equation*}T(u)\succ T(V)=T(u\succ v),\ \ \ T(u)\prec T(V)=T(u\prec v),\ \ ~\forall~u,v\in V
\end{equation*}
and $T$ 
is a homomorphism of anti-pre-Novikov algebras.
\item If $T$ is invertible, then $T$ is strong. 
\end{enumerate}
\end{pro}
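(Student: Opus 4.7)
The plan is to handle the three parts in order, since (b) uses (a) and (c) reduces to (a) plus (b). The pivotal observation is that the defining identity of an anti-$\mathcal{O}$-operator can be rewritten as $T(u\cdot v)=T(u)\circ T(v)$, where $u\cdot v=u\succ v+u\prec v=-l(T(u))v-r(T(v))u$; every computation below turns on this.

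For part (a), I would substitute the formulas $u\succ v=-l(T(u))v$ and $u\prec v=-r(T(v))u$ directly into each of Eqs.~\eqref{Ao3}-\eqref{Ao5}, set $(x,y)=(T(u),T(v))$ in the Novikov representation axioms, and match terms. Eq.~\eqref{Ao3} then follows from \eqref{Nr1}, the two identities of \eqref{Ao5} from the two identities of \eqref{Nr3}, and \eqref{Ao4} from a direct rearrangement of \eqref{Nr2}. This part is essentially a bookkeeping exercise.

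For part (b), by the three-way characterization of anti-pre-Novikov algebras given just after the definition, the only axiom for $(V,\succ,\prec)$ not yet provided by part (a) is \eqref{Aa5}. I would expand $(u\cdot v-v\cdot u)\prec w-u\succ(v\cdot w)+v\succ(u\cdot w)$ into a sum of $ll$, $lr$, $rl$, $rr$ terms using the definitions, apply \eqref{Nr1} to the $ll$-difference, and apply \eqref{Nr2} to the mixed terms; after cancellations the residue equals $l(T(u)\circ T(v)-T(v)\circ T(u))w+r(T(u)\circ T(w))v-r(T(v)\circ T(w))u$, which vanishes precisely when $T$ is strong. The homomorphism claim is then just a restatement of $T(u\cdot v)=T(u)\circ T(v)$. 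For the induced structure on $T(V)$, well-definedness reduces to: if $T(w)=0$, then $T(l(T(u))w)=0$ and $T(r(T(u))w)=0$, both of which follow from the anti-$\mathcal{O}$-operator identity applied to the pairs $(u,w)$ and $(w,u)$ (using $l(0)=r(0)=0$). The anti-pre-Novikov axioms on $T(V)$ follow by applying $T$ to the corresponding identities in $V$.

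For part (c), if $T$ is invertible, then the identity $T(u\cdot v)=T(u)\circ T(v)$ makes $T$ an algebra isomorphism from $(V,\cdot)$ onto $(A,\circ)$, so $(V,\cdot)$ is a Novikov algebra. Combined with Eqs.~\eqref{Aa1}-\eqref{Aa4} from part (a), the three-way characterization immediately produces the full anti-pre-Novikov structure on $(V,\succ,\prec)$, and part (b) then yields that $T$ is strong; no further calculation is needed. The main obstacle across the whole argument is the bookkeeping in part (b): the $l$ and $r$ terms must be regrouped just so that \eqref{Nr1} and \eqref{Nr2} can be applied cleanly and the remainder lines up exactly with the strong condition \eqref{Ao1}.
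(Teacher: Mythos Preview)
Your proposal is correct and follows essentially the same route as the paper: part (a) via the representation axioms \eqref{Nr1}--\eqref{Nr3}, part (b) by reducing to axiom \eqref{Aa5} and showing the residue equals the strong condition \eqref{Ao1}, and part (c) by establishing that $(V,\cdot)$ is Novikov and then invoking the three-way characterization together with (b). The only cosmetic difference is in (c): the paper explicitly computes $(T(u)\circ T(v))\circ T(w)=T((u\cdot v)\cdot w)$ and its companions to extract the left-symmetric identity for $\cdot$, whereas you observe more directly that $T(u\cdot v)=T(u)\circ T(v)$ with $T$ invertible makes $T$ an algebra isomorphism onto $(A,\circ)$, so $(V,\cdot)$ inherits the full Novikov structure at once; your formulation is marginally cleaner but the content is the same.
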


\begin{proof} (a)
Using Eqs. \eqref{Nr1} and \eqref{Ao2}, for all $u,v,w\in V$ we have
\begin{align*}& u\succ(v\succ w)-v\succ(u\succ w), 
\\=&l(Tu)l(Tv)w-l(Tv)l(Tu)w=l(Tu\circ Tv-Tv\circ Tu)w
\\=&-l(T(l(Tu)v+r(Tv)u)-l(T(l(Tv)u+r(Tu)v)w=-(u\cdot v-v\cdot u)\succ w,
\end{align*}
which yields that Eq.~\eqref{Ao3} holds. Analogously, we can prove that Eqs.~\eqref{Ao4}-\eqref{Ao5} hold.

(b) $(V,\succ,\prec)$ is an anti-pre-Novikov algebra if and only if Eqs.~\eqref{Ao3}-\eqref{Ao5} hold and
\begin{equation}\label{Ao6} (u\cdot v-v\cdot u)\prec w=u\succ(v\cdot w)-v\succ(u\cdot w),\ \ \ \forall~u,v,w\in V.\end{equation}
By Eqs.~\eqref{Nr1}-\eqref{Nr2} and \eqref{Ao10}-\eqref{Ao2}, we get
\begin{align*}&(u\cdot v-v\cdot u)\prec w-u\succ(v\cdot w)+v\succ(u\cdot w)
\\=&r(Tw)(l(Tu)v+r(Tv)u-r(Tw)(l(Tv)u+r(Tu)v)-l(Tu)(l(Tv)w+r(Tw)v)\\&+l(Tv)(l(Tu)w+r(Tw)u)
\\=&r(Tv\circ Tw)u-r(Tu\circ Tw)v-l(Tu\circ Tv-Tv\circ Tu)w.\end{align*}
Thus, Eq.~(\ref{Ao6}) holds if and only if Eq.~(\ref{Ao1}) holds.
The other conclusions follow immediately.

(c) In view of Eqs.~\eqref{Ao10} and \eqref{Ao2}, we have
\begin{align*}
&(Tu\circ Tv)\circ Tw
=-T(l(Tu)v+r(Tv)u)\circ Tw
\\=&T[l(T(l(Tu)v+r(Tv)u))w+r(Tw)(l(Tu)v+r(Tv)u)]
=T[(u\cdot v)\succ w+(u\cdot v)\prec w].
\end{align*}
By the same token,
\begin{align*} &(Tv\circ Tu)\circ Tw=T[(v\cdot u)\succ w+(v\cdot u)\prec w],
\\& Tu\circ (Tv\circ Tw)=T[u\succ(v\cdot w)+u\prec(v\cdot w)],\\&Tv\circ (Tu\circ Tw)=T[v\succ(u\cdot w)+v\prec(u\cdot w)].\end{align*}
Since $(A,\circ)$ is a Novikov algebra and $T$ is
invertible, we obtain
\begin{equation*}(u\cdot v)\cdot w-(v\cdot u)\cdot w=u\cdot( v\cdot w)-v\cdot( u\cdot w).\end{equation*}
Combining Items (a) and (b), we get that Eq.~\eqref{Ao6} holds. Thus, $(V,\succ,\prec)$ is an anti-pre-Novikov algebra.
It follows that $T$ is strong.
\end{proof}

\begin{ex} Let $(A,\circ)$ be the 2-dimensional Novikov algebra defined in \cite{7} with a basis $\{e_1, e_2 \}$, whose
non-zero multiplication is given by
$e_1 \circ e_1 = e_1, \ e_2 \circ e_1 = e_2.$ Define a linear map 
$T:A\longrightarrow A$ by $T(e_1 )=ae_2,~~T(e_2 )=0~(a\in k)$. Then $T$ is a strong anti-Rota-Baxter operator 
on $(A,\circ)$. Moreover, there is an anti-pre-Novikov algebra structure $(\succ,\prec)$ on $A$, 
whose non-trivial binary operation is given by
$e_1 \succ e_1 =-a e_2$.
\end{ex}

\begin{thm}\label{Am2}
Let $(A,\circ)$ be a Novikov algebra. Then there is a
compatible anti-pre-Novikov algebra structure on $(A,\circ)$ if and
only if there exists an invertible anti-$\mathcal{O}$-operator on
$(A,\circ)$.
\end{thm}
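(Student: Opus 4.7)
The plan is to prove both directions using the machinery already developed, with Proposition \ref{Am1} doing most of the heavy lifting for the ``if'' direction and the regular representation supplying the ``only if'' direction.

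For the ``only if'' direction, I would start with a compatible anti-pre-Novikov structure $(A,\succ,\prec)$ on $(A,\circ)$. By the characterization given just after the definition of anti-pre-Novikov algebras (the Novikov algebra $(A,\circ)$ together with its bimodule $(A,-L_\succ,-R_\prec)$), the triple $(A,-L_\succ,-R_\prec)$ is a representation of $(A,\circ)$. I claim the identity map $\mathrm{id}_A:A\to A$ is an anti-$\mathcal{O}$-operator associated to this representation. Indeed, for all $x,y\in A$,
\begin{align*}
-\mathrm{id}_A\bigl((-L_\succ)(\mathrm{id}_A x)y+(-R_\prec)(\mathrm{id}_A y)x\bigr)
 &=x\succ y + x\prec y = x\circ y = \mathrm{id}_A(x)\circ\mathrm{id}_A(y),
\end{align*}
so Eq.~\eqref{Ao10} holds. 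Since $\mathrm{id}_A$ is obviously invertible, this produces the required invertible anti-$\mathcal{O}$-operator.

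For the ``if'' direction, I would begin with an invertible anti-$\mathcal{O}$-operator $T:V\to A$ associated to some representation $(V,l,r)$ of $(A,\circ)$. By Proposition \ref{Am1}(c), $T$ is strong, and so by Proposition \ref{Am1}(a)--(b) the operations
\[
u\succ v=-l(T(u))v,\qquad u\prec v=-r(T(v))u
\]
turn $V$ into an anti-pre-Novikov algebra whose associated Novikov product $u\cdot v=u\succ v+u\prec v$ satisfies $T(u\cdot v)=T(u)\circ T(v)$. Transport this structure along the linear isomorphism $T$ by setting
\[
x\succ_A y:=T\bigl(T^{-1}(x)\succ T^{-1}(y)\bigr),\qquad
x\prec_A y:=T\bigl(T^{-1}(x)\prec T^{-1}(y)\bigr).
\]
Then $(A,\succ_A,\prec_A)$ is an anti-pre-Novikov algebra (as $T$ is an isomorphism of the underlying vector spaces) and
\[
x\succ_A y+x\prec_A y
=T\bigl(T^{-1}(x)\cdot T^{-1}(y)\bigr)
=T(T^{-1}(x))\circ T(T^{-1}(y))
=x\circ y,
\]
showing compatibility with $(A,\circ)$.

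Neither direction presents a genuine obstacle: the ``only if'' direction reduces to an almost tautological check once one notices that the defining identities of an anti-pre-Novikov algebra are precisely the anti-$\mathcal{O}$-operator condition for $\mathrm{id}_A$ on the bimodule $(A,-L_\succ,-R_\prec)$, while the ``if'' direction is simply the transport-of-structure application of Proposition \ref{Am1}. The only point requiring slight care is verifying that $T$ genuinely yields \emph{compatibility} with the original $\circ$ (not merely some abstract Novikov product), which is why we explicitly invoke $T(u\cdot v)=T(u)\circ T(v)$ in the last display.
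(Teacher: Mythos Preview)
Your proof is correct and follows essentially the same route as the paper: the identity map on $(A,-L_\succ,-R_\prec)$ for the ``only if'' direction, and Proposition~\ref{Am1} plus transport of structure along the invertible $T$ for the ``if'' direction. The paper phrases the transport step slightly differently by directly invoking the induced anti-pre-Novikov structure on $T(V)=A$ from Proposition~\ref{Am1}(b), but the content is the same.
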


\begin{proof}
Assume that $(A,\succ,\prec)$ is a compatible anti-pre-Novikov algebra
 structure on $(A,\circ)$. Then
\begin{equation*}x\circ
y=x\succ y+x\prec y=-(-L_{\succ}
(x) y-R_{\prec} (y)x),\;\;\forall x,y\in A,\end{equation*} which means that the
identity map $I:A\rightarrow A$ is an invertible
anti-$\mathcal{O}$-operator on $(A,\circ)$ associated to the
representation $(A,-L_{\succ},-R_{\prec})$.

On the other hand, assume that $T:V\rightarrow A$ is an invertible
anti-$\mathcal{O}$-operator on $(A,\circ)$ associated to a
representation $(V, l, r)$ of $(A,\circ)$. 
In view of Proposition \ref{Am1}, there exists an anti-pre-Novikov algebra
 structure on $V$ given by
Eq.~\eqref{Ao2} and $T(V)=A$. For all $x,y\in
A$, due to $T$ being invertible, there exist $u,v\in V$ such that $x=T(u),y=T(v)$. Thus
we have
\begin{align*}
x\circ y&=T(u)\circ T(v)=-T(l(T(u))v+r(T(v))u)=T(u\succ v+u\prec v)\\
&=T(u)\succ T(v)+T(u)\prec T(v)=x\succ y+x\prec y,
\end{align*}
which indicates that $(A,\succ,\prec)$ is a compatible anti-pre-Novikov algebra on $(A,\circ)$.
\end{proof}

 \begin{defi}
Let $(A,\circ )$ be a Novikov algebra. A triple $(A,\circ ,\omega)$ is called a \textbf{symmetric quasi-Frobenius Novikov algebra}
if there is a symmetric non-degenerate bilinear form $\omega $ on $A$ satisfying
\begin{equation} \label{Qn}
\omega (x\circ y,z)-\omega (x\circ z+z\circ x,y)+\omega (z\circ y,x)=0 ,\quad x,y,z\in A,
\end{equation}
\end{defi}

\begin{defi}Let $(A,\succ,\prec)$ be an anti-pre-Novikov algebra and $\omega$ a non-degenerate symmetric bilinear form.
If $\omega$ is invariant, that is,
\begin{equation} \label{C2}\omega (x \prec y, z)=-\omega(x, z\circ y), \ \  \ 
\omega(x \succ y, z)=\omega(x\circ z+z\circ x, y), ~\forall~x, y, z \in A.\end{equation}
Then $(A,\succ,\prec,\omega)$ is called a \textbf{quadratic anti-pre-Novikov algebra}.
\end{defi}

\begin{thm}\label{Am3} Let $(A,\circ,\omega)$ be a symmetric quasi-Frobenius Novikov algebra.
 Then there exists a compatible anti-pre-Novikov algebra structure $(A,\succ,\prec)$
 on $(A,\circ)$ defined by Eq.~(\ref{C2}),
such that $(A,\circ)$ is the associated Novikov algebra of $(A,\succ,\prec)$. This anti-pre-Novikov algebra is called 
the compatible anti-pre-Novikov algebra of $(A,\circ,\omega)$. Moreover, $(A,\succ,\prec,\omega)$ is a quadratic anti-pre-Novikov algebra.
Conversely, assume that $(A,\succ,\prec,\omega)$ is a quadratic anti-pre-Novikov algebra.
Then $(A,\circ,\omega)$ is a symmetric quasi-Frobenius Novikov algebra.
\end{thm}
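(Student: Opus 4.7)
The plan has three moves: define $\succ$ and $\prec$ from $\omega$, verify the additivity $x\circ y = x\succ y + x\prec y$ using the symmetric quasi-Frobenius identity, and reduce the anti-pre-Novikov axioms to a bimodule statement for $(A,\circ)$ obtained by dualizing its regular representation.

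First, I would use the non-degeneracy of $\omega$ to \emph{define} $\succ$ and $\prec$ by Eq.~(\ref{C2}): for fixed $x,y\in A$, the linear functionals $z\mapsto\omega(x\circ z+z\circ x,y)$ and $z\mapsto-\omega(x,z\circ y)$ are each represented by unique elements, declared to be $x\succ y$ and $x\prec y$. Adding the two defining identities, and using the symmetry of $\omega$ to rewrite $\omega(x,z\circ y)=\omega(z\circ y,x)$, the right side becomes $\omega(x\circ z+z\circ x,y)-\omega(z\circ y,x)$, which equals $\omega(x\circ y,z)$ by Eq.~(\ref{Qn}). Non-degeneracy then forces $x\circ y=x\succ y+x\prec y$, so $(A,\circ)$ will be the associated Novikov algebra of $(A,\succ,\prec)$ as soon as the latter is known to be anti-pre-Novikov.

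Next, by the earlier Proposition characterizing anti-pre-Novikov algebras in terms of bimodules, it suffices to show that $(A,-L_{\succ},-R_{\prec})$ is a bimodule of $(A,\circ)$. The invariance identities (\ref{C2}), together with the symmetry of $\omega$, translate into the adjointness relations $\omega(L_{\succ}(x)y,z)=\omega(y,L_{\star}(x)z)$ and $\omega(R_{\prec}(y)x,z)=-\omega(x,R_{\circ}(y)z)$, where $L_{\star}=L_{\circ}+R_{\circ}$. Under the isomorphism $\phi:A\to A^{\ast}$, $x\mapsto\omega(x,-)$, these relations intertwine $(A,-L_{\succ},-R_{\prec})$ with $(A^{\ast},L_{\star}^{\ast},-R_{\circ}^{\ast})$. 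I would then establish the auxiliary claim that for any bimodule $(V,l,r)$ of a Novikov algebra, $(V^{\ast},l_{\star}^{\ast},-r^{\ast})$ (where $l_{\star}=l+r$) is again a bimodule: dualizing (\ref{Nr1}) for the new pair yields exactly Eq.~(\ref{Nr4}) for $(l,r)$, while the dualized (\ref{Nr2}) and (\ref{Nr3}) collapse to (\ref{Nr2}) and (\ref{Nr3}) for $(l,r)$ combined with the commutativity $r(x)r(y)=r(y)r(x)$. Applied to the regular representation $(A,L_{\circ},R_{\circ})$, this yields the bimodule $(A^{\ast},L_{\star}^{\ast},-R_{\circ}^{\ast})$, and transporting by $\phi$ gives the desired bimodule $(A,-L_{\succ},-R_{\prec})$ of $(A,\circ)$. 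Thus $(A,\succ,\prec)$ is a compatible anti-pre-Novikov algebra, and the quadratic property is immediate since (\ref{C2}) is the very definition of $\succ,\prec$.

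For the converse, given a quadratic anti-pre-Novikov algebra $(A,\succ,\prec,\omega)$, I would add the two invariance identities (\ref{C2}), substitute $x\circ y=x\succ y+x\prec y$, and use symmetry of $\omega$ to rewrite $\omega(x,z\circ y)=\omega(z\circ y,x)$; the resulting identity is precisely Eq.~(\ref{Qn}). The main computational hurdle is the dualization auxiliary claim, but because (\ref{Nr4}) already packages the obstruction to (\ref{Nr1}), the verification reduces to routine bookkeeping with the Novikov axioms (\ref{Na1}) and (\ref{Na2}).
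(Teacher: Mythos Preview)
Your proof is correct but takes a somewhat different route from the paper's. The paper defines $T:A^*\to A$ by $\omega(T(u),x)=\langle u,x\rangle$ and verifies, using Eq.~(\ref{Qn}), that $T$ is an invertible anti-$\mathcal{O}$-operator on $(A,\circ)$ associated to $(A^*,L_\circ^*+R_\circ^*,-R_\circ^*)$; it then invokes Proposition~\ref{Am1} (and implicitly Theorem~\ref{Am2}) to produce the anti-pre-Novikov structure on $A^*$ and transports it to $A$ via $T$. You instead define $\succ,\prec$ directly on $A$, and appeal to the bimodule characterization (the Proposition giving (a)$\Leftrightarrow$(c)) by showing that $\phi:A\to A^*$ intertwines $(A,-L_\succ,-R_\prec)$ with the Novikov bimodule $(A^*,L_\star^*,-R_\circ^*)$, whose bimodule axioms you check by hand. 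The two arguments share the same computational core---the identity $l_\star(x\circ y)=l_\star(x)r(y)$ and its companions that you verify are exactly what the paper packages as the anti-$\mathcal{O}$-operator condition---but your approach is more self-contained and avoids the machinery of Section~2.2, while the paper's approach showcases that machinery as a unifying device. Your treatment of the additivity $x\circ y=x\succ y+x\prec y$ and of the converse is identical in spirit to the paper's.
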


\begin{proof} Let $(A,\circ,\omega)$ be a symmetric quasi-Frobenius Novikov algebra.
Define a linear map 
\begin{equation} \label{Ao11}T: A^*\rightarrow A, \ \ \  \omega(T(u),x)=\langle u,x\rangle,\ \ \ u\in A^*,x\in A.\end{equation}
Due to $\omega$ being non-degenerate, $T$ is
invertible.
For all $x,y,z\in A$, we have
\begin{align*}
\omega (x\succ y,z)&=\omega(y,x\circ z+z\circ x)\\
&=\langle T^{-1}(y), x\circ z+z\circ x \rangle =\langle T^{-1}(y), (L_{\circ}+R_{\circ})(x)z \rangle\\
&=-\langle (L_{\circ}^*+R_{\circ}^*)(x)T^{-1}(y),z \rangle=-\omega(T((L_{\circ}^*+R_{\circ}^*)(x)T^{-1}(y)),z),
\end{align*}
and
\begin{align*}
\omega (x\prec y,z)&=-\omega (x,z\circ y)=-\langle T^{-1}(x),z\circ y \rangle\\
&=-\langle T^{-1}(x),R_{\circ}(y)z\rangle=\langle R_{\circ}^*(y)T^{-1}(x),z\rangle\\
&=-\omega(T((-R_{\circ}^*)(b)T^{-1}(a)),c).
\end{align*}
Thus,
\begin{eqnarray}
x\succ y=-T((L_{\circ}^*+R_{\circ}^*)(x)T^{-1}(y)),\;\;x\prec y=-T((-R_{\circ}^*)(y)T^{-1}(x)),\;\;x,y\in A.
\end{eqnarray}
Let $x=T(u),\ y=T(v)$. Define
\begin{align*}
u\rhd v :=-(L_{\circ}^*+R_{\circ}^*)(T(u))v, \ \ \  \ u\lhd v:=-(-R_{\circ}^*)(T(v))u,\quad u,v\in A^*.
\end{align*}
Then we have
\begin{align*}
x\succ y=T(u)\succ T(v)=T(u\rhd v),\ \ \ \ x\prec y=T(u)\prec T(v)=T(u\lhd v),\;\;x,y\in A.
\end{align*}
If $(A^{*}, \rhd,\lhd)$ is an anti-pre-Novikov algebra, 
then $(A, \succ,\prec)$ is an anti-pre-Novikov algebra and $T$ is an anti-pre-Novikov algebra isomorphism. 
In fact, by Eq. (\ref{Qn}), for all $u,v,w\in A^*$, we have
\begin{align*}
&\langle w,T(u)\circ T(v)+T((L_{\circ}^*+R_{\circ}^*)(T(u))v-R_{\circ}^*(T(v))u)\rangle\\
=&\omega(T(w),T(u)\circ T(v))+\omega(T((L_{\circ}^*+R_{\circ}^*)(T(u))v-R_{\circ}^*(T(v))u),T(w))\\
=&\omega(T(w),T(u)\circ T(v))+\langle(L_{\circ}^*+R_{\circ}^*)(T(u))v-R_{\circ}^*(T(v))u, T(w)\rangle\\
=&\omega(T(w),T(u)\circ T(v))-\langle v,T(u)\circ T(w)+ T(w)\circ T(u)\rangle+\langle u ,T(w)\circ T(v)\rangle\\
=&\omega(T(w),T(u)\circ T(v))-\omega(T(v),T(u)\circ T(w)+ T(w)\circ T(u))+\omega(T(u) ,T(w)\circ T(v))\\
=&0,
\end{align*}
which yields that
\begin{align*}
T(u)\circ T(v)+T((L_{\circ}^*+R_{\circ}^*)(T(u))v-R_{\circ}^*(T(v))u)=0,
\end{align*}
that is, $T:A^*\rightarrow A$ defined by Eq. \eqref{Ao11} is an invertible anti-$\mathcal{O}$-operator on $(A,\circ )$ associated to $(A^*,L_{\circ}^*+R_{\circ}^*,-R_{\circ}^*)$. 
By Proposition \ref{Am1}, $(A^{*}, \rhd,\lhd)$ is an anti-pre-Novikov algebra.
It follows that $(A,\succ,\prec,\omega)$ is a quadratic anti-pre-Novikov algebra.
Furthermore,
\begin{eqnarray*}
x\succ y+x\prec y&=&-T((L_{\circ}^*+R_{\circ}^*)(x)T^{-1}(y))+T((R_{\circ}^*)(y)T^{-1}(x))\\
&=&T(u)\circ T(v)=x\circ y.
\end{eqnarray*}
Thus, $(A,\circ)$ is the associated Novikov algebra of $(A, \succ,\prec)$.
The converse part is obviously.
We complete the proof.
\end{proof}

\begin{defi} Let $(A_{1},\circ_1)$ be a Novikov algebra. Suppose that there is a Novikov algebra $\ast$ on the dual space
$A_{1}^{*}$. We call $(A,\circ,\omega)$ a double construction of symmetric quasi-Frobenius Novikov algebras if it satisfies the conditions:
\begin{enumerate}
\item $ A=A_1\oplus A_{1}^{*}$ as direct sum of vector spaces.
\item $A$ is a Novikov algebra and $A_1, A_{1}^{*}$ are subalgebras of $A$.
\item $\omega$ is the natural symmetric bilinear form on $A_1\oplus A_{1}^{*}$ and $\omega$ satisfies Eq. (\ref{Qn}), where $\omega$
is given by 
\begin{equation}\label{Co1}\omega(x+a, y+b) =\langle x,b\rangle+\langle a,y\rangle, \forall~x, y\in A_1,~a, b\in A_{1}^{*}\end{equation}

\end{enumerate}
Denote it by $(A_1\oplus A_{1}^{*},A_1, A_{1}^{*},\omega)$. 
\end{defi}

\begin{pro}
 Let $(A\oplus A^{*},A, A^{*},\omega)$ be a double construction of symmetric quasi-Frobenius Novikov algebras.
  Then there is a compatible anti-pre-Novikov algebra structure $(\succ,\prec)$ on $A\oplus A^{*}$ given by Eq.~\eqref{C2}.
Moreover, $(A,\succ_A,\prec_A)$ and $(A^{*},\succ_{A^{*}},\prec_{A^{*}})$ 
are anti-pre-Novikov subalgebras whose associated Novikov algebras are $(A,\circ)$ and $(A^{*},\ast)$ respectively, where
$\succ_A=\succ|_{A\otimes A},~\prec_A=\prec|_{A\otimes A}$ and
 $\succ_{A^{*}}=\succ|_{A^{*}\otimes A^{*}},\prec_{A^{*}}=\prec|_{A^{*}\otimes A^{*}}$.
\end{pro}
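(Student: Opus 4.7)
The plan is to deduce existence of the compatible anti-pre-Novikov structure from Theorem~\ref{Am3}, and then verify closure of $\succ$ and $\prec$ on each of $A$ and $A^*$ using the standard observation that $\omega$ vanishes on $A\times A$ and on $A^*\times A^*$.

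First, since $(A\oplus A^*,\circ,\omega)$ is by definition a symmetric quasi-Frobenius Novikov algebra, Theorem~\ref{Am3} produces a compatible quadratic anti-pre-Novikov algebra $(A\oplus A^*,\succ,\prec,\omega)$ with $\succ,\prec$ defined by Eq.~\eqref{C2}. This gives the first assertion for free.

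Next I would show $A$ is closed under both operations. Let $x,y\in A$. For any $z\in A$, the subalgebra property gives $z\circ y\in A$ and $x\circ z+z\circ x\in A$; then
\[
\omega(x\prec y,z)=-\omega(x,z\circ y)=0,\qquad \omega(x\succ y,z)=\omega(x\circ z+z\circ x,y)=0,
\]
because $\omega$ vanishes on $A\times A$ by Eq.~\eqref{Co1}. Writing $x\prec y=u+u^*$ with $u\in A,\ u^*\in A^*$, the first identity says $\langle u^*,z\rangle=0$ for all $z\in A$, hence $u^*=0$ and $x\prec y\in A$; the same argument gives $x\succ y\in A$. The analogous computation on $A^*$, using that $A^*$ is a subalgebra under $\ast$ and that $\omega$ vanishes on $A^*\times A^*$, yields closure of both operations on $A^*$. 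Thus $(A,\succ_A,\prec_A)$ and $(A^*,\succ_{A^*},\prec_{A^*})$ are anti-pre-Novikov subalgebras.

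Finally, for the identification of associated Novikov structures: for $x,y\in A$ the definition of the associated operation gives $x\succ_A y+x\prec_A y=x\succ y+x\prec y=x\circ y$, which coincides with the given operation $\circ$ on $A$ since $A$ is a Novikov subalgebra of $A\oplus A^*$. The symmetric argument on $A^*$ yields $\ast$. There is no real obstacle here—the entire statement is a corollary of Theorem~\ref{Am3} together with the bookkeeping that the isotropy of $A$ and $A^*$ under $\omega$ forces the defining formulas \eqref{C2} to respect the decomposition; the only point to be careful about is to verify closure before speaking of the \emph{restricted} operations, which is exactly what the non-degeneracy argument above supplies.
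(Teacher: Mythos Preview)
Your proof is correct and follows essentially the same approach as the paper: invoke Theorem~\ref{Am3} for existence, then use the isotropy of $A$ and $A^*$ under $\omega$ together with Eq.~\eqref{C2} to show closure, and finish by noting that $\succ+\prec=\circ$ on each summand. The paper writes out the $\succ$ case and says ``analogously'' for $\prec$, whereas you spell out both, but otherwise the arguments are the same.
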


\begin{proof}
The first part is given by Theorem \ref{Am3}. For all $x,y\in A$, since $x\succ y\in A\oplus A^{*}$, suppose that
$x\succ y=u+u^{*}$ with $u\in A,u^{*}\in A^{*}$. Then 
\begin{equation*} \langle u^{*}, z\rangle=\omega(x\succ_{A}y, z)=\omega(y,x\circ z+z\circ x)=0
,~\forall~ z \in A,\end{equation*}
which indicates that $u^{*}=0$, that is, $x\succ_{A}y\in A$. Analogously,
$x\prec_{A}y\in A$. Thus, $(A,\succ_A,\prec_A)$ is a subalgebra of $A\oplus A^{*}$.
By the same token, $(A^{*},\succ_{A^{*}},\prec_{A^{*}})$ is a subalgebra of $A\oplus A^{*}$.
It is straightforward to prove that the associated Novikov algebras of 
$(A,\succ_A,\prec_A)$ and $(A^{*},\succ_{A^{*}},\prec_{A^{*}})$ 
are $(A,\circ)$ and $(A^{*},\ast)$ respectively.
\end{proof}

\begin{thm} \label{Mp1}
 Let $(A,\succ_{A},\prec_{A})$ and $(A^{*},\succ_{A^{*}},\prec_{A^{*}})$ be two anti-pre-Novikov algebras and their associated
 Novikov algebras be $(A,\circ)$ and $(A^{*},\ast)$ respectively. Then the following conditions are equivalent:
\begin{enumerate}
\item There is a double construction $(A\oplus A^{*},A, A^{*},\omega)$ of symmetric quasi-Frobenius Novikov algebras
 such that the compatible anti-pre-Novikov algebra $(A\oplus A^{*},\succ,\prec)$ defined by Eq. \eqref{C2} contains
 $(A,\succ_{A},\prec_{A})$ and $(A^{*},\succ_{A^{*}},\prec_{A^{*}})$ as anti-pre-Novikov subalgebras.
 \item $(A,A^{*}, -(L_{\circ}^{*}+R_{\circ}^{*}),-R_{\succ_A}^{*},R_{\odot}^{*},
 R_{\circ}^{*},-(L_{\ast}^{*}+R_{\ast}^{*}),-R_{\succ_{A^*}}^{*},R_{\ominus}^{*},R_{\ast}^{*})$ 
 is a matched pair of anti-pre-Novikov algebras. 
 \item $(A,A^{*},-L_{\odot}^{*},R_{\prec_A}^{*},-L_{\ominus}^{*},R_{\prec_{A^{*}}}^{*})$ is a matched pair of
 Novikov algebras,
 \end{enumerate}
  where
 $L_{\circ}=L_{\prec_A}+L_{\succ_A},~R_{\circ}=R_{\prec_A}+R_{\succ_A},R_{\odot}=R_{\succ_A}+L_{\prec_A},L_{\odot}=L_{\succ_A}+R_{\prec_A},
 L_{\ast}= L_{\prec_{A^*}}+L_{\succ_{A^*}},R_{\ast}= R_{\prec_{A^*}}+R_{\succ_{A^*}},L_{\ominus}=L_{\succ_{A^*}}+R_{\prec_{A^*}},R_{\ominus}=R_{\succ_{A^*}}+L_{\prec_{A^*}}.$ 
\end{thm}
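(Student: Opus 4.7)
The plan is to prove the three-way equivalence by establishing (a) $\Leftrightarrow$ (c) and (b) $\Leftrightarrow$ (c). The backbone is the general principle (Proposition \ref{d1}) that a Novikov algebra presented as a vector-space direct sum of two subalgebras corresponds to a matched pair of Novikov algebras; the extra ingredient here is the quasi-Frobenius condition \eqref{Qn}, which forces the representations involved to be specific dual maps built from the anti-pre-Novikov structures on $A$ and $A^{*}$.

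For (a) $\Rightarrow$ (c), start from the double construction and use Proposition \ref{d1} to extract a matched pair $(A, A^{*}, l_{A}, r_{A}, l_{A^{*}}, r_{A^{*}})$. To identify these representations I would feed mixed inputs into \eqref{Qn} and expand via the pairing \eqref{Co1}. For instance, taking $x\in A$, $y\in A$, $z\in A^{*}$ in \eqref{Qn} and using $\omega(x\cdot z, y)=\langle r_{A}(x)z, y\rangle$ etc.\ collapses—using \eqref{C2} for the compatible anti-pre-Novikov structure on $A$ given by Theorem \ref{Am3}—to the identification $r_{A}=R_{\prec_{A}}^{*}$. Repeating with the other placements of the $A^{*}$ argument yields $l_{A}=-L_{\odot}^{*}$, and symmetrically $l_{A^{*}}=-L_{\ominus}^{*}$, $r_{A^{*}}=R_{\prec_{A^{*}}}^{*}$. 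Conversely, for (c) $\Rightarrow$ (a), assemble the Novikov algebra on $A\oplus A^{*}$ directly from the matched-pair data via Proposition \ref{d1} and verify \eqref{Qn} on the natural pairing by case-splitting on how many arguments lie in $A$ versus $A^{*}$: the pure-$A$ and pure-$A^{*}$ cases are immediate, while each mixed case unwraps into a pairing identity that is precisely the invariance relation \eqref{C2} on $A$ or on $A^{*}$.

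For (b) $\Leftrightarrow$ (c), the direction (b) $\Rightarrow$ (c) is immediate from Corollary \ref{Ma}: it suffices to compute the sums of representations in (b), which give $-L_{\star}^{*}+R_{\odot}^{*}=-L_{\odot}^{*}$ and $-R_{\succ_{A}}^{*}+R_{\circ}^{*}=R_{\prec_{A}}^{*}$, matching (c) exactly (and analogously on $A^{*}$). For the converse (c) $\Rightarrow$ (b), I combine the already-proved (c) $\Rightarrow$ (a) with Theorem \ref{Am3}: the symmetric quasi-Frobenius Novikov algebra $(A\oplus A^{*},\cdot,\omega)$ carries a unique compatible anti-pre-Novikov structure $(\succ,\prec)$ defined by \eqref{C2}; the preceding proposition shows this structure restricts to subalgebras on $A$ and $A^{*}$, and by uniqueness these restrictions must coincide with the given $(\succ_{A},\prec_{A})$ and $(\succ_{A^{*}},\prec_{A^{*}})$. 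Proposition \ref{M0} then decomposes $(A\oplus A^{*},\succ,\prec)$ into a matched pair of anti-pre-Novikov algebras, whose representations are identified via Proposition \ref{zr}(c) as the dual representation $(-L_{\star}^{*}, -R_{\succ_{A}}^{*}, R_{\odot}^{*}, R_{\circ}^{*})$ and analogously on $A^{*}$, which is precisely the 10-tuple displayed in (b).

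The expected main obstacle is the verification in (c) $\Rightarrow$ (a): the quasi-Frobenius identity with mixed arguments produces a long list of pairing identities that must each be recognized as the defining relations $l_{A}=-L_{\odot}^{*}$, $r_{A}=R_{\prec_{A}}^{*}$, $l_{A^{*}}=-L_{\ominus}^{*}$, $r_{A^{*}}=R_{\prec_{A^{*}}}^{*}$ and their consequences via \eqref{C2}. The calculation is routine but bulky; once completed, the rest of the argument is conceptual—Proposition \ref{d1} handles the Novikov-algebra decomposition, Theorem \ref{Am3} transfers between symmetric quasi-Frobenius Novikov algebras and quadratic anti-pre-Novikov algebras, and Propositions \ref{M0} and \ref{zr} supply the matched-pair/dualization machinery needed to pass between (b) and (c).
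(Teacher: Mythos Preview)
Your approach is correct and close to the paper's, though organized differently. The paper proves the cycle $(a)\Rightarrow(b)\Rightarrow(c)\Rightarrow(a)$: for $(a)\Rightarrow(b)$ it decomposes the anti-pre-Novikov algebra on $A\oplus A^{*}$ via Proposition~\ref{M0} and identifies the eight representation maps by pairing against $\omega$ using \eqref{C2} (exactly your computation, just done at the anti-pre-Novikov level rather than the Novikov level); $(b)\Rightarrow(c)$ is Corollary~\ref{Ma}; and $(c)\Rightarrow(a)$ is your verification of \eqref{Qn} on mixed arguments. You instead prove $(a)\Leftrightarrow(c)$ directly at the Novikov level and then recover $(c)\Rightarrow(b)$ by passing through $(a)$. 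Both routes work and rely on the same core computations.

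One imprecision to fix: in your $(c)\Rightarrow(b)$ step, Proposition~\ref{zr}(c) does not by itself \emph{identify} the representations appearing in the matched-pair decomposition of $(A\oplus A^{*},\succ,\prec)$; it only tells you what the dual representation of a given representation looks like. The identification must be carried out explicitly by computing, for example, $\langle l_{\succ_A}(x)b,y\rangle=\omega(x\succ b,y)=\omega(b,x\circ y+y\circ x)$ via \eqref{C2} and the pairing \eqref{Co1}, which forces $l_{\succ_A}=-(L_{\circ}^{*}+R_{\circ}^{*})$, and similarly for the other seven maps. This is precisely the computation the paper performs in its $(a)\Rightarrow(b)$ step, and it is the substance of your $(c)\Rightarrow(b)$ argument once you have $(a)$ in hand; just be explicit that the identification comes from the invariance of $\omega$, not from Proposition~\ref{zr}(c).
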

\begin{proof}
$(a)\Longrightarrow (b)$ By Proposition \ref{M0}, there are linear maps 
$l_{\succ_A},r_{\succ_A},l_{\prec_A},r_{\prec_A}:A\longrightarrow \hbox{End}(A^{*})$ and
$l_{\succ_{A^{*}}},r_{\succ_{A^{*}}},l_{\prec_{A^{*}}},r_{\prec_{A^{*}}}:A^{*}\longrightarrow \hbox{End}(A)$ such that
$(A,A^{*},l_{\succ_A},r_{\succ_A},l_{\prec_A},r_{\prec_A}, l_{\succ_{A^{*}}},r_{\succ_{A^{*}}},l_{\prec_{A^{*}}},r_{\prec_{A^{*}}})$ 
is a matched pair of anti-pre-Novikov algebras and
\begin{align*}
&x\succ b=r_{\succ_{A^{*}}}(b)x+l_{\succ_A}(x)b,\ \ \ b\succ x=l_{\succ_{A^{*}}}(b)x+r_{\succ_A}(x)b,
\\&
x\prec b=r_{\prec_{A^{*}}}(b)x+l_{\prec_A}(x)b,\ \ \ b\prec x=l_{\prec_{A^{*}}}(b)x+r_{\prec_A}(x)b,
\end{align*}
for all $x\in A$ and $b\in A^{*}$.
Then we obtain,
\begin{align*}\langle l_{\succ_A}(x)b,y\rangle &=\omega (x\succ b,y)=\omega(b,x\circ y+y\circ x)
\\&=\langle b,(L_{\succ_A}(x)+L_{\prec_A}(x))y+(R_{\succ_A}(x)+R_{\prec_A}(x))y
\rangle\\&=-\langle (L_{\succ_A}^{*}+L_{\prec_A}^{*}+R_{\succ_A}^{*}+R_{\prec_A}^{*})(x)b,y
 \rangle,\end{align*}
which indicates that
$l_{\succ_A}=-(L_{\succ_A}^{*}+L_{\prec_A}^{*}+R_{\succ_A}^{*}+R_{\prec_A}^{*})$.
Analogously, $r_{\succ_A}=-R_{\succ_A}^{*},~l_{\prec_A}=R_{\succ_A}^{*}+L_{\prec_A}^{*},~r_{\prec_A}=(R_{\prec_A}^{*}+R_{\succ_A}^{*}),~
l_{\succ_{A^{*}}}=-(L_{\succ_A^{*}}^{*}+L_{\prec_A^{*}}^{*}+R_{\succ_A^{*}}^{*}+R_{\prec_A^{*}}^{*})
,~r_{\succ_{A^{*}}}=-R_{\succ_A^{*}}^{*}
,~l_{\prec_{A^{*}}}=R_{\succ_A^{*}}^{*}+L_{\prec_A^{*}}^{*}
,~r_{\prec_{A^{*}}}=(R_{\prec_A^{*}}^{*}+R_{\succ_A^{*}}^{*})$. Thus, Item (b) holds.

$(b)\Longrightarrow (c)$ It can be obtained by Corollary \ref {Ma}.

$(c)\Longrightarrow (a)$  Assume that $(A,A^{*},-(L_{\succ_A}^{*}+R_{\prec_A}^{*}),R_{\prec_A}^{*},-(L_{\succ_A}^{*}+R_{\prec_{A^{*}}}^{*}),R_{\prec_{A^{*}}}^{*})$ is a matched pair of Novikov algebras. Then $(A\bowtie A^{*},\circ)$ is a Novikov algebra with $\circ$ given by
\begin{equation*}x\circ a=R_{\prec_{A^*}}^{*}(a)x-(L_{\succ_{A}}^{*}+R_{\prec_{A}}^{*})(x) a,\ \ \ 
a\circ x=R_{\prec_{A}}^{*}(x)a-(L_{\succ_{A^*}}^{*}+R_{\prec_{A^*}}^{*})(a) x, \ \ \forall ~x\in A, a\in A^{*}.\end{equation*}
In view of Eq. \eqref{Co1}, for all $x,y\in A, a\in A^{*}$ we get
\begin{align*}&\omega(a\circ x,y)+\omega (y\circ x,a)-\omega (a\circ y+y\circ a,x)\\
=&\omega(R_{\prec_{A}}^{*}(x)a-(L_{\succ_{A^*}}^{*}+R_{\prec_{A^*}}^{*})(a) x
,y)+\omega(y\circ x,a)\\&-\omega(R_{\prec_{A}}^{*}(y)a-(L_{\succ_{A^*}}^{*}+R_{\prec_{A^*}}^{*})(a) y+
R_{\prec_{A^*}}^{*}(a)y-(L_{\succ_{A}}^{*}+R_{\prec_{A}}^{*})(y) a,x)
\\=&\langle R_{\prec_{A}}^{*}(x)a,y\rangle+\langle y\circ x,a\rangle
+\langle L_{\succ_{A}}^{*}(y) a,x
\rangle
\\=&\langle a,-y\prec_A x\rangle+\langle y\circ x,a
\rangle-\langle a,y\succ_A x\rangle
\\=&0,\end{align*}
which means that $\omega$ satisfies Eq. \eqref{Qn}.
Therefore, $(A\oplus A^{*},A, A^{*},\omega)$ is a double construction of symmetric quasi-Frobenius Novikov algebras.
The proof is completed.
\end{proof}

\section{Anti-pre-Novikov
bialgebras  }

In this section, we introduce a notion of anti-pre-Novikov
bialgebras
 as the bialgebra structures corresponding 
   to double constructions of symmetric quasi-Novikov algebras. Both of them
are equivalent to certain matched pairs of Novikov algebras as well 
as the compatible anti-pre-Novikov algebras. The study of coboundary case leads to the introduction of the APN-YBE, whose
skew-symmetric solutions induce coboundary anti-pre-Novikov
bialgebras. 
The notion of $\mathcal O$-operators on
anti-pre-Novikov algebras is introduced to construct skew-symmetric solutions
of the APN-YBE. 

\subsection{Anti-pre-Novikov bialgebras }
\begin{defi} \label{DB1} An anti-pre-Novikov coalgebra is a triple $(A,\Delta_{\succ},\Delta_{\prec})$, where
$A$ is a vector space and $\Delta_{\succ},\Delta_{\prec}:A\longrightarrow A\otimes A$ are linear maps such that
the following conditions hold:
\begin{equation}\label{Ca1}
( \Delta\otimes I)\Delta_{\succ}-(\tau \otimes I)( \Delta\otimes I)\Delta_{\succ}=(\tau \otimes I)(I\otimes\Delta_{\succ})\Delta_{\succ}-(I\otimes\Delta_{\succ})\Delta_{\succ},\end{equation}
\begin{equation}\label{Ca2}
(I\otimes \Delta)\Delta_{\prec}=(\tau\otimes I)(\Delta_{\succ}\otimes I)\Delta_{\prec}-
( \Delta_{\prec}\otimes I)\Delta_{\prec}-(\tau \otimes I)(I\otimes\Delta_{\prec})\Delta_{\succ},\end{equation}
\begin{equation}\label{Ca3}
( \Delta\otimes I)\Delta_{\succ}=-(I \otimes \tau)( \Delta_{\succ}\otimes I)\Delta_{\prec},\end{equation}
\begin{equation}\label{Ca4}
( \Delta_{\prec}\otimes I)\Delta_{\prec}=(I \otimes \tau)( \Delta_{\prec}\otimes I)\Delta_{\prec},\end{equation}
\begin{equation}\label{Ca5}
( \Delta\otimes I)\Delta_{\prec}-(\tau\otimes I)( \Delta\otimes I)\Delta_{\prec}=(I \otimes \Delta)\Delta_{\succ}-(\tau\otimes I)(I\otimes \Delta)\Delta_{\succ},\end{equation}
where $\Delta=\Delta_{\succ}+\Delta_{\prec}$ and $\tau:A\otimes A\longrightarrow A\otimes A~~\tau(x\otimes y)=y\otimes x,~\forall~x,y\in A$.
\end{defi}

\begin{defi} \label{DB2} An anti-pre-Novikov bialgebra is a quintuple $(A,\succ,\prec,\Delta_{\succ},\Delta_{\prec})$
such that $(A,\succ,\prec)$ is an anti-pre-Novikov algebra, $(A,\Delta_{\succ},\Delta_{\prec})$ is 
an anti-pre-Novikov coalgebra and the following compatible conditions hold:
\begin{align}&
(\tau \Delta_{\prec}+\Delta_{\succ})(x\circ y)=(I\otimes L_{\circ}(x)-(L_{\succ}+2R_{\prec})(x)\otimes I)(\tau \Delta_{\prec}+\Delta_{\succ})(y)\label{B1}
\\&+(I\otimes R_{\circ}(y))(2\tau \Delta_{\prec}+\Delta_{\succ})(x)+(R_{\prec}(y)\otimes I)\tau\Delta_{\prec} (x),\nonumber\\
&\Delta_{\prec}([y,x])=( L_{\circ}(y)\otimes I-I\otimes L_{\odot}(x) )\Delta_{\prec} (x)
+( I\otimes L_{\odot}(y) -L_{\circ}(x)\otimes I)\Delta_{\prec} (y),\label{B2}\\
&\Delta(x\odot y)=((L_{\succ}+2R_{\prec})(x)\otimes I)\Delta (y)+(L_{\prec}(y)\otimes I)\Delta_{\prec}(x)
\label{B3}\\&+(I\otimes L_{\odot}(x))\Delta(y)-(I\otimes R_{\odot}(y))\Delta_{\succ}(x)
-2(I\otimes R_{\odot}(y))\tau\Delta_{\prec}(x),\nonumber\\
&(\tau\Delta-\Delta)(y\prec x)=(I\otimes L_{\prec}(y))(\Delta_{\succ}+\tau \Delta_{\prec})(x)
-(I\otimes R_{\prec}(x))\Delta(y)\label{B4}\\&+(R_{\prec}(x)\otimes I)\tau\Delta (y)
-(L_{\prec}(y)\otimes I)(\tau\Delta_{\succ}+ \Delta_{\prec})(x),\nonumber\\
&(I\otimes R_{\circ}(y)+R_{\prec}(y)\otimes I)(\Delta_{\succ}+ \tau\Delta_{\prec})(x)
=(I\otimes R_{\circ}(x)+R_{\prec}(x)\otimes I)(\Delta_{\succ}+ \tau\Delta_{\prec})(y)\label{B5},\\
&(I\otimes R_{\circ}(y))\tau\Delta_{\prec}(x)-L_{\odot}(x)\otimes I)(\Delta_{\succ}+ \tau\Delta_{\prec})(y)=\tau\Delta_{\prec})(x\circ y)
\label{B6},\\
&(R_{\odot}(y)\otimes I)\Delta_{\prec}(x)-(I\otimes L_{\odot}(x))\tau\Delta(y)=(I\otimes R_{\odot}(y)
)\tau\Delta_{\prec})(x)-( L_{\odot}(x)\otimes I)\Delta(y),\label{B7}\\
&(I\otimes R_{\odot}(y))(\Delta_{\succ}+ \tau\Delta_{\prec})(x)=(R_{\prec}(x)\otimes I)\Delta (y)-\Delta(y\prec x),\label{B8}
\end{align}
where $\circ=\succ+\prec,~[x,y]=x\circ y-y\circ x,~~x\odot y=x\succ y+y\prec x,~\Delta=\Delta_{\succ}+\Delta_{\prec}$ and $R_{\circ}=R_{\prec}+R_{\succ},
~L_{\circ}=L_{\prec}+L_{\succ},~L_{\odot}=R_{\prec}+L_{\succ},~R_{\odot}=L_{\prec}+R_{\succ}$.
\end{defi}

\begin{rmk} $(A,\Delta_{\succ},\Delta_{\prec})$ is 
an anti-pre-Novikov coalgebra 
 if and only if $(A^{*},\succ_{A^{*}},\prec_{A^{*}})$
 is an anti-pre-Novikov algebra, where $\succ_{A^{*}},\prec_{A^{*}}$ are the linear dual of $\Delta_{\succ},\Delta_{\prec}$
 respectively, that is,
 \begin{align}&\label{Dc1}\langle \Delta_{\succ}(x),\zeta\otimes \eta\rangle=\langle x,\zeta\succ_{A^{*}} \eta\rangle
\\&\label{Dc2}\langle \Delta_{\prec}(x),\zeta\otimes \eta\rangle=\langle x,\zeta\prec_{A^{*}} \eta\rangle,~\forall~x\in A,\zeta,\eta\in A^{*}.
\end{align}
 Thus, an anti-pre-Novikov bialgebra $(A,\succ,\prec,\Delta_{\succ},\Delta_{\prec})$ is sometimes
denoted by $(A,\succ,\prec,A^{*},\succ_{A^{*}},$ \ \ $\prec_{A^{*}})$, where the anti-pre-Novikov
 algebra structure $(A^{*},\succ_{A^{*}},\prec_{A^{*}})$
 on the dual space $A^{*}$
corresponds to the anti-pre-Novikov coalgebra $(A,\Delta_{\succ},\Delta_{\prec})$
through Eqs. (\ref{Dc1})-(\ref{Dc2}).
\end{rmk}

\begin{thm} \label{Mp2}
Let $(A,\succ,\prec)$ be an anti-pre-Novikov algebra and $(A,\circ )$ be the associated Novikov algebra of $(A,\succ_{A},\prec_{A})$. 
Suppose that there is an anti-pre-Novikov algebra $(A^{*}, \succ_{A^*},\succ_{A^*})$ which is induced from 
an anti-pre-Novikov coalgebra $(A, \Delta_{\prec}, \Delta_{\succ})$, whose associated Novikov algebra is denoted by $(A^{*}, \ast)$. Then $(A, A^*,-(L_{\succ_{A}}^*+R_{\prec_{A}}^*),R_{\prec_{A}}^*,-(L_{\succ_{A^{*}}}^*+R_{\prec_{A^{*}}}^*),R_{\prec_{A^{*}}}^*)$
 is a matched pair of Novikov algebras if and only if $(A,\succ,\prec,\Delta_{\succ},\Delta_{\prec})$ is an anti-pre-Novikov bialgebra.
\end{thm}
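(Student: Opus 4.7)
The plan is as follows. Since the hypotheses of the theorem include that $(A,\succ,\prec)$ and $(A^*,\succ_{A^*},\prec_{A^*})$ are anti-pre-Novikov algebras with associated Novikov algebras $(A,\circ)$ and $(A^*,\ast)$ respectively, Proposition~\ref{zr}(e) applied to both sides automatically provides that $(A^*,-(L_{\succ_A}^*+R_{\prec_A}^*),R_{\prec_A}^*)$ is a representation of $(A,\circ)$ and that $(A,-(L_{\succ_{A^*}}^*+R_{\prec_{A^*}}^*),R_{\prec_{A^*}}^*)$ is a representation of $(A^*,\ast)$. Consequently the matched-pair criterion of Proposition~\ref{d1}, in this setting, reduces entirely to verifying the eight compatibility equations \eqref{Nm1}--\eqref{Nm8}; the whole task is then to show that these translate, via duality, into the eight bialgebra axioms \eqref{B1}--\eqref{B8}.

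To carry out the translation, I would pair each of \eqref{Nm1}--\eqref{Nm8} with arbitrary test elements and rewrite the result in terms of coproducts, using \eqref{Dc1}--\eqref{Dc2} together with the basic adjoint formulas
\[\langle (L_{\succ_A}^*+R_{\prec_A}^*)(x)\xi,\,y\rangle = -\langle \xi,\, x\odot y\rangle, \qquad \langle R_{\prec_A}^*(y)\xi,\,x\rangle = -\langle \xi,\, x\prec y\rangle,\]
and the analogous formulas for $L_\circ^*, R_\circ^*, R_\odot^*$ and their counterparts on $A^*$. The left-hand sides of \eqref{Nm1}--\eqref{Nm8}, which involve products $x\circ y$ or $\xi\ast\eta$ hit by $l_A$ or $l_B$, then dualize to $\Delta_\succ$ or $\Delta_\prec$ applied to combinations of products in $A$, while the right-hand sides, involving composites of $l_A,r_A,l_B,r_B$ with products, dualize to operators of the form $L_\circ(x)\otimes I$ or $I\otimes R_\odot(y)$ acting on coproducts, possibly twisted by the flip $\tau$. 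Careful expansion will match each \eqref{NmI} with one of \eqref{B1}--\eqref{B8}, thereby establishing both implications of the theorem simultaneously.

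The principal obstacle is purely one of bookkeeping. One must track the minus sign produced by the adjoint convention $\langle f^*(x)u^*,v\rangle = -\langle u^*, f(x)v\rangle$ at each transposition across the pairing, the interaction of the flip $\tau$ on $A\otimes A$ with the order of the test pair $\xi\otimes\eta\in A^*\otimes A^*$, and the correct identification of the composite operators $L_\odot, R_\odot, L_\circ, R_\circ$, $L_\star$ on both $A$ and $A^*$ as they reappear after dualization. A useful structural shortcut, which I expect to roughly halve the labor, is the $A\leftrightarrow A^*$ duality symmetry: swapping the two sides of the matched pair sends \eqref{Nm1}--\eqref{Nm4} onto \eqref{Nm5}--\eqref{Nm8}, and the system \eqref{B1}--\eqref{B8} enjoys a matching involutive symmetry once one interprets the coproducts $\Delta_\succ,\Delta_\prec$ on $A$ as the duals of the products $\succ_{A^*},\prec_{A^*}$ on $A^*$. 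Hence it suffices to verify four of the eight correspondences directly, and the remaining four follow by this symmetry.
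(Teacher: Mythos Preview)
Your approach is essentially the same as the paper's: set $l_A=-(L_{\succ_A}^*+R_{\prec_A}^*)$, $r_A=R_{\prec_A}^*$, $l_B=-(L_{\succ_{A^*}}^*+R_{\prec_{A^*}}^*)$, $r_B=R_{\prec_{A^*}}^*$, pair each matched-pair identity \eqref{Nm1}--\eqref{Nm8} against test elements, and read off the corresponding bialgebra identity \eqref{B1}--\eqref{B8}; the paper carries this out explicitly for \eqref{Nm1}$\Leftrightarrow$\eqref{B1} and then invokes ``by the same token'' for the rest. One small correction to your symmetry shortcut: the $A\leftrightarrow A^*$ swap in Proposition~\ref{d1} sends \eqref{Nm1}$\leftrightarrow$\eqref{Nm3}, \eqref{Nm2}$\leftrightarrow$\eqref{Nm4}, \eqref{Nm5}$\leftrightarrow$\eqref{Nm7}, \eqref{Nm6}$\leftrightarrow$\eqref{Nm8} (group them by which algebra's product appears), not \eqref{Nm1}--\eqref{Nm4} onto \eqref{Nm5}--\eqref{Nm8}; this still halves the work, just with a different partition.
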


\begin{proof}  We need to prove that Eqs.~\eqref{Nm1}-\eqref{Nm8} are equivalent to
 Eqs.~\eqref{B1}-\eqref{B8}. In fact, let $l_{A}=-(L_{\succ_{A}}^*+R_{\prec_{A}}^*),~
 r_A=R_{\prec_{A}}^*,~ l_B=-(L_{\succ_{A^{*}}}^*+R_{\prec_{A^{*}}}^*),~r_B=R_{\prec_{A^{*}}}^*$
 for all $x,y\in A$ and $a,b\in A^{*}$, we obtain
\begin{align*}\langle -(L_{\succ_{A^{*}}}^*+R_{\prec_{A^{*}}}^*)(a)(x\circ y),b\rangle
&=\langle x\circ y,a\succ_A^{*}b+b\prec_A^{*}a )\rangle
\\&=\langle (\Delta_{\succ}+\tau\Delta_{\prec})(x\circ y), a\otimes b\rangle,
\end{align*}
\begin{align*}\langle -(L_{\succ_{A^{*}}}^*+R_{\prec_{A^{*}}}^*)((L_{\succ_{A}}^*+2R_{\prec_{A}}^*)(x)a)y,b\rangle
&=\langle y, (L_{\succ_{A}}^*+2R_{\prec_{A}}^*)(x)a \succ_{A^{*}}b+b \prec_{A^{*}} (L_{\succ_{A}}^*+2R_{\prec_{A}}^*)(x)a
)\rangle
\\&=\langle \Delta_{\succ}(y)+\tau\Delta_{\prec}(y),(L_{\succ_{A}}^*+2R_{\prec_{A}}^*)(x)a \otimes b\rangle
\\&=-\langle [(L_{\succ_{A}}^*+2R_{\prec_{A}}^*)(x) \otimes I](\Delta_{\succ}+\tau\Delta_{\prec})(y),a \otimes b\rangle,
\end{align*}
\begin{align*}\langle -[(L_{\succ_{A^{*}}}^*+2R_{\prec_{A^{*}}}^*)(a)x]\circ y,b\rangle
&=\langle (L_{\succ_{A^{*}}}^*+2R_{\prec_{A^{*}}}^*)(a)x,R_{\circ}^{*}(y)b\rangle
\\&=\langle -x,a \succ_{A^{*}}R_{\circ}^{*}(y)b+(2R_{\circ}^{*}(y)b)\prec_{A^{*}} a\rangle
\\&=\langle (I\otimes R_{\circ}(y))(\Delta_{\succ}+2\tau\Delta_{\prec})(x),a \otimes b\rangle,
\end{align*}
\begin{align*}\langle R_{\prec_{A^{*}}}^*)(R_{\prec_{A}}^*)(y)a)x,b\rangle
=-\langle x,b\prec_{A^{*}}(R_{\prec_{A}}^*)(y)a) \rangle
=\langle (R_{\prec_{A}}(y)\otimes I)\tau\Delta_{\prec}(x),a \otimes b\rangle,
\end{align*}
\begin{align*}\langle -x\circ[(L_{\succ_{A^{*}}}^*+R_{\prec_{A^{*}}}^*)(a)y],b\rangle
&=\langle (L_{\succ_{A^{*}}}^*+R_{\prec_{A^{*}}}^*)(a)y, L_{\circ}^{*}(x)b\rangle
\\&=-\langle y,a\succ_{A^{*}}L_{\circ}^{*}(x)b +L_{\circ}^{*}(x)b\prec_{A^{*}} a
\\&=\langle (I\otimes L_{\circ}(x))(\Delta_{\succ}+\tau\Delta_{\prec})(y),a \otimes b\rangle.
\end{align*}
Thus, Eq.~(\ref{Nm1}) $\Longleftrightarrow$ Eq.~(\ref{B1}). By the same token,
Eq. (\ref{Nm2}) $\Longleftrightarrow$ Eq. (\ref{B2}), Eq. (\ref{Nm3}) $\Longleftrightarrow$ Eq. (\ref{B3}),
 Eq. (\ref{Nm4}) $\Longleftrightarrow$ Eq. (\ref{B4}),
Eq. (\ref{Nm5}) $\Longleftrightarrow$ Eq. (\ref{B5}), Eq. (\ref{Nm6}) $\Longleftrightarrow$ Eq. (\ref{B6}),
 Eq. (\ref{Nm7}) $\Longleftrightarrow$ Eq. (\ref{B7}),
Eq. (\ref{Nm8}) $\Longleftrightarrow$ Eq. (\ref{B8}). 
\end{proof}

In the light of Theorem \ref{Mp1} and Theorem \ref{Mp2}, we have 

\begin{thm} 
Let $(A,\succ,\prec)$ be an anti-pre-Novikov algebra and $(A,\circ )$ be the associated Novikov algebra of $(A,\succ_{A},\prec_{A})$. 
Suppose that there is an anti-pre-Novikov algebra $(A^{*}, \succ_{A^*},\succ_{A^*})$ which is induced from 
an anti-pre-Novikov coalgebra $(A, \Delta_{\prec}, \Delta_{\succ})$, whose associated Novikov algebra is denoted by $(A^{*}, \ast)$. Then
the following conditions are equivalent:
\begin{enumerate}
\item There is a double construction $(A\oplus A^{*},A, A^{*},\omega)$ of symmetric quasi-Frobenius Novikov algebras
 such that the compatible anti-pre-Novikov algebra $(A\oplus A^{*},\succ,\prec)$ defined by Eq.~\eqref{C2} contains
 $(A,\succ_{A},\prec_{A})$ and $(A^{*},\succ_{A^{*}},\prec_{A^{*}})$ as anti-pre-Novikov subalgebras.
 \item $(A,A^{*}, -(L_{\circ}^{*}+R_{\circ}^{*}),-R_{\succ_A}^{*},R_{\odot}^{*},
 R_{\circ}^{*},-(L_{\ast}^{*}+R_{\ast}^{*}),-R_{\succ_{A^*}}^{*},R_{\ominus}^{*},R_{\ast}^{*})$ 
 is a matched pair of anti-pre-Novikov algebras. 
 \item $(A,A^{*},-L_{\odot}^{*},R_{\prec_A}^{*},-L_{\ominus}^{*},R_{\prec_{A^{*}}}^{*})$ is a matched pair of
 Novikov algebras.
\item  $(A,\succ,\prec,\Delta_{\succ},\Delta_{\prec})$ is an anti-pre-Novikov bialgebra.
 \end{enumerate}
\end{thm}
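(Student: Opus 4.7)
The proof is essentially a direct assembly of the two preceding results, so the plan is short. I would first observe that the three conditions (a), (b), (c) are precisely the three conditions of Theorem \ref{Mp1}, so their mutual equivalence is already established there; no new work is needed. In particular, (c) states that $(A,A^{*},-L_{\odot}^{*},R_{\prec_A}^{*},-L_{\ominus}^{*},R_{\prec_{A^{*}}}^{*})$ is a matched pair of Novikov algebras, and unpacking $L_{\odot}=L_{\succ}+R_{\prec}$ and $L_{\ominus}=L_{\succ_{A^{*}}}+R_{\prec_{A^{*}}}$ shows that this is the same data as the matched pair appearing in the hypothesis of Theorem \ref{Mp2}, namely $(A,A^{*},-(L_{\succ_{A}}^{*}+R_{\prec_{A}}^{*}),R_{\prec_{A}}^{*},-(L_{\succ_{A^{*}}}^{*}+R_{\prec_{A^{*}}}^{*}),R_{\prec_{A^{*}}}^{*})$.

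Next, I would invoke Theorem \ref{Mp2}, which asserts exactly that this latter matched pair of Novikov algebras exists if and only if $(A,\succ,\prec,\Delta_{\succ},\Delta_{\prec})$ is an anti-pre-Novikov bialgebra, where $(A^{*},\succ_{A^{*}},\prec_{A^{*}})$ corresponds to $(A,\Delta_{\succ},\Delta_{\prec})$ via the duality relations \eqref{Dc1}-\eqref{Dc2}. This gives the equivalence (c) $\Leftrightarrow$ (d).

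Chaining the two equivalences then produces (a) $\Leftrightarrow$ (b) $\Leftrightarrow$ (c) $\Leftrightarrow$ (d), which is the statement to be proved. There is no genuine obstacle here since all the heavy lifting---the translation between the matched pair conditions \eqref{Nm1}-\eqref{Nm8} and the coalgebra compatibility conditions \eqref{B1}-\eqref{B8}, as well as the characterization of the double construction in terms of the matched pair of Novikov algebras---has already been carried out in Theorems \ref{Mp1} and \ref{Mp2}. The only mild point worth checking explicitly in the write-up is that the labels $L_{\odot},L_{\ominus}$ used in Theorem \ref{Mp1} really coincide with $L_{\succ}+R_{\prec}$ and $L_{\succ_{A^{*}}}+R_{\prec_{A^{*}}}$ respectively, so that the matched pair in (c) is literally the same object appearing as the hypothesis of Theorem \ref{Mp2}; once this notational identification is recorded, the proof reduces to citing the two theorems in sequence.
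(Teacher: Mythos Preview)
Your proposal is correct and matches the paper's approach exactly: the paper simply states ``In the light of Theorem \ref{Mp1} and Theorem \ref{Mp2}, we have'' and records the theorem without further argument. Your explicit check that $L_{\odot}=L_{\succ}+R_{\prec}$ and $L_{\ominus}=L_{\succ_{A^{*}}}+R_{\prec_{A^{*}}}$ so that condition (c) in Theorem \ref{Mp1} is literally the matched pair hypothesis of Theorem \ref{Mp2} is a helpful clarification that the paper leaves implicit.
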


Let $(A, \succ,\prec\Delta_{\prec}, \Delta_{\succ})$ be
 an anti-pre-Novikov bialgebra. Then $(D=A\oplus A^{*},\succ_{D},\prec_{D})$
 is an anti-pre-Novikov algebra, where
 \begin{align}\label{Db1}(x+a)\succ_{D}(y+b)=&x\succ_A y-(L_{\succ_{A^*}}^{*}+L_{\prec_{A^*}}^{*}+R_{\succ_{A^*}}^{*}+R_{\prec_{A^*}}^{*})
(a)y-R_{\succ_{A^*}}^*
(b)x\\&+a\succ_{A^*}b-(L_{\succ_A}^{*}+L_{\prec_A}^{*}+R_{\succ_A}^{*}+R_{\prec_A}^{*}(x)b-R_{\succ_A}^{*}(y)a\nonumber
,\end{align}
\begin{align}\label{Db2}
(x+a)\prec_{D}(y+b)=&x\prec_A y+(L_{\prec_{A^*}}^{*}+R_{\succ_{A^*}}^{*})
(a)y+(R_{\succ_{A^*}}^{*}+R_{\prec_{A^*}}^{*})(b)x\\&+a\prec_{A^*}b+(L_{\prec_A}^{*}+R_{\succ_A}^{*})
(x)b+(R_{\prec_A}^{*}+R_{\succ_A}^{*})(y)a\nonumber,
\end{align}
for all $x,y\in A,a,b\in A^{*}$.
$(D=A\oplus A^{*},\succ_{D},\prec_{D})$ is called the double anti-pre-Novikov algebra.

\subsection{Coboundary anti-pre-Novikov bialgebras and the anti-pre-Novikov Yang-Baxter equations}
\begin{defi} 
An anti-pre-Novikov bialgebra $(A,\succ,\prec,\Delta_{\succ,s},\Delta_{\prec,s})$ is called coboundary 
 if $\Delta_{\succ,s},\Delta_{\prec,s}$ are defined by the following equations respectively:
 \begin{align}&\label{CB1}
\Delta_{\succ,s}(x)=(I\otimes L_{\star}(x)-L_{\succ}(x)\otimes I)s_{\succ},
\\&\label{CB2}\Delta_{\prec,s}(x)=(L_{\circ}(x)\otimes I-I\otimes L_{\odot}(x))s_{\prec},
\end{align}
 for all $x\in A$, where $s_{\prec}=\sum_{i}a_i\otimes b_i,s_{\succ}=\sum_{i}c_i\otimes d_i\in A\otimes A$
 and $\circ=\succ+\prec,~
 L_{\star}=L_{\circ}+R_{\circ},~L_{\odot}=L_{\succ}+R_{\prec}$.
\end{defi}
It is direct to show that Eqs.~\eqref{B2} and \eqref{B6} hold if $\Delta_{\succ,s},\Delta_{\prec,s}$ 
are given by Eqs.~\eqref{CB1}-\eqref{CB2}.

Let $W$ be a vector space with a binary operation $\ast$. Suppose that
 $s=\sum\limits_{i}a_i\otimes b_i,~s^{'}=\sum\limits_{i}c_i\otimes d_i \in W\otimes W$. Put
\begin{eqnarray*}
s_{12}\ast s_{13}^{'}:=\sum_{i,j}a_i\ast c_j\otimes b_i\otimes d_j,\;s_{13}\ast s_{12}^{'}:=\sum_{i,j}a_i\ast c_j\otimes d_j\otimes b_i,
\;s_{12}\ast s_{23}^{'}:=\sum_{i,j}a_i\otimes b_i\ast c_j \otimes d_j,\\
s_{13}\ast s_{23}^{'}:=\sum_{i,j}a_i\otimes c_j\otimes b_i\ast d_j,\;
 s_{21}\ast s_{13}^{'}:=\sum_{i,j}b_i\ast c_j\otimes a_i\otimes d_j,\;
s_{13}\ast s_{21}^{'}:=\sum_{i,j}a_i\ast d_j\otimes c_j\otimes b_i,\\
 s_{21}\ast s_{31}^{'}:=\sum_{i,j}b_i\ast d_j\otimes a_i\otimes c_j
 ,\; s_{21}\ast s_{23}:=\sum_{i,j}b_i\otimes a_i\ast c_j\otimes d_j,\; s_{21}\ast s_{32}^{'}:=\sum_{i,j}b_i\otimes a_i\ast d_j\otimes c_j,\\
s_{31}\ast s_{23}^{'}:=\sum_{i,j}b_i\otimes c_j\otimes a_i\ast d_j,\;s_{31}\ast s_{21}^{'}:=\sum_{i,j}b_i\ast d_j\otimes c_j\otimes a_i,\;
s_{31}\ast s_{32}^{'}:=\sum_{i,j}b_i\otimes d_j\otimes a_i\ast c_j,\\
s_{23}\ast s_{12}^{'}:=\sum_{i,j}c_j\otimes a_i\ast d_j\otimes b_i,\; s_{23}\ast s_{21}^{'}:=\sum_{i,j}d_j\otimes a_i\ast c_j\otimes b_i,\;
s_{23}\ast s_{13}^{'}:=\sum_{i,j}c_j\otimes a_i\otimes b_i\ast d_j,\\
s_{32}\ast s_{21}:=\sum_{i,j}d_j\otimes b_i\ast c_j\otimes a_i,\;\; s_{23}\ast s_{31}^{'}:=\sum_{i,j}d_j\otimes a_i\otimes b_i\ast c_j.
\end{eqnarray*}

\begin{pro} \label{DB4}Let $(A,\succ,\prec)$ be an anti-pre-Novikov algebra. Assume that
$\Delta_{\succ},\Delta_{\prec}$ defined by Eqs. \eqref{CB1}-\eqref{CB2}. Then
\begin{enumerate}
\item Eq. \eqref{Ca1} holds if and only if the following equation holds:
\begin{align}&\label{CB3}(L_{\succ}(x)\otimes I \otimes I)(s_{\succ,12}\succ s_{\succ,23}-s_{\succ,13}\star s_{\succ,23}+s_{\succ,21}\succ s_{\succ,13})
\\&+(I\otimes L_{\succ}(x)\otimes I)(s_{\succ,13}\star s_{\succ,23}
-s_{\succ,21}\succ s_{\succ,13}-s_{\succ,12}\succ s_{\succ,23})
\nonumber\\&+(I\otimes I\otimes L_{\star}(x))(s_{\succ,12}\star s_{\succ,23}
-s_{\succ,13}\succ s_{\succ,12}+s_{\succ,13}\circ s_{\prec,12}
-s_{\succ,23}\odot s_{\prec,12}\nonumber\\&+
s_{\succ,23}\succ s_{\succ,21}-s_{\succ,13}\star s_{\succ,21}
-s_{\succ,23}\circ s_{\prec,21}+s_{\succ,13}\odot s_{\prec,21}
+s_{\succ,23}\circ s_{\succ,13}-s_{\succ,13}\circ s_{\succ,23})
\nonumber\\&-I\otimes L_{\odot}(x\succ c_i)\otimes I)[( s_{\succ}-s_{\prec})\otimes d_i]
-(L_{\prec}(x\succ c_i)\otimes I\otimes I)[(\tau s_{\succ}+ s_{\prec})\otimes d_i]\nonumber\\&+\sum_i(L_{\odot}(x\succ c_i)\otimes I\otimes I
-I\otimes L_{\succ}(x\succ c_i)\otimes I)[(\tau s_{\succ}-\tau s_{\prec})\otimes d_i]
\nonumber\\&+\sum_i
(I\otimes L_{\prec}(x\succ c_i)\otimes  I)[(\tau s_{\prec}+ s_{\succ})\otimes d_i]
+(L_{\succ}(x\succ c_i)\otimes I\otimes I
=0\nonumber.\end{align}
\item Eq. \eqref{Ca2} holds if and only if the following equation holds:
\begin{align}\label{CB4}&
(L_{\circ}(x)\otimes I \otimes I)(s_{\prec,12}\circ s_{\prec,23}-s_{\prec,13}\odot s_{\prec,23}
+s_{\prec,13}\star s_{\succ,23}-s_{\prec,12}\succ s_{\succ,23}-s_{\succ,21}\circ s_{\prec,13})
\\&+(I\otimes I\otimes L_{\odot}(x))(s_{\prec,23}\odot s_{\prec,12}-s_{\prec,13}\circ s_{\prec,12}
+s_{\prec,13}\star s_{\succ,21}-s_{\prec,23}\succ s_{\succ,21}-
s_{\succ,23}\succ s_{\prec,13}\nonumber\\&+s_{\prec,13}\prec s_{\succ,23})
+(I\otimes L_{\succ}(x)\otimes I)(s_{\succ,23}\odot s_{\prec,13}-s_{\succ,21}\circ s_{\prec,13}
+s_{\prec,12}\prec s_{\prec,23})
\nonumber\\&+
\sum_{i}(L_{\circ}(x\circ a_i)\otimes I\otimes I+I\otimes L_{\succ}(x\circ a_i)\otimes I)[(\tau s_{\succ}+s_{\prec})\otimes b_i]
\nonumber\\&+
(I\otimes I\otimes L_{\odot}(x\odot b_i))[a_i\otimes (s_{\prec}- s_{\succ} )]
=0\nonumber.\end{align}
\item Eq. \eqref{Ca3} holds if and only if the following equation holds:
\begin{align}\label{CB5}&
(I \otimes I \otimes L_{\star}(x))(
s_{\succ,23}\star s_{\succ,12}-s_{\succ,13}\succ s_{\succ,12}+s_{\succ,13}\circ s_{\prec,12}-s_{\succ,23}\odot s_{\prec,12}
+s_{\succ,13}\circ s_{\prec,32})
\\&+(I  \otimes L_{\odot}(x)\otimes I)(s_{\prec,12}\succ s_{\succ,13}-s_{\prec,32}\star s_{\succ,13}-s_{\succ,12}\succ s_{\succ,23})
\nonumber\\&+(I  \otimes L_{\odot}(x\succ c_i)\otimes I-L_{\succ}(x\succ c_i)\otimes I\otimes I)[(s_{\prec}-s_{\succ})\otimes d_i]
=0\nonumber.\end{align}
\item  Eq.~\eqref{Ca4} holds if and only if the following equation holds:
\begin{align}\label{CB6}&
 (I\otimes I\otimes L_{\odot}(x))(s_{\prec,23}\odot s_{\prec,12}-s_{\prec,13}\prec s_{\prec,32}-s_{\prec,13}\circ s_{\prec,12})
\\&+(I\otimes L_{\odot}(x) \otimes I)(s_{\prec,12}\circ s_{\prec,13}+s_{\prec,12}\prec s_{\prec,23}-s_{\prec,32}\odot s_{\prec,13})
=0\nonumber.\end{align}
\item  Eq. \eqref{Ca5} holds if and only if the following equation holds:
\begin{align}\label{CB7}&
(I\otimes I\otimes L_{\odot}(x))(s_{\prec,23}\odot s_{\prec,12}-s_{\prec,13}\circ s_{\prec,12}-s_{\prec,23}\star s_{\succ,12}
+s_{\prec,13}\succ s_{\succ,12}+
s_{\prec,23}\circ s_{\prec,21}\\&-s_{\prec,13}\odot s_{\prec,21}+s_{\prec,13}\star s_{\succ,21}-s_{\prec,23}\succ s_{\succ,21}
+s_{\succ,13}\circ s_{\succ,23}-s_{\succ,23}\circ s_{\succ,13})
\nonumber\\&+
(L_{\succ}(x)\otimes I\otimes I)(s_{\succ,13}\star s_{\succ,23}-s_{\succ,12}\succ s_{\succ,23}+s_{\succ,12}\circ s_{\prec,23}
-s_{\succ,13}\odot s_{\prec,23}-s_{\succ,21}\circ s_{\prec,13})
\nonumber\\&+
( I\otimes L_{\succ}(x)\otimes I)(s_{\succ,21}\succ s_{\succ,13}-s_{\succ,13}\star s_{\succ,23}-s_{\succ,21}\circ s_{\prec,13}
+s_{\succ,23}\odot s_{\prec,13}+s_{\succ,12}\circ s_{\prec,23})
\nonumber\\&+(I\otimes  I\otimes L_{\star}(x))s_{\succ,23}\odot(s_{\succ,13}-s_{\prec,13})
+(L_{\star}(x)\otimes I\otimes I)s_{\succ,21}\succ(s_{\prec,13}-s_{\succ,13})
\nonumber\\&+\sum_i (L_{\prec}(x\circ a_i)\otimes I\otimes I)[(\tau s_{\succ}+ s_{\prec})\otimes b_i]
-(I\otimes L_{\prec}(x\circ a_i)\otimes I)[(s_{\succ}+\tau s_{\prec})\otimes b_i]
\nonumber\\&+
[L_{\succ}(x\circ a_i)\otimes I\otimes I-I\otimes L_{\odot}(x\circ a_i)\otimes I][(s_{\prec}-s_{\succ})\otimes b_i]
\nonumber\\&
+[L_{\odot}(x\circ a_i)\otimes I\otimes I+I\otimes L_{\succ}(x\circ a_i)\otimes I][(\tau s_{\prec}-\tau s_{\succ})\otimes b_i]
\nonumber\\&+[I\otimes L_{\succ}(x\star d_i)\otimes I-I\otimes I\otimes L_{\odot}(x\star d_i)][ c_i\otimes (s_{\succ}-s_{\prec})]
=0\nonumber.\end{align}
\item Eq.~\eqref{B1} holds if and only if the following equation holds:
\begin{align}\label{CB8}&
[I\otimes L_{\circ}(x\circ y)-I\otimes 2 R_{\circ}(y)L_{\circ}(x)-I\otimes L_{\circ}(x)L_{\circ}(y)-
L_{\succ}(x\circ y)\otimes I -(L_{\succ}+2R_{\prec})(x)L_{\succ}(y)\\&\otimes I
+(L_{\succ}+2R_{\prec})(x)\otimes L_{\circ}( y)+2L_{\odot}(x)\otimes R_{\circ}( y)
+L_{\succ}( y)\otimes L_{\circ}(x)]
( s_{\succ}+\tau s_{\prec})
=0.\nonumber\end{align}
\item Eq.~\eqref{B3} holds if and only if the following equation holds:
\begin{align}\label{CB9}&
[2R_{\prec}(x)\otimes L_{\odot}(y)+L_{\succ}(x)\otimes L_{\odot}(y)-L_{\succ}(y)\otimes L_{\odot}(x)
+L_{\succ}(x\odot y)\otimes I\\&-L_{\succ}(x)L_{\succ}(y)\otimes I-2R_{\prec}(x)L_{\succ}(y)\otimes I
-I\otimes L_{\odot}(x\odot y)+I\otimes L_{\odot}(x)L_{\odot}(y)](s_{\prec}-s_{\succ})
\nonumber\\&+2[I\otimes R_{\odot}(y)L_{\circ}(x)-L_{\odot}(x)\otimes R_{\odot}(y)]( s_{\succ}+\tau s_{\prec})
=0\nonumber.\end{align}
\item Eq. \eqref{B4} holds if and only if the following equation holds:
\begin{align}\label{CB10}&
[L_{\succ}(y)\otimes R_{\prec}(x)+I\otimes L_{\odot}(y\prec x)- I\otimes R_{\prec}(x)L_{\odot}(y)-L_{\succ}(y\prec x)\otimes I](s_{\succ}- s_{\prec})
\\&+[L_{\prec}(y)\otimes L_{\succ}(x)+ R_{\prec}(x)L_{\prec}(y)\otimes I- L_{\prec}(y)L_{\circ}(x)\otimes I]( \tau s_{\succ}+ s_{\prec})
\nonumber\\&+[I\otimes L_{\succ}(y\prec x)-R_{\prec}(x)\otimes L_{\succ}(y)+R_{\prec}(x)L_{\odot}(y)\otimes I-L_{\odot}(y\prec x)\otimes I]( \tau s_{\succ}- \tau s_{\prec})
\nonumber\\&+[I\otimes L_{\prec}(y)L_{\circ}(x)-I\otimes R_{\prec}(x)L_{\prec}(y)- L_{\succ}(x)\otimes L_{\prec}(y)]
( s_{\succ}+\tau s_{\prec})
=0\nonumber.\end{align}
\item Eq. \eqref{B5} holds if and only if the following equation holds:
\begin{align}\label{CB11}&
[I\otimes R_{\circ}(x)L_{\circ}(y)-I\otimes R_{\circ}(y)L_{\circ}(x)+
L_{\odot}(x)\otimes R_{\circ}(y)+R_{\prec})(x)\otimes L_{\circ}(y)-R_{\prec}(y)\otimes L_{\circ}(x)
\\&-L_{\odot}(y)\otimes R_{\circ}(x)+R_{\prec}(y)L_{\succ}(x)\otimes I-R_{\prec}(x)L_{\succ}(y)\otimes I]( s_{\succ}+\tau s_{\prec})
=0.\nonumber\end{align}
\item Eq. \eqref{B7} holds if and only if the following equation holds:
\begin{align}\label{CB12}&
(L_{\odot}(x)\otimes L_{\odot}(y)-L_{\odot}(x)L_{\succ}(y)\otimes I)(s_{\prec}-s_{\succ})
+(L_{\odot}(y)\otimes L_{\odot}(x)-I\otimes R_{\prec}(x)L_{\circ}(y))(\tau s_{\succ}-\tau s_{\prec})
\\&+(R_{\odot}(y)\otimes L_{\odot}(x))(\tau s_{\succ}+ s_{\prec})
-(L_{\odot}(x)\otimes L_{\odot}(y))( s_{\succ}+\tau s_{\prec})
=0.\nonumber\end{align}
\item Eq. \eqref{B8} holds if and only if the following equation holds:
\begin{align}\label{CB13}&
(L_{\prec}(y\succ x)\otimes I-L_{\succ}(y\prec x)\otimes I+I\otimes L_{\odot}(y\prec x)-R_{\prec}(x)\otimes
 L_{\odot}(y))(s_{\succ}-s_{\prec})
 \\&+(I\otimes R_{\odot}(y)L_{\circ}(x)-L_{\odot}(x)\otimes R_{\odot}(y))(s_{\succ}+\tau s_{\prec})=0,\nonumber\end{align}
\end{enumerate}
where $L_{\odot}=L_{\succ}+R_{\prec},R_{\odot}=R_{\succ}+L_{\prec},~L_{\star}=L_{\circ}+R_{\circ},~\circ=\succ+\prec$.
\end{pro}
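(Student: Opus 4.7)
\bigskip
\noindent\textbf{Proof proposal.} The plan is to verify all eleven items by the same mechanical strategy: substitute the coboundary formulas \eqref{CB1}--\eqref{CB2} into the left-hand sides of the coalgebra axioms \eqref{Ca1}--\eqref{Ca5} and of the bialgebra compatibility relations \eqref{B1}, \eqref{B3}, \eqref{B4}, \eqref{B5}, \eqref{B7}, \eqref{B8}, expand everything in terms of $s_\succ=\sum_i c_i\otimes d_i$ and $s_\prec=\sum_i a_i\otimes b_i$, and then reorganise the resulting sums into the tensor notation $s_{jk}\,\ast\, s'_{lm}$ defined immediately before Proposition \ref{DB4}. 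We first remark that \eqref{B2} and \eqref{B6} are automatic under \eqref{CB1}--\eqref{CB2} (this is asserted just after the definition and can be checked by a short direct expansion), which is why they do not appear in the statement.

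For each item I would proceed as follows. First I expand $(\Delta\otimes I)\Delta_{\succ,s}(x)$, $(I\otimes \Delta_{\succ,s})\Delta_{\succ,s}(x)$, etc., using $\Delta=\Delta_\succ+\Delta_\prec$ and \eqref{CB1}--\eqref{CB2}. In doing so, the ``outer'' action of $L_\star(x),L_\succ(x),L_\circ(x),L_\odot(x)$ pulls out of the appropriate tensor slot; the ``inner'' action produces terms of the form $L_?(L_?(x)c_i)$ or $L_?(L_?(x)a_i)$ acting on the remaining slot. Each such composition is then rewritten via the defining relations \eqref{Aa1}--\eqref{Aa5} of an anti-pre-Novikov algebra (equivalently, via the representation identities \eqref{rp1}--\eqref{rp10} applied to the regular representation and its dual, cf.~Proposition \ref{zr}), which expresses the composition as a sum of single left multiplications by products $x\succ y$, $x\prec y$, $x\circ y$, $x\odot y$, $x\star y$ of elements of $A$. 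The terms that have ``inner'' $s$'s become the cubic terms $s_{jk}\ast s'_{lm}$ after relabelling indices, while the terms with ``outer'' $L_?(x\ast c_i)$ or $L_?(x\ast a_i)$ assemble into the remaining mixed quadratic pieces displayed in \eqref{CB3}--\eqref{CB13}. In each case the coboundary axiom reduces to the vanishing, for every $x\in A$, of a specific expression built linearly from $s_\succ,s_\prec$ and their various tensor contractions.

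The converse direction in each equivalence is the easier one: since \eqref{CB3}--\eqref{CB13} are required to hold for all $x\in A$, we plug back the coboundary formulas, read the matching terms in reverse, and recover \eqref{Ca1}--\eqref{Ca5} and \eqref{B1}, \eqref{B3}--\eqref{B5}, \eqref{B7}--\eqref{B8} respectively. As a sanity check one can specialise to the case $s_\succ=s_\prec=0$, where every equation reduces trivially to $0=0$, consistent with $\Delta_{\succ,s}=\Delta_{\prec,s}=0$.

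The real obstacle is not conceptual but combinatorial: each of the eleven identities produces on the order of a dozen tensor summands living in $A^{\otimes 3}$, distributed across the three slots $(12,3)$, $(1,23)$, $(13,2)$ with all the various permutations generated by $\tau$, and every slot must be matched carefully with the correct one of the five multiplications $\succ,\prec,\circ,\odot,\star$ and the five families of operators $L_\succ,L_\prec,L_\circ,L_\odot,L_\star$ (and their right-multiplication counterparts). The hard part will therefore be bookkeeping: organising the computation so that no term is lost, and grouping the summands by the common outer operator $L_?(x)\otimes I,\,I\otimes L_?(x)\otimes I,\,I\otimes I\otimes L_?(x)$ so that the required identity becomes visible. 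Once this is set up systematically (which we do item by item, the arguments for \eqref{Ca2}, \eqref{Ca5} and \eqref{B1} being the longest), the verification of each equivalence reduces to routine checking, and we omit the detailed calculations.
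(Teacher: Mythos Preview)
Your proposal is correct and follows essentially the same approach as the paper: direct substitution of the coboundary formulas \eqref{CB1}--\eqref{CB2} into each axiom, expansion into explicit tensor sums, and reorganisation using the anti-pre-Novikov identities \eqref{Aa1}--\eqref{Aa5} (and their consequences such as \eqref{Aa8}). The paper's proof differs only in that it carries out one representative case in full---item (e), the equivalence of \eqref{Ca5} and \eqref{CB7}---by splitting the expanded expression into three blocks $P(1),P(2),P(3)$ grouped by which tensor slot carries the outer $L_?(x)$, simplifying each block separately, and then declaring the remaining items ``verified analogously''; your plan of grouping summands by the outer operator is exactly this organising principle.
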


\begin{proof} In view of Eqs. \eqref{CB1}-\eqref{CB2}, we get
\begin{align*} 
&( \Delta_{s}\otimes I)\Delta_{\prec,s}-(\tau\otimes I)( \Delta_{s}\otimes I)\Delta_{\prec,s}-(I \otimes \Delta_{s})\Delta_{\succ,s}
+(\tau\otimes I)(I\otimes \Delta_{s})\Delta_{\succ,s}(x)
\\=&\sum_{i,j}[(x\circ a_i)\circ a_j]\otimes b_j\otimes b_i-
a_j\otimes[(x\circ a_i)\odot b_j]\otimes b_i
-(a_i\circ a_j)\otimes b_j\otimes (x\odot b_i)\\&+a_j\otimes (a_i \odot b_j)\otimes(x\odot b_i)
+c_j\otimes [ (x\circ a_i)\star d_j]\otimes b_i-[(x\circ a_i)\succ c_j]\otimes d_j\otimes b_i
\\&-c_j\otimes (a_i\star d_j)\otimes (x\odot b_i)+(a_i\succ c_j)\otimes d_j\otimes (x\odot b_i)
-b_j\otimes [(x\circ a_i)\circ a_j]\otimes b_i
\\&+[(x\circ a_i)\odot b_j]\otimes a_j\otimes b_i
+ b_j\otimes (a_i\circ a_j)\otimes(x\odot b_i)- (a_i \odot b_j)\otimes a_j\otimes(x\odot b_i)
\\&- [ (x\circ a_i)\star d_j]\otimes c_j\otimes b_i+ d_j\otimes [(x\circ a_i)\succ c_j]\otimes b_i
+ (a_i\star d_j)\otimes c_j\otimes(x\odot b_i)\\&- d_j\otimes (a_i\succ c_j)\otimes (x\odot b_i)
-c_i\otimes c_j\otimes [(x\star d_i)\star d_j]+ c_i\otimes [(x\star d_i)\succ c_j]\otimes d_j
\\&+(x\succ c_i)\otimes c_j \otimes ( d_i\star d_j)-(x\succ c_i)\otimes (d_i\succ c_j )\otimes d_j
-c_i\otimes[ (x\star d_i)\circ a_j]\otimes b_j\\&+ c_i\otimes a_j\otimes [(x\star d_i)\odot b_j]
+(x\succ c_i)\otimes (d_i\circ a_j) \otimes b_j-(x\succ c_i)\otimes a_j \otimes (d_i\odot b_j)
\\&+ c_j\otimes c_i\otimes[(x\star d_i)\star d_j]-  [(x\star d_i)\succ c_j]\otimes c_i\otimes d_j
-c_j \otimes(x\succ c_i)\otimes  ( d_i\star d_j)\\&+ (d_i\succ c_j )\otimes(x\succ c_i)\otimes d_j
+[ (x\star d_i)\circ a_j]\otimes c_i\otimes b_j-  a_j\otimes c_i\otimes [(x\star d_i)\odot b_j]
\\&- (d_i\circ a_j) \otimes (x\succ c_i)\otimes b_j+ a_j \otimes (x\succ c_i)\otimes(d_i\odot b_j)
\\=&P(1)+P(2)+P(3),
\end{align*}
where $\Delta_{s}=\Delta_{\succ,s}+\Delta_{\prec,s}$,
\begin{align*} 
P(1)=&\sum_{i,j}[(x\circ a_i)\circ a_j]\otimes b_j\otimes b_i-[(x\circ a_i)\succ c_j]\otimes d_j\otimes b_i
+[(x\circ a_i)\odot b_j]\otimes a_j\otimes b_i\\&- [ (x\circ a_i)\star d_j]\otimes c_j\otimes b_i
+(x\succ c_i)\otimes c_j \otimes ( d_i\star d_j)-(x\succ c_i)\otimes (d_i\succ c_j )\otimes d_j
\\&+(x\succ c_i)\otimes (d_i\circ a_j) \otimes b_j-(x\succ c_i)\otimes a_j \otimes (d_i\odot b_j)
\\&-  [(x\star d_i)\succ c_j]\otimes c_i\otimes d_j
+[ (x\star d_i)\circ a_j]\otimes c_i\otimes b_j,\end{align*}
\begin{align*} P(2)=&
\sum_{i,j}c_j\otimes [ (x\circ a_i)\star d_j]\otimes b_i-a_j\otimes[(x\circ a_i)\odot b_j]\otimes b_i
-b_j\otimes [(x\circ a_i)\circ a_j]\otimes b_i\\&
+ d_j\otimes [(x\circ a_i)\succ c_j]\otimes b_i
+ c_i\otimes [(x\star d_i)\succ c_j]\otimes d_j
\\&-c_i\otimes[ (x\star d_i)\circ a_j]\otimes b_j
-c_j \otimes(x\succ c_i)\otimes  ( d_i\star d_j)+ (d_i\succ c_j )\otimes(x\succ c_i)\otimes d_j
\\&- (d_i\circ a_j) \otimes (x\succ c_i)\otimes b_j+ a_j \otimes (x\succ c_i)\otimes(d_i\odot b_j)
,\end{align*}
\begin{align*} P(3)=&
\sum_{i,j}
a_j\otimes (a_i \odot b_j)\otimes(x\odot b_i)-(a_i\circ a_j)\otimes b_j\otimes (x\odot b_i)
-c_j\otimes (a_i\star d_j)\otimes (x\odot b_i)
\\&+(a_i\succ c_j)\otimes d_j\otimes (x\odot b_i)
+ b_j\otimes (a_i\circ a_j)\otimes(x\odot b_i)- (a_i \odot b_j)\otimes a_j\otimes(x\odot b_i)
\\&+ (a_i\star d_j)\otimes c_j\otimes(x\odot b_i)- d_j\otimes (a_i\succ c_j)\otimes (x\odot b_i)
-c_i\otimes c_j\otimes [(x\star d_i)\star d_j]\\&+ c_i\otimes a_j\otimes [(x\star d_i)\odot b_j]
+ c_i\otimes c_j\otimes[(x\star d_j)\star d_i]-  a_j\otimes c_i\otimes [(x\star d_i)\odot b_j]
,\end{align*}
Using Eqs. (\ref{Aa5}) and (\ref{Aa8}), we have
\begin{align*} 
P(1)=&\sum_{i,j}(L_{\succ}(x\circ a_i)\otimes I\otimes I)(s_{\prec}\otimes b_i)+
[(x\circ a_i)\prec a_j]\otimes b_j\otimes b_i-(L_{\succ}(x\circ a_i)\otimes I\otimes I)(s_{\succ}\otimes b_i)
\\&+(L_{\odot}(x\circ a_i)\otimes I\otimes I)(\tau s_{\prec}\otimes b_i)
-(L_{\odot}(x\circ a_i)\otimes I\otimes I)(\tau s_{\succ}\otimes b_i)\\&-d_j\odot (x\circ a_i)\otimes c_j\otimes b_i
+(L_{\succ}(x)\otimes I\otimes I)(s_{\succ,13}\star s_{\succ,23}-s_{\succ,12}\succ s_{\succ,23}
+s_{\succ,12}\circ s_{\prec,23}\\&-s_{\succ,13}\odot s_{\prec,23})
-[(L_{\star}(x)\otimes I\otimes I)s_{\succ,21}]\succ s_{\succ,13} 
\\&+[(L_{\star}(x)\otimes I\otimes I)s_{\succ,21}]\succ s_{\prec,13}
+[ (x\star d_i)\prec a_j]\otimes c_i\otimes b_j
\\=&\sum_{i,j}(L_{\succ}(x\circ a_i)\otimes I\otimes I)[(s_{\prec}-s_{\succ})\otimes b_i]
+(L_{\odot}(x\circ a_i)\otimes I\otimes I)[(\tau s_{\prec}-\tau s_{\succ})\otimes b_i]
\\&+[(L_{\star}(x)\otimes I\otimes I)s_{\succ,21}]\succ (s_{\prec,13}-s_{\succ,13} )
+(L_{\prec}(x\circ a_i)\otimes I\otimes I)[(\tau s_{\succ}+s_{\prec})\otimes b_i ]
\\&+(L_{\succ}(x)\otimes I\otimes I)(s_{\succ,13}\star s_{\succ,23}-s_{\succ,12}\succ s_{\succ,23}
+s_{\succ,12}\circ s_{\prec,23}-s_{\succ,13}\odot s_{\prec,23}-s_{\succ,21}\circ s_{\prec,13})
,\end{align*}
\begin{align*} P(2)=&
\sum_{i,j} (I\otimes L_{\odot}(x\circ a_i)\otimes I)(s_{\succ}\otimes b_i)
+c_j\otimes [ d_j\odot(x\circ a_i) ]\otimes b_i-(I\otimes L_{\odot}(x\circ a_i)\otimes I)(s_{\prec}\otimes b_i)\\&
-(I\otimes L_{\succ}(x\circ a_i)\otimes I)(\tau s_{\prec}\otimes b_i)-b_j\otimes [(x\circ a_i)\prec a_j]\otimes b_i
\\&+(I\otimes L_{\succ}(x\circ a_i)\otimes I)(\tau s_{\succ}\otimes b_i)
+(I\otimes L_{\succ}(x\star d_i)\otimes I)( c_i\otimes s_{\succ})
\\&-(I\otimes L_{\succ}(x\star d_i)\otimes I)( c_i\otimes s_{\prec})
-c_i\otimes[ (x\star d_i)\prec a_j]\otimes b_j
\\&+( I\otimes L_{\succ}(x)\otimes I) (-s_{\succ,23}\star s_{\succ,13}+s_{\succ,21}\succ s_{\succ,13}
-s_{\succ,21}\circ s_{\prec,13}+s_{\succ,23}\odot s_{\prec,13})
\\=&\sum_{i,j} (I\otimes L_{\odot}(x\circ a_i)\otimes I)[(s_{\succ}-s_{\prec})\otimes b_i]
+c_i\otimes [ d_i\odot(x\circ a_j) ]\otimes b_j\\&
-(I\otimes L_{\succ}(x\circ a_i)\otimes I)[(\tau s_{\succ}-\tau s_{\prec})\otimes b_i]
-(I\otimes L_{\prec}(x\circ a_i)\otimes I)(\tau s_{\prec}\otimes b_i)
\\&+(I\otimes L_{\succ}(x\star d_i)\otimes I)[ c_i\otimes (s_{\succ}-s_{\prec})]
-(I\otimes L_{\prec}(x\circ a_i)\otimes I)(s_{\succ}\otimes b_i)
\\&-c_i\otimes[ ( d_i\circ x)\prec a_j]\otimes b_j
\\&+( I\otimes L_{\succ}(x)\otimes I) (-s_{\succ,23}\star s_{\succ,13}+s_{\succ,21}\succ s_{\succ,13}
-s_{\succ,21}\circ s_{\prec,13}+s_{\succ,23}\odot s_{\prec,13})
\\=&\sum_{i,j} (I\otimes L_{\odot}(x\circ a_i)\otimes I)[(s_{\succ}-s_{\prec})\otimes b_i]
-(I\otimes L_{\succ}(x\circ a_i)\otimes I)[(\tau s_{\succ}-\tau s_{\prec})\otimes b_i]
\\&+(I\otimes L_{\succ}(x\star d_i)\otimes I)[ c_i\otimes (s_{\succ}-s_{\prec})]
-(I\otimes L_{\prec}(x\circ a_i)\otimes I)[(s_{\succ}+\tau s_{\prec})\otimes b_i]
\\&+( I\otimes L_{\succ}(x)\otimes I) (s_{\succ,21}\succ s_{\succ,13}-s_{\succ,23}\star s_{\succ,13}
-s_{\succ,21}\circ s_{\prec,13}+s_{\succ,23}\odot s_{\prec,13}+s_{\succ,12}\circ s_{\prec,23})
,\end{align*}
\begin{align*} P(3)=&
(I\otimes I\otimes L_{\odot}(x))(s_{\prec,23}\odot s_{\prec,12}-s_{\prec,13}\circ s_{\prec,12}-s_{\prec,23}\star s_{\succ,12}
\\&+s_{\prec,13}\succ s_{\succ,12}+
s_{\prec,23}\circ s_{\prec,21}-s_{\prec,13}\odot s_{\prec,21}+s_{\prec,13}\star s_{\succ,21}-s_{\prec,23}\succ s_{\succ,21}
)\\&-\sum_{i,j}(I\otimes I\otimes L_{\odot}(x\star d_i))(c_i\otimes s_{\succ})-c_i\otimes c_j\otimes [d_j\odot(x\star d_i)]
\\&+
(I\otimes I\otimes L_{\odot}(x\star d_i))(c_i\otimes s_{\prec})
+[(I\otimes I\otimes L_{\star}(x))s_{\succ,23}]\odot s_{\succ,13}
\\&+c_i\otimes c_j\otimes[ d_i\odot(x\star d_j)]
-[(I\otimes I\otimes L_{\star}(x))s_{\succ,23}]\odot s_{\prec,13}
\\=&(I\otimes I\otimes L_{\odot}(x))(s_{\prec,23}\odot s_{\prec,12}-s_{\prec,13}\circ s_{\prec,12}-s_{\prec,23}\star s_{\succ,12}
\\&+s_{\prec,13}\succ s_{\succ,12}+
s_{\prec,23}\circ s_{\prec,21}-s_{\prec,13}\odot s_{\prec,21}+s_{\prec,13}\star s_{\succ,21}-s_{\prec,23}\succ s_{\succ,21}
)\\&+\sum_{i,j}(I\otimes I\otimes L_{\odot}(x\star d_i))[c_i\otimes (s_{\prec}-s_{\succ})]+
[(I\otimes I\otimes L_{\star}(x))s_{\succ,23}]\odot (s_{\succ,13}-s_{\prec,13})
\\&+c_i\otimes c_j\otimes[ x\odot(d_i\circ d_j-d_j\circ d_i)]
\\=&(I\otimes I\otimes L_{\odot}(x))(s_{\prec,23}\odot s_{\prec,12}-s_{\prec,13}\circ s_{\prec,12}-s_{\prec,23}\star s_{\succ,12}
+s_{\prec,13}\succ s_{\succ,12}+
s_{\prec,23}\circ s_{\prec,21}\\&-s_{\prec,13}\odot s_{\prec,21}+s_{\prec,13}\star s_{\succ,21}-s_{\prec,23}\succ s_{\succ,21}+s_{\succ,13}\circ s_{\succ,23}-s_{\succ,23}\circ s_{\succ,13}
)\\&+\sum_{i,j}(I\otimes I\otimes L_{\odot}(x\star d_i))[c_i\otimes (s_{\prec}-s_{\succ})]+
[(I\otimes I\otimes L_{\star}(x))s_{\succ,23}]\odot (s_{\succ,13}-s_{\prec,13})
,\end{align*}
which yields that Item (e) holds.
The remaining part can be verified analogously.
\end{proof}
In particular, we have
\begin{pro} \label{DB4}Let $(A,\succ,\prec)$ be an anti-pre-Novikov algebra and $s_{\succ}=s_{\prec}=\sum_ia_i\otimes b_i$. Assume that
$\Delta_{\succ},\Delta_{\prec}$ defined by Eqs.~\eqref{CB1}-\eqref{CB2}. Then
\begin{enumerate}
\item Eq.~\eqref{Ca1} holds if and only if the following equation holds:
\begin{align}\label{CB30}&(I\otimes I\otimes L_{\star}(x))(s_{12}\odot s_{23}
+s_{13}\prec s_{12}-s_{23}\prec s_{21}-s_{21}\odot s_{13}
+s_{23}\circ s_{13}-s_{13}\circ s_{23})
\\&+[L_{\succ}(x)\otimes I \otimes I-I\otimes L_{\succ}(x)\otimes I](s_{12}\succ s_{23}+s_{21}\succ s_{13}-s_{13}\star s_{23})
\nonumber\\&+\sum_i
[I\otimes L_{\prec}(x\succ a_i)-L_{\prec}(x\succ a_i)\otimes I]( s+ \tau (s))\otimes b_i
=0\nonumber.\end{align}
\item Eq.~\eqref{Ca2} holds if and only if the following equation holds:
\begin{align}\label{CB40}&(I\otimes I\otimes L_{\odot}(x))(s_{23}\odot s_{12}-s_{13}\circ s_{12}
+s_{13}\star s_{21}-s_{23}\succ s_{21}-
s_{23}\succ s_{13}+s_{13}\prec s_{23})
\\&+[L_{\circ}(x)\otimes I \otimes I+I\otimes L_{\succ}(x)\otimes I](s_{23}\odot s_{13}-s_{21}\circ s_{13}
+s_{12}\prec s_{23})
\nonumber\\&
+\sum_{i}(L_{\circ}(x\circ a_i)\otimes I\otimes I+I\otimes L_{\succ}(x\circ a_i)\otimes I)[(\tau (s)+s)\otimes b_i]
=0\nonumber.\end{align}
\item Eq.~\eqref{Ca3} holds if and only if the following equation holds:
\begin{align}\label{CB50}
&(I \otimes I \otimes L_{\star}(x))(
s_{12}\odot s_{23}+s_{13}\prec s_{12}
+s_{13}\circ s_{32})
\\&+(I  \otimes L_{\odot}(x)\otimes I)(-s_{32}\star s_{13}+s_{12}\succ s_{13}-s_{12}\succ s_{23})
=0\nonumber.\end{align}
\item Eq.~\eqref{Ca4} holds if and only if the following equation holds:
\begin{align}\label{CB60}&
 (I\otimes I\otimes L_{\odot}(x))(s_{23}\odot s_{12}-s_{13}\prec s_{32}-s_{13}\circ s_{12})
\\&+(I\otimes L_{\odot}(x) \otimes I)(s_{12}\circ s_{13}+s_{12}\prec s_{23}-s_{32}\odot s_{13})
=0\nonumber.\end{align}
\item Eq.~\eqref{Ca5} holds if and only if the following equation holds:
\begin{align}\label{CB70}&
(I\otimes I\otimes L_{\odot}(x))(-s_{12}\odot s_{23}-s_{13}\prec s_{12}
+s_{23}\prec s_{21}+s_{21}\odot s_{13}+s_{13}\circ s_{23}-s_{23}\circ s_{13})
\\&+\sum_i
(L_{\prec}(x\circ a_i)\otimes I\otimes I-I\otimes L_{\prec}(x\circ a_i)\otimes I )[(\tau (s)+ s)\otimes b_i]
\nonumber\\&
+(L_{\succ}(x)\otimes I\otimes I)(s_{23}\odot s_{13}+s_{12}\prec s_{23}
-s_{21}\circ s_{13})
\nonumber\\&+
( I\otimes L_{\succ}(x)\otimes I)(s_{12}\circ s_{23}-s_{13}\odot s_{23}-s_{21}\prec s_{13})
=0\nonumber.\end{align}
\item Eq.~\eqref{B1} holds if and only if the following equation holds:
\begin{align}\label{CB80}&
[I\otimes L_{\circ}(x\circ y)-I\otimes 2 R_{\circ}(y)L_{\circ}(x)-I\otimes L_{\circ}(x)L_{\circ}(y)-
L_{\succ}(x\circ y)\otimes I -(L_{\succ}+2R_{\prec})(x)L_{\succ}(y)\\&\otimes I
+(L_{\succ}+2R_{\prec})(x)\otimes L_{\circ}( y)+2L_{\odot}(x)\otimes R_{\circ}( y)
+L_{\succ}( y)\otimes L_{\circ}(x)]
( s+\tau (s))
=0.\nonumber\end{align}
\item Eq.~\eqref{B3} holds if and only if the following equation holds:
\begin{align}\label{CB90}2[I\otimes R_{\odot}(y)L_{\circ}(x)-L_{\odot}(x)\otimes R_{\odot}(y)]( s+\tau (s))
=0.\end{align}
\item Eq.~\eqref{B4} holds if and only if the following equation holds:
\begin{align}\label{CB100}&
[L_{\prec}(y)\otimes L_{\succ}(x)+ R_{\prec}(x)L_{\prec}(y)\otimes I- L_{\prec}(y)L_{\circ}(x)\otimes I+I\otimes L_{\prec}(y)L_{\circ}(x)
\\&-I\otimes R_{\prec}(x)L_{\prec}(y)- L_{\succ}(x)\otimes L_{\prec}(y)]( \tau (s)+ s)
=0\nonumber.\end{align}
\item Eq.~\eqref{B5} holds if and only if the following equation holds:
\begin{align}\label{CB110}&
[I\otimes R_{\circ}(x)L_{\circ}(y)-I\otimes R_{\circ}(y)L_{\circ}(x)+
L_{\odot}(x)\otimes R_{\circ}(y)+R_{\prec})(x)\otimes L_{\circ}(y)-R_{\prec}(y)\otimes L_{\circ}(x)
\\&-L_{\odot}(y)\otimes R_{\circ}(x)+R_{\prec}(y)L_{\succ}(x)\otimes I-R_{\prec}(x)L_{\succ}(y)\otimes I]( s+\tau (s))
=0\nonumber.\end{align}
\item Eq.~\eqref{B7} holds if and only if the following equation holds:
\begin{align}\label{CB120}
[R_{\odot}(y)\otimes L_{\odot}(x)-L_{\odot}(x)\otimes L_{\odot}(y)](\tau (s)+ s)
=0.\end{align}
\item Eq.~\eqref{B8} holds if and only if the following equation holds:
\begin{align}\label{CB130}[I\otimes R_{\odot}(y)L_{\circ}(x)-L_{\odot}(x)\otimes R_{\odot}(y)](s+\tau (s))=0,\end{align}
\end{enumerate}
where $L_{\odot}=L_{\succ}+R_{\prec},R_{\odot}=R_{\succ}+L_{\prec},~L_{\star}=L_{\circ}+R_{\circ},~\circ=\succ+\prec$.
\end{pro}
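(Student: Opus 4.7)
The plan is to deduce this proposition directly from the preceding Proposition~\ref{DB4} by specializing to the case $s_{\succ}=s_{\prec}=s$. Under this identification, every difference of the form $s_{\succ}-s_{\prec}$ or $\tau s_{\succ}-\tau s_{\prec}$ vanishes, while the sums $s_{\succ}+\tau s_{\prec}$ and $\tau s_{\succ}+s_{\prec}$ both reduce to $s+\tau(s)$. Consequently, many of the terms in the general coboundary conditions \eqref{CB3}--\eqref{CB13} collapse, leaving the simpler system \eqref{CB30}--\eqref{CB130}.

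First I would dispatch the bialgebra compatibility items (f)--(k). Here the reduction is essentially mechanical: in each of \eqref{CB8}--\eqref{CB13} every tensor argument is one of the combinations $s_{\succ}\pm\tau s_{\prec}$, $\tau s_{\succ}\pm s_{\prec}$, or $s_{\succ}-s_{\prec}$. Setting $s_{\succ}=s_{\prec}=s$ kills all minus-combinations and identifies all plus-combinations with $s+\tau(s)$, so \eqref{CB8} yields \eqref{CB80}, \eqref{CB9} yields \eqref{CB90}, and so on down to \eqref{CB13} yielding \eqref{CB130}, with no further work needed.

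Next I would turn to the coalgebra axioms, items (a)--(e), which require slightly more care. The general equations \eqref{CB3}--\eqref{CB7} each contain a list of cubic tensor expressions of the form $s_{\succ,ij}\bullet s_{\succ,kl}$, $s_{\prec,ij}\bullet s_{\prec,kl}$ and mixed $s_{\succ,ij}\bullet s_{\prec,kl}$, where $\bullet\in\{\succ,\prec,\circ,\star,\odot\}$. Upon substituting $s_{\succ}=s_{\prec}=s$, these collapse to expressions in $s_{ij}\bullet s_{kl}$; using $\circ=\succ+\prec$, $L_{\odot}=L_{\succ}+R_{\prec}$ and the structural identities \eqref{Aa7}--\eqref{Aa8} available in any anti-pre-Novikov algebra, the resulting sums rearrange into exactly \eqref{CB30}--\eqref{CB70}.

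The main obstacle is the combinatorial bookkeeping for items (a), (b) and (e), since the corresponding general equations each involve on the order of ten cubic tensor terms whose grouping after the substitution must be verified term by term. No new algebraic input beyond Proposition~\ref{DB4} and the defining axioms of an anti-pre-Novikov algebra is needed; the proof is entirely a substitution-and-simplification argument, with the cancellations in items (f)--(k) being immediate and those in items (a)--(e) requiring careful matching of indices and operations.
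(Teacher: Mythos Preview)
Your proposal is correct and matches the paper's approach exactly: the paper presents this proposition with the phrase ``In particular, we have'' immediately after the general Proposition~\ref{DB4}, indicating that the proof is precisely the specialization $s_{\succ}=s_{\prec}=s$ you describe, with the differences $s_{\succ}-s_{\prec}$ vanishing and the sums collapsing to $s+\tau(s)$.
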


\begin{thm} \label{YE3} Let $(A,\succ,\prec)$ be an anti-pre-Novikov algebra.
Suppose that
$\Delta_{\succ,s},\Delta_{\prec,s}$ defined by Eqs.~\eqref{CB1}-\eqref{CB2}. Then $(A,\succ,\prec,\Delta_{\succ,s},\Delta_{\prec,s})$
 is an anti-pre-Novikov bialgebra if and only if Eqs.~\eqref{CB3}-\eqref{CB13} hold.\end{thm}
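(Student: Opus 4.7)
The plan is essentially to collect what has already been established in the preceding Proposition~\ref{DB4} and combine it with the automatic vanishing of the conditions corresponding to Eqs.~\eqref{B2} and \eqref{B6} in the coboundary setting. By Definition~\ref{DB2}, the quintuple $(A,\succ,\prec,\Delta_{\succ,s},\Delta_{\prec,s})$ is an anti-pre-Novikov bialgebra if and only if (i) $(A,\Delta_{\succ,s},\Delta_{\prec,s})$ is an anti-pre-Novikov coalgebra, i.e. Eqs.~\eqref{Ca1}--\eqref{Ca5} hold, and (ii) the eight compatibility conditions Eqs.~\eqref{B1}--\eqref{B8} are satisfied. Since $(A,\succ,\prec)$ is already an anti-pre-Novikov algebra by hypothesis, I only need to translate these thirteen conditions into constraints on the tensors $s_{\succ}$ and $s_{\prec}$.

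First, I would recall the observation made just after the definition of coboundary bialgebras: when $\Delta_{\succ,s}$ and $\Delta_{\prec,s}$ are defined by Eqs.~\eqref{CB1}--\eqref{CB2}, a direct substitution shows that Eqs.~\eqref{B2} and~\eqref{B6} are automatic. Concretely, $\Delta_{\prec,s}([y,x])$ expands by \eqref{CB2} with $[y,x]=y\circ x-x\circ y$ acting via $L_\circ$ in the first slot and via $L_\odot$ in the second; the resulting identity follows from the fact that $L_\circ$ is a Novikov-module action (see Eq.~\eqref{Nr1}) and $L_\odot$ commutes with $L_\circ$ in the required way thanks to Eq.~\eqref{Aa8}. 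The verification of Eq.~\eqref{B6} is analogous, using Eqs.~\eqref{rp3} and~\eqref{rp8} for the regular representation. Hence these two compatibility conditions contribute no constraints on $s_{\succ},s_{\prec}$ and can be discarded from the list.

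Next, I would apply Proposition~\ref{DB4} directly to each of the remaining eleven conditions. Item~(a) gives \eqref{Ca1}$\Leftrightarrow$\eqref{CB3}, item~(b) gives \eqref{Ca2}$\Leftrightarrow$\eqref{CB4}, and items~(c),(d),(e) give the equivalences for Eqs.~\eqref{Ca3}--\eqref{Ca5} with \eqref{CB5}--\eqref{CB7}, respectively. Likewise items~(f)--(k) of the same proposition provide the six remaining equivalences \eqref{B1}$\Leftrightarrow$\eqref{CB8}, \eqref{B3}$\Leftrightarrow$\eqref{CB9}, \eqref{B4}$\Leftrightarrow$\eqref{CB10}, \eqref{B5}$\Leftrightarrow$\eqref{CB11}, \eqref{B7}$\Leftrightarrow$\eqref{CB12}, and \eqref{B8}$\Leftrightarrow$\eqref{CB13}. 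Putting these together, the full system \eqref{Ca1}--\eqref{Ca5}, \eqref{B1}, \eqref{B3}--\eqref{B5}, \eqref{B7}--\eqref{B8} is equivalent to \eqref{CB3}--\eqref{CB13}, which proves the theorem.

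The only non-routine step is the verification that Eqs.~\eqref{B2} and \eqref{B6} are automatically satisfied in the coboundary case; once this is confirmed the rest is a bookkeeping assembly of the equivalences already stated in Proposition~\ref{DB4}. I expect the main effort, should the reader want full details, to lie in writing out carefully that $(L_\circ(y)\otimes I - I\otimes L_\odot(x))L_\circ(x)s_{\prec} + (I\otimes L_\odot(y) - L_\circ(x)\otimes I)L_\circ(y)s_{\prec}$ collapses to the left-hand side of \eqref{B2} via the Novikov identities; this is purely mechanical but tedious, and I would relegate it to a short remark rather than reproduce it in the proof.
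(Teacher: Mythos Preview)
Your proposal is correct and follows essentially the same approach as the paper's own proof, which simply reads ``This follows from Definition~\ref{DB1}, Definition~\ref{DB2} and Proposition~\ref{DB4}.'' You have merely unpacked this one-liner: the automatic vanishing of Eqs.~\eqref{B2} and~\eqref{B6} in the coboundary case is stated in the paper immediately before Proposition~\ref{DB4}, and the remaining eleven equivalences are precisely the content of that proposition, so your more detailed account matches the paper's argument exactly.
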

\begin{proof} This follows from Definition \ref{DB1}, Definition \ref{DB2} and Proposition \ref{DB4}.\end{proof}

The equations in Eqs.~\eqref{CB3}-\eqref{CB13} are highly complex. 
We therefore investigate a simpler, special case where $s_{\prec}=s_{\succ}=s\in A\otimes A$ and $s$ is skew-symmetric.

The following conclusion is apparent.

\begin{pro} \label{YE4} Assume that $s_{\prec}=s_{\succ}=s\in A\otimes A$ and $s$ is skew-symmetric. Then
\begin{enumerate}
		\item Eqs.~\eqref{CB8}-\eqref{CB13} hold automatically.
\item Eq. \eqref{CB3} holds if and only if
\begin{align*}
(L_{\succ}(x)\otimes I \otimes I-I\otimes L_{\succ}(x)\otimes I)S_{2}
+(I\otimes I\otimes L_{\star}(x))S_{3}=0.
\end{align*}
\item Eq.~\eqref{CB4} holds if and only if
\begin{align*}
(L_{\circ}(x)\otimes I \otimes I+I\otimes L_{\succ}(x)\otimes I)S_{1}+(I\otimes I\otimes L_{\odot}(x))S_{4}
=0.\end{align*}
\item Eq.~\eqref{CB5} holds if and only if
\begin{align*}
(I \otimes I \otimes L_{\star}(x))
S_{5}-(I  \otimes L_{\odot}(x)\otimes I)S_{2}=0.\end{align*}
\item Eq.~\eqref{CB6} holds if and only if
\begin{align*}
 (I\otimes I\otimes L_{\odot}(x))S_{6}+(I\otimes L_{\odot}(x) \otimes I)S_{1}=0.\end{align*}
\item Eq.~\eqref{CB7} holds if and only if
\begin{align*}
(L_{\succ}(x)\otimes I\otimes I)S_{1}-(I\otimes I\otimes L_{\odot}(x))S_{3}
+( I\otimes L_{\succ}(x)\otimes I)S_{7}=0,\end{align*}
\end{enumerate}
where \begin{align*}&S_{1}=s_{12}\prec s_{23}+s_{23}\odot s_{13}+s_{12}\circ s_{13},\ \ \ 
 S_{2}=s_{12}\succ s_{23}-s_{13}\star s_{23}-s_{12}\succ s_{13},\\&
S_{3}=s_{12}\odot s_{23}+s_{13}\prec s_{12}-s_{13}\circ s_{23}+
s_{23}\prec s_{12}+s_{12}\odot s_{13}+s_{23}\circ s_{13},
\\&S_{4}=s_{23}\odot s_{12}-s_{13}\circ s_{12}-s_{13}\star s_{12}-
s_{23}\succ s_{13}+s_{13}\prec s_{23}+s_{23}\succ s_{12},\\&
S_{5}=s_{13}\prec s_{12}+s_{12}\odot s_{23}
-s_{13}\circ s_{23},\ \ \
S_{6}=s_{23}\odot s_{12}+s_{13}\prec s_{23}-s_{13}\circ s_{12},\\&
S_{7}=s_{12}\prec s_{13}-s_{13}\odot s_{23}+s_{12}\circ s_{23}.
\end{align*}
\end{pro}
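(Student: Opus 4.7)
The plan is to exploit that skew-symmetry of $s$ forces the ``difference'' and ``sum-with-swap'' combinations of $s_\succ$ and $s_\prec$ that appear throughout Eqs.~\eqref{CB3}-\eqref{CB13} to vanish. Concretely, substituting $s_\succ = s_\prec = s$ and using $\tau s = -s$, one obtains
\begin{align*}
s_\succ - s_\prec = 0,\quad s_\prec - s_\succ = 0,\quad s_\succ + \tau s_\prec = 0,\quad \tau s_\succ + s_\prec = 0,\quad \tau s_\succ - \tau s_\prec = 0,
\end{align*}
and on the level of three-fold tensors $s_{21} = -s_{12}$, $s_{32} = -s_{23}$, $s_{31} = -s_{13}$, each being a consequence of the bilinearity of the $\ast$ pairings in the definitions (so the identity $s_{21}=-s_{12}$ does propagate across $\ast$).

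For part (a), I would simply inspect Eqs.~\eqref{CB8}-\eqref{CB13}: every summand is an operator of the form $L_\bullet(\cdot)\otimes L_\bullet(\cdot)$ (plus variants) applied to one of the above vanishing combinations of $s_\succ$ and $s_\prec$. Therefore each of these equations reduces to the trivial identity $0=0$, and so they hold automatically under the hypotheses.

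For parts (b)-(f), the strategy is to substitute $s_\succ = s_\prec = s$ into Eqs.~\eqref{CB3}-\eqref{CB7} and simplify in two stages. First, the ``tail'' terms carrying an extra summation over $a_i, b_i$ or $c_i, d_i$ all have as their input one of the vanishing combinations listed above, so they disappear. Only the pure three-fold tensor contributions survive. Second, apply $s_{21} = -s_{12}$, $s_{32} = -s_{23}$, $s_{31} = -s_{13}$ to collapse the remaining expressions. For example, in Eq.~\eqref{CB3} the coefficient of $L_\succ(x)\otimes I\otimes I$ becomes
\[
s_{12}\succ s_{23} - s_{13}\star s_{23} + s_{21}\succ s_{13} = s_{12}\succ s_{23} - s_{13}\star s_{23} - s_{12}\succ s_{13} = S_2,
\]
while the coefficient of $I\otimes L_\succ(x)\otimes I$ becomes $-S_2$, and the $L_\star(x)$-coefficient collapses to $S_3$ after combining the $s_{21},s_{32},s_{31}$ summands with their unswapped counterparts. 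The same two-stage mechanism yields parts (c)-(f), with the specific sums $S_1, S_4, S_5, S_6, S_7$ emerging in each case.

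The main obstacle is purely clerical: tracking which tensor slot each term occupies after substitution, and correctly pairing the $s_{21}, s_{32}, s_{31}$ terms with their unswapped counterparts together with the sign flips. In particular, terms involving mixed operations such as $s_{23}\odot s_{12}$, $s_{23}\succ s_{12}$, $s_{23}\circ s_{12}$ must be collected carefully, since they sit in the same tensor slot but contribute to different pieces of the simplified sums. Once the computation is carried out in detail for one case (say Eq.~\eqref{CB3}), the remaining cases follow along identical lines, appealing to Theorem~\ref{YE3} and Proposition~\ref{DB4} for the general coboundary formulas.
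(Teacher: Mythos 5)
Your proposal is correct and follows exactly the route the paper intends: the paper offers no written proof (it declares the proposition "apparent" after the specialization $s_{\succ}=s_{\prec}=s$ in the preceding proposition), and the only content is precisely what you supply — the combinations $s_{\succ}-s_{\prec}$, $\tau s_{\succ}+s_{\prec}$, $s_{\succ}+\tau s_{\prec}$, $\tau s_{\succ}-\tau s_{\prec}$ and $s+\tau(s)$ all vanish, killing the tail sums and all of Eqs.~\eqref{CB8}--\eqref{CB13}, after which the substitutions $s_{21}\mapsto -s_{12}$, $s_{31}\mapsto -s_{13}$, $s_{32}\mapsto -s_{23}$ (valid by linearity of the $\ast$-pairings) collapse the surviving three-fold tensors to $S_{1},\dots,S_{7}$. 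Your worked example for Eq.~\eqref{CB3} reproduces the stated coefficients $S_{2}$, $-S_{2}$, $S_{3}$ correctly, so the argument is complete up to the routine bookkeeping you acknowledge.
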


\begin{rmk} \label{YE5} Assume that $\sigma_{12},\sigma_{13},\sigma_{23},\sigma_{132}:A\otimes A\otimes A\longrightarrow A\otimes A\otimes A$ 
are maps defined respectively by
\begin{align*}&\sigma_{12}(x\otimes y\otimes z)=y\otimes x\otimes z, \ \ \ \sigma_{13}(x\otimes y\otimes z)=z\otimes y\otimes x,
\\&\sigma_{23}(x\otimes y\otimes z)=x\otimes z\otimes y, \ \ \ \sigma_{132}(x\otimes y\otimes z)=z\otimes x\otimes y,~~\forall~x,y,z\in A.
 \end{align*}
If $s$ is skew-symmetric, then
\begin{align*}&S_{2}=-S_{1}-\sigma_{12}S_{1},\ \ \  S_{3}=S_{5}+\sigma_{13}S_{1},\ \ \ S_{4}=-S_{1}-2\sigma_{23}S_{1},\\
&S_{6}=-\sigma_{23}S_{1}, \ \ \ S_{5}=-\sigma_{132}S_{1}, \ \ \ S_{7}=-\sigma_{12}S_{1}.\end{align*}
\end{rmk}

\begin{proof} In view of the definitions of $\odot$ and $\star$, we obtain 
\begin{align*}S_{4}&=s_{23}\odot s_{12}-s_{13}\circ s_{12}-s_{13}\star s_{12}-
s_{23}\succ s_{13}+s_{13}\prec s_{23}+s_{23}\succ s_{12}
\\&=- s_{12}\circ s_{13}-s_{23}\odot s_{13}-s_{12}\prec s_{23}+2s_{23}\odot s_{12}-2s_{13}\circ s_{12}+s_{13}\prec s_{23}
\\&=-S_{1}-2\sigma_{23}S_{1}.\end{align*}
The other case can be verified analogously.
\end{proof}

\begin{thm} \label{BY} Let $s_{\prec}=s_{\succ}=s\in A\otimes A$ and $s$ be skew-symmetric.
Assume that
$\Delta_{\succ,s},\Delta_{\prec,s}$ defined by Eqs.~\eqref{CB1}-\eqref{CB2}.
Then $(A,\succ,\prec,\Delta_{\succ,s},\Delta_{\prec,s})$ is an anti-pre-Novikov bialgebra if and only if the
following equation holds:
\begin{equation} \label{YE6} s_{12}\circ s_{13}+s_{23}\odot s_{13}+s_{12}\prec s_{23}=0.\end{equation}
\end{thm}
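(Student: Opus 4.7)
The plan is to collapse the entire bialgebra axiom system down to the single identity~\eqref{YE6} by chaining together Theorem~\ref{YE3}, the skew-symmetric specialization of Proposition~\ref{DB4}, and Remark~\ref{YE5}.

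First, I would invoke Theorem~\ref{YE3}: the quintuple $(A,\succ,\prec,\Delta_{\succ,s},\Delta_{\prec,s})$ is an anti-pre-Novikov bialgebra if and only if equations \eqref{CB3}--\eqref{CB13} all hold. Under the assumption $s_\prec = s_\succ = s$ with $s + \tau(s) = 0$, every instance of the combination $s + \tau(s)$ in the original system vanishes, so the second (specialized) version of Proposition~\ref{DB4} applies: equations \eqref{CB80}--\eqref{CB130} collapse identically to $0=0$, while \eqref{CB30}--\eqref{CB70} become the six compact relations \textup{(b)--(f)} of Proposition~\ref{YE4}, each of which is a linear statement in $S_1,\ldots,S_7$ with the operators $L_{\circ}(x), L_{\succ}(x), L_{\odot}(x), L_{\star}(x)$.

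Second, I would invoke Remark~\ref{YE5}, which expresses each of $S_2,\ldots,S_7$ as a signed image of $S_1$ under one of the permutations $\sigma_{12}, \sigma_{13}, \sigma_{23}, \sigma_{132}$. This makes the sufficient direction immediate: equation~\eqref{YE6} is exactly the statement $S_1 = 0$, which forces $S_2 = \cdots = S_7 = 0$ and hence renders every condition \textup{(b)--(f)} of Proposition~\ref{YE4} trivially true. Reading Theorem~\ref{YE3} backward, $(A,\succ,\prec,\Delta_{\succ,s},\Delta_{\prec,s})$ is then an anti-pre-Novikov bialgebra.

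For the converse, substituting the Remark~\ref{YE5} relations into \textup{(b)--(f)} of Proposition~\ref{YE4} yields a system of tensor identities phrased purely in $S_1$ and its $\sigma$-shifts, and the task is to show they jointly force $S_1 = 0$. This is where I expect the principal obstacle to lie, since no single reduced identity obviously does so on its own. My plan is to exploit the freedom of $x \in A$ together with the involutivity $\sigma_{ij}^2 = I$: for example, applying $\sigma_{23}$ to the reduced form of \eqref{CB60} (which involves only $S_1$ and $S_6 = -\sigma_{23}S_1$) and comparing with the original reduced \eqref{CB60} should eliminate the $\sigma$-shifted copy and leave a tensor identity in $S_1$ alone, valid for all $x$. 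Combining this with reduced \eqref{CB50} and \eqref{CB70} (whose $S_2, S_3, S_5, S_7$ are similarly $\sigma$-images of $S_1$) should then produce $S_1 = 0$, recovering~\eqref{YE6} and closing the equivalence.
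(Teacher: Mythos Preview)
Your approach is essentially identical to the paper's: the paper's entire proof reads ``Combining Theorem~\ref{YE3}, Proposition~\ref{YE4} and Remark~\ref{YE5}, we can get the conclusion,'' which is precisely the chain you describe. Your more careful treatment of the converse direction (extracting $S_1=0$ from the operator identities via $\sigma$-symmetrization) fills in detail that the paper leaves entirely to the reader, so in that respect your write-up is more complete than the original.
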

\begin{proof} 
Combining Theorem \ref{YE3}, Proposition \ref{YE4} and Remark \ref{YE5}, we can get the conclusion.
\end{proof}

\begin{defi} Let $(A,\succ,\prec)$ be an anti-pre-Novikov algebra and 
$r\in A\otimes A$. Eq.~\eqref{YE6} is called the {\bf anti-pre-Novikov Yang-Baxter equation} or {\bf APN-YBE} in short. 
\end{defi}

In the following, we characterize the skew-symmetric solutions of the APN-YBE in terms of operators.

For a vector space $A$, the isomorphism $A\otimes A^{*}\simeq Hom (A^{*},A)$ identifies an element $s\in A\otimes A$ with a map
$T_{s}:A^{*}\longrightarrow A$. Explicitly, 
\begin{equation} \label{YE7} T_{s}:A^{*}\longrightarrow A,\ \ \  \langle T_{s}(\zeta),\eta\rangle=\langle s,\zeta\otimes\eta\rangle,
\ \ \ \forall~\zeta,\eta\in A^{*}.\end{equation}
It is clearly that $T_{s}^{*}=T_{\tau(s)},~T_{s+\tau(s)}^{*}=T_{s+\tau(s)}$.

\begin{pro} \label{YE80} Let $(A,\succ,\prec)$ be an anti-pre-Novikov
 algebra and 
$s=\sum_{i}a_i\otimes b_i\in A\otimes A$. Then the following conclusions hold:
\begin{enumerate}
		\item $s$ is a solution of the APN-YBE: $P(s)=s_{12}\circ s_{13}+s_{23}\odot s_{13}+s_{12}\prec s_{23}=0$
if and only if
\begin{equation*}T_{\tau(s)}(\zeta)\circ T_{\tau(s)}(\eta)=T_{\tau(s)}(L_{\odot}^*(T_{s}(\zeta))\eta+R_{\prec}^*(T_{\tau(s)}(\eta))\zeta).\end{equation*}
\item $s$ is a solution of the equation:
$ P_1(s)=s_{12}\prec s_{13}-s_{13}\odot s_{23}+s_{12}\circ s_{23}=0$
if and only if
\begin{equation*}T_{\tau(s)}(\zeta)\prec T_{\tau(s)}(\eta)=T_{\tau(s)}(R_{\circ}^*(T_{\tau(s)}(\eta))\zeta-R_{\odot}^*(T_{s}(\zeta))\eta)).\end{equation*}
\item $s$ is a solution of the equation
$P_2(s)=s_{12}\succ s_{13}+s_{13}\star s_{23}-s_{12}\succ s_{23}=0$
if and only if
\begin{equation*}T_{\tau(s)}(\zeta)\succ T_{\tau(s)}(\eta)=T_{\tau(s)}(L_{\star}^*(T_{s}(\zeta))\eta)-R_{\succ}^*(T_{\tau(s)}(\eta))\zeta).\end{equation*}
\item $s$ is a solution of the equation:
$P_3(s)=s_{13}\circ s_{23}-s_{13}\prec s_{12}-s_{12}\odot s_{23}=0$
if and only if
\begin{equation*}T_{s}(\zeta)\circ T_{s}(\eta)=T_{s}(-L_{\odot}^*(T_{s}(\zeta))\eta-R_{\prec}^*(T_{\tau(s)}(\eta))\zeta).\end{equation*}
\item $s$ is a solution of the equation:
$P_4(s)=s_{13}\circ s_{12}-s_{13}\prec s_{23}-s_{23}\odot s_{12}=0$
if and only if
\begin{equation*}T_{s}(\zeta)\prec T_{s}(\eta)=T_{s}(R_{\odot}^*(T_{s}(\zeta))\eta-R_{\circ}^*(T_{\tau(s)}(\eta))\zeta).\end{equation*}
\item $s$ is a solution of the equation:
$P_5(s)=s_{12}\star s_{23}-s_{13}\succ s_{12}-s_{13}\succ s_{23}=0$
if and only if
\begin{equation*}T_{s}(\zeta)\succ T_{s}(\eta)=T_{s}(R_{\succ}^*(T_{\tau(s)}(\eta))\zeta-L_{\star}^*(T_{s}(\zeta))\eta).\end{equation*}
\end{enumerate}
\end{pro}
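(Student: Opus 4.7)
The plan is to translate each tensor equation $P_k(s)=0$ in $A^{\otimes 3}$ into the claimed operator identity on $A^*$ by pairing against test covectors and using the explicit description of $T_s$ and $T_{\tau(s)}$. Writing $s=\sum_i a_i\otimes b_i$, one has $T_s(\zeta)=\sum_i \langle\zeta,a_i\rangle b_i$ and $T_{\tau(s)}(\zeta)=\sum_i \langle\zeta,b_i\rangle a_i$, and the basic move is the identity $\langle f^*(x)\xi,y\rangle=-\langle\xi,f(x)y\rangle$ used to migrate each binary operation inside a pairing to a dual operator acting on the test covector. Since the pairings $A\otimes A\otimes A\cong\operatorname{Hom}(A^*\otimes A^*,A)$ are non-degenerate, the scalar identity one obtains from pairing with an arbitrary $\zeta\otimes\eta\otimes\xi$ is equivalent to $P_k(s)=0$.

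For item (a), I would expand
\begin{equation*}
P(s)=\sum_{i,j}(a_i\circ a_j)\otimes b_i\otimes b_j+a_j\otimes a_i\otimes(b_i\odot b_j)+a_i\otimes(b_i\prec a_j)\otimes b_j,
\end{equation*}
and contract the second and third slots against arbitrary $\eta,\xi\in A^*$. The first term collapses immediately to $T_{\tau(s)}(\eta)\circ T_{\tau(s)}(\xi)$. Using $b_i\odot b_j=L_{\odot}(b_i)b_j$ and linearity of $L_{\odot}^*$ in its argument, the second term rewrites as $-T_{\tau(s)}(L_{\odot}^*(T_s(\eta))\xi)$. Via $b_i\prec a_j=R_{\prec}(a_j)b_i$ the third term becomes $-T_{\tau(s)}(R_{\prec}^*(T_{\tau(s)}(\xi))\eta)$. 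Setting the sum to zero (and relabeling $\eta,\xi$ as $\zeta,\eta$) gives exactly the identity in (a), and the equivalence is clear since $\eta,\xi$ are arbitrary.

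Items (b) and (c) are handled identically by contracting slots $2,3$, yielding the $T_{\tau(s)}$-versions for the operations $\prec$ and $\succ$ respectively; items (d), (e), (f) are handled by contracting slots $1,2$ instead, which is what converts $T_{\tau(s)}$ into $T_s$ throughout and explains the systematic shift between the two halves of the proposition. In each case, the computation follows the same three-step template: expand $s_{ij}\cdot s_{k\ell}$ with $s=\sum a_i\otimes b_i$, use $\langle f^*(x)\xi,y\rangle=-\langle\xi,f(x)y\rangle$ to shift the shared-slot operation onto the test covector, and recognize the collected coefficients as $T_s$ or $T_{\tau(s)}$ applied to the dual-operator output. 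The main obstacle is purely organizational: keeping track of which operation ($\circ$, $\succ$, $\prec$, $\odot$, $\star$) occupies the shared slot for each subscript pattern (so that the correct $L$/$R$ form is produced), and verifying that $L_{\odot}$ is represented equivalently as $L_{\succ}+R_{\prec}$ or as $R_{\odot}$ acting from the other side, with signs tracked accordingly. Once this dictionary is fixed, each of the six equivalences reduces to a short, mechanical verification.
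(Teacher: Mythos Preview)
Your proposal is correct and follows essentially the same approach as the paper: both pair the tensor $P_k(s)\in A^{\otimes 3}$ against covectors and use the definition of $T_s$, $T_{\tau(s)}$ together with the duality rule $\langle f^*(x)\xi,y\rangle=-\langle\xi,f(x)y\rangle$ to convert each term into the claimed operator expression. The only cosmetic difference is that the paper pairs against all three slots with $\theta\otimes\zeta\otimes\eta$ to obtain a scalar identity, whereas you contract only two slots to land in $A$; the two formulations are immediately equivalent by non-degeneracy.
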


\begin{proof} According to Eq.~\eqref{YE7}, for all $\zeta,\eta,\theta\in A^{*}$, we have
\begin{align*}\langle\theta\otimes \zeta\otimes \eta,s_{12}\circ s_{13}\rangle
&=\sum_{i,j}\langle \theta\otimes \zeta\otimes \eta,a_i\circ a_j\otimes b_i\otimes b_j\rangle
\\&=\sum_{i,j}\langle \theta,a_i\circ a_j\rangle\langle \zeta,b_i\rangle \langle \eta,b_j\rangle 
=\langle T_{\tau(s)}(\zeta)\circ T_{\tau(s)}(\eta),\theta \rangle
,\end{align*}
\begin{align*}\langle \theta\otimes \zeta\otimes \eta,s_{23}\odot s_{13}\rangle
&=\sum_{i,j}\langle \theta\otimes \zeta\otimes \eta,a_i \otimes a_j\otimes (b_j\odot b_i)\rangle
\\&=\sum_{i,j}\langle \zeta,a_j\rangle \langle \theta,a_i\rangle\langle \eta,b_j\odot b_i\rangle 
=\langle \eta,T_{s}(\zeta) \odot T_{s}(\theta)\rangle
\\&=-\langle L_{\odot}^{*}(T_{s}(\zeta))\eta, T_{s}(\theta)\rangle
=-\langle T_{\tau(s)}(L_{\odot}^{*}(T_{s}(\zeta))\eta), \theta\rangle,\end{align*}
 \begin{align*}\langle \theta\otimes \zeta\otimes \eta,s_{12}\prec s_{23}\rangle
 &=\sum_{i,j}\langle \theta\otimes \zeta\otimes \eta,a_i \otimes (b_i\prec a_j)\otimes b_j\rangle
 \\& =\sum_{i,j}\langle \theta,a_i\rangle\langle \eta,b_j\rangle \langle \zeta,b_i\prec a_j\rangle 
 =\langle \zeta,T_{s}(\theta)\prec T_{\tau(s)}(\eta)\rangle
  \\&=-\langle R_{\prec}^{*}(T_{\tau(s)}(\eta))\zeta,T_{s}(\theta)\rangle
 =-\langle T_{\tau(s)}(R_{\prec}^{*}(T_{\tau(s)}(\eta))\zeta),\theta\rangle.\end{align*}
 Thus, Item (a) holds. The other items can be verified similarly.
\end{proof}

Recall that an $\mathcal O$-operator $T$ on a Novikov
algebra $(A,\circ)$ associated to a representation $(V,l,r)$ is a linear map $T:V\longrightarrow A$ satisfying
$T(u)\circ T(v)=T (l(T(u))v+r(T(v))u)$ for all $u,v\in V$.

 \begin{defi} Let $(A,\succ,\prec)$ be an anti-pre-Novikov algebra and $(V,l_{\succ},r_{\succ},l_{\prec},r_{\prec})$ be
 its representation.
  An $\mathcal O$-operator $T$ on $(A,\succ,\prec)$ associated to $(V,l_{\succ},r_{\succ},l_{\prec},r_{\prec})$
   is a linear map $T:V\longrightarrow A$ satisfying
\begin{equation*}T(u)\succ T(v)=T (l_{\succ}(T(u))v+r_{\succ}(T(v))u),\ \ \ 
T(u)\prec T(v)=T (l_{\prec}(T(u))v+r_{\prec}(T(v))u),~~\forall~u,v\in V.\end{equation*}
In particular, an $\mathcal{O}$-operator $P$ on $(A,\circ )$ associated to
the representation $(A,L_{\succ},R_{\succ},L_{\prec},R_{\prec})$ is called a Rota-Baxter operator, that is, $P:A\longrightarrow A$
is a linear map satisfying 
\begin{equation}
P(x)\succ P(y)=P(P(x)\succ y)+x\succ P(y)),\\ \ P(x)\prec P(y)=P(P(x)\prec y)+x\prec P(y)). \end{equation}
More generally, a linear map $P:A\longrightarrow A$ is called a Rota-Baxter of weight $\lambda$ on an anti-pre-Novikov algebra
$(A,\succ,\prec)$
if
\begin{align*} &P (x) \prec P(y)=P(P(x)\prec y+x\prec P(y)+\lambda x\prec y), \\
&P (x) \succ P(y)=P(P(x)\succ y+x\succ P(y)+\lambda x\succ y),\end{align*}
for all $x, y, z \in A$.
$(A,\succ,\prec,P)$ is called a Rota-Baxter anti-pre-Novikov algebra of weight $\lambda$.\end{defi}

Direct computation shows that
$(A,\succ,\prec,P)$ is a Rota-Baxter anti-pre-Novikov algebra of weight $\lambda$
if and only if
 $(A,\succ,\prec,\tilde{P})$ is, where $\tilde{P}=-\lambda I -P$.
		
\begin{thm} \label{YE8} Let $(A,\succ,\prec)$ be an anti-pre-Novikov
 algebra,
$s=\sum_{i}a_i\otimes b_i\in A\otimes A$ be skew-symmetric and $(A,\circ)$ be the associated Novikov
 algebra of $(A,\succ,\prec)$. Then the following conditions are equivalent:
\begin{enumerate}
		\item $s$ is a solution of the APN-YBE in $(A,\succ,\prec)$, that is, 
\begin{equation*}  s_{12}\circ s_{13}+s_{23}\odot s_{13}+s_{12}\prec s_{23}=0.\end{equation*}
\item $T_{s}$ is an $\mathcal O$-operator on $(A, \circ)$ associated 
to $(A^{*},-L_{\odot}^{*},R_{\prec}^{*})$.
\item $T_{s}$ is an $\mathcal O$-operator on $(A,\succ,\prec)$ associated to $(-L_{\star}^*,-R_{\succ}^*,
R_{\odot}^*,R_{\circ}^*)$.
\end{enumerate}
\end{thm}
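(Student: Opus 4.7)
The approach is to leverage Proposition \ref{YE80}, which recasts the six polynomials $P(s),P_1(s),\ldots,P_5(s)$ as identities for $T_s$ and $T_{\tau(s)}$, combined with the observation that skew-symmetry of $s$ yields $\tau(s)=-s$, hence $T_{\tau(s)}=-T_s$. The three equivalences will then follow from Proposition \ref{YE80}(a)-(c), Remark \ref{YE5}, and a direct computation comparing the representations of the associated Novikov algebra and the anti-pre-Novikov algebra.

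First, to establish (a) $\Leftrightarrow$ (b), I would apply Proposition \ref{YE80}(a): condition (a) amounts to
\begin{equation*}
T_{\tau(s)}(\zeta)\circ T_{\tau(s)}(\eta)=T_{\tau(s)}\bigl(L_{\odot}^{*}(T_{s}(\zeta))\eta+R_{\prec}^{*}(T_{\tau(s)}(\eta))\zeta\bigr).
\end{equation*}
Substituting $T_{\tau(s)}=-T_{s}$ and tracking signs, this collapses to
\begin{equation*}
T_{s}(\zeta)\circ T_{s}(\eta)=T_{s}\bigl(-L_{\odot}^{*}(T_{s}(\zeta))\eta+R_{\prec}^{*}(T_{s}(\eta))\zeta\bigr),
\end{equation*}
which is precisely the $\mathcal O$-operator condition in (b) associated to the representation $(A^{*},-L_{\odot}^{*},R_{\prec}^{*})$ of $(A,\circ)$ (cf.\ Proposition \ref{zr}(e)).

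For (a) $\Leftrightarrow$ (c), I would apply Proposition \ref{YE80}(b)(c) and again substitute $T_{\tau(s)}=-T_{s}$: then $P_1(s)=0$ becomes $T_{s}(\zeta)\prec T_{s}(\eta)=T_{s}(R_{\odot}^{*}(T_{s}(\zeta))\eta+R_{\circ}^{*}(T_{s}(\eta))\zeta)$ and $P_2(s)=0$ becomes $T_{s}(\zeta)\succ T_{s}(\eta)=T_{s}(-L_{\star}^{*}(T_{s}(\zeta))\eta-R_{\succ}^{*}(T_{s}(\eta))\zeta)$, and jointly these are exactly (c). To bridge (a) and (c) I would invoke Remark \ref{YE5}: identifying $S_{1}=P(s)$, $S_{7}=P_{1}(s)$ and $S_{2}=-P_{2}(s)$ in the list of seven expressions, the identities $S_{7}=-\sigma_{12}S_{1}$ and $S_{2}=-S_{1}-\sigma_{12}S_{1}$ make $P(s)=0$ force both $P_{1}(s)=0$ and $P_{2}(s)=0$, while the bijectivity of $\sigma_{12}$ gives the converse implication $P_{1}(s)=0\Rightarrow P(s)=0$. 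This yields (a) $\Leftrightarrow$ (c).

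As a sanity check and alternative route, the implication (c) $\Rightarrow$ (b) can be seen directly by summing the $\succ$- and $\prec$-components of (c) and using $-L_{\star}^{*}+R_{\odot}^{*}=-L_{\odot}^{*}$ together with $-R_{\succ}^{*}+R_{\circ}^{*}=R_{\prec}^{*}$, which reproduces (b) on the nose. The main technical hurdle I expect is twofold: keeping the sign conventions straight when propagating $T_{\tau(s)}=-T_s$ through each item of Proposition \ref{YE80}, and correctly matching the polynomials $P, P_1, P_2$ to the expressions $S_1, S_7, S_2$ so that the symmetry relations in Remark \ref{YE5} deliver the needed propagation $P(s)=0\Leftrightarrow P_1(s)=0$. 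Once these bookkeeping points are settled, the equivalence of (a), (b), (c) is an immediate assembly of the pieces.
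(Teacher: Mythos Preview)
Your proposal is correct and follows essentially the same approach as the paper, which simply states ``Combining Remark \ref{YE5} and Proposition \ref{YE80}, we get the statement.'' You have unpacked exactly what that combination means: the substitution $T_{\tau(s)}=-T_s$ into Proposition \ref{YE80}(a)--(c), together with the identifications $S_1=P(s)$, $S_7=P_1(s)$, $S_2=-P_2(s)$ and the permutation relations of Remark \ref{YE5}, yield all three equivalences just as you outline.
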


\begin{proof} Combining Remark \ref{YE5} and Proposition \ref{YE80}, we get the statement.
\end{proof}

\begin{thm} \label{Yo}
 Let $(A,\succ,\prec)$ be an anti-pre-Novikov algebra and $(V,l_{\succ},r_{\succ},l_{\prec},r_{\prec})$ be
  a representation of $(A,\succ,\prec)$. Suppose that $(V^{*},-(l_{\circ}^*+r_{\circ}^*),-r_{\succ}^*,
r_{\odot}^*,r_{\circ}^*)$ is the dual representation of $A$ given by Proposition \ref{zr}. 
Let $\hat{A}=A\ltimes V^{*}$ and
 $T:V\longrightarrow A$ be a linear map which is identifies an element in $\hat{A}\otimes \hat{A}$ through
 ($Hom(V,A)\simeq A\otimes V^{*}\subseteq \hat{A}\otimes \hat{A}$).
  Then $s=T-\tau(T)$ is a skew-symmetric solution
of the APN-YBE in the anti-pre-Novikov algebra $\hat{A}$ if and only if $T$ is an $\mathcal O$-operator
on $(A,\succ,\prec)$ associated to $(V,l_{\succ},r_{\succ},l_{\prec},r_{\prec})$, where
$r_{\circ}^{*}=r_{\prec}^{*}+r_{\succ}^{*},~l_{\circ}^{*}=l_{\prec}^{*}+l_{\succ}^{*},~$
$l_{\odot}^{*}=r_{\prec}^{*}+l_{\succ}^{*},~r_{\odot}^{*}=l_{\prec}^{*}+r_{\succ}^{*}.$
\end{thm}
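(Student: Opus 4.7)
The strategy is to reduce the claim to Theorem \ref{YE8} applied inside the anti-pre-Novikov algebra $\hat{A}=A\ltimes V^{*}$, whose semi-direct structure uses the dual representation $(V^{*},-l_{\star}^{*},-r_{\succ}^{*},r_{\odot}^{*},r_{\circ}^{*})$ from Proposition \ref{zr}(c). First I would identify $T\in\mathrm{Hom}(V,A)$ with an element of $A\otimes V^{*}\subseteq \hat{A}\otimes\hat{A}$, so that $s=T-\tau(T)$ lies in $(A\otimes V^{*})\oplus (V^{*}\otimes A)$ and is automatically skew-symmetric. By Theorem \ref{YE8}, $s$ is a solution of the APN-YBE in $\hat{A}$ if and only if the associated map $T_{s}:\hat{A}^{*}\to\hat{A}$ is an $\mathcal O$-operator on $(\hat{A},\succeq,\preceq)$ with respect to the dual representation of $\hat{A}$.

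Next I would compute $T_{s}$ explicitly through the decompositions $\hat{A}^{*}=A^{*}\oplus V$ and $\hat{A}=A\oplus V^{*}$. A short pairing calculation using Eq.~\eqref{YE7} yields $T_{s}(\alpha)=T^{*}(\alpha)\in V^{*}$ for $\alpha\in A^{*}$, where $T^{*}:A^{*}\to V^{*}$ is the adjoint of $T$, and $T_{s}(u)=-T(u)\in A$ for $u\in V$. Then I would test the two $\mathcal O$-operator identities for $T_{s}$ on each of the four cases $(\zeta,\eta)\in\{A^{*},V\}\times\{A^{*},V\}$. The key bookkeeping input is the semi-direct product formula together with the cancellations in the dual representation of $\hat{A}$: for $x\in A$ and $w^{*}\in V^{*}$ one has $x\circ w^{*}=-l_{\odot}^{*}(x)w^{*}$ and $w^{*}\circ x=r_{\prec}^{*}(x)w^{*}$, which rest on the identities $-l_{\star}^{*}+r_{\odot}^{*}=-l_{\odot}^{*}$ and $-r_{\succ}^{*}+r_{\circ}^{*}=r_{\prec}^{*}$. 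Analogous formulas hold for the other products involving $V^{*}$.

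For $\zeta=u,\eta=v\in V$ the $\succeq$-condition becomes $T(u)\succ T(v)=T(l_{\succ}(T(u))v+r_{\succ}(T(v))u)$, exactly the $\mathcal O$-operator equation for $T$; the $\preceq$-case is analogous. For $\zeta,\eta\in A^{*}$ both sides vanish since $V^{*}\cdot V^{*}=0$ in the semi-direct product. For the mixed cases, the output lies in $V^{*}$, and pairing against an arbitrary $w\in V$ together with $\langle T^{*}(\mu),w\rangle=\langle\mu,T(w)\rangle$ reduces the identity once more to the $\mathcal O$-operator equation for $T$, merely with the roles of the two slots permuted. Thus all four cases collapse to the single pair of $\mathcal O$-operator equations for $T$ with respect to $(V,l_{\succ},r_{\succ},l_{\prec},r_{\prec})$, proving both directions. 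The main obstacle is the sign bookkeeping: one must tabulate the cancellations such as $l_{\odot}-r_{\prec}=l_{\succ}$ and $-l_{\star}^{*}+r_{\odot}^{*}=-l_{\odot}^{*}$ so that the dual-of-dual operators on $\hat{A}^{*}$, restricted to $V$ or $A^{*}$, recover exactly $l_{\succ},r_{\succ},l_{\prec},r_{\prec}$ (respectively their transposes through $T$ and $T^{*}$); once this dictionary is fixed, the four case analyses proceed uniformly.
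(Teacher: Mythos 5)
Your proposal is correct, but it follows a genuinely different route from the paper's own proof of Theorem \ref{Yo}. The paper argues by direct expansion: writing $T=\sum_i T(v_i)\otimes v_i^{*}$ in a basis, it computes the three tensors $s_{12}\circ s_{13}$, $s_{12}\prec s_{23}$ and $s_{23}\odot s_{13}$ inside $\hat{A}$ using the semi-direct product formulas \eqref{YE9}--\eqref{YE12}, and reads off that their sum vanishes precisely when the identities \eqref{YE13}--\eqref{YE15} hold, which are then recognized as the $\mathcal O$-operator conditions for $T$. You instead recycle Theorem \ref{YE8} applied to the skew-symmetric element $s=T-\tau(T)$ of $\hat{A}\otimes\hat{A}$, reducing the APN-YBE to the statement that $T_s:\hat{A}^{*}\to\hat{A}$ is an $\mathcal O$-operator for the dual of the regular representation of $\hat{A}$, and then you evaluate everything on the decomposition $\hat{A}^{*}=A^{*}\oplus V$. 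I checked the identifications your reduction rests on and they all hold: $T_s|_{A^{*}}=T^{*}$ and $T_s|_{V}=-T$; the products $x\circ w^{*}=-l_{\odot}^{*}(x)w^{*}$ and $w^{*}\circ x=r_{\prec}^{*}(x)w^{*}$ for $x\in A$, $w^{*}\in V^{*}$; the vanishing of $L_{\star,\hat{A}}^{*}(w^{*})\beta$ and $R_{\succeq}^{*}(z^{*})\alpha$ for $\alpha,\beta\in A^{*}$ (so the $A^{*}\times A^{*}$ case is trivially $0=0$); and the collapse of the mixed case, after pairing with $w\in V$ and using $l_{\odot}-r_{\prec}=l_{\succ}$, to $T(w)\succ T(v)=T(l_{\succ}(T(w))v+r_{\succ}(T(v))w)$, which is again the $\mathcal O$-operator equation. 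Your route buys conceptual economy — no new tensor computation, only bookkeeping of how the double-dual representation restricts to $A^{*}$ and $V$ — at the price of leaning on Theorem \ref{YE8} and tracking an extra layer of dualization; the paper's direct expansion is longer but self-contained and exhibits the intermediate identities \eqref{YE13}--\eqref{YE15} explicitly.
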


\begin{proof}
For all $x+a^{*},y+b^{*}\in \hat{A}$ with $x,y\in A$ and $a^{*},b^{*}\in V^{*}$, the anti-pre-Novikov algebraic structure 
$(\succ,\prec)$ on $\hat{A}$ is defined by
\begin{align}\label{YE9}
 &(x+a^{*})\succ(y+b^{*})=x\succ y-(l_{\circ}^*+r_{\circ}^*)(x)b^{*}-r_{\succ}^{*}(y)a^{*},
 \\& \label{YE10}(x+a^{*})\prec(y+b^{*})=x\prec y+r_{\odot}^*(x)b^{*}+r_{\circ}^*(y)a^{*},\end{align}
and the associated Novikov algebraic structure $\circ$ on $\hat{A}$ is given by
 \begin{align*}
 (x+a^{*})\circ(y+b^{*})=x\circ y-l_{\odot}^*(x)b^{*}+r_{\prec}^{*}(y)a^{*}.\end{align*}
Assume that $\{v_1,v_2,\cdot\cdot\cdot, v_n\}$ is a basis of $V$ and 
$\{v_1^{*},v^{*}_2,\cdot\cdot\cdot, v^{*}_n\}$ is the dual basis of $V^{*}$. Then 
$T=\sum_{i=1}^{n}T(v_i)\otimes v^{*}_i\in T(V)\otimes V^{*}\subseteq \hat{A}\otimes \hat{A}$. Note that
 \begin{align}\label{YE11}
 &l_{\succ}^{*}(T(v_{i}))v_{j}^{*}
 =\sum_{k=1}^{n}\langle -v_{j}^{*},l_{\succ}(T(v_{i})v_k\rangle v_{k}^{*}, \ \ \ r_{\succ}^{*}(T(v_{i}))v_{j}^{*}=
 \sum_{k=1}^{n}\langle -v_{j}^{*},r_{\succ}(T(v_{i})v_k\rangle v_{k}^{*},\\&
 \label{YE12} l_{\prec}^{*}(T(v_{i}))v_{j}^{*}=
 \sum_{k=1}^{n}\langle -v_{j}^{*},l_{\prec}(T(v_{i})v_k\rangle v_{k}^{*}
, \ \ \ r_{\prec}^{*}(T(v_{i}))v_{j}^{*}= \sum_{k=1}^{n}\langle -v_{j}^{*},r_{\prec}(T(v_{i})v_k\rangle v_{k}^{*}.\end{align}
 Using Eqs.~\eqref{YE9}-\eqref{YE12}, we have
 \begin{align*}
 s_{12}\circ s_{13}
 &=\sum_{i,j=1}^{n}T(v_i)\circ T(v_j)\otimes v_{i}^{*}\otimes v_{j}^{*}-
 T(v_i)\circ v_{j}^{*}\otimes v_{i}^{*}\otimes T(v_{j})
-v_{i}^{*}\circ T(v_j)\otimes T(v_{i})\otimes v_{j}^{*}
\\&=\sum_{i,j=1}^{n}T(v_i)\circ T(v_j)\otimes v_{i}^{*}\otimes v_{j}^{*}
+[l_{\odot}^{*}(T(v_i))v_{j}^{*}]\otimes v_{i}^{*}\otimes T(v_{j})
-r_{\prec}^{*}(T(v_j))v_{i}^{*}\otimes T(v_{i})\otimes v_{j}^{*}
\\&=\sum_{i,j=1}^{n}T(v_i)\circ T(v_j)\otimes v_{i}^{*}\otimes v_{j}^{*}
-v_{j}^{*}\otimes v_{i}^{*}\otimes T(l_{\odot}(T(v_i))v_{j})
+v_{i}^{*}\otimes T(r_{\prec}(T(v_j))v_{i})\otimes v_{j}^{*},\end{align*}
\begin{align*}
 s_{12}\prec s_{23}
 &=\sum_{i,j=1}^{n}T(v_i)\otimes (v_{i}^{*}\prec  T(v_j))\otimes v_{j}^{*}+
v_{i}^{*}\otimes (T(v_{i})\prec v_{j}^{*})\otimes T(v_{j})-
v_{i}^{*}\otimes (T(v_{i})\prec T(v_{j}))\otimes v_{j}^{*}
\\&=\sum_{i,j=1}^{n}T(v_i)\otimes v_{i}^{*}\otimes [r_{\circ}^*(T(v_j))v_{j}^{*}]
+v_{i}^{*}\otimes r_{\odot}^*(T(v_{i}))v_{j}^{*} \otimes T(v_{j})
-v_{i}^{*}\otimes (T(v_{i})\prec T(v_{j}))\otimes v_{j}^{*}
\\&=\sum_{i,j=1}^{n}-T(r_{\circ}(T(v_j))v_i)\otimes v_{i}^{*}\otimes v_{j}^{*}
-v_{i}^{*}\otimes v_{j}^{*} \otimes T(r_{\odot}(T(v_{i})v_{j}))
-v_{i}^{*}\otimes (T(v_{i})\prec T(v_{j}))\otimes v_{j}^{*},\end{align*}
\begin{align*}
 s_{23}\odot s_{13}
 &=\sum_{i,j=1}^{n}-T(v_j)\otimes v_{i}^{*}\otimes (T(v_i)\odot  v_{j}^{*})-
v_{j}^{*}\otimes T(v_{i})\otimes (v_{i}^{*}\odot T(v_j))
+v_{j}^{*}\otimes v_{i}^{*}\otimes (T(v_i)\odot T(v_j))
\\&=\sum_{i,j=1}^{n}T(v_j)\otimes v_{i}^{*}\otimes [l_{\circ}^{*}(T(v_i))
 v_{j}^{*}]-v_{j}^{*}\otimes T(v_{i})\otimes l_{\prec}^{*}(T(v_j))v_{i}^{*}
+v_{j}^{*}\otimes v_{i}^{*}\otimes (T(v_i)\odot T(v_j))
\\&=\sum_{i,j=1}^{n}-T(l_{\circ}(T(v_i))v_j)\otimes v_{i}^{*}\otimes 
 v_{j}^{*})+v_{j}^{*}\otimes T(l_{\prec}(T(v_j))v_{i})\otimes v_{i}^{*}
+v_{j}^{*}\otimes v_{i}^{*}\otimes (T(v_i)\odot T(v_j))
.\end{align*}
Therefore, $s=T-\tau(T)$ is a skew-symmetric solution
of the APN-YBE in the anti-pre-Novikov algebra $\hat{A}$ if and only if the following equations hold:
 \begin{align}\label{YE13}
&T(v_i)\circ T(v_j)=T(r_{\circ}(Tv_j)v_i)+T(l_{\circ}(T(v_i))v_j),
\\&\label{YE14}
 T(v_i)\odot T(v_j)=T(l_{\odot}(T(v_i))v_{j})+T(r_{\odot}(T(v_{j})v_{i}),
\\&\label{YE15}
T(v_{i})\prec T(v_{j})=T(r_{\prec}(T(v_j))v_{i})+T(l_{\prec}(T(v_i))v_{j}).\end{align}
It is easy to check that Eqs.~(\ref{YE13})-(\ref{YE15}) hold if and only if $T$ is an $\mathcal O$-operator
on $(A,\succ,\prec)$ associated with $(V,l_{\succ},r_{\succ},l_{\prec},r_{\prec})$.
The proof is finished.
\end{proof}

\begin{ex} Let $(A,\succ,\prec)$ be a 2-dimensional anti-pre-Novikov algebra with a basis $\{e_1, e_2 \}$ whose
non-trivial
multiplication is given by $e_1 \succ e_1 = ae_2~(~\forall~a\in k)$. Define a linear map $T:A\longrightarrow A$ by a matrix
 $\begin {bmatrix}
 t_{1}&t_{2}\\
t_{3}&t_{4}
\end {bmatrix}$
with respect to the basis $\{e_1, e_2 \}$, where $t_i\in k~(i=1,2,3,4)$. 
Then $T$ is an $\mathcal{O}$-operator on $(A,\succ,\prec)$ associated to the representation
$(A,L_{\succ},R_{\succ},L_{\prec},R_{\prec})$ if and only if $t_{2}=0,~t_{1}(t_{1}-2t_{4})=0$.
Denote the dual basis of $A^{*}$ by $\{e_1^{*}, e_2^{*} \}$. 
The semi-direct product $A\ltimes A^{*}$ of $(A,\succ,\prec)$ and its representation $(A^{*},-(L_{\prec_A}^{*}+L_{\succ_A}^{*}+R_{\prec_A}^{*}+R_{\succ_A}^{*}),-R_{\succ_A}^{*},(R_{\succ_A}^{*}+L_{\prec_A}^{*}),
 (R_{\succ_A}^{*}+R_{\prec_A}^{*}))$ is an anti-pre-Novikov algebra with the non-trivial binary operation given by 
   \begin{flalign*}
&e_1\succ e_1=ae_2, \ \ \ e_1\succ e_2^{*}= 2ae_1^{*}, \ \ \ e_2^{*}\succ e_1= ae_1^{*},\ \ \
e_1\prec e_2^{*}= e_2^{*}\prec e_1=-ae_1^{*}.
\end{flalign*}
 In the light of Theorem \ref{Yo}, 
 \begin{align*}s&=\sum_{i,j=1}^{2}T(e_i)\otimes e_i^{*}-e_j^{*}\otimes T(e_j)\\&=(t_1e_1+t_3e_2)\otimes e_1^{*}
 +t_4e_2\otimes e_2^{*}- e_1^{*}\otimes (t_1e_1+t_3e_2)-t_4e_2^{*}\otimes e_2.
 \end{align*}
 is a skew-symmetric solution of the APN-YBE in the anti-pre-Novikov algebra $(A\ltimes A^{*},\succ,\prec)$. By Theorem \ref{BY}, 
$ (A\ltimes A^{*},\succ,\prec,\Delta_{\succ,s},\Delta_{\prec,s})$ is an anti-pre-Novikov bialgebra with the linear maps
$\Delta_{\succ,s},\Delta_{\prec,s}:A\ltimes A^{*}\longrightarrow (A\ltimes A^{*})\otimes (A\ltimes A^{*})$ defined respectively by
 \begin{align*}&
\Delta_{\succ,s}(x)=(I\otimes L_{\star}(x)-L_{\succ}(x)\otimes I)s,
\\&\Delta_{\prec,s}(x)=(L_{\circ}(x)\otimes I-I\otimes L_{\odot}(x))s,~~\forall~x\in A\ltimes A^{*}.
\end{align*}
Explicitly, 
 \begin{align*}&
\Delta_{\succ,s}(e_1)=a(t_4-t_1)e_2\otimes e_1^{*}+a(2t_4-t_1)e_1^{*}\otimes e_2,\\
&\Delta_{\succ,s}(e_2^{*})=-2at_1e_1^{*}\otimes e_1^{*},\\&
\Delta_{\prec,s}(e_1)=a(t_1-t_4)e_2\otimes e_1^{*}+a(t_1-t_4)e_1^{*}\otimes e_2,\\
&\Delta_{\succ,s}(e_2^{*})=-at_1e_1^{*}\otimes e_1^{*}.
\end{align*}
\end{ex}

\section{Quasi-triangular anti-pre-Novikov bialgebras and factorizable anti-pre-Novikov bialgebras}
 By Theorem \ref{BY}, a skew-symmetric solution of the APN-YBE gives rise to  
an (coboundary) anti-pre-Novikov bialgebra. In the present section, we examine how solutions of the APN-YBE 
produce anti-pre-Novikov bialgebras, showing these structures are no longer constrained to possess skew-symmetry.

\subsection{Quasi-triangular anti-pre-Novikov bialgebras }

\begin{defi} \label{In}
 Let $(A,\succ,\prec)$ be an anti-pre-Novikov algebra and $s\in A\otimes A$. Then $s$ is called {\bf invariant} if 
 \begin{align}&\label{IE3}
(I\otimes L_{\star}(x)-L_{\succ}(x)\otimes I)s=0,
\\&\label{IE4}(L_{\circ}(x)\otimes I-I\otimes L_{\odot}(x))s=0,~\forall~x\in A.
\end{align}
\end{defi}
 
 \begin{lem} \label{In1}
 Let $(A,\succ,\prec)$ be an anti-pre-Novikov algebra and $s\in A\otimes A$. Then $s$ is invariant if and only if
 \begin{align}&\label{IE5}
L_{\star}(x) T_{s}(\zeta)+T_{s}(L_{\succ}^{*}(x)\zeta)=0,
\\&\label{IE6}L_{\odot}(x) T_{s}(\zeta)+T_{s}(L_{\circ}^{*}(x)\zeta)=0,~~\forall~x\in A,\zeta\in A^{*}.
\end{align}
Moreover, Eqs. (\ref{IE5})-(\ref{IE6}) hold if and only if the following equations hold:
\begin{align}&\label{IE55}
L_{\star}^{*} (T_{s}(\zeta))\eta=R_{\succ}^{*}(T_{\tau(s)}(\eta))\zeta),
\\&\label{IE66}R_{\odot}^{*} (T_{s}(\zeta))\eta=R_{\circ}^{*}(T_{\tau(s)}(\eta))\zeta,~~\forall~x\in A,\zeta\in A^{*}.
\end{align}
\end{lem}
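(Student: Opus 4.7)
The plan is to establish both equivalences by pairing the tensorial identities with elements of $A^*\otimes A^*$ and systematically applying the defining relation $\langle T_s(\zeta),\eta\rangle=\langle s,\zeta\otimes\eta\rangle$ from Eq.~\eqref{YE7} together with the duality convention $\langle f^*(x)u^*,v\rangle=-\langle u^*,f(x)v\rangle$.

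For the first equivalence, write $s=\sum_i a_i\otimes b_i$ and pair Eq.~\eqref{IE3} with an arbitrary $\zeta\otimes\eta\in A^*\otimes A^*$. The $L_\succ(x)\otimes I$ contribution becomes $\sum_i\langle L_\succ(x)a_i,\zeta\rangle\langle b_i,\eta\rangle=-\sum_i\langle a_i,L_\succ^*(x)\zeta\rangle\langle b_i,\eta\rangle=-\langle T_s(L_\succ^*(x)\zeta),\eta\rangle$, while the $I\otimes L_\star(x)$ contribution yields $\langle L_\star(x)T_s(\zeta),\eta\rangle$ since $L_\star(x)$ acts directly on the second tensor factor. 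Since this must hold for all $\eta\in A^*$, we recover Eq.~\eqref{IE5}, and the converse is obtained by running the pairing backwards. An identical calculation for Eq.~\eqref{IE4} establishes its equivalence with Eq.~\eqref{IE6}.

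For the moreover part, I will exploit two symmetry observations that follow directly from the definitions of $\star$ and $\odot$:
\begin{align*}
L_\star(a)b=a\circ b+b\circ a=L_\star(b)a,\qquad L_\odot(a)b=a\succ b+b\prec a=R_\odot(b)a,
\end{align*}
for all $a,b\in A$. Pairing Eq.~\eqref{IE5} with an arbitrary $\eta\in A^*$ and then testing against $x\in A$ via the duality, the first symmetry rewrites $\langle L_\star(x)T_s(\zeta),\eta\rangle=\langle L_\star(T_s(\zeta))x,\eta\rangle=-\langle L_\star^*(T_s(\zeta))\eta,x\rangle$, whereas the term $\langle T_s(L_\succ^*(x)\zeta),\eta\rangle$ transforms via $T_{\tau(s)}=T_s^*$ into $\langle L_\succ^*(x)\zeta,T_{\tau(s)}(\eta)\rangle=-\langle\zeta,x\succ T_{\tau(s)}(\eta)\rangle=\langle R_\succ^*(T_{\tau(s)}(\eta))\zeta,x\rangle$. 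Equating both sides for arbitrary $x$ yields exactly Eq.~\eqref{IE55}. The second symmetry supplies the analogous identity $\langle L_\odot(x)T_s(\zeta),\eta\rangle=\langle R_\odot(T_s(\zeta))x,\eta\rangle=-\langle R_\odot^*(T_s(\zeta))\eta,x\rangle$, and combining it with the transformation of $\langle T_s(L_\circ^*(x)\zeta),\eta\rangle$ into $\langle R_\circ^*(T_{\tau(s)}(\eta))\zeta,x\rangle$ converts Eq.~\eqref{IE6} into Eq.~\eqref{IE66}. Each step is reversible, so the equivalence is two-sided.

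The computation is essentially bookkeeping; the only delicate points are keeping track of the minus sign in the adjoint convention and correctly recognizing that the symmetry $L_\star(a)b=L_\star(b)a$ (which holds because $\star$ is the symmetrization of $\circ$) and the duality $L_\odot(a)b=R_\odot(b)a$ (built into the definition of $\odot$) are precisely what allow each pair of $A$-indices to be exchanged, producing the characteristic crossed form $L_\star^*(T_s\zeta)\eta\leftrightarrow R_\succ^*(T_{\tau(s)}\eta)\zeta$ and $R_\odot^*(T_s\zeta)\eta\leftrightarrow R_\circ^*(T_{\tau(s)}\eta)\zeta$ of Eqs.~\eqref{IE55}--\eqref{IE66}.
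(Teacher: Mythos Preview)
Your proof is correct and follows essentially the same approach as the paper: both arguments reduce the tensorial invariance conditions to operator identities by pairing with $\zeta\otimes\eta$ and invoking the duality convention, and the paper's proof of the ``moreover'' part is simply ``Analogously,'' which you have fleshed out with the symmetry identities $L_\star(a)b=L_\star(b)a$ and $L_\odot(a)b=R_\odot(b)a$ together with $T_s^*=T_{\tau(s)}$. Your presentation is in fact more explicit than the paper's on this second equivalence.
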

\begin{proof} 
For all $~x\in A,\zeta,\eta\in A^{*}$, we have
\begin{align*}\langle(I\otimes L_{\star}(x)-L_{\succ}(x)\otimes I)s,\zeta\otimes \eta\rangle
=&\langle s,L_{\succ}^{*}(x)\zeta\otimes \eta\rangle-\langle s,\zeta\otimes L_{\star}^{*}(x)\eta\rangle
\\=&\langle T_{s}(L_{\succ}^{*}(x))\zeta,\eta\rangle-\langle T_{s}(\zeta), L_{\star}^{*}(x)\eta\rangle
\\=&\langle T_{s}(L_{\succ}^{*}(x))\zeta,\eta\rangle+\langle L_{\star}(x)T_{s}(\zeta), \eta\rangle,\end{align*}
\begin{align*} \langle L_{\circ}(x)\otimes I-I\otimes L_{\odot}(x))s,\zeta\otimes \eta\rangle
=&\langle -s,L_{\circ}^{*}(x)\zeta\otimes \eta\rangle+\langle s,\zeta\otimes L_{\odot}^{*}(x)\eta\rangle
\\=&-\langle T_{s}(L_{\circ}^{*}(x))\zeta,\eta\rangle+\langle T_{s}(\zeta), L_{\odot}^{*}(x)\eta\rangle
\\=&-\langle T_{s}(L_{\circ}^{*}(x))\zeta,\eta\rangle-\langle L_{\odot}(x) T_{s}(\zeta), \eta\rangle.
\end{align*}
Thus, Eqs. (\ref{IE3})-(\ref{IE4}) hold if and only if Eqs. (\ref{IE5})-(\ref{IE6}) hold. Analogously,
Eqs. (\ref{IE5})-(\ref{IE6}) hold if and only if Eqs. (\ref{IE55})-(\ref{IE66}) hold. 
\end{proof}

\begin{pro} \label{Si}
 Let $(A,\succ,\prec)$ be an anti-pre-Novikov algebra and $s\in A\otimes A$. Then the following
 conditions are equivalent:
  \begin{enumerate}
\item $s+\tau(s)$ is invariant.
\item The following equations hold:
\begin{align}&\label{IE7}T_{s+\tau(s)}(L_{\succ}^{*}(x)\zeta)+
L_{\star}(x)T_{s+\tau(s)}(\zeta)=0, \ \ \ T_{s+\tau(s)}(L_{\circ}^{*}(x)\zeta)+
L_{\odot}(x)T_{s+\tau(s)}(\zeta)=0.\end{align} 
\item The following equations hold:
\begin{align}&\label{IE8}L_{\succ}(x)T_{s+\tau(s)}(\zeta)+T_{s+\tau(s)}(L_{\star}^{*}(x)\zeta)=0,
 \ \ \ T_{s+\tau(s)}(L_{\odot}^{*}(x)\zeta)+L_{\circ}(x)T_{s+\tau(s)}(\zeta)=0.\end{align}
\item The following equations hold:
\begin{align}&\label{LR1} 
L_{\star}^{*} (T_{s+\tau(s)}(\zeta))\eta=R_{\succ}^{*}(T_{s+\tau(s)}(\eta))\zeta,\ \ \
R_{\odot}^{*} (T_{s+\tau(s)}(\zeta))\eta=R_{\circ}^{*}(T_{s+\tau(s)}(\eta))\zeta,
\end{align}
\end{enumerate}
for all $x\in A,~\zeta,\eta\in A^{*}$
 \end{pro}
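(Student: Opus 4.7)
My plan is to reduce the entire proposition to Lemma \ref{In1} applied to the symmetric tensor $t := s+\tau(s)$, together with the observation that $\tau(t)=t$. Lemma \ref{In1} provides a chain of equivalences: for any element of $A\otimes A$, invariance $\Longleftrightarrow$ Eqs.~\eqref{IE5}-\eqref{IE6} $\Longleftrightarrow$ Eqs.~\eqref{IE55}-\eqref{IE66}. Substituting $t$ for $s$, the invariance condition is precisely (a); Eqs.~\eqref{IE5}-\eqref{IE6} become exactly Eqs.~\eqref{IE7}, which is (b); and Eqs.~\eqref{IE55}-\eqref{IE66} become Eqs.~\eqref{LR1}, which is (d), after collapsing $T_{\tau(t)}$ to $T_t$ using the symmetry $\tau(t)=t$. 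This yields (a) $\Longleftrightarrow$ (b) $\Longleftrightarrow$ (d) with essentially no computation.

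For (b) $\Longleftrightarrow$ (c), I would exploit the self-adjointness of the operator $T_t$. From the defining formula Eq.~\eqref{YE7}, $\langle T_t(\zeta),\eta\rangle=\langle t,\zeta\otimes\eta\rangle=\langle t,\eta\otimes\zeta\rangle=\langle T_t(\eta),\zeta\rangle$, so $T_t=T_t^\ast$. Starting from the first identity of (b), I would pair both sides against an arbitrary $\eta\in A^\ast$, move the left multiplications across the pairing via $\langle L_\bullet(x)u,\eta\rangle=-\langle u,L_\bullet^\ast(x)\eta\rangle$, and then shift $T_t$ to the other argument using self-adjointness. After relabeling, the resulting identity is exactly the first identity of (c); the argument is reversible, and the same manipulation converts the second equation of (b) into the second of (c) (with the pair $(L_\star,L_\succ)$ replaced by $(L_\circ,L_\odot)$).

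I do not expect any serious obstacle: the proposition is a formal consequence of Lemma \ref{In1} combined with symmetry of $t$. The only place requiring care is the bookkeeping of signs when dualizing the representation maps and the matching of the pairs $(L_\star,L_\succ)$ in the first identities with $(L_\circ,L_\odot)$ in the second identities, which get swapped between (b) and (c). Since $t$ is symmetric, the two equivalences (a) $\Longleftrightarrow$ (d) and (b) $\Longleftrightarrow$ (c) are driven by the same underlying fact $T_t=T_t^\ast$, so the argument is uniform.
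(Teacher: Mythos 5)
Your proposal is correct and follows essentially the same route as the paper: the paper likewise obtains (a) $\Longleftrightarrow$ (b) and (a) $\Longleftrightarrow$ (d) by applying Lemma \ref{In1} to $s+\tau(s)$ (using $\tau(s+\tau(s))=s+\tau(s)$ to identify $T_{\tau(s+\tau(s))}$ with $T_{s+\tau(s)}$), and gets (b) $\Longleftrightarrow$ (c) from the self-adjointness $T_{s+\tau(s)}=T_{s+\tau(s)}^{*}$. Your pairing computation for (b) $\Longleftrightarrow$ (c) is exactly the bookkeeping the paper leaves implicit.
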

  
 \begin{proof} In view of Lemma \ref{In1}, $s+\tau(s)$ is invariant if and only if Eq.~(\ref{IE7}) holds.
 Since $T_{s+\tau(s)}=T_{s+\tau(s)}^{*}$, Eq.~(\ref{IE7}) holds if and only if Eq.~ (\ref{IE8}) holds.
By Eqs.~(\ref{IE55})-(\ref{IE66}), $s+\tau(s)$ is invariant if and only if  Eq.~(\ref{LR1}) holds. 
The proof is completed. 
\end{proof}
 Combining Eqs.~(\ref{IE7})-(\ref{LR1}), we obtain for all~$x\in A,~\zeta,\eta\in A^{*}$,
\begin{align}&\label{IE9}
T_{s+\tau(s)}(L_{\prec}^{*}(x)\zeta)=
R_{\odot}(x)T_{s+\tau(s)}(\zeta), \ \ \
L_{\prec}(x)T_{s+\tau(s)}(\zeta)=T_{s+\tau(s)}R_{\odot}^{*}(x)\zeta),\\
&\label{IE10}T_{s+\tau(s)}(R_{\prec}^{*}(x)\zeta)=R_{\circ}(x)T_{s+\tau(s)}(\zeta), \ \ \
R_{\succ}(x)T_{s+\tau(s)}(\zeta)=-T_{s+\tau(s)}R_{\succ}^{*}(x)\zeta),
\\
&\label{IE11}T_{s+\tau(s)}(R_{\succ}^{*}(x)\zeta)=-R_{\succ}(x)T_{s+\tau(s)}(\zeta), \ \ \
R_{\prec}(x)T_{s+\tau(s)}(\zeta)=T_{s+\tau(s)}R_{\circ}^{*}(x)\zeta),
\\&\label{LR2} 
L_{\odot}^{*} (T_{s+\tau(s)}(\zeta))\eta=-R_{\prec}^{*}(T_{s+\tau(s)}(\eta))\zeta, \ \ \
L_{\circ}^{*} (T_{s+\tau(s)}(\zeta))\eta=-L_{\prec}^{*}(T_{s+\tau(s)}(\eta))\zeta.
\end{align}

\begin{thm}\label{Ya1} Let $(A,\succ,\prec)$ be an anti-pre-Novikov
 algebra and $s=\sum_{i}a_i\otimes b_i\in A\otimes A$. 
Assume that $s+\tau(s)$ is invariant. Then the following conditions are equivalent:
\begin{enumerate}
		\item $s$ is a solution of the APN-YBE $P(s)=0$.
\item $s$ is a solution of the equation $P_3(s)=0$.
\item $s$ is a solution of the equations $P_1(s)=P_2(s)=0$. 
\item $s$ is a solution of the equations $P_4(s)=P_5(s)=0$.
\item $s$ is a solution of the equation $P(\tau(s))=s_{21}\circ s_{31}+s_{21}\prec s_{32}+s_{32}\odot s_{31}=0$,
\end{enumerate}
where the notations $P_i(s)~(i=1,2,3,4,5)$ appeared in Proposition \ref{YE80}.
\end{thm}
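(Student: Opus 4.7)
The approach is to translate each of the five conditions into an operator equation via Proposition~\ref{YE80}, and then to leverage the invariance of $\sigma:=s+\tau(s)$ through Lemma~\ref{In1}, Proposition~\ref{Si} and the derived identities~\eqref{IE9}--\eqref{LR2}. Introduce
$\alpha(\zeta,\eta):=L_\odot^*(T_s(\zeta))\eta+R_\prec^*(T_{\tau(s)}(\eta))\zeta$,
$\beta(\zeta,\eta):=R_\circ^*(T_{\tau(s)}(\eta))\zeta-R_\odot^*(T_s(\zeta))\eta$ and
$\gamma(\zeta,\eta):=L_\star^*(T_s(\zeta))\eta-R_\succ^*(T_{\tau(s)}(\eta))\zeta$. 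The elementary identities $L_\star-R_\odot=L_\odot$ and $R_\circ-R_\succ=R_\prec$ give $\beta+\gamma=\alpha$, so the two operator equations in (c) sum to the one in (a), yielding (c)$\Rightarrow$(a) unconditionally; symmetrically (d)$\Rightarrow$(b).

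The heart of the argument uses $T_s+T_{\tau(s)}=T_\sigma$ together with the invariance identities to establish three \emph{unconditional} (not requiring $P(s)=0$) identities:
\[
T_{\tau(s)}(\zeta)\circ T_{\tau(s)}(\eta)-T_s(\zeta)\circ T_s(\eta)=T_\sigma(\alpha(\zeta,\eta)),
\]
\[
T_{\tau(s)}(\zeta)\succ T_{\tau(s)}(\eta)-T_s(\zeta)\succ T_s(\eta)=T_\sigma(\gamma(\zeta,\eta)),
\]
\[
T_{\tau(s)}(\zeta)\prec T_{\tau(s)}(\eta)-T_s(\zeta)\prec T_s(\eta)=T_\sigma(\beta(\zeta,\eta)),
\]
the first obtained from Eqs.~\eqref{IE8} and~\eqref{IE10} (rewriting $T_\sigma(L_\odot^*(x)\zeta)=-x\circ T_\sigma(\zeta)$ and $T_\sigma(R_\prec^*(x)\zeta)=T_\sigma(\zeta)\circ x$), and the other two from the analogous $\succ$- and $\prec$-versions using Eqs.~\eqref{IE9} and~\eqref{IE11}. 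Rearranging each then yields (a)$\Leftrightarrow$(b) and (c)$\Leftrightarrow$(d) immediately; applying the same scheme with $s$ and $\tau(s)$ interchanged, noting $\tau(s)+s=\sigma$ is still invariant and using Eq.~\eqref{LR2} to see that the ``swapped'' $\alpha$ is the negative of the original, gives (a)$\Leftrightarrow$(e).

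The main obstacle is promoting (c)$\Rightarrow$(a) to a full equivalence. Since $P(s)=P_1(s)+P_2(s)$ at the tensor level, assuming (a) alone gives only the sum of the two equations in (c); the $\succ$- and $\prec$-components must then be decoupled by an independent mechanism. My plan is to exploit the ``symmetric'' invariance identities of Eq.~\eqref{LR1}, namely $L_\star^*(T_\sigma(\zeta))\eta=R_\succ^*(T_\sigma(\eta))\zeta$ and $R_\odot^*(T_\sigma(\zeta))\eta=R_\circ^*(T_\sigma(\eta))\zeta$, combined with the anti-pre-Novikov structural axioms~\eqref{Aa1}--\eqref{Aa5}, to show that $P(s)=0$ forces $T_{\tau(s)}(\zeta)\succ T_{\tau(s)}(\eta)=T_{\tau(s)}(\gamma(\zeta,\eta))$, i.e., $P_2(s)=0$; then $P_1(s)=0$ follows automatically from (a). Carrying out this decoupling is where the most delicate computation lies.
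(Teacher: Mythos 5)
Your operator-level framework is sound as far as it goes: the three ``difference'' identities $T_{\tau(s)}(\zeta)\ast T_{\tau(s)}(\eta)-T_s(\zeta)\ast T_s(\eta)=T_{s+\tau(s)}(\cdot)$ for $\ast\in\{\circ,\succ,\prec\}$ do follow from Eqs.~\eqref{IE8}--\eqref{IE11} exactly as you describe, and they correctly yield (a)$\Leftrightarrow$(b), (c)$\Leftrightarrow$(d), (c)$\Rightarrow$(a) (via $\beta+\gamma=\alpha$, equivalently $P(s)=P_1(s)+P_2(s)$), and (a)$\Leftrightarrow$(e) (via Eq.~\eqref{LR2}). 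This is a genuinely cleaner packaging than the paper's computation, which works entirely at the tensor level by rewriting each $P_i(s)$ as a permutation of $P(s)$ plus terms annihilated by the invariance of $s+\tau(s)$.

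However, the proof is not complete: the implication (a)$\Rightarrow$(c) (equivalently (a)$\Rightarrow$(d), or (b)$\Rightarrow$(d)) is precisely the step you defer to a ``plan,'' and without it the five conditions only form the chain (c)$\Leftrightarrow$(d)$\Rightarrow$(a)$\Leftrightarrow$(b)$\Leftrightarrow$(e). Your proposed mechanism --- that Eq.~\eqref{LR1} together with the axioms \eqref{Aa1}--\eqref{Aa5} should force $P_2(s)=0$ from $P(s)=0$ --- is not substantiated, and it is not a routine consequence of the identities you have already listed: all of those relate a $P_i$ to its ``partner'' $P_{i+3}$ at the same operation, whereas the decoupling requires a cross-relation between the $\circ$-equation and one of its $\succ$/$\prec$-components. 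In the paper this is supplied by the explicit tensor identity $P_4(s)=\sigma_{23}(P(s))-\sum_i a_i\otimes\bigl(I\otimes L_{\prec}(b_i)+R_{\odot}(b_i)\otimes I\bigr)(s+\tau(s))$, whose correction term vanishes by invariance (after applying $\tau$ to the two relevant factors and using the symmetry of $s+\tau(s)$); this gives $P(s)=0\Rightarrow P_4(s)=0$ directly, and then $P_5(s)=-\sigma_{12}[P(s)+P_4(s)]+\sum_i\bigl(I\otimes L_{\star}(a_i)-L_{\succ}(a_i)\otimes I\bigr)(s+\tau(s))\otimes b_i=0$. Establishing this cross-identity (or an operator-level equivalent) genuinely uses the structural relations such as Eq.~\eqref{Aa3} and Eqs.~\eqref{Aa7}--\eqref{Aa8}, and it is the substantive content of the theorem; until you carry it out, the equivalence of all five conditions is unproved.
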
 

\begin{proof} 
For all $\zeta,\eta,\theta\in A^{*}$,
we have
\begin{align}&\label{Ey1}\langle \zeta\otimes\eta\otimes\theta,\sum_i(R_{\prec}(a_i)\otimes I\otimes I)(\tau\otimes I)[b_i\otimes (s+\tau(s))]\rangle
\\=&-\sum_i\langle \eta \otimes R_{\prec}^{*}(a_i)\zeta\otimes\theta,b_i\otimes (s+\tau(s))\rangle
\nonumber\\=&-\langle R_{\prec}^{*}(T_{\tau(s)}(\eta))\zeta\otimes\theta,s+\tau(s)\rangle
=-\langle T_{s+\tau(s)}(R_{\prec}^{*}(T_{\tau(s)}(\eta)\zeta)),\theta\rangle\nonumber
,\\&\label{Ey2}\langle \zeta\otimes\eta\otimes\theta,\sum_ia_i\otimes(L_{\odot}(b_i)\otimes I)(s+\tau(s))\rangle
=-\langle T_{s+\tau(s)}(L_{\odot}^{*}(T_{s}(\zeta)\eta),\theta\rangle.
\end{align}
Using Eqs.~(\ref{IE8}) and (\ref{IE10}), we get
\begin{align}&\label{Ey3}\langle \zeta\otimes\eta\otimes\theta,(I\otimes I\otimes R_{\circ}(b_i))(\tau\otimes I)[a_i\otimes (s+\tau(s))]
\rangle
\\=&\langle R_{\circ}^{*}(T_{s}(\eta))T_{s+\tau(s)}(\zeta),\theta\rangle\nonumber
=\langle T_{s+\tau(s)}(R_{\prec}^{*}(T_{s}(\eta)\zeta),\theta\rangle
,\\&\label{Ey4}\langle \zeta\otimes\eta\otimes\theta,b_i\otimes(I\otimes L_{\circ}(a_i))(s+\tau(s))
\rangle
\\=&\langle L_{\circ}^{*}(T_{\tau(s)}(\zeta))T_{s+\tau(s)}(\eta),\theta\rangle\nonumber
=-\langle T_{s+\tau(s)}(L_{\odot}^{*}(T_{\tau(s)}(\zeta)\eta),\theta\rangle.
\end{align}
Combining Eqs.~(\ref{LR2}) and (\ref{Ey1})-(\ref{Ey4}), we derive
\begin{align*}&P_{3}(s)=s_{13}\circ s_{23}-s_{13}\prec s_{12}-s_{12}\odot s_{23}
 \\=&\sigma_{132}P(s)-(s_{13}+s_{31})\prec s_{12}-s_{12}\odot(s_{23}+s_{32})+(s_{13}+s_{31})\circ s_{23}-s_{31}\circ (s_{23}+s_{32})
 \\=&\sigma_{132}P(s)-\sum_i(R_{\prec}(a_i)\otimes I\otimes I)(\tau\otimes I)[b_i\otimes (s+\tau(s))]-
 a_i\otimes(L_{\odot}(b_i)\otimes I)(s+\tau(s))\\&+(I\otimes I\otimes R_{\circ}(b_i))(\tau\otimes I)[a_i\otimes (s+\tau(s))]
 - b_i\otimes(I\otimes L_{\circ}(a_i))(s+\tau(s))
 \\=&\sigma_{132}P(s).
\end{align*}
Thus, Item (a) $\Longleftrightarrow $ Item (b).
 In the following, we prove that Item (a) $\Longleftrightarrow $ Item (d). In fact,
 \begin{align*}&P_{4}(s)=s_{13}\circ s_{12}-s_{13}\prec s_{23}-s_{23}\odot s_{12}\\=&
\sigma_{23}(P(s))-s_{13}\prec(s_{23}+s_{32})-(s_{23}+s_{32})\odot s_{12}
\\=&\sigma_{23}(P(s))-\sum_ia_i\otimes(I\otimes L_{\prec}(b_i)+R_{\odot}(b_i)\otimes I)(s+\tau(s)),
\\&P_5(s)=s_{12}\star s_{23}-s_{13}\succ s_{12}-s_{13}\succ s_{23}
\\=&-\sigma_{12}[P(s)+P_4(s)]+(s_{12}+s_{21})\star s_{23}-s_{13}\succ (s_{12}+s_{21})
\\=&-\sigma_{12}[P(s)+P_4(s)]+\sum_i(I\otimes L_{\star}(a_i)-L_{\succ}\otimes I)(s+\tau(s))\otimes b_i.
\end{align*}
Combining Definition \ref{In}, Item (a) $\Longleftrightarrow $ Item (d). Item (a) $\Longleftrightarrow $Item (c) and
Item (a) $\Longleftrightarrow $ Item (e)
can be verified analogously.
\end{proof}

\begin{pro} \label{Da1}
Let $(A,\succ,\prec,\Delta_{\succ,s},\Delta_{\prec,s})$ be a coboundary anti-pre-Novikov bialgebra and $s\in A\otimes A$, 
where the comultiplications
$\Delta_{\succ,s},\Delta_{\prec,s}$ are defined by the following equations respectively:
 \begin{align}&\label{CB01}
\Delta_{\succ,s}(x)=(I\otimes L_{\star}(x)-L_{\succ}(x)\otimes I)s,
\\&\label{CB02}\Delta_{\prec,s}(x)=(L_{\circ}(x)\otimes I-I\otimes L_{\odot}(x))s,
\end{align}
for all $x\in A$. Then the anti-pre-Novikov algebra structure $(\succ_s,\prec_s)$ on $A^{*}$ is
given by 
\begin{align}&\label{IE1}\zeta\succ_{s}\eta=R_{\succ}^{*}(T_{\tau(s)}(\eta))\zeta-L_{\star}^{*}(T_{s}(\zeta))\eta,
\\&\label{IE2}\zeta\prec_{s}\eta=R_{\odot}^{*}(T_{s}(\zeta))\eta-R_{\circ}^{*}(T_{\tau(s)}(\eta))\zeta.
\end{align}
And the associated Novikov algebra structure on  $\circ_r$ on $A^{*}$ is
given by 
\begin{align}\label{IE0}\zeta\circ_{r}\eta=-R_{\prec}^{*}(T_{\tau(s)}(\eta))\zeta-L_{\odot}^{*}(T_{s}(\zeta))\eta,
\end{align}
where $\circ=\succ+\prec,~
 L_{\star}=L_{\circ}+R_{\circ},~R_{\odot}=R_{\succ}+L_{\prec}$.
\end{pro}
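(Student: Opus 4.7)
The plan is to translate the given comultiplications $\Delta_{\succ,s},\Delta_{\prec,s}$ into the corresponding algebra operations $\succ_s,\prec_s$ on $A^*$ via the pairing formulas
\[
\langle \Delta_{\succ,s}(x),\zeta\otimes\eta\rangle=\langle x,\zeta\succ_{s}\eta\rangle,\qquad \langle \Delta_{\prec,s}(x),\zeta\otimes\eta\rangle=\langle x,\zeta\prec_{s}\eta\rangle,
\]
stated in Eqs.~\eqref{Dc1}-\eqref{Dc2}, and then add them to obtain $\circ_r$. The basic device is the tensor--Hom identification \eqref{YE7}, together with the fact that $T_{s}^{*}=T_{\tau(s)}$ and the sign convention $\langle f^{*}(x)u^{*},v\rangle=-\langle u^{*},f(x)v\rangle$ used throughout the paper.

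First I would compute $\langle \Delta_{\succ,s}(x),\zeta\otimes\eta\rangle$ by expanding \eqref{CB01} using $s=\sum_i a_i\otimes b_i$:
\[
\langle \Delta_{\succ,s}(x),\zeta\otimes\eta\rangle=\sum_i\langle a_i,\zeta\rangle\langle x\star b_i,\eta\rangle-\sum_i\langle x\succ a_i,\zeta\rangle\langle b_i,\eta\rangle.
\]
Dualizing the arguments and rewriting the sums as $-\langle T_{s}(\zeta),L_{\star}^{*}(x)\eta\rangle+\langle T_{s}(L_{\succ}^{*}(x)\zeta),\eta\rangle$, then passing to $T_{\tau(s)}=T_{s}^{*}$ in the second term, one finds
\[
\langle \Delta_{\succ,s}(x),\zeta\otimes\eta\rangle=-\langle x,L_{\star}^{*}(T_{s}(\zeta))\eta\rangle+\langle x,R_{\succ}^{*}(T_{\tau(s)}(\eta))\zeta\rangle,
\]
which, by Eq.~\eqref{Dc1}, yields Eq.~\eqref{IE1}. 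An entirely parallel computation with \eqref{CB02}, using that $L_{\odot}(x)T_{s}(\zeta)=x\succ T_{s}(\zeta)+T_{s}(\zeta)\prec x$ can be rewritten via right-multiplications $R_{\succ}$ and $L_{\prec}$ (so that $R_{\succ}^{*}+L_{\prec}^{*}=R_{\odot}^{*}$ appears), produces Eq.~\eqref{IE2}.

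Finally, summing \eqref{IE1} and \eqref{IE2} gives
\[
\zeta\circ_{r}\eta=\bigl(R_{\succ}^{*}-R_{\circ}^{*}\bigr)(T_{\tau(s)}(\eta))\zeta+\bigl(R_{\odot}^{*}-L_{\star}^{*}\bigr)(T_{s}(\zeta))\eta,
\]
and the identities $R_{\succ}^{*}-R_{\circ}^{*}=-R_{\prec}^{*}$ together with $R_{\odot}^{*}-L_{\star}^{*}=(R_{\succ}^{*}+L_{\prec}^{*})-(L_{\circ}^{*}+R_{\circ}^{*})=-L_{\odot}^{*}$ (using $L_{\odot}=L_{\succ}+R_{\prec}$, $L_{\star}=L_{\circ}+R_{\circ}$, $R_{\circ}=R_{\succ}+R_{\prec}$) immediately yield Eq.~\eqref{IE0}. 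The only delicate point is bookkeeping the minus signs from the dual-representation convention and keeping the split $\star,\odot,\circ$ straight; no structural input beyond the definition of a coboundary anti-pre-Novikov bialgebra and the identity $T_s^*=T_{\tau(s)}$ is required, so the proof is essentially a contained calculation.
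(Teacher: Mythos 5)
Your proposal is correct and follows essentially the same route as the paper's proof: pairing $\Delta_{\succ,s}(x)$ and $\Delta_{\prec,s}(x)$ against $\zeta\otimes\eta$, moving the multiplication operators across the pairing with the dual-map sign convention, and identifying the resulting terms with $T_{s}(\zeta)$ and $T_{\tau(s)}(\eta)$. The only difference is that you also write out the summation step for $\circ_{r}$ explicitly (which the paper leaves implicit), and your operator identities $R_{\succ}^{*}-R_{\circ}^{*}=-R_{\prec}^{*}$ and $R_{\odot}^{*}-L_{\star}^{*}=-L_{\odot}^{*}$ check out.
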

\begin{proof} For all $\zeta,\eta\in A^{*}$ and $x\in A$,
\begin{align*}\langle \zeta\succ_{s}\eta,x\rangle
&=\langle \zeta\otimes\eta,\Delta_{\succ,s}(x)\rangle
=\langle \zeta\otimes\eta,(I\otimes L_{\star}(x)-L_{\succ}(x)\otimes I)s\rangle
\\&=\langle  \eta\otimes L_{\succ}^{*}(x)\zeta,\tau(s)\rangle
-\langle  \zeta \otimes L_{\star}^{*}(x)\eta,s\rangle
=\langle T_{\tau(s)}(\eta),L_{\succ}^{*}(x)\zeta\rangle
-\langle T_{s}(\zeta),L_{\star}^{*}(x)\eta
\\&=-\langle x\succ T_{\tau(s)}(\eta),\zeta\rangle
+\langle x\star T_{s}(\zeta),\eta\rangle
=\langle R_{\succ}^{*}(T_{\tau(s)}(\eta))\zeta,x\rangle-\langle L_{\star}^{*}(T_{s}(\zeta))\eta,x\rangle,
\end{align*}
which indicates that Eq. (\ref{IE1}) holds.
By the same token, Eq. (\ref{IE2}) holds.
\end{proof}

\begin{thm} Let $(A,\succ,\prec)$ be an anti-pre-Novikov algebra and $s=\sum_{i}a_i\otimes b_i\in A\otimes A$. 
 Assume that
$\Delta_{\succ,s},\Delta_{\prec,s}$ are given by Eqs. (\ref{CB01})-(\ref{CB02}). If $s$ is a solution of the 
APN-YBE in $(A,\succ,\prec)$ and $s+\tau(s)$ is invariant.
Then $(A,\succ,\prec,\Delta_{\succ,s},\Delta_{\prec,s})$ is an anti-pre-Novikov bialgebra.
\end{thm}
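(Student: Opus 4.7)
The plan is to reduce the statement, via Theorem~\ref{Mp2}, to showing that the associated Novikov structures on $A$ and $A^{*}$ form a matched pair of Novikov algebras, thereby avoiding a direct term-by-term verification of \eqref{CB3}--\eqref{CB13}. Define the candidate operations $\succ_s,\prec_s$ on $A^{*}$ by \eqref{IE1}--\eqref{IE2}, as dictated by Proposition~\ref{Da1}; the task is then to verify that these form an anti-pre-Novikov algebra compatible with $(A,\succ,\prec)$ in the sense required by the matched pair.

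First, by Theorem~\ref{Ya1}, the two hypotheses (invariance of $s+\tau(s)$ and $P(s)=0$) upgrade to the full list $P(s)=P_1(s)=\cdots=P_5(s)=0$. Translating each $P_i(s)=0$ through Proposition~\ref{YE80} produces six operator identities for $T_s$ and $T_{\tau(s)}$. Together they assert exactly that both $T_s$ and $T_{\tau(s)}$ are anti-homomorphisms from $(A^{*},\succ_s,\prec_s)$ to $(A,\succ,\prec)$: for example $T_s(\zeta\succ_s\eta)=-T_s(\zeta)\succ T_s(\eta)$, and similarly for $\prec$ and for the sum $\circ$. A short computation also confirms that $\succ_s+\prec_s$ coincides with the associated Novikov operation $\circ_r$ of \eqref{IE0}, so the two associated Novikov structures are compatible with the definitions.

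Second, the invariance of $s+\tau(s)$ supplies, through Proposition~\ref{Si} and the consequences \eqref{IE7}--\eqref{LR2}, intertwining relations between $T_{s+\tau(s)}=T_s+T_{\tau(s)}$ and the operators $L_{\succ},L_{\circ},L_{\odot},L_{\star}$. These relations are precisely what is needed to guarantee that the module actions $-L_{\odot}^{*},R_{\prec}^{*}$ of $A$ on $A^{*}$ and their analogs for $A^{*}$ acting on $A$ via the induced $\succ_s,\prec_s$ satisfy the eight matched-pair equations \eqref{Nm1}--\eqref{Nm8} of Proposition~\ref{d1}. Combining these intertwinings with the anti-homomorphism identities from the first step, pairing against an arbitrary $x\in A$ reduces the anti-pre-Novikov axioms \eqref{Aa1}--\eqref{Aa5} for $(A^{*},\succ_s,\prec_s)$ to consequences of the corresponding axioms for $(A,\succ,\prec)$. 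Theorem~\ref{Mp2} then delivers the desired anti-pre-Novikov bialgebra.

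The principal obstacle is the bookkeeping separating $T_s$ from $T_{\tau(s)}$: each of \eqref{IE1}--\eqref{IE2} mixes both maps, and each $P_i$ identity singles out one specific combination. The invariance hypothesis is exactly what allows the systematic interchange of $T_s$ and $T_{\tau(s)}$ via \eqref{LR1}--\eqref{LR2}, converting the asymmetric contributions arising from $\tfrac12(s+\tau(s))$ into the symmetric form handled by Theorem~\ref{BY} in the skew case. Keeping track of signs, transposes, and which operator is paired with which slot of $s$ during this exchange is where the argument demands real care, but every step is mechanical once the reduction to the matched-pair criterion of Theorem~\ref{Mp2} is in place.
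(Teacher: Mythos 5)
Your strategy --- reducing to the matched-pair criterion of Theorem \ref{Mp2} rather than verifying the coboundary identities directly --- is genuinely different from the paper's. The paper works entirely at the level of tensors in $A^{\otimes 3}$: it takes the conditions of Proposition \ref{DB4} (the version with $s_{\succ}=s_{\prec}=s$), rewrites each left-hand side as a combination of permuted copies of $P(s)$ plus terms in which $s+\tau(s)$ occupies one tensor slot, and kills the former by the APN-YBE and the latter by the invariance relations \eqref{IE8}--\eqref{IE11}. Your route works instead at the level of the operators $T_{s},T_{\tau(s)}:A^{*}\to A$, and it is a natural one (it is essentially how the paper itself later packages the situation in Theorem \ref{QB0} and Corollary \ref{QB}); carried out in full it would yield a more conceptual proof.

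As written, however, the proposal leaves unproven exactly the two statements that carry all the weight. First, to invoke Theorem \ref{Mp2} you must know that $(A^{*},\succ_{s},\prec_{s})$ is an anti-pre-Novikov algebra, i.e.\ that \eqref{Ca1}--\eqref{Ca5} hold; the homomorphism identities from Proposition \ref{YE80} do not give this for free, since neither $T_{s}$ nor $T_{\tau(s)}$ need be injective in the quasi-triangular case. Your claim that ``pairing against an arbitrary $x$'' reduces \eqref{Aa1}--\eqref{Aa5} for $A^{*}$ to the axioms of $(A,\succ,\prec)$ is not accurate: expanding, say, $(\zeta\circ_{s}\eta-\eta\circ_{s}\zeta)\succ_{s}\theta$ via \eqref{IE1}--\eqref{IE2} produces cross terms such as $R_{\succ}^{*}(T_{\tau(s)}(\theta))\bigl(L_{\star}^{*}(T_{s}(\zeta))\eta\bigr)$, whose simplification requires the dual-representation identities \eqref{rp1}--\eqref{rp10} together with the interchange relations \eqref{LR1}--\eqref{LR2}, not merely the axioms of $A$. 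Second, the eight matched-pair identities \eqref{Nm1}--\eqref{Nm8} for the actions $-L_{\odot}^{*},R_{\prec}^{*}$ and their duals are asserted to follow from the intertwining relations, but no mechanism is given; each is a computation of the same order as the paper's verification of a single equation among \eqref{CB30}--\eqref{CB130}, so the claim that ``every step is mechanical'' understates where the work lies. Finally, a sign slip: Proposition \ref{YE80} together with \eqref{IE1} gives $T_{s}(\zeta\succ_{s}\eta)=T_{s}(\zeta)\succ T_{s}(\eta)$ and $T_{\tau(s)}(\zeta\succ_{s}\eta)=-T_{\tau(s)}(\zeta)\succ T_{\tau(s)}(\eta)$; the correct statement is that $T_{s}$ and $-T_{\tau(s)}$ are homomorphisms (Corollary \ref{QB}), so your displayed identity $T_{s}(\zeta\succ_{s}\eta)=-T_{s}(\zeta)\succ T_{s}(\eta)$ is wrong. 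The skeleton is sound, but until the coalgebra axioms and the matched-pair identities are actually derived, the proof is incomplete.
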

\begin{proof}
Since $s$ is a solution of the 
APN-YBE in $(A,\succ,\prec)$ and $s+\tau(s)$ is invariant, $P(s)=0$ and
Eqs.~(\ref{IE8})-(\ref{IE11}) hold. Note that
\begin{align*}A_{11}=&(I\otimes I\otimes L_{\star}(x))(s_{12}\odot s_{23}
+s_{13}\prec s_{12}-s_{23}\prec s_{21}-s_{21}\odot s_{13}
+s_{23}\circ s_{13}-s_{13}\circ s_{23})\\=&(I\otimes I\otimes L_{\star}(x))[\sigma_{13}P(s)-\sigma_{132}P(s)+s_{12}\odot(s_{23}+s_{32})
+(s_{13}+s_{31})\prec s_{12}+(s_{13}+s_{31})\circ s_{32}\\&-s_{13}\circ(s_{23}+s_{32})
-(s_{23}+s_{32})\prec s_{21}-s_{21}\odot(s_{13}+s_{31})+s_{23}\circ(s_{13}+s_{31})
-(s_{23}+s_{32})\circ s_{31}]
\\=&\sum_{i}a_i\otimes
( L_{\odot}(b_i)\otimes L_{\star}(x))( s+ \tau (s))+
(R_{\prec}(a_i)\otimes I \otimes L_{\star}(x))(\tau\otimes I)(b_i\otimes( s+ \tau (s)))
\\&+(I\otimes I \otimes L_{\star}(x)R_{\circ}(a_i))(\tau\otimes I)(b_i\otimes( s+ \tau (s)))
-a_i\otimes
(I\otimes L_{\star}(x)L_{\circ}(b_i))( s+ \tau (s))
\\&-b_i\otimes
(R_{\prec}(b_i)\otimes L_{\star}(x))( s+ \tau (s))
-(L_{\odot}(b_i)\otimes I \otimes L_{\star}(x))(\tau\otimes I)(a_i\otimes( s+ \tau (s)))
\\&+(I\otimes I \otimes L_{\star}(x)L_{\circ}(b_i))(\tau\otimes I)(a_i\otimes( s+ \tau (s)))
-b_i\otimes
(I\otimes L_{\star}(x)R_{\circ}(a_i))( s+ \tau (s))
.\end{align*}
By Eqs.~(\ref{IE8}) and (\ref{IE10}), for all $\zeta,\eta\in A^{*}$ we obtain
\begin{align*}&\langle
( L_{\odot}(b_i)\otimes L_{\star}(x))( s+ \tau (s)),\zeta\otimes \eta\rangle
=\langle s+ \tau (s), L_{\odot}^{*}(b_i)\zeta\otimes  L_{\star}^{*}(x)\eta\rangle
\\=&\langle T_{s+ \tau (s)}( L_{\odot}^{*}(b_i)\zeta),  L_{\star}^{*}(x)\eta\rangle
=\langle L_{\star}(x)L_{\circ}(b_i)T_{s+ \tau (s)}( \zeta),  \eta\rangle,
\\&\langle
( R_{\prec}(b_i)\otimes L_{\star}(x))( s+ \tau (s)),\zeta\otimes \eta\rangle
=\langle s+ \tau (s), R_{\prec}^{*}(b_i)\zeta\otimes  L_{\star}^{*}(x)\eta\rangle
\\=&\langle T_{s+ \tau (s)}( R_{\prec}^{*}(b_i)\zeta),  L_{\star}^{*}(x)\eta\rangle
=-\langle L_{\star}(x)R_{\circ}(b_i)T_{s+ \tau (s)}(\zeta),  \eta\rangle,
\\&\langle (I \otimes L_{\star}(x)L_{\circ}(b_i))
( s+ \tau (s)),\zeta\otimes \eta\rangle
=\langle s+ \tau (s), \zeta\otimes L_{\circ}^{*}(b_i) L_{\star}^{*}(x)\eta\rangle
\\=&\langle T_{s+ \tau (s)}( \zeta),  L_{\circ}^{*}(b_i) L_{\star}^{*}(x)\eta\rangle
=\langle L_{\star}(x)L_{\circ}(b_i)T_{s+ \tau (s)}(\zeta),  \eta\rangle
,\\&\langle (I \otimes L_{\star}(x)R_{\circ}(b_i))
( s+ \tau (s)),\zeta\otimes \eta\rangle
=\langle s+ \tau (s), \zeta\otimes R_{\circ}^{*}(b_i) L_{\star}^{*}(x)\eta\rangle
\\=&\langle T_{s+ \tau (s)}( \zeta),  R_{\circ}^{*}(b_i) L_{\star}^{*}(x)\eta\rangle
=\langle L_{\star}(x)R_{\circ}(b_i)T_{s+ \tau (s)}(\zeta),  \eta\rangle
.\end{align*}
Thus, $A_{11}=0$.
Analogously,
\begin{align*}&
[L_{\succ}(x)\otimes I \otimes I-I\otimes L_{\succ}(x)\otimes I](s_{12}\succ s_{23}+s_{21}\succ s_{13}-s_{13}\star s_{23})
\\&+\sum_i
[I\otimes L_{\prec}(x\succ a_i)-L_{\prec}(x\succ a_i)\otimes I]( s+ \tau (s))\otimes b_i
=0.\end{align*}
Therefore, Eq.~(\ref{CB30}) holds. Taking the same procedure, one can prove that
Eqs.~(\ref{CB40})-(\ref{CB130}) hold.
The proof is completed.
\end{proof}

\begin{defi} \label{Qt1}
 Let $(A,\succ,\prec)$ be an anti-pre-Novikov algebra and $s\in A\otimes A$. If $s$ is a solution of the APN-YBE in $(A,\succ,\prec)$
 and $s+\tau(s)$ is invariant, then the anti-pre-Novikov
  bialgebra $(A, \succ,\prec,\Delta_{\succ,s},\Delta_{\prec,s})$ induced by $s$ is called a {\bf quasi-triangular} anti-pre-Novikov
 bialgebra. In particular, if $s$ is skew-symmetric, $(A, \succ,\prec,\Delta_{\succ,s},\Delta_{\prec,s})$
is called a {\bf triangular} anti-pre-Novikov bialgebra, where $\Delta_{\succ,s}$ and $\Delta_{\prec,s}$ 
are given by Eqs.~(\ref{CB01})-(\ref{CB02}) respectively.
\end{defi}

\begin{thm}\label{QB0}  Let $(A,\succ,\prec)$ be an anti-pre-Novikov algebra and
$s\in A\otimes A$. Suppose that $s+\tau(s)$ is invariant. Then the following conditions are equivalent:
 \begin{enumerate}
\item $s$ is a solution of the APN-YBE in $(A,\succ,\prec)$.
 \item $(A^{*},\circ_s)$ is a Novikov algebra and the linear maps
$T_{s},-T_{\tau(s)}$ are both Novikov algebra
 homomorphisms from $(A^{*},\circ_s)$ to $(A,\circ)$.
 \item $(A^{*},\succ_s,\prec_s)$ is an anti-pre-Novikov algebra and the linear maps
$T_{s},-T_{\tau(s)}$ are both anti-pre-Novikov algebra
 homomorphisms from $(A^{*},\succ_s,\prec_s)$ to $(A,\succ,\prec)$.
 \end{enumerate}
\end{thm}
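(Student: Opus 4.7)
The plan is to reduce the equivalences to the translation dictionary between polynomial identities $P(s), P_1(s), \dots, P_5(s)$ on $s$ and homomorphism-type identities for $T_s$ and $T_{\tau(s)}$, which is furnished by Proposition \ref{YE80}. The crucial preparatory observation is that when one substitutes the formulas in Proposition \ref{Da1} for $\succ_s, \prec_s, \circ_s$ into the right-hand sides appearing in Proposition \ref{YE80}, the six conditions become:
\begin{align*}
P(s)=0   &\iff T_{\tau(s)}(\zeta)\circ T_{\tau(s)}(\eta) = -T_{\tau(s)}(\zeta\circ_s\eta), \\
P_3(s)=0 &\iff T_s(\zeta)\circ T_s(\eta) = T_s(\zeta\circ_s\eta),\\
P_1(s)=0 &\iff T_{\tau(s)}(\zeta)\prec T_{\tau(s)}(\eta) = -T_{\tau(s)}(\zeta\prec_s\eta),\\
P_4(s)=0 &\iff T_s(\zeta)\prec T_s(\eta) = T_s(\zeta\prec_s\eta),\\
P_2(s)=0 &\iff T_{\tau(s)}(\zeta)\succ T_{\tau(s)}(\eta) = -T_{\tau(s)}(\zeta\succ_s\eta),\\
P_5(s)=0 &\iff T_s(\zeta)\succ T_s(\eta) = T_s(\zeta\succ_s\eta),
\end{align*}
for all $\zeta,\eta\in A^{*}$. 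The sign on the right of each $T_{\tau(s)}$ line absorbs into the map if one considers $-T_{\tau(s)}$ instead of $T_{\tau(s)}$: thus $P_i(s)=0$ on the top three rows precisely says that $-T_{\tau(s)}$ is a (Novikov / anti-pre-Novikov) homomorphism for $\circ$, $\prec$, $\succ$ respectively, and on the bottom three rows that $T_s$ is.

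For $(a)\Rightarrow(c)$, assume $P(s)=0$; since $s+\tau(s)$ is invariant, Theorem \ref{Ya1} yields $P_1(s)=P_2(s)=P_3(s)=P_4(s)=P_5(s)=0$. By the coboundary bialgebra theorem established immediately before this statement, $(A,\succ,\prec,\Delta_{\succ,s},\Delta_{\prec,s})$ is an anti-pre-Novikov bialgebra; the Remark following Definition \ref{DB2} then guarantees that $(A^{*},\succ_s,\prec_s)$ is an anti-pre-Novikov algebra, while the displayed dictionary above provides the homomorphism property simultaneously for $T_s$ and $-T_{\tau(s)}$ with respect to both $\succ$ and $\prec$.

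For $(c)\Rightarrow(b)$, note that the associated Novikov algebra of the anti-pre-Novikov algebra $(A^{*},\succ_s,\prec_s)$ is precisely $(A^{*},\circ_s)$ with $\circ_s = \succ_s + \prec_s$, which by a direct check against Proposition \ref{Da1} equals the operation of Eq.~\eqref{IE0}; summing the two homomorphism identities for $T_s$ (resp.~$-T_{\tau(s)}$) yields the Novikov homomorphism identity for $\circ$. For $(b)\Rightarrow(a)$, the homomorphism identity for $-T_{\tau(s)}$ with respect to $\circ$ is, after applying the first line of the dictionary in reverse, exactly the statement $P(s)=0$, so we are done by Proposition \ref{YE80}(a).

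The main obstacle is bookkeeping the signs in the dictionary: the asymmetry between $T_s$ and $-T_{\tau(s)}$ (rather than $+T_{\tau(s)}$) is forced by the minus sign in the formulas $\zeta\circ_s\eta = -R_{\prec}^{*}(T_{\tau(s)}(\eta))\zeta - L_{\odot}^{*}(T_s(\zeta))\eta$ and its $\succ,\prec$ analogues. A secondary point requiring attention is that in implication $(b)\Rightarrow(a)$ the Novikov structure of $(A^{*},\circ_s)$ is assumed rather than derived, so the argument does not need to verify it; in $(a)\Rightarrow(c)$, by contrast, the Novikov/anti-pre-Novikov structure on $A^{*}$ is obtained via the coboundary bialgebra theorem rather than bare-hands computation, which is what keeps the proof short.
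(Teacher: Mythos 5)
Your proposal is correct and follows essentially the same route as the paper, whose proof is simply the instruction to combine Proposition \ref{YE80}, Theorem \ref{Ya1} and Proposition \ref{Da1}; your sign dictionary between the identities $P_i(s)=0$ and the homomorphism properties of $T_s$ and $-T_{\tau(s)}$ is exactly the content of that combination. Your explicit appeal to the quasi-triangular bialgebra theorem to justify that $(A^{*},\succ_s,\prec_s)$ is itself an anti-pre-Novikov algebra is a detail the paper leaves implicit, and is a correct way to supply it.
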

\begin{proof} 
Combining Proposition \ref{YE80}, Theorem \ref{Ya1} and Proposition \ref{Da1},
 we get the conclusion.
\end{proof}

\begin{cor} \label{QB} Let $(A,\succ,\prec,\Delta_{\succ,s},\Delta_{\prec,s})$ be a quasi-triangular anti-pre-Novikov bialgebra.
 Then  \begin{enumerate}
\item $T_{s},-T_{\tau(s)}$ are both Novikov algebra
 homomorphisms from $(A^{*},\circ_s)$ to $(A,\circ)$.
 \item $T_{s},-T_{\tau(s)}$ are anti-pre-Novikov algebra
 homomorphisms from $(A^{*},\succ_s,\prec_s)$ to $(A,\succ,\prec)$.
 \end{enumerate}
\end{cor}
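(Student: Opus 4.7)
The plan is to derive the corollary directly from Theorem \ref{QB0}, which does the heavy lifting. By Definition \ref{Qt1}, saying that $(A,\succ,\prec,\Delta_{\succ,s},\Delta_{\prec,s})$ is a quasi-triangular anti-pre-Novikov bialgebra means exactly that $s\in A\otimes A$ is a solution of the APN-YBE in $(A,\succ,\prec)$ and that $s+\tau(s)$ is invariant in the sense of Definition \ref{In}. So the hypotheses of Theorem \ref{QB0} are in place.

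With those hypotheses, Theorem \ref{QB0} asserts the equivalence of three statements: (i) $s$ solves the APN-YBE; (ii) $(A^{*},\circ_s)$ is a Novikov algebra and both $T_s$ and $-T_{\tau(s)}$ are Novikov algebra homomorphisms from $(A^{*},\circ_s)$ to $(A,\circ)$; and (iii) $(A^{*},\succ_s,\prec_s)$ is an anti-pre-Novikov algebra and both $T_s$ and $-T_{\tau(s)}$ are anti-pre-Novikov algebra homomorphisms from $(A^{*},\succ_s,\prec_s)$ to $(A,\succ,\prec)$. Since (i) is given, (ii) and (iii) hold, and these are precisely items (a) and (b) of the corollary.

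Thus the entire proof reduces to citing Theorem \ref{QB0}; no new computation is required. The only thing worth noting is that the Novikov and anti-pre-Novikov structures on $A^{*}$ referred to in the corollary are exactly the ones induced by $\Delta_{\succ,s}$ and $\Delta_{\prec,s}$ via Proposition \ref{Da1}, namely $\succ_s$, $\prec_s$ and $\circ_s=\succ_s+\prec_s$ given by Eqs.~\eqref{IE1}, \eqref{IE2} and \eqref{IE0}; this identification is what allows Theorem \ref{QB0} to be applied in the present bialgebraic context.

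There is essentially no obstacle: the corollary is a direct specialization of Theorem \ref{QB0} to the case where the coboundary data $s$ satisfies both hypotheses of Definition \ref{Qt1}. If anything, the only minor point to check is that the formulas \eqref{IE1}--\eqref{IE0} coming from $\Delta_{\succ,s},\Delta_{\prec,s}$ agree with the operations used implicitly inside Theorem \ref{QB0}, which is immediate from Proposition \ref{Da1}.
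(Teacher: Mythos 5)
Your proposal is correct and is exactly the paper's argument: the paper's own proof is the one-line citation of Theorem \ref{QB0} together with Definition \ref{Qt1}, which you have simply spelled out. No further comment is needed.
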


\begin{proof}
It follows by Theorem \ref{QB0} and Definition \ref{Qt1}.
\end{proof}

\subsection{Factorizable anti-pre-Novikov bialgebras}
In this section, we introduce the notion of factorizable anti-pre-Novikov
bialgebras, which is a special class of quasi-triangular anti-pre-Novikov bialgebras.
We show that the double of an anti-pre-Novikov bialgebra
is naturally a factorizable anti-pre-Novikov bialgebra.

\begin{defi} \label{Qt} A quasi-triangular anti-pre-Novikov
 bialgebra $(A, \succ,\prec, \Delta_{\succ,s}, \Delta_{\prec,s})$ is
called factorizable if  
 the linear map $T_{s+\tau(s)}:A^{*}\longrightarrow A$ given by Eq. (\ref{YE7}) is a linear isomorphism of vector spaces, 
where $\Delta_{\succ,s}, \Delta_{\prec,s} $
are defined by Eqs. (\ref{CB01})-(\ref{CB02}).\end{defi}

In view of Item (e) of Theorem \ref{Ya1} and Definition \ref{Qt}, we have the following statement:

\begin{pro} \label{Qf} Assume that $(A, \succ,\prec, \Delta_{\succ,s}, \Delta_{\prec,s})$ is a quasi-triangular
 (factorizable) anti-pre-Novikov bialgebra. Then $(A, \succ,\prec, \Delta_{\succ,\tau(s)}, \Delta_{\prec,\tau(s)})$
 is also a quasi-triangular
 (factorizable) anti-pre-Novikov bialgebra.\end{pro}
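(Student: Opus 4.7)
The plan is to derive both quasi-triangularity and, under the extra hypothesis, factorizability of $(A,\succ,\prec,\Delta_{\succ,\tau(s)},\Delta_{\prec,\tau(s)})$ directly from the properties of $s$, by exploiting the symmetry of the invariance condition together with the equivalence $(a)\Leftrightarrow(e)$ of Theorem~\ref{Ya1}.

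First, the invariance condition transfers for free. Since $\tau(\tau(s))=s$, we have
\[
\tau(s)+\tau(\tau(s))=\tau(s)+s=s+\tau(s)
\]
in $A\otimes A$, so the ``symmetric part'' attached to $\tau(s)$ literally coincides with that attached to $s$, and invariance passes over at once. The same identity yields $T_{\tau(s)+\tau(\tau(s))}=T_{s+\tau(s)}$ as linear maps $A^{*}\to A$, so the factorizability hypothesis will also transfer automatically as soon as the quasi-triangular case is settled.

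The genuine content is to show that $\tau(s)$ solves the APN-YBE. Unwinding leg labels gives $\tau(s)_{12}=s_{21}$, $\tau(s)_{13}=s_{31}$, $\tau(s)_{23}=s_{32}$, whence
\[
P(\tau(s))=\tau(s)_{12}\circ\tau(s)_{13}+\tau(s)_{23}\odot\tau(s)_{13}+\tau(s)_{12}\prec\tau(s)_{23}=s_{21}\circ s_{31}+s_{32}\odot s_{31}+s_{21}\prec s_{32},
\]
which (up to reordering the summands) is exactly the expression $P(\tau(s))$ displayed as Item~(e) of Theorem~\ref{Ya1}. Because $s+\tau(s)$ is invariant by hypothesis, that theorem applies, and the equivalence $(a)\Leftrightarrow(e)$ converts ``$s$ satisfies the APN-YBE'' into ``$\tau(s)$ satisfies the APN-YBE''. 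Combined with the preceding paragraph, this gives the quasi-triangular conclusion, and the identity $T_{\tau(s)+s}=T_{s+\tau(s)}$ immediately upgrades it to the factorizable conclusion.

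The only real obstacle is bookkeeping: one must translate between leg-labeling conventions carefully to confirm that the expression in Theorem~\ref{Ya1}(e) genuinely equals $P(\tau(s))$ under the natural definition ``$P$ applied to $\tau(s)$''. Once this identification is in hand, no further computation involving the operations $\succ,\prec,\odot,\circ$ is required, and the proof reduces to citing Theorem~\ref{Ya1} together with the trivial identity $\tau(\tau(s))=s$.
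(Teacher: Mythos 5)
Your proposal is correct and follows exactly the paper's route: the paper also deduces the result from the equivalence (a)$\Leftrightarrow$(e) of Theorem~\ref{Ya1} (whose item (e) is precisely $P(\tau(s))=0$) together with the observations that $\tau(s)+\tau(\tau(s))=s+\tau(s)$, so invariance and the bijectivity of $T_{s+\tau(s)}$ carry over unchanged. Your leg-relabeling check confirming $P(\tau(s))=s_{21}\circ s_{31}+s_{32}\odot s_{31}+s_{21}\prec s_{32}$ is consistent with the paper's conventions, so no gap remains.
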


\begin{pro}
Let $(A, \succ,\prec, \Delta_{\succ,s}, \Delta_{\prec,s})$ be a factorizable anti-pre-Novikov bialgebra.
Then $\mathrm{Im}(T_{s}\oplus T_{\tau(s)})$ is an anti-pre-Novikov subalgebra of the direct sum anti-pre-Novikov
algebra $A\oplus A$,
which is isomorphic to the anti-pre-Novikov algebra $(A^{*},\succ_{s},\prec_{s})$. Furthermore, any $x\in A$ has a unique
decomposition $x=x_1-x_2$, where $(x_1,x_2)\in \mathrm{Im}(T_{s}\oplus T_{\tau(s)})$ and
\begin{equation*}T_{s}\oplus T_{\tau(s)}:A^{*}\longrightarrow A\oplus A,~~(T_{s}\oplus T_{\tau(s)})(\zeta)=(T_{s}(\zeta),
 -T_{\tau(s)}(\zeta)),~\forall~\zeta\in A^{*}. \end{equation*}
\end{pro}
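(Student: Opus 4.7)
The plan is to build directly on Corollary \ref{QB}, which already provides the essential algebraic content: both $T_{s}$ and $-T_{\tau(s)}$ are anti-pre-Novikov algebra homomorphisms from $(A^{*},\succ_{s},\prec_{s})$ to $(A,\succ,\prec)$. Consequently, the map $\Phi := T_{s}\oplus T_{\tau(s)}$, defined by $\Phi(\zeta) = (T_{s}(\zeta),-T_{\tau(s)}(\zeta))$, is a homomorphism of anti-pre-Novikov algebras from $(A^{*},\succ_{s},\prec_{s})$ into the direct sum anti-pre-Novikov algebra $A\oplus A$ (with componentwise operations). Therefore $\mathrm{Im}(\Phi)$ is automatically an anti-pre-Novikov subalgebra of $A\oplus A$.

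Next, I would establish that $\Phi$ is injective. Suppose $\Phi(\zeta) = (0,0)$, so that $T_{s}(\zeta) = 0$ and $T_{\tau(s)}(\zeta) = 0$; then $T_{s+\tau(s)}(\zeta) = T_{s}(\zeta)+T_{\tau(s)}(\zeta) = 0$. By the factorizability assumption (Definition \ref{Qt}), the map $T_{s+\tau(s)}:A^{*}\to A$ is a linear isomorphism, so $\zeta = 0$. Hence $\Phi$ is injective, and restricts to an isomorphism of anti-pre-Novikov algebras from $(A^{*},\succ_{s},\prec_{s})$ onto $\mathrm{Im}(\Phi)$, which proves the first assertion.

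For the decomposition statement, let $x\in A$ be arbitrary and seek $(x_{1},x_{2})\in \mathrm{Im}(\Phi)$ with $x = x_{1}-x_{2}$. Writing $(x_{1},x_{2}) = (T_{s}(\zeta),-T_{\tau(s)}(\zeta))$ for some $\zeta\in A^{*}$, the requirement becomes $x = T_{s}(\zeta)+T_{\tau(s)}(\zeta) = T_{s+\tau(s)}(\zeta)$. Invertibility of $T_{s+\tau(s)}$ produces a unique $\zeta = T_{s+\tau(s)}^{-1}(x)\in A^{*}$, and hence a unique corresponding pair $(x_{1},x_{2})\in \mathrm{Im}(\Phi)$ with $x = x_{1}-x_{2}$.

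The argument is short because almost all of the algebraic work is absorbed into Corollary \ref{QB}, whose proof in turn relies on the APN-YBE together with the invariance of $s+\tau(s)$. The only genuine care needed is with signs: it is $T_{s}$ and $-T_{\tau(s)}$ (not $T_{\tau(s)}$) that are the homomorphisms, which is why $\Phi$ has mixed signs in its components and why both injectivity and the decomposition $x = x_{1}-x_{2}$ are controlled by the \emph{sum} $T_{s+\tau(s)}$ being invertible, exactly matching the factorizability hypothesis.
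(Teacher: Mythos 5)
Your proposal is correct and follows essentially the same route as the paper's proof: both deduce from Corollary \ref{QB} that $T_{s}\oplus T_{\tau(s)}$ is a homomorphism, both obtain injectivity from the invertibility of $T_{s+\tau(s)}$ (your kernel computation just spells out the paper's ``$\mathrm{Ker}(T_{s}\oplus T_{\tau(s)})=0$''), and both produce the decomposition by writing $x=T_{s+\tau(s)}T_{s+\tau(s)}^{-1}(x)=T_{s}(\zeta)+T_{\tau(s)}(\zeta)$ with $\zeta=T_{s+\tau(s)}^{-1}(x)$. Your explicit remark on uniqueness is a small clarification the paper leaves implicit.
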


\begin{proof} In view of Definition \ref{Qt} and Corollary \ref{QB}, $T_{s}, -T_{\tau(s)}$ are anti-pre-Novikov algebra 
homomorphisms and $T_{s+\tau(s)}$ is a linear isomorphism. It follows that
$T_{s}\oplus T_{\tau(s)}$ is an anti-pre-Novikov algebra homomorphism and $\mathrm{Ker}(T_{s}\oplus T_{\tau(s)})=0$.
Therefore, $\mathrm{Im}(T_{s}\oplus T_{\tau(s)})$ is isomorphic to $(A^{*},\succ_{s},\prec_{s})$ as anti-pre-Novikov algebras.
For all $x\in A$, 
\begin{equation*}x=T_{s+\tau(s)}T_{s+\tau(s)}^{-1}(x)=T_{s}T_{s+\tau(s)}^{-1}(x)+T_{\tau(s)}T_{s+\tau(s)}^{-1}(x)=x_1-x_2
,\end{equation*}
where $x_1=T_{s}T_{s+\tau(s)}^{-1}(x)\in \mathrm{Im}(T_{s}),
x_2=-T_{\tau(s)}T_{s+\tau(s)}^{-1}(x)\in \mathrm{Im}(-T_\tau(s))$.
The proof is finished.
\end{proof}
\begin{pro}
 Let $(A, \succ,\prec, \Delta_{\succ,s}, \Delta_{\prec,s})$ 
 be a factorizable anti-pre-Novikov bialgebra. Then the double anti-pre-Novikov algebra $(D=A\oplus A^{*}, \succ_D,\prec_D)$
is isomorphic to the direct sum $A\oplus A$ of anti-pre-Novikov algebras.
\end{pro}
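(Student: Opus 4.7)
The plan is to exhibit an explicit linear isomorphism $\Phi: D \to A \oplus A$ of anti-pre-Novikov algebras, with $A \oplus A$ given the direct sum structure. Motivated by the preceding proposition, I will define
\begin{equation*}
\Phi(x + \zeta) = \bigl(x + T_s(\zeta),\; x - T_{\tau(s)}(\zeta)\bigr), \qquad x \in A,\ \zeta \in A^*,
\end{equation*}
which sends $A$ into the diagonal $\Delta_A$ and $A^*$ into the subalgebra $\mathrm{Im}(T_s \oplus T_{\tau(s)})$ identified earlier. Factorizability immediately gives that $\Phi$ is a linear isomorphism: from $\Phi(x+\zeta)=0$, subtracting the two coordinates yields $T_{s+\tau(s)}(\zeta)=0$, so $\zeta=0$ and hence $x=0$; surjectivity follows by solving $\zeta=T_{s+\tau(s)}^{-1}(a-b)$, $x=a-T_s(\zeta)$ for any $(a,b)\in A\oplus A$.

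The bulk of the proof is to check that $\Phi$ intertwines $(\succ_D,\prec_D)$ with the componentwise operations, which I will handle by cases according to whether each argument lies in $A$ or $A^*$. The $A$-$A$ case is immediate from the diagonal embedding, and the $A^*$-$A^*$ case is exactly Corollary \ref{QB}, which states that both $T_s$ and $-T_{\tau(s)}$ are anti-pre-Novikov algebra homomorphisms from $(A^*,\succ_s,\prec_s)$ to $(A,\succ,\prec)$. For a typical mixed case such as $\Phi(x\succ_D\zeta)=\Phi(x)\succ\Phi(\zeta)$, expansion via \eqref{Db1} reduces the first-coordinate identity to
\begin{equation*}
x\succ T_s(\zeta) + T_s(L_\star^*(x)\zeta) + R_{\succ_{A^*}}^*(\zeta)\,x = 0.
\end{equation*}
Using the explicit formula for $\succ_s$ in Proposition \ref{Da1} together with a direct dualization, $R_{\succ_{A^*}}^*(\zeta)\,x$ equals $x\succ T_{\tau(s)}(\zeta) + T_{\tau(s)}(L_\star^*(x)\zeta)$ purely from the definitions of $T_s, T_{\tau(s)}$; substituting this collapses the displayed identity to the invariance relation \eqref{IE8}, which holds by Proposition \ref{Si}. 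The second-coordinate identity then becomes automatic.

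The remaining mixed cases ($\zeta\succ_D x$, $x\prec_D\zeta$, $\zeta\prec_D x$) are handled by strictly parallel computations, using the companion invariance relations \eqref{IE9}--\eqref{LR2} and the corresponding $\prec_s$ and $\circ_s$ formulas from Proposition \ref{Da1}. The main obstacle is purely organizational: each mixed identity involves the dual-representation operators $R_{\succ_{A^*}}^*,\,L_\star^*$, etc.\ appearing in \eqref{Db1}--\eqref{Db2}, and one must carefully translate them into expressions in $T_s, T_{\tau(s)}$ before the invariance of $s+\tau(s)$ can bridge $T_s$ and $T_{\tau(s)}$ with operations on arbitrary elements of $A$ (rather than only on elements in their images). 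Once this translation step is in place, the verification is routine and $\Phi$ is the desired isomorphism of anti-pre-Novikov algebras.
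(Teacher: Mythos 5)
Your proposal is correct and follows essentially the same route as the paper: the same map $\varphi(x+\zeta)=(x+T_s(\zeta),\,x-T_{\tau(s)}(\zeta))$, the same bijectivity argument via invertibility of $T_{s+\tau(s)}$, and the same reduction of the mixed cases to the identities $-R_{\succ_{A^*}}^{*}(\zeta)x-T_{s}(L_{\star}^{*}(x)\zeta)=x\succ T_{s}(\zeta)$ (the paper's Eqs.~\eqref{FD2}--\eqref{FD3}) obtained from Proposition \ref{Da1} and the invariance of $s+\tau(s)$. Your explicit appeal to Corollary \ref{QB} for the $A^{*}$-$A^{*}$ case is a slightly cleaner packaging of what the paper leaves to "the same token," but the argument is the same.
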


\begin{proof} By Definition \ref{Qt1} and Definition \ref{Qt}, 
$T_{s+\tau(s)}$ is a linear isomorphism and $s+\tau(s)$ is invariant.
Define $\varphi:A\oplus A^{*}\longrightarrow A\oplus A$ by
\begin{equation}\label{FD1}\varphi(x,\zeta)=(x+T_{s}(\zeta),x-T_{\tau(s)}(\zeta)),~\forall~(x,\zeta)\in A\oplus A^{*}.\end{equation}
Then $\varphi$ is a bijection.
Using Eqs. (\ref{IE1}) and (\ref{LR1}), we have
\begin{align*}&\langle\eta,-R_{\succ_{A^*}}^{*}(\zeta)x-T_{s}(L_{\star_A}^{*}(x)\zeta)\rangle
= \langle \eta\succ_{s} \zeta,x\rangle+\langle x\star_{A}T_{\tau(s)}(\eta),\zeta\rangle
\\=& \langle R_{\succ}^{*}(T_{\tau(s)}(\eta))\zeta-L_{\star}^{*}(T_{s}(\zeta))\eta,x\rangle
+\langle x\star_{A}T_{\tau(s)}(\eta),\zeta\rangle
\\=& \langle R_{\succ}^{*}(T_{s+\tau(s)}(\zeta))\eta-L_{\star}^{*}(T_{s+\tau(s)}(\eta))\zeta-R_{\succ}^{*}(T_{s}(\zeta))\eta+
L_{\star}^{*}(T_{\tau(s)}(\eta))\zeta,x\rangle
-\langle L_{\star_{A}}^{*}(T_{\tau(s)}(\eta))\zeta,x\rangle
\\=& \langle\eta,x\succ_A T_{s}(\zeta)\rangle, 
\end{align*}
which implies that 
\begin{equation}\label{FD2} -R_{\succ_{A^*}}^{*}(\zeta)x-T_{s}(L_{\star_A}^{*}(x)\zeta)
=x\succ_A T_{s}(\zeta).\end{equation}
Analogously,  
\begin{equation}\label{FD3}-R_{\succ_{A^*}}^{*}(\zeta)x+T_{\tau(s)}(L_{\star_A}^{*}(x)\zeta)
=-x\succ_A T_{\tau(s)}(\zeta).\end{equation}
According to Eqs. (\ref{Db1}) and (\ref{FD1})-(\ref{FD3}),
\begin{align*}&\varphi(x\succ_D \zeta)=\varphi(-R_{\succ_A^{*}}^{*}(\zeta)x
-L_{\star_A}^{*}(x)\zeta)\\
=&(-R_{\succ_{A^*}}^{*}(\zeta)x-T_{s}(L_{\star_A}^{*}(x)\zeta),-R_{\succ_{A^*}}^{*}(\zeta)x+ T_{\tau(s)}(L_{\star_A}^{*}(x)\zeta))
\\=&(x\succ_A T_{s}(\zeta),-x\succ_A T_{\tau(s)}(\zeta))
=(x,x)\succ (T_{s}(\zeta),-T_{\tau(r)}(\zeta))
\\=&\varphi(x)\succ_A \varphi(\zeta).
\end{align*}
By the same token, $\varphi(\zeta\succ_D x)=\varphi(\zeta)\succ_A \varphi(x) $. Thus,
$\varphi((x,\zeta)\succ_D (y,\eta))=\varphi(x,\zeta)\succ \varphi(y,\eta) $ for all
$(x,\zeta), (y,\eta)\in A\oplus A^{*}$.
Taking the same procedure, we can prove that
$\varphi((x,\zeta)\prec_D (y,\eta))=\varphi(x,\zeta)\prec \varphi(y,\eta) $.
Hence, $\varphi$ is an isomorphism of anti-pre-Novikov algebras.
The proof is completed.
\end{proof}

\begin{thm} \label{Ft}
Let $(A, \succ,\prec, \Delta_{\succ}, \Delta_{\prec})$ be an anti-pre-Novikov
 bialgebra. Assume that $\{e_1,...,e_n\}$ is a basis of $A$ and $\{e^{*}_1,...,e^{*}_n\}$ is the dual basis.
 Let \begin{equation*}s=\sum_{i=1}^{n}e_{i}\otimes e_{i}^{*}\in A\otimes A^{*}\subseteq D\otimes D.\end{equation*}
Then $(D,\succ_D,\prec_D, \Delta_{\succ,s},\Delta_{\prec,s})$ with 
$\Delta_{\succ,s},\Delta_{\prec,s}$ given by Eqs.~(\ref{CB01})-(\ref{CB02})
 is a factorizable anti-pre-Novikov bialgebra.
\end{thm}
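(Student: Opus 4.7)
The plan is to verify the three defining features of a factorizable anti-pre-Novikov bialgebra in succession: $T_{s+\tau(s)}$ is a linear isomorphism, $s+\tau(s)$ is invariant in $(D,\succ_{D},\prec_{D})$, and $s$ satisfies the APN-YBE in $(D,\succ_{D},\prec_{D})$. Given these, the theorem immediately preceding Definition~\ref{Qt1} delivers the coboundary anti-pre-Novikov bialgebra structure on $D$, and Definition~\ref{Qt} then yields factorizability.

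First I would compute $T_{s}, T_{\tau(s)}, T_{s+\tau(s)}\colon D^{*}\to D$ explicitly. Writing $D^{*}=A^{*}\oplus A$ under the canonical identifications (using $A\cong A^{**}$), a direct pairing calculation from Eq.~\eqref{YE7} gives
\begin{equation*}
T_{s}(\zeta+y)=\zeta\in A^{*}\subseteq D,\qquad T_{\tau(s)}(\zeta+y)=y\in A\subseteq D,
\end{equation*}
for $\zeta\in A^{*}$, $y\in A$. Hence $T_{s+\tau(s)}(\zeta+y)=y+\zeta$ is the canonical isomorphism $A^{*}\oplus A\xrightarrow{\sim}A\oplus A^{*}$, in particular bijective, and it satisfies $\omega(T_{s+\tau(s)}(\alpha),x)=\langle\alpha,x\rangle$ for all $\alpha\in D^{*}$, $x\in D$. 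Invariance of $s+\tau(s)$ then follows from the quadratic structure on $D$: by Theorem~\ref{Am3}, the symmetric quasi-Frobenius Novikov algebra $(D,\circ_{D},\omega)$ carries the compatible quadratic anti-pre-Novikov algebra $(D,\succ_{D},\prec_{D},\omega)$. Applying Lemma~\ref{In1} to $\theta=T_{s+\tau(s)}$, the invariance conditions of Definition~\ref{In} for $s+\tau(s)$ become $L_{\star}(x)\theta(\alpha)+\theta(L_{\succ}^{*}(x)\alpha)=0$ and $L_{\odot}(x)\theta(\alpha)+\theta(L_{\circ}^{*}(x)\alpha)=0$; pairing both sides against an arbitrary test vector and using the defining relations~\eqref{C2} of the quadratic form together with the symmetry of $\omega$ reduces these to identities that hold by construction.

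For the APN-YBE I would invoke Theorem~\ref{QB0}: under the invariance of $s+\tau(s)$ just established, it suffices to verify condition~(b), namely that $(D^{*},\circ_{s})$ is a Novikov algebra and both $T_{s}$ and $-T_{\tau(s)}$ are Novikov algebra homomorphisms into $(D,\circ_{D})$. Unpacking $\circ_{s}$ from Eq.~\eqref{IE0} using the multiplications~\eqref{Db1}--\eqref{Db2} on $D$ coming from the matched pair of Theorem~\ref{Mp1}, one computes the $A^{*}$- and $A$-parts of $R_{\prec_{D}}^{*}(z)(\zeta_{1}+y)$ and $L_{\odot_{D}}^{*}(\zeta_{1})(\eta_{1}+z)$; coincidences of starred operators (notably $L_{\ominus}^{*}(\zeta_{1})z$ and $R_{\prec_{A}}^{*}(z)\zeta_{1}$ appearing with opposite signs in both expressions) force all cross-terms to cancel, yielding the clean splitting
\begin{equation*}
(\zeta_{1}+y)\circ_{s}(\eta_{1}+z)\;=\;\zeta_{1}\circ_{A^{*}}\eta_{1}\;-\;y\circ_{A}z,\qquad \zeta_{1},\eta_{1}\in A^{*},\;y,z\in A.
\end{equation*}
Thus $(D^{*},\circ_{s})\cong(A^{*},\circ_{A^{*}})\oplus(A,-\circ_{A})$ as a direct sum of Novikov algebras (the sign-reversed multiplication on $A$ still satisfies the Novikov axioms by direct inspection), and since $A^{*}$ and $A$ are Novikov subalgebras of $(D,\circ_{D})$, the maps $T_{s}$ and $-T_{\tau(s)}$---which are (up to sign) the canonical projections onto $A^{*}$ and $A$ followed by the inclusion into $D$---are manifestly Novikov homomorphisms. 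Combining the three points via the theorem preceding Definition~\ref{Qt1} and Definition~\ref{Qt} then yields the factorizable anti-pre-Novikov bialgebra. The main obstacle is the splitting computation for $\circ_{s}$: although formally a direct unpacking, it requires careful bookkeeping of the dual-action identities built into the matched-pair structure on $D$.
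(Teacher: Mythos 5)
Your proposal is correct and rests on the same three pillars the paper verifies --- bijectivity of $T_{s+\tau(s)}$, invariance of $s+\tau(s)$, and the APN-YBE for $s$ in $D$ --- but it reaches the latter two by genuinely different means. For invariance, the paper computes $(I\otimes L_{\star_D}(x)-L_{\succ_D}(x)\otimes I)(s+\tau(s))=0$ and its companion identity directly from the multiplication formulas \eqref{Db1}--\eqref{Db2}, then extends to $\zeta\in A^{*}$ by duality; you instead note that $T_{s+\tau(s)}=(\omega^{\sharp})^{-1}$ for the canonical form \eqref{Co1} on $D$ and appeal to the quadratic structure on $D$ together with Lemma \ref{Fb1}. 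This is shorter and more conceptual, and it is sound because the double-construction equivalence guarantees that $(D,\succ_D,\prec_D,\omega)$ is a quadratic anti-pre-Novikov algebra. For the APN-YBE, the paper verifies $s_{12}\circ_D s_{13}+s_{23}\odot_D s_{13}+s_{12}\prec_D s_{23}=0$ by a short direct computation, whereas you route through Theorem \ref{QB0}(b), computing $\circ_s$ on $D^{*}$ from Eq.~\eqref{IE0} and exhibiting the splitting $(\zeta+y)\circ_s(\eta+z)=\zeta\circ_{A^{*}}\eta-y\circ_A z$; I checked that the cross-terms do cancel as you assert (for instance the $R_{\prec_A}^{*}(z)\zeta$ contributions coming from the two summands of Eq.~\eqref{IE0} are equal and opposite, and likewise for the $L_{\odot_{A^{*}}}^{*}(\zeta)z$ terms), so the argument closes, though the bookkeeping is comparable in length to the paper's direct check rather than lighter. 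One small discrepancy worth flagging: the paper's proof records $T_{\tau(s)}(\zeta,x)=-x$, while the standard pairing gives $T_{\tau(s)}(\zeta,x)=x$ as in your computation; your sign is the one under which $T_{s+\tau(s)}=(\omega^{\sharp})^{-1}$ holds exactly, and in either convention $T_{s+\tau(s)}$ is bijective, so factorizability is unaffected.
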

\begin{proof} Firstly, we prove that $s+\tau(s)=\sum_{i,j=1}^{n}(e_{i}\otimes e_{i}^{*}+e_{j}^{*}\otimes e_{j})$
 is invariant. In fact,
by Eqs.~(\ref{Db1}) and (\ref{Db2}), for all $x\in A$, we have 
\begin{align*}&(I\otimes L_{\star_D}(x)-L_{\succ_D}(x)\otimes I)\sum_{i,j=1}^{n}(e_{i}\otimes e_{i}^{*}+e_{i}^{*}\otimes e_{i})
\\=&\sum_{i,j=1}^{n}e_{i}\otimes x\star_{D}e_{i}^{*}-x\succ_{D}e_{i}\otimes e_{i}^{*}
+e_{j}^{*}\otimes x\star_{D}e_{j}-x\succ_{D}e_{j}^{*}\otimes e_{j}
\\=&\sum_{i,j=1}^{n}-e_{i}\otimes L_{\succ_{A^*}}^{*}(e_{i}^{*})x-e_{i}\otimes L_{\succ}^{*}(x)e_{i}^{*}
-x\succ e_{i}\otimes e_{i}^{*}+e_{j}^{*}\otimes x\star e_j+R_{\succ_{A^*}}^{*}(e_{j}^{*})x\otimes e_{j}
+L_{\star}^{*}(x)e_{j}^{*}\otimes e_{j}.
\end{align*}
Note that
\begin{align*}& \sum_{i,j=1}^{n}R_{\succ_{A^*}}^{*}(e_{j}^{*})x\otimes e_{j}-e_{i}\otimes L_{\succ_{A^*}}^{*}(e_{i}^{*})x=0,
\\&\sum_{i,j=1}^{n}e_{i}\otimes L_{\succ_A}^{*}(x)e_{i}^{*}
+x\succ_{A}e_{i}\otimes e_{i}^{*}=0,
\\&\sum_{i,j=1}^{n}e_{j}^{*}\otimes x\star e_j
+L_{\star}^{*}(x)e_{j}^{*}\otimes e_{j}=0.
\end{align*}
Thus, $(I\otimes L_{\star_{D}}(x)-L_{\succ_{D}}(x)\otimes I)(s+\tau(s))=0.$
Similarly, $(L_{\circ_{D}}(x)\otimes I-I\otimes L_{\odot_{D}}(x))(s+\tau(s))=0.$
By duality, we get
\begin{equation*}(I\otimes L_{\star_{D}}(\zeta)-L_{\succ_{D}}(\zeta)\otimes I)(s+\tau(s))=0,
\ \ \
(L_{\circ_{D}}(\zeta)\otimes I-I\otimes L_{\odot_{D}}(\zeta))(s+\tau(s))=0\end{equation*}
for all $\zeta\in A^{*}$. Therefore, according to Definition \ref{In},  $s+\tau(s)$ is invariant.
Secondly, we prove that $s$ is a solution of the APN-YBE in $(D,\succ_D,\prec_D)$.
 In view of Eqs.~(\ref{Db1}) and (\ref{Db2}), we get
\begin{align*}& s_{12}\circ_D s_{13}+s_{23}\odot_D s_{13}+s_{12}\prec_D s_{23}
\\=&\sum_{i,j=1}^{n}e_{i}\circ_D e_{j}\otimes e_{i}^{*}\otimes e_{j}^{*}+e_{j}\otimes e_{i} \otimes e_{i}^{*}\odot_D e_{j}^{*}
+e_{i}\otimes e_{i}^{*}\prec_D e_j\otimes e_{j}^{*}
\\=&\sum_{i,j=1}^{n}e_{i}\circ_A e_{j}\otimes e_{i}^{*}\otimes e_{j}^{*}+e_{j}\otimes e_{i} \otimes e_{i}^{*}\odot_{A^*} e_{j}^{*}
+e_{i} \otimes (R_{\odot_A{^*}}^{*}(e_{i}^{*})e_{j}+R_{\circ}^{*}(e_{j})e_{i}^{*})\otimes e_{j}^{*}
=0.\end{align*}
In all, $(D,\succ_D,\prec_D, \Delta_{\succ,s},\Delta_{\prec,s})$ is a quasi-triangular anti-pre-Novikov bialgebra.
 Furthermore, the linear maps $T_{s},T_{\tau(s)}:D^{*}\longrightarrow D$ are respectively defined by 
$T_{s}(\zeta,x)=\zeta,~T_{\tau(s)}(\zeta,x)=-x$ for all $x\in A,\zeta\in A^{*}$. Thus,
$T_{s+\tau(s)}(\zeta,x)=(\zeta,-x)$ is a linear isomorphism. Hence, $(D,\succ_D,\prec_D, \Delta_{\succ,s},\Delta_{\prec,s})$
is a factorizable anti-pre-Novikov bialgebra.
\end{proof}

\section{Relative Rota-Baxter operators and quadratic Rota-Baxter anti-pre-Novikov algebras }

\subsection{Relative Rota-Baxter operators and the APN-YBE}
This section interprets solutions to the APN-YBE possessing 
invariant symmetric parts via relative Rota–Baxter operators of weight on anti-pre-Novikov algebras.

\begin{defi}\label{RRa} Let $(A,\succ,\prec)$ and $(V,\succ_V,\prec_V)$ be anti-pre-Novikov algebras. Assume that
 $(V,l_{\succ},$ \ \ \ $r_{\succ},l_{\prec},r_{\prec})$ is a representation and the following conditions are satisfied:
 \begin{align*}&
 (l_{\circ}(x)a-r_{\circ}(x)a)\succ_V b=a\succ l_{\succ}(x)b-l_{\succ}(x)(a\succ_V b),\\&
 l_{\prec}(x)(a\circ_V b)=r_{\succ}(x)a\prec_V b-l_{\prec}(x)a\prec_V b-a\succ_V(l_{\prec}(x)b),\\&
 l_{\circ}(x)a\succ_V b=-l_{\succ}(x)b\prec_V a, \ \ \ l_{\prec}(x)a\prec_V b=l_{\prec}(x)b\prec_V a,
 \\
 &(l_{\circ}(x)a-r_{\circ}(x)a)\prec_V b=l_{\succ}(x)(a\circ_Vb)-a\succ_V(l_{\circ}(x)b),
 \\&r_{\succ}(x)(a\circ_V b-b\circ_V a)=b\succ_V(r_{\succ}(x)a)-a\succ_V(r_{\succ}(x)b),
 \\&
 a \prec_V(r_{\circ}(x)b)=r_{\prec}(x)(b\succ_V a -a\prec_V b)-b\succ_V(r_{\prec}(x)a),\\&
 r_{\prec}(x)(a\circ_V b-b\circ_V a)=a\succ_V(r_{\circ}(x)b)-b\succ_V(r_{\circ}(x)a),
 \\&a\prec_V(l_{\circ}(x)b)=l_{\succ}(x)a\prec_V b-r_{\prec}(x)a\prec_V b-l_{\succ}(x)(a\prec_V b),\\&
  r_{\succ}(x)(a\circ_V b)=-r_{\succ}(x)a\prec_V b, \ \   r_{\prec}(x)(a\prec_V b)=r_{\prec}(x)a\prec_V b,\ \
  \\&r_{\prec}(x)(a\succ_V b)=-r_{\circ}(x)a\succ_V b
\end{align*}
 for all $x\in A$ and $a,b\in V$. Then $(V,\succ_V,\prec_V,l_{\succ},r_{\succ},l_{\prec},r_{\prec})$ is called
  an A-anti-pre-Novikov algebra, where $\circ=\succ+\prec,~\circ_V=\succ_V+\prec_V$.
\end{defi}
 It is easy to check that $(V,\succ_V,\prec_V,l_{\succ},r_{\succ},l_{\prec},r_{\prec})$ is an A-
 anti-pre-Novikov algebra
 if and only if $(A\oplus V,\succeq,\preceq)$ is an anti-pre-Novikov algebra, where
 \begin{align*}&
 (x+a)\succeq (y+b)=x\succ y+l_{\succ}(x)b+r_{\succ}(y)a+a\succ_V b,\\&
 (x+a)\preceq (y+b)=x\prec y+l_{\prec}(x)b+r_{\prec}(y)a+a\prec_V b.
  \end{align*}

 \begin{pro}
Let $(A,\succ,\prec)$ be an anti-pre-Novikov algebra and $s\in A\otimes A$ be symmetric and invariant. Define the
multiplications $\succeq_s,\preceq_s:A^{*}\otimes A^{*}\longrightarrow A^{*}$ by
\begin{align}&\label{AD1} \zeta\succeq_s\eta=-L_{\star}^{*}(T_{s}(\zeta))\eta=-R_{\succ}^{*}(T_{s}(\eta))\zeta, 
 \\&\label{AD2}\zeta\preceq_s\eta=R_{\odot}^{*}(T_{s}(\zeta))\eta=R_{\circ}^{*}(T_{s}(\eta))\zeta
,~~\forall~\zeta,\eta\in A^{*}.
\end{align}
Then $(A^{*},\succeq_s,\preceq_s,-L_{\star}^{*},-R_{\succ}^{*},R_{\odot}^{*}, R_{\circ}^{*})$ is an A-anti-pre-Novikov algebra
and $(A^{*},\circ_s,-L_{\odot}^{*},R_{\prec}^{*})$ is an A-Novikov algebra, where
\begin{equation}\label{AD3} \zeta\circ_s\eta=-L_{\odot}^{*}(T_s(\zeta))\eta=R_{\prec}^{*}(T_s(\eta))\zeta,~~\forall~\zeta,\eta\in A^{*}.
\end{equation}
\end{pro}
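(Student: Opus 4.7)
The plan is to reduce everything to identities that $T_s$ satisfies as a consequence of the symmetry and invariance of $s$, and then to verify each axiom by translating via $T_s$ and appealing to the dual representation properties on $A^{*}$ already catalogued in Proposition \ref{zr}.

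I would first dispose of well-definedness. Since $s$ is symmetric, $T_s=T_{\tau(s)}$, and since $s$ is invariant, Lemma \ref{In1} provides $L_\star^{*}(T_s(\zeta))\eta=R_\succ^{*}(T_s(\eta))\zeta$ and $R_\odot^{*}(T_s(\zeta))\eta=R_\circ^{*}(T_s(\eta))\zeta$; the additional identity $L_\odot^{*}(T_s(\zeta))\eta=-R_\prec^{*}(T_s(\eta))\zeta$ needed for $\circ_s$ drops out of the decompositions $-L_\star^{*}+R_\odot^{*}=-L_\odot^{*}$ and $-R_\succ^{*}+R_\circ^{*}=R_\prec^{*}$. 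Combining with the derived equations \eqref{IE9}-\eqref{LR2} specialised to $s+\tau(s)=2s$ yields the intertwining relation $T_s(L_\odot^{*}(T_s(\zeta))\eta)=-T_s(\zeta)\circ T_s(\eta)$, i.e. $T_s(\zeta\circ_s\eta)=T_s(\zeta)\circ T_s(\eta)$. Thus $T_s$ is a Novikov algebra homomorphism from $(A^{*},\circ_s)$ to $(A,\circ)$, and analogous translations give the $\succeq_s,\preceq_s$ multiplications in terms of $\succ,\prec$ on $A$.

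Next I would verify the Novikov and anti-pre-Novikov axioms on $A^{*}$. For \eqref{Na2} I rewrite $(\zeta\circ_s\eta)\circ_s\theta=-L_\odot^{*}(T_s(\zeta\circ_s\eta))\theta=-L_\odot^{*}(T_s(\zeta)\circ T_s(\eta))\theta$ and apply the identity $l(x\circ y)v=r(y)l(x)v$ from Proposition \ref{zr}(e) to symmetrise in $\eta,\theta$; for \eqref{Na1}, the commutator identity for $l=-L_\odot^{*}$ coming from Eq.~\eqref{Nr1} handles the associator asymmetry once the outermost factors are pushed into $A$ via the homomorphism property. The five anti-pre-Novikov axioms \eqref{Aa1}-\eqref{Aa5} for $(\succeq_s,\preceq_s)$ follow by the same template, using now the full four-component dual representation $(-L_\star^{*},-R_\succ^{*},R_\odot^{*},R_\circ^{*})$ of Proposition \ref{zr}(c) and the corresponding representation axioms \eqref{rp1}-\eqref{rp10} evaluated at arguments $T_s(\zeta), T_s(\eta), T_s(\theta)$.

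Finally I would verify the compatibility conditions of Definition \ref{RRa} for the $A$-anti-pre-Novikov structure, and analogously those of Proposition \ref{d1} for the $A$-Novikov structure on $(A^{*},\circ_s,-L_\odot^{*},R_\prec^{*})$. For each such condition, after writing the operations on $A^{*}$ in their $-L_\bullet^{*}(T_s\cdot)$-form or $R_\bullet^{*}(T_s\cdot)$-form, both sides reduce, via the invariance identities $L_\star(x)T_s(\zeta)+T_s(L_\succ^{*}(x)\zeta)=0$ and $L_\odot(x)T_s(\zeta)+T_s(L_\circ^{*}(x)\zeta)=0$ of Lemma \ref{In1} together with their derived forms, to the same expression in the structure constants of $(A,\succ,\prec)$ and its representation on $A^{*}$. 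The main obstacle is purely bookkeeping: the list in Definition \ref{RRa} is long, and for each identity the trick is to select among the two equivalent presentations of each of $\succeq_s,\preceq_s,\circ_s$ the one that lines up the $T_s$ occurrence with the slot of $L_\bullet^{*}$ or $R_\bullet^{*}$ where an intertwining identity of Lemma \ref{In1} is directly applicable; no conceptual ingredient beyond symmetry, invariance, and the dual representation structure of Proposition \ref{zr} is required.
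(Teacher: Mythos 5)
Your proposal is correct and follows essentially the same route as the paper: both establish the homomorphism/intertwining property $T_s(\zeta\circ_s\eta)=T_s(\zeta)\circ T_s(\eta)$ from the invariance identities of Lemma \ref{In1} (together with \eqref{IE8}--\eqref{IE9} specialised to the symmetric case $T_s=T_{\tau(s)}$), and then check each axiom by transporting the outer arguments into $A$ via $T_s$ and invoking the dual-representation relations of Proposition \ref{zr}. The paper likewise writes out only one axiom (Eq.~\eqref{Aa1} for $\succeq_s,\circ_s$) in full and declares the remaining verifications analogous, so your level of detail matches its own.
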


\begin{proof} In view of Eqs.~(\ref{Nr4}), (\ref{IE8})-(\ref{IE9}) and (\ref{AD1})-(\ref{AD3}),
for all $\zeta,\eta,\theta\in A^{*}$ we have
\begin{align*}(\zeta\circ_s\eta-\eta\circ_s\zeta)\succeq_{s}\theta
=&-L_{\star}^{*}(T_{s}(\zeta\circ_s\eta-\eta\circ_s\zeta))\theta
\\=&L_{\star}^{*}(T_{s}(L_{\odot}^{*}(T_s(\zeta))\eta-L_{\odot}^{*}(T_s(\eta))\zeta))\theta
\\=&L_{\star}^{*}(T_s(\eta)\circ T_s(\zeta)-T_s(\zeta)\circ T_s(\eta))\theta
\\=&L_{\star}^{*}(T_s(\eta))L_{\star}^{*}( T_s(\zeta))\theta-L_{\star}^{*}(T_s(\zeta)) L_{\star}^{*}(T_s(\eta))\theta
\\=&\eta\succeq_s(\zeta\succeq_s\theta)-\zeta\succeq_s(\eta\succeq_s\theta),
\end{align*}
which yields that Eq.~(\ref{Aa1}) holds for $\succeq_s$ and $\circ_s$. Analogously,
using Eqs. (\ref{rp1})-(\ref{rp10}), (\ref{IE8})-(\ref{IE9}) and (\ref{AD1})-(\ref{AD3}), we can prove that 
Eqs. (\ref{Aa2})-(\ref{Aa5}) and all equations in Definition \ref{RRa} hold for $\succeq_s,\preceq_s$ and $\circ_s$. We finish the proof.
\end{proof}

\begin{defi} Let $(A,\succ,\prec)$ be an anti-pre-Novikov algebra and 
$(V,\succ_V,\prec_V,l_{\succ},r_{\succ},l_{\prec},r_{\prec})$ be an A-anti-pre-Novikov algebra.
A relative Rota-Baxter operator $T$ of weight $\lambda$ on $(A,\succ,\prec)$ associated to
 $(V,\succ_V,\prec_V,l_{\succ},r_{\succ},l_{\prec},r_{\prec})$
   is a linear map $T:V\longrightarrow A$ satisfying
\begin{align*}&T(u)\succ T(v)=T (l_{\succ}(T(u))v+r_{\succ}(T(v))u+\lambda u\succ_V v),\\& 
T(u)\prec T(v)=T (l_{\prec}(T(u))v+r_{\prec}(T(v))u+\lambda u\prec_V v),~~\forall~u,v\in V.\end{align*}
When $u\succ_Vv=u\prec_Vv=0$ for all $u,v\in V$, then $T$ is simply a relative Rota-Baxter operator ( $\mathcal O$-operator) on
$(A,\succ,\prec)$ associated to a representation $(V,l_{\succ},r_{\succ},l_{\prec},r_{\prec})$.
\end{defi}

The following result ia a generalization of Theorem \ref{YE8}.
\begin{thm}
Let $(A,\succ,\prec)$ be an anti-pre-Novikov algebra and $s\in A\otimes A$. Assume that $s+\tau(s)$ is invariant.
Then the following
conditions are equivalent.
 \begin{enumerate}
\item $s$ is a solution of the APN-YBE in $(A,\succ,\prec)$
 such that $(A,\succ,\prec,\Delta_{\succ,s},\Delta_{\prec,s})$
with $\Delta_{\succ,s},\Delta_{\prec,s}$ given by Eqs. (\ref{CB01})-(\ref{CB02}) is a quasi-triangular anti-pre-Novikov bialgebra.
\item $T_s$ is a relative Rota–Baxter operator of weight $-1$ on $(A,\succ,\prec)$
 with respect to the A-anti-pre-Novikov algebra $(A^{*},\succeq_{s+\tau(s)},\preceq_{s+\tau(s)},-L_{\star}^{*},-R_{\succ}^{*},R_{\odot}^{*}, R_{\circ}^{*}
 )$,
 that is,
 \begin{align}\label{AD5}&T_{s}(\zeta)\prec T_{s}(\eta)=T_{s}(R_{\odot}^*(T_{s}(\zeta))\eta
 +R_{\circ}^*(T_{s}(\eta))\zeta-\zeta\preceq_{s+\tau(s)}\eta), \\&
 \label{AD6}T_{s}(\zeta)\succ T_{s}(\eta)=T_{s}(-L_{\star}^*(T_{s}(\zeta))\eta-R_{\succ}^*(T_{s}(\eta))\zeta
-\zeta\succeq_{s+\tau(s)}\eta). 
 \end{align}
\item $T_s$ is a relative Rota–Baxter operator of weight $-1$ on $(A,\circ)$
 with respect to the A-Novikov algebra $(A^{*},\circ_{s+\tau(s)},-L_{\odot}^{*},R_{\prec}^{*})$, that is,
 \begin{align}\label{AD8} &
 T_{s}(\zeta)\circ T_{s}(\eta)=T_{s}(-L_{\odot}^*(T_{s}(\zeta))\eta+R_{\prec}^*(T_{s}(\eta))\zeta-\zeta\circ_{s+\tau(s)}\eta), 
 \end{align}
  \end{enumerate}
for all $\zeta,\eta\in A^{*}$, where $\succeq_{s+\tau(s)},~\preceq_{s+\tau(s)}$ and $\circ_{s+\tau(s)}$ are given respectively by Eqs.~(\ref{AD1})-(\ref{AD3})
\end{thm}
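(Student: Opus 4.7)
The plan is to reduce the equivalence to the operator characterization of APN-YBE solutions in Proposition \ref{YE80} and Theorem \ref{Ya1}, then exploit the invariance hypothesis on $s+\tau(s)$ to eliminate $T_{\tau(s)}$ in favor of $T_s$, picking up exactly the A-(anti-pre-)Novikov correction terms.

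First I would record the substitution $T_{\tau(s)}=T_{s+\tau(s)}-T_s$ and, using Proposition \ref{Si} and Eqs.~\eqref{IE7}--\eqref{LR2}, translate the operator identities
\[
L_{\star}^*(T_{s+\tau(s)}(\zeta))\eta=R_{\succ}^*(T_{s+\tau(s)}(\eta))\zeta,\qquad R_{\odot}^*(T_{s+\tau(s)}(\zeta))\eta=R_{\circ}^*(T_{s+\tau(s)}(\eta))\zeta,
\]
together with $L_{\odot}^*(T_{s+\tau(s)}(\zeta))\eta=-R_{\prec}^*(T_{s+\tau(s)}(\eta))\zeta$, into the key fact that the binary operations $\succeq_{s+\tau(s)}$, $\preceq_{s+\tau(s)}$ and $\circ_{s+\tau(s)}$ defined in \eqref{AD1}--\eqref{AD3} are well-defined on $A^*$ (both formulas in each line agree). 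This is what lets the weight $-1$ correction $-\zeta\succeq_{s+\tau(s)}\eta$, $-\zeta\preceq_{s+\tau(s)}\eta$, $-\zeta\circ_{s+\tau(s)}\eta$ appear consistently.

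Next I would prove (a)$\Longleftrightarrow$(c). By Theorem \ref{Ya1}, under the invariance of $s+\tau(s)$, $s$ solves the APN-YBE iff it solves $P(s)=0$, which by Proposition \ref{YE80}(a) is equivalent to
\[
T_{\tau(s)}(\zeta)\circ T_{\tau(s)}(\eta)=T_{\tau(s)}\bigl(L_{\odot}^{*}(T_{s}(\zeta))\eta+R_{\prec}^{*}(T_{\tau(s)}(\eta))\zeta\bigr).
\]
Substituting $T_{\tau(s)}=T_{s+\tau(s)}-T_s$ on both sides and systematically collecting terms using Eqs.~\eqref{IE9}--\eqref{LR2} (which allow moving $T_{s+\tau(s)}$ past $L_\odot$, $R_\prec$ etc.) should leave exactly
\[
T_s(\zeta)\circ T_s(\eta)=T_s\bigl(-L_{\odot}^*(T_s(\zeta))\eta+R_{\prec}^*(T_s(\eta))\zeta-\zeta\circ_{s+\tau(s)}\eta\bigr),
\]
which is precisely \eqref{AD8}. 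The equivalence (a)$\Longleftrightarrow$(b) follows the same pattern but is applied to the refined identities $P_2(s)=0$ and $P_1(s)=0$ (equivalently, Proposition \ref{YE80}(b)(c)), whose sum recovers $P(s)=0$; Theorem \ref{Ya1}(c) ensures that under invariance of $s+\tau(s)$, the vanishing of $P(s)$ is equivalent to the simultaneous vanishing of $P_1(s)$ and $P_2(s)$, so the $\succ$- and $\prec$-identities \eqref{AD5}, \eqref{AD6} hold iff \eqref{AD8} does.

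The main obstacle will be the bookkeeping in the substitution step: several cross terms of the form $T_{s+\tau(s)}(L_\odot^*(T_s(\zeta))\eta)$ and $R_\odot(T_s(\zeta))T_{s+\tau(s)}(\eta)$ appear, and to cancel or combine them one must invoke the invariance identities in just the right direction, alternating between the ``left'' form \eqref{IE7} and the ``right'' form \eqref{IE8}, and similarly for $L_\odot,L_\circ$ via \eqref{IE9}--\eqref{LR2}. Once the cancellations are organized—most conveniently by first establishing the auxiliary lemma that $\succeq_{s+\tau(s)}$, $\preceq_{s+\tau(s)}$, $\circ_{s+\tau(s)}$ are symmetric in the sense that the two defining formulas coincide—the remaining verification is routine and the three statements fall out simultaneously.
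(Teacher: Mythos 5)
Your overall strategy---reduce to the operator characterizations of Proposition \ref{YE80}, invoke Theorem \ref{Ya1} under the invariance hypothesis, and then absorb the discrepancy between $T_s$ and $T_{\tau(s)}$ into the weight-$(-1)$ correction terms $\zeta\circ_{s+\tau(s)}\eta$, $\zeta\succeq_{s+\tau(s)}\eta$, $\zeta\preceq_{s+\tau(s)}\eta$---is the same as the paper's, and the consistency of the two defining formulas in \eqref{AD1}--\eqref{AD3}, which you propose as an auxiliary lemma, is already established in the proposition immediately preceding the theorem. Where you diverge is in \emph{which} operator form you start from, and that choice matters. The paper uses Theorem \ref{Ya1} to replace $P(s)=0$ by $P_3(s)=0$ (and, for item (b), by $P_4(s)=P_5(s)=0$), whose characterizations in Proposition \ref{YE80}(d)--(f) are already written with $T_s$ in every slot except a single occurrence of $T_{\tau(s)}$; the entire proof then collapses to the one-line rewriting $-R_{\prec}^*(T_{\tau(s)}(\eta))\zeta=R_{\prec}^*(T_{s}(\eta))\zeta-R_{\prec}^*(T_{s+\tau(s)}(\eta))\zeta=R_{\prec}^*(T_{s}(\eta))\zeta-\zeta\circ_{s+\tau(s)}\eta$, with no further cancellation required. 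You instead start from Proposition \ref{YE80}(a)--(c), where every slot carries $T_{\tau(s)}$, and propose to substitute $T_{\tau(s)}=T_{s+\tau(s)}-T_s$ throughout and cancel via \eqref{IE7}--\eqref{LR2}.

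That substitution route is not wrong---both identities are equivalent to the APN-YBE, so the cancellation must succeed---but it amounts to re-deriving the operator-level content of Theorem \ref{Ya1}(a)$\Leftrightarrow$(b) by hand, and your write-up defers exactly that multi-term cancellation (``systematically collecting terms \ldots should leave exactly'') without carrying it out; in your version this is where all of the actual work sits. Relatedly, the sentence ``by Theorem \ref{Ya1}, $s$ solves the APN-YBE iff it solves $P(s)=0$'' is vacuous, since $P(s)=0$ \emph{is} the APN-YBE; the useful content of Theorem \ref{Ya1} here is precisely the passage to $P_3$, respectively $P_4$ and $P_5$. If you route the argument through those equivalent forms, the verification you postpone disappears and items (b) and (c) fall out exactly as you intend.
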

  
 \begin{proof}
 In the light of Proposition \ref{YE80} and Theorem \ref{Ya1}, if $s+\tau(s)$ is invariant, then
 $s$ is a solution of the APN-YBE in $(A,\succ,\prec)$ if and only if
 \begin{equation*}T_{s}(\zeta)\circ T_{s}(\eta)=T_{s}(-L_{\odot}^*(T_{s}(\zeta))\eta-R_{\prec}^*(T_{\tau(s)}(\eta))\zeta).\end{equation*}
 Observe that
\begin{align*}T_{s}(\zeta)\circ T_{s}(\eta)&=T_{s}(-L_{\odot}^*(T_{s}(\zeta))\eta+R_{\prec}^*(T_{s}(\eta))\zeta-R_{\prec}^*(T_{s+\tau(s)}(\eta))\zeta)
\\&=T_{s}(-L_{\odot}^*(T_{s}(\zeta))\eta+R_{\prec}^*(T_{s}(\eta))\zeta-\zeta\circ_{s+\tau(s)}\eta).\end{align*}
Thus, Item (a) $\Longleftrightarrow$ Item (c).
Similarly, Item (a) $\Longleftrightarrow$ Item (b).
  \end{proof}
 If $s\in A\otimes A$ is skew-symmetric, then $s+\tau(s)=0$. Thus, Theorem \ref{YE8} is obtained.
 
\subsection{Quadratic Rota-Baxter anti-pre-Novikov algebras and factorizable anti-pre-Novikov bialgebras}
In this section, we first introduce a notion of quadratic Rota-Baxter anti-pre-Novikov algebras. 
Then we characterize the relationship between factorizable anti-pre-Novikov 
bialgebras and quadratic Rota-Baxter anti-pre-Novikov algebras.

\begin{defi} Let $(A,\succ,\prec,P)$ be a Rota-Baxter anti-pre-Novikov algebra of weight $\lambda$
and $(A,\succ,\prec,\omega)$ a quadratic anti-pre-Novikov algebra. Then $(A,\succ,\prec,P,\omega)$
is called a \textbf{quadratic Rota-Baxter anti-pre-Novikov algebra of weight $\lambda$} if the following condition holds:
\begin{equation} \label{Fs}\omega (P(x),y)+\omega(x, P(y))+\lambda\omega(x,y)=0, ~\forall~x, y \in A.\end{equation}
\end{defi}

\begin{defi} Let $(A,\circ,P)$ be a Rota-Baxter Novikov algebra of weight $\lambda$ and
 $(A,\circ,\omega )$ be a symmetric quasi-Frobenius Novikov algebra if the following condition holds: 
 \begin{equation} \label{Fs1}\omega (P(x),y)+\omega(x, P(y))+\lambda\omega(x,y)=0, ~\forall~x, y \in A.\end{equation}
Then $(A,\circ ,P,\omega)$ is called a \textbf{symmetric Rota-Baxter quasi-Frobenius Novikov algebra of 
weight $\lambda$}.
\end{defi}

Since  \begin{align*} &\lambda\omega(x,y)+\omega (-\lambda(x)- P(x),y)+\omega(x, -\lambda(y)- P(y))\\=&
-\lambda\omega(x,y)-\omega (P(x),y)-\omega(x,  P(y)), ~\forall~x, y \in A,\end{align*}
the following conclusions are clear.

\begin{pro} \label{Fb2} Let $(A,\succ,\prec,\omega)$ be a quadratic anti-pre-Novikov algebra and let $P:A\longrightarrow A$ be a linear
map. Then $(A,\succ,\prec,P,\omega)$ is a quadratic Rota–Baxter anti-pre-Novikov algebra of weight $\lambda$ if and only if
$(A,\succ,\prec,-\lambda I-P,\omega)$ is a quadratic Rota–Baxter anti-pre-Novikov algebra of weight $\lambda$.
\end{pro}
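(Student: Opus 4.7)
The plan is to combine two already-established facts. First, the paper has already observed (in the remark right after the definition of Rota–Baxter operators of weight $\lambda$ on an anti-pre-Novikov algebra) that $(A,\succ,\prec,P)$ is a Rota–Baxter anti-pre-Novikov algebra of weight $\lambda$ if and only if $(A,\succ,\prec,\tilde P)$ is, where $\tilde P=-\lambda I-P$. So the Rota–Baxter half of the definition automatically transfers from $P$ to $-\lambda I-P$ and back.

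Second, it remains to handle the compatibility condition \eqref{Fs} between the operator and the bilinear form $\omega$. Here I would just record the identity already displayed immediately above the proposition: for all $x,y\in A$,
\begin{equation*}
\lambda\omega(x,y)+\omega(-\lambda x-P(x),y)+\omega(x,-\lambda y-P(y))=-\lambda\omega(x,y)-\omega(P(x),y)-\omega(x,P(y)).
\end{equation*}
Thus the left-hand side, which is the Eq.~\eqref{Fs}-expression for $\tilde P=-\lambda I-P$, equals the negative of the Eq.~\eqref{Fs}-expression for $P$. In particular one vanishes if and only if the other does.

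Combining these two observations gives the biconditional in both directions: assuming $(A,\succ,\prec,P,\omega)$ is a quadratic Rota–Baxter anti-pre-Novikov algebra of weight $\lambda$, the first fact yields that $-\lambda I-P$ is again a Rota–Baxter operator of weight $\lambda$, and the displayed identity yields that $-\lambda I-P$ still satisfies \eqref{Fs}, so $(A,\succ,\prec,-\lambda I-P,\omega)$ is a quadratic Rota–Baxter anti-pre-Novikov algebra of weight $\lambda$; the converse is symmetric because applying the construction $P\mapsto -\lambda I-P$ twice returns $P$.

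There is essentially no obstacle here: both ingredients are already present in the text, so the proof is purely a bookkeeping combination. The only thing worth flagging is that the quadratic anti-pre-Novikov structure $(A,\succ,\prec,\omega)$ itself is unchanged under the swap $P\leftrightarrow -\lambda I-P$, so one does not need to reverify Eq.~\eqref{C2}; the only moving part is the compatibility condition \eqref{Fs}, which is handled by the single displayed computation.
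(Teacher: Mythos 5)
Your proposal is correct and follows essentially the same route as the paper: the paper's own justification consists precisely of the earlier remark that $P$ is a Rota--Baxter operator of weight $\lambda$ if and only if $-\lambda I-P$ is, together with the displayed identity showing that the Eq.~\eqref{Fs}-expression for $-\lambda I-P$ is the negative of that for $P$. You have merely made explicit the bookkeeping that the paper leaves as ``the following conclusions are clear,'' so there is nothing to add.
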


\begin{pro} Let $(A,\circ,\omega)$ be a symmetric quasi-Frobenius Novikov algebra and let $P:A\longrightarrow A$ be a linear
map. Then $(A,\circ ,P,\omega)$ is a symmetric Rota-Baxter quasi-Frobenius Novikov algebra of weight $\lambda$ if and only if
$(A,\circ ,-\lambda I-P,\omega)$ is a symmetric Rota-Baxter quasi-Frobenius Novikov algebra of weight $\lambda$.
\end{pro}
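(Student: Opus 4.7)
The statement is an involution-symmetry result: the map $P \mapsto -\lambda I - P$ is an involution on linear endomorphisms of $A$, so it suffices to show one direction, namely that $(A,\circ,P,\omega)$ being a symmetric Rota-Baxter quasi-Frobenius Novikov algebra of weight $\lambda$ implies the same for $\tilde P := -\lambda I - P$. There are two conditions to verify for $\tilde P$: first, that $\tilde P$ is a Rota-Baxter operator of weight $\lambda$ on the Novikov algebra $(A,\circ)$; second, that the compatibility \eqref{Fs1} with $\omega$ persists for $\tilde P$.

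The second condition is exactly the computation already displayed immediately before the proposition. Plugging $\tilde P$ into the left-hand side of \eqref{Fs1} gives
\begin{align*}
\omega(\tilde P(x),y) + \omega(x,\tilde P(y)) + \lambda\omega(x,y)
&= -\lambda\omega(x,y) - \omega(P(x),y) - \lambda\omega(x,y) - \omega(x,P(y)) + \lambda\omega(x,y) \\
&= -\bigl(\omega(P(x),y)+\omega(x,P(y))+\lambda\omega(x,y)\bigr) = 0,
\end{align*}
using \eqref{Fs1} for $P$ in the last step.

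For the first condition, I would verify directly that $\tilde P(x)\circ \tilde P(y) = \tilde P(\tilde P(x)\circ y + x\circ \tilde P(y) + \lambda x\circ y)$. Expanding the left-hand side using bilinearity gives $\lambda^{2}x\circ y + \lambda P(x)\circ y + \lambda x\circ P(y) + P(x)\circ P(y)$, and then the Rota-Baxter relation for $P$ rewrites the last term as $P\bigl(P(x)\circ y + x\circ P(y) + \lambda x\circ y\bigr)$. On the other hand, expanding the argument of $\tilde P$ on the right-hand side collapses one of the three $\lambda x\circ y$ terms, yielding $\tilde P\bigl(-\lambda x\circ y - P(x)\circ y - x\circ P(y)\bigr)$; applying $\tilde P = -\lambda I - P$ produces exactly the same expression. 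This is the standard fact that the map $P\mapsto -\lambda I - P$ is an involution on Rota-Baxter operators of weight $\lambda$, and is the analogue of the preparatory remark preceding Proposition~\ref{Fb2}.

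There is no real obstacle here; the argument is a direct symbolic verification completely parallel to Proposition~\ref{Fb2} and the displayed computation immediately before the statement. The only point worth being careful about is keeping track of signs when expanding $(-\lambda x - P(x)) \circ (-\lambda y - P(y))$ and its image under $\tilde P$, so that the cross terms $\lambda P(x)\circ y$ and $\lambda x\circ P(y)$ cancel correctly on both sides.
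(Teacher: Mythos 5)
Your proof is correct and follows essentially the same route as the paper: the displayed computation before the proposition handles the $\omega$-compatibility condition, and the Rota--Baxter involution $P\mapsto -\lambda I - P$ (which the paper records earlier as a "direct computation" for the weighted Rota--Baxter identity) handles the rest. You merely make explicit the sign-tracking in $(-\lambda x - P(x))\circ(-\lambda y - P(y))$ that the paper leaves to the reader, which is fine.
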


\begin{thm}\label{Fb0} Suppose that $(A,\circ,P,\omega)$ is a symmetric Rota-Baxter quasi-Frobenius Novikov algebra of 
weight $\lambda$. Then $(A,\succ,\prec,P,\omega)$ is a quadratic Rota-Baxter anti-pre-Novikov 
algebra of weight $\lambda$, where $\succ,\prec$ are defined by Eq.~(\ref{C2}).
 On the other hand, let $(A,\succ,\prec,P,\omega)$ be a quadratic Rota-Baxter anti-pre-Novikov algebra of weight $\lambda$.
Then $(A,\circ,P,\omega)$ is a symmetric Rota-Baxter 
quasi-Frobenius Novikov algebra of weight $\lambda$, where $\circ=\succ+\prec$.
\end{thm}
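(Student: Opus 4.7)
The plan is to invoke Theorem~\ref{Am3} for the algebraic part and then reduce the statement to the compatibility of the Rota-Baxter operator $P$ on the two structures. Observe first that Eq.~\eqref{Fs} and Eq.~\eqref{Fs1} are the same identity imposed on the bilinear form $\omega$, so the ``weight $\lambda$ skew-adjointness'' condition of $P$ relative to $\omega$ carries over automatically between the two frameworks. By Theorem~\ref{Am3}, the transitions $(A,\circ,\omega)\rightsquigarrow(A,\succ,\prec,\omega)$ and $(A,\succ,\prec,\omega)\rightsquigarrow(A,\circ,\omega)$ are already established, so only the Rota-Baxter identity needs verification in each direction.

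For the converse part the argument is a direct linear combination: since $\circ=\succ+\prec$ and $P$ is a Rota-Baxter operator of weight $\lambda$ on both $\succ$ and $\prec$, I would add the two defining identities
\[P(x)\succ P(y)=P(P(x)\succ y+x\succ P(y)+\lambda x\succ y),\quad
P(x)\prec P(y)=P(P(x)\prec y+x\prec P(y)+\lambda x\prec y),\]
apply linearity of $P$, and conclude $P(x)\circ P(y)=P(P(x)\circ y+x\circ P(y)+\lambda x\circ y)$.

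For the forward part I would exploit the non-degeneracy of $\omega$: it suffices to test the desired identity for $\succ$ against an arbitrary $z\in A$. Starting from $\omega(P(x)\succ P(y),z)$, I would apply Eq.~\eqref{C2} to rewrite it as $\omega(P(x)\circ z+z\circ P(x),P(y))$, then use Eq.~\eqref{Fs1} to move $P$ across the pairing, use the Rota-Baxter identity for $\circ$ on $P(x)\circ P(z)+P(z)\circ P(x)$, and recollect terms. The result matches $\omega(P(P(x)\succ y+x\succ P(y)+\lambda x\succ y),z)$ obtained by the symmetric procedure (apply Eq.~\eqref{Fs1} once, then Eq.~\eqref{C2} termwise). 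The argument for $\prec$ is simpler: $\omega(P(x)\prec P(y),z)=-\omega(P(x),z\circ P(y))$, and one application of Eq.~\eqref{Fs1} to this already equals the expression obtained from $\omega(P(P(x)\prec y+x\prec P(y)+\lambda x\prec y),z)$ after cancellations that use the Rota-Baxter identity on $P(z)\circ P(y)$.

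The main obstacle is bookkeeping in the forward direction: when Eq.~\eqref{C2} is applied to all three terms inside $P(P(x)\succ y+x\succ P(y)+\lambda x\succ y)$ and Eq.~\eqref{Fs1} is used to shift $P$'s across $\omega$, about a dozen scalar and $\lambda$-scalar terms appear, and one needs the Rota-Baxter identity for $\circ$ expanded on $P(x)\circ P(z)+P(z)\circ P(x)$ (rather than a single $P(a)\circ P(b)$) to see the cancellations. Once this is tracked, the equality reduces to $\omega(P(x)\circ z+z\circ P(x),P(y))$ on both sides, and the non-degeneracy of $\omega$ yields the required Rota-Baxter identity for $\succ$.
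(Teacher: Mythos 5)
Your proposal is correct and follows essentially the same route as the paper: the algebraic correspondence is delegated to Theorem~\ref{Am3}, the converse follows by adding the two Rota-Baxter identities, and the forward direction pairs the $\succ$ (resp.\ $\prec$) Rota-Baxter identity against an arbitrary $z$ via non-degeneracy of $\omega$, shuttles $P$ across the form with Eq.~\eqref{Fs1}, and closes the computation with the weight-$\lambda$ Rota-Baxter identity applied to $P(x)\star P(z)=P(x)\circ P(z)+P(z)\circ P(x)$ (resp.\ $P(z)\circ P(y)$), exactly as in the paper's displayed calculation.
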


\begin{proof} Let $(A,\circ,P,\omega)$ is a symmetric Rota-Baxter quasi-Frobenius Novikov algebra of 
weight $\lambda$. By Theorem \ref{Am3}, $(A,\succ,\prec,\omega)$ is a quadratic anti-pre-Novikov algebra.
Using Eqs.~(\ref{C2}) and (\ref{Fs1}), for all $x,y,z\in A$, we have
\begin{align*}& \omega( P(x)\succ P(y)-P(P(x)\succ y+x\succ P(y)+\lambda x\succ y),z)
\\=&\omega( P(x)\star z, P(y))+\omega(P(x)\succ y, P(z))+\lambda\omega(P(x)\succ y, z)
+\omega(x\succ P(y), P(z))\\&+\lambda\omega(x\succ P(y), z)
+\lambda\omega(x\succ y, P(z))+\lambda^{2}\omega(x\succ y, z)
\\=&\omega( P(x)\star z, P(y))+\omega( P(x)\star  P(z), y)+\lambda \omega( P(x)\star z, y)
+\omega( x\star  P(z), P(y))\\&+\lambda \omega( x\star z, P(y))
+\lambda \omega( x\star P(z), y)
+\lambda^{2} \omega( x\star z, y)
\\=&\omega( P(x)\star z+x\star  P(z)+\lambda x\star z, P(y))+\omega( P(x)\star  P(z), y)
+\lambda \omega( P(x)\star z+x\star P(z)+\lambda x\star z, y)
\\=&0,\end{align*}
which implies that $P(x)\succ P(y)=P(P(x)\succ y+x\succ P(y)+\lambda x\succ y)$.
Analogously, $P(x)\prec P(y)=P(P(x)\prec y+x\prec P(y)+\lambda x\prec y)$.
Thus, $(A,\succ,\prec,P,\omega)$ is a quadratic Rota-Baxter anti-pre-Novikov algebra of weight $\lambda$.
The other hand is apparently.
We complete the proof.
\end{proof}

Let $\omega$ be a non-degenerate bilinear form on a vector space $A$. Then there is an isomorphism
$\omega^{\sharp}:A\longrightarrow A^{*}$ given by
\begin{equation} \omega(x,y)=\langle\omega^{\sharp}(x),y \rangle,~~\forall~x,y\in A.\end{equation}
Define an element $s_{\omega}\in A\otimes A$ such that $T_{s_{\omega}}=(\omega^{\sharp})^{-1}$,
that is, 
\begin{equation}\label{Nd1} \langle T_{s_{\omega}}(\zeta),\eta\rangle=\langle s_{\omega},\zeta \otimes\eta\rangle=
\langle (\omega^{\sharp})^{-1}(\zeta), \eta\rangle, \ \ \forall~\zeta,\eta\in A^{*}. \end{equation}

\begin{lem}\label{Fb1} Let $(A, \succ,\prec)$ be an anti-pre-Novikov algebra and $\omega$ be a non-degenerate bilinear form on $A$. Then
$(A, \succ,\prec,\omega)$ is a quadratic anti-pre-Novikov algebra if and only if the corresponding $s_{\omega}\in A\otimes A$ given by
Eq.~(\ref{Nd1}) is symmetric and invariant.\end{lem}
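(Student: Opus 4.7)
My plan is to separate the biconditional into the ``symmetric'' part and the ``invariant'' part, and translate both across the isomorphism $\omega^{\sharp}\colon A \to A^{*}$ using the defining relation $T_{s_{\omega}} = (\omega^{\sharp})^{-1}$, i.e.\ $\langle s_{\omega},\zeta\otimes\eta\rangle = \langle (\omega^{\sharp})^{-1}(\zeta),\eta\rangle$. For any $x,y\in A$, setting $\zeta=\omega^{\sharp}(x),\eta=\omega^{\sharp}(y)$ gives $\langle s_{\omega},\zeta\otimes\eta\rangle = \langle x,\omega^{\sharp}(y)\rangle = \omega(y,x)$, while $\langle s_{\omega},\eta\otimes\zeta\rangle = \omega(x,y)$. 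Since $\omega^{\sharp}$ is bijective, $\zeta,\eta$ range over all of $A^{*}$, so $s_{\omega}$ is symmetric if and only if $\omega$ is.

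Second, assuming $\omega$ (hence $s_{\omega}$) is symmetric, I would use Lemma~\ref{In1} to rewrite the invariance of $s_{\omega}$ as the pair of identities
\begin{equation*}
L_{\star}(x)T_{s_{\omega}}(\zeta) + T_{s_{\omega}}(L_{\succ}^{*}(x)\zeta)=0,\qquad L_{\odot}(x)T_{s_{\omega}}(\zeta) + T_{s_{\omega}}(L_{\circ}^{*}(x)\zeta)=0.
\end{equation*}
Substituting $\zeta=\omega^{\sharp}(y)$ (so $T_{s_{\omega}}(\zeta)=y$), then applying $\omega^{\sharp}$ and pairing with an arbitrary $z$, the first identity unfolds as $\omega(x\star y,z)=\omega(y,x\succ z)$, i.e.\ $\omega(x\succ z,y)=\omega(x\circ y+y\circ x,z)$, which is precisely the second half of \eqref{C2}. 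The second identity becomes $\omega(x\succ y + y\prec x,z) = \omega(y,x\circ z)$; combined with the first invariance this is equivalent to $\omega(y\prec x,z)=-\omega(y,z\circ x)$, the first half of \eqref{C2}. Running this chain of equivalences in both directions (the nondegeneracy of $\omega$ is used repeatedly to pass between the operator identities and the scalar identities on $\omega$) establishes the equivalence.

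I do not expect a serious obstacle; the whole argument is a bookkeeping translation through the adjoint pairing. The only delicate point is that the second invariance condition of $s_{\omega}$ does not by itself correspond cleanly to the first identity of \eqref{C2}; one must combine it with the first invariance to isolate $\omega(y\prec x,z) = -\omega(y,z\circ x)$. I would therefore present the ``$\Leftarrow$'' direction by deriving the two operator identities from the two equations of \eqref{C2} (using symmetry of $\omega$) and the ``$\Rightarrow$'' direction by deriving the two equations of \eqref{C2} from the two operator identities, making the order of combination explicit so the reader sees both conditions are genuinely needed.
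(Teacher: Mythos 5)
Your proposal is correct and follows essentially the same route as the paper: both reduce invariance of $s_{\omega}$ to the operator identities of Lemma~\ref{In1} and translate them through $\omega^{\sharp}$ into the two conditions of Eq.~\eqref{C2}, including the step of combining the two identities to isolate $\omega(y\prec x,z)=-\omega(y,z\circ x)$. No gaps.
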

\begin{proof}
It is obvious that $\omega$ is symmetric if and only if $s_{\omega}$ is symmetric. 
For all $x,y\in A$, put $\omega^{\sharp}(x)=\zeta,\omega^{\sharp}(y)=\eta,
\omega^{\sharp}(z)=\theta$ with $\zeta,\eta,\theta\in A^{*}$. If $\omega$ is invariant, we have
\begin{align*} \omega (x \prec y, z)+\omega(x, z\circ y)
=&\langle \omega^{\sharp}(z), (\omega^{\sharp})^{-1}(\zeta)\prec y\rangle
+\langle \omega^{\sharp}(x), (\omega^{\sharp})^{-1}(\theta)\circ y\rangle
\\=&\langle \theta, T_{s_{\omega}}(\zeta)\prec y\rangle
+\langle \zeta, T_{s_{\omega}}(\theta)\circ y\rangle
\\=&\langle -L_{\prec}^{*}(T_{s_{\omega}}(\zeta))\theta-L_{\circ}^{*}(T_{s_{\omega}}(\theta))\zeta,  y\rangle=0,\end{align*}
\begin{align*} \omega(x \succ y, z)-\omega(x\circ z+z\circ x, y)
=&\langle \omega^{\sharp}(z),x\succ(\omega^{\sharp})^{-1}(\eta)\rangle-
\langle \omega^{\sharp}(y),x\circ(\omega^{\sharp})^{-1}(\theta)+(\omega^{\sharp})^{-1}(\theta)\circ x
\rangle
\\=&\langle \theta,x\succ T_{s_{\omega}}(\eta)\rangle-
\langle \eta,x\circ T_{s_{\omega}}(\theta)+T_{s_{\omega}}(\theta)\circ x
\rangle
\\=&\langle L_{\star}^{*}(T_{s_{\omega}}(\theta))\eta-R_{\succ}^{*}(T_{s_{\omega}}(\eta))\theta,x\rangle=0,
\end{align*}
Thus,
\begin{align}&\label{Sd1}L_{\prec}^{*}(T_{s_{\omega}}(\zeta))\theta=-L_{\circ}^{*}(T_{s_{\omega}}(\theta))\zeta,
\ \ \ L_{\star}^{*}(T_{s_{\omega}}(\theta))\eta=R_{\succ}^{*}(T_{s_{\omega}}(\eta))\theta.\end{align}
It follows that 
\begin{align}\label{Sd2}
R_{\odot}^{*}(T_{s_{\omega}}(\zeta))\theta=R_{\circ}^{*}(T_{s_{\omega}}(\theta))\zeta.\end{align}
Combining Proposition \ref{Si} and Eqs.~(\ref{Sd1})-(\ref{Sd2}), $s_{\omega}\in A\otimes A$ is invariant.
The converse part can be checked similarly.
 \end{proof}
 
\begin{pro} \label{QF1} Let $(A, \succ,\prec,\omega)$ be a quadratic anti-pre-Novikov algebra and $s\in A\otimes A$. 
Assume that $s+\tau(s)$ is invariant. Define a linear map 
\begin{equation}\label{Nd2}P:A\longrightarrow A,\ \ \ P(x)=T_{s}\omega^{\sharp}(x),~~\forall~x\in A.\end{equation}
Then $s$ is a solution of the APN-YBE in $(A, \succ,\prec)$ if and only if $P$ satisfies 
\begin{align}&\label{Nd3}P(x)\succ P(y)= P(P(x)\succ y+x\succ P(y)-x\succ T_{s+\tau(s)}\omega^{\sharp}(y)),
\\&\label{Nd4}P(x)\prec P(y)= P(P(x)\prec y+x\prec P(y)-x\prec T_{s+\tau(s)}\omega^{\sharp}(y)),~~\forall~x,y\in A.
 \end{align}
\end{pro}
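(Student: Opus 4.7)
The plan is to translate the APN-YBE, which is a tensor equation in $A\otimes A\otimes A$, into a pair of operator identities on $T_s$, and then convert these—using the invariance of $\omega$ together with the isomorphism $\omega^\sharp$—into identities on $P=T_s\omega^\sharp$. The starting point is Theorem \ref{Ya1}: since $s+\tau(s)$ is invariant, the condition $P(s)=0$ is equivalent to $P_4(s)=P_5(s)=0$, which by Proposition \ref{YE80}(e),(f) amounts to
\begin{align*}
T_s(\zeta)\succ T_s(\eta) &= T_s\bigl(R_\succ^{*}(T_{\tau(s)}(\eta))\zeta - L_\star^{*}(T_s(\zeta))\eta\bigr),\\
T_s(\zeta)\prec T_s(\eta) &= T_s\bigl(R_\odot^{*}(T_s(\zeta))\eta - R_\circ^{*}(T_{\tau(s)}(\eta))\zeta\bigr)
\end{align*}
for all $\zeta,\eta\in A^*$.

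Next I substitute $\zeta=\omega^\sharp(x)$ and $\eta=\omega^\sharp(y)$, so that $T_s(\zeta)=P(x)$, $T_s(\eta)=P(y)$, and $T_{\tau(s)}\omega^\sharp(y)=T_{s+\tau(s)}\omega^\sharp(y)-P(y)$. The heart of the argument is the translation of the coadjoint actions into multiplications. Using Eq.~\eqref{C2} and the symmetry of $\omega$, one checks directly the four identities
\begin{align*}
L_\star^{*}(x)\omega^\sharp(v) &= -\omega^\sharp(x\succ v), &
L_\odot^{*}(x)\omega^\sharp(v) &= -\omega^\sharp(x\circ v),\\
R_\succ^{*}(u)\omega^\sharp(x) &= -\omega^\sharp(x\succ u), &
R_\prec^{*}(u)\omega^\sharp(x) &= \omega^\sharp(x\circ u).
\end{align*}
Combining them via $R_\circ=R_\succ+R_\prec$ and $R_\odot=L_\star-L_\odot$ yields $R_\circ^{*}(u)\omega^\sharp(x)=\omega^\sharp(x\prec u)$ and $R_\odot^{*}(x)\omega^\sharp(y)=\omega^\sharp(x\prec y)$. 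Plugging these back in and using $T_s\omega^\sharp=P$ converts the two operator equations above term-by-term into
\begin{align*}
P(x)\succ P(y) &= P\bigl(P(x)\succ y+x\succ P(y)-x\succ T_{s+\tau(s)}\omega^\sharp(y)\bigr),\\
P(x)\prec P(y) &= P\bigl(P(x)\prec y+x\prec P(y)-x\prec T_{s+\tau(s)}\omega^\sharp(y)\bigr),
\end{align*}
which are precisely Eqs.~\eqref{Nd3}--\eqref{Nd4}.

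Since $\omega^\sharp$ is a bijection, quantifying over $\zeta,\eta\in A^{*}$ is the same as quantifying over $x,y\in A$, so the whole chain of equivalences is reversible and gives both directions of the "if and only if". The main (and essentially only) obstacle is careful bookkeeping: establishing the six coadjoint-to-multiplication identities from Eq.~\eqref{C2} and the symmetry of $\omega$, and keeping the decomposition $T_{\tau(s)}\omega^\sharp=T_{s+\tau(s)}\omega^\sharp-P$ aligned correctly in both operator equations. No new structural input beyond Proposition \ref{YE80}, Theorem \ref{Ya1}, and the defining properties of the quadratic form is required.
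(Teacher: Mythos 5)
Your proof is correct and follows essentially the same route as the paper: reduce the APN-YBE, via Theorem \ref{Ya1} and Proposition \ref{YE80}, to the two operator identities on $T_s$, and then conjugate by $\omega^{\sharp}$ using the invariance condition \eqref{C2} to turn the coadjoint actions into multiplications and obtain \eqref{Nd3}--\eqref{Nd4}. The only (cosmetic) difference is that the paper packages the translation step through Lemma \ref{Fb1} (invariance of $s_{\omega}=(\omega^{\sharp})^{-1}$) and the relative Rota--Baxter identities \eqref{AD5}--\eqref{AD6}, whereas you derive the needed identities such as $L_{\star}^{*}(x)\omega^{\sharp}(v)=-\omega^{\sharp}(x\succ v)$ directly from \eqref{C2}; the underlying computation is the same.
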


\begin{proof} By Lemma \ref{Fb1}, $s_{\omega}$ is symmetric and invariant. In the light of Proposition \ref{Si},
 Eqs.~(\ref{IE8}), (\ref{IE10}) and (\ref{AD1}) ,
 for all $x,y\in A$, put $\omega^{\sharp}(x)=\zeta,\omega^{\sharp}(y)=\eta$ with 
$\zeta,\eta\in A^{*}$, we get
\begin{align*}& P(x)\succ P(y)=T_{s}\omega^{\sharp}(x)\succ T_{s}\omega^{\sharp}(y)=T_{s}(\zeta)\succ T_{s}(\eta),\\
&P(P(x)\succ y)=T_{s}\omega^{\sharp}(T_{s}(\zeta)\succ T_{s_{\omega}}(\eta))=
-T_{s}\omega^{\sharp} T_{s_{\omega}}(L_{\star}^{*}(T_{s}(\zeta))\eta)
=-T_{s}(L_{\star}^{*}(T_{s}(\zeta))\eta),
\\
&P(x\succ P(y))=T_{s}\omega^{\sharp}(T_{s_{\omega}}(\zeta)\succ T_{s}(\eta))=
-T_{s}\omega^{\sharp}T_{s_{\omega}}(R_{\succ}^{*}(T_{s}(\eta))\zeta)
=-T_{s}(R_{\succ}^{*}(T_{s}(\eta))\zeta),
\\
&P(x\succ T_{s+\tau(s)}\omega^{\sharp}(y))
=T_{s}\omega^{\sharp}(T_{s_{\omega}}(\zeta)\succ T_{s+\tau(s)}(\eta))
=-T_{s}\omega^{\sharp}  T_{s_{\omega}}( R_{\succ}^{*}(T_{s+\tau(s)}(\eta))\zeta)
\\=&-T_{s}(( R_{\succ}^{*}(T_{s+\tau(s)}(\eta))\zeta)
=T_{s}(\zeta\succeq_{s+\tau(s)}\eta).\end{align*}
 Thus, Eq.~(\ref{Nd3}) holds if and only if Eq.~(\ref{AD6}) holds.
 Analogously, Eq.~(\ref{Nd4}) holds if and only if Eq.~(\ref{AD5}) holds. The proof is finished.
 \end{proof}
 
\begin{lem} \label{QF2} Let $A$ be a vector space and $\omega$ be a non-degenerate symmetric bilinear form. 
Let $s\in A\otimes A$,
$\lambda\in k$ and $P$ be given by Eq.~(\ref{Nd2}). Then $s$ satisfies
\begin{equation} \label{Nd5} s+\tau(s)=-\lambda s_{\omega} \end{equation}
if and only if $P$ satisfies Eq.~(\ref{Fs}).
\end{lem}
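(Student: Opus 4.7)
The plan is to translate both sides of the asserted equivalence into a common pairing on $A^{*}\otimes A^{*}$ and then appeal to non-degeneracy of $\omega$.

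First I would record the three basic pairing identities. Fix $x,y\in A$ and set $\zeta=\omega^{\sharp}(x),\eta=\omega^{\sharp}(y)$; since $\omega^{\sharp}$ is a bijection, as $(x,y)$ ranges over $A\times A$ the pair $(\zeta,\eta)$ ranges over $A^{*}\times A^{*}$. Using symmetry of $\omega$, the definition $\omega(a,b)=\langle\omega^{\sharp}(a),b\rangle$, the definition $\langle T_s(\zeta),\eta\rangle=\langle s,\zeta\otimes\eta\rangle$ from Eq.~(\ref{YE7}), and the identity $T_{s_{\omega}}=(\omega^{\sharp})^{-1}$ from Eq.~(\ref{Nd1}), I would compute
\begin{align*}
\omega(P(x),y)&=\omega(T_s(\zeta),y)=\langle\eta,T_s(\zeta)\rangle=\langle s,\zeta\otimes\eta\rangle,\\
\omega(x,P(y))&=\omega(T_s(\eta),x)=\langle\zeta,T_s(\eta)\rangle=\langle s,\eta\otimes\zeta\rangle=\langle\tau(s),\zeta\otimes\eta\rangle,\\
\omega(x,y)&=\langle\zeta,T_{s_{\omega}}(\eta)\rangle=\langle s_{\omega},\eta\otimes\zeta\rangle=\langle s_{\omega},\zeta\otimes\eta\rangle,
\end{align*}
where the last equality uses that $s_{\omega}$ is symmetric, a fact granted by Lemma~\ref{Fb1} since $\omega$ is symmetric.

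Adding these, I obtain
\begin{equation*}
\omega(P(x),y)+\omega(x,P(y))+\lambda\omega(x,y)=\langle s+\tau(s)+\lambda s_{\omega},\zeta\otimes\eta\rangle.
\end{equation*}
Thus Eq.~(\ref{Fs}) holds for all $x,y\in A$ if and only if the right-hand pairing vanishes for all $\zeta,\eta\in A^{*}$, which by the universal property of the tensor product is equivalent to $s+\tau(s)+\lambda s_{\omega}=0$, i.e.\ Eq.~(\ref{Nd5}).

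There is no serious obstacle here; the whole content is a bookkeeping exercise tying together the three identifications $\omega^{\sharp}$, $T_s$, and $T_{s_{\omega}}$. The only mild subtlety is the consistent use of symmetry of $\omega$ (hence of $s_{\omega}$) to avoid spurious $\tau$'s on the $s_{\omega}$ term; once this is noted, the proof is a single four-line computation followed by invocation of non-degeneracy.
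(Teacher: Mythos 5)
Your proposal is correct and follows essentially the same route as the paper: express $\omega(P(x),y)$, $\omega(x,P(y))$, and $\lambda\omega(x,y)$ as pairings of $s$, $\tau(s)$, and $\lambda s_{\omega}$ against $\zeta\otimes\eta$ with $\zeta=\omega^{\sharp}(x)$, $\eta=\omega^{\sharp}(y)$, then conclude by surjectivity of $\omega^{\sharp}$ and non-degeneracy of the pairing. The only cosmetic difference is that the paper obtains $\lambda\omega(x,y)=\lambda\langle s_{\omega},\zeta\otimes\eta\rangle$ directly from $T_{s_{\omega}}=(\omega^{\sharp})^{-1}$ without needing to invoke the symmetry of $s_{\omega}$, but this changes nothing of substance.
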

\begin{proof} For all $x,y\in A$, put $\omega^{\sharp}(x)=\zeta,\omega^{\sharp}(y)=\eta,~~\zeta,\eta\in A^{*}$.
\begin{align*}& \omega(P(x),y)=\omega(y,P(x))=\langle\omega^{\sharp}(y),T_{s}\omega^{\sharp}(x)\rangle
=\langle \eta,T_{s}(\zeta)\rangle
=\langle s,\zeta\otimes\eta\rangle,\\
&\omega(x,P(y))=\langle \omega^{\sharp}(x),T_{s}\omega^{\sharp}(y)\rangle
=\langle \zeta,T_{s}(\eta)\rangle
=\langle s,\eta\otimes\zeta\rangle
=\langle \tau(s),\zeta\otimes\eta\rangle,
\\
&\lambda\omega(x,y)=\lambda \omega(y,x)
=\lambda\langle \omega^{\sharp}(y),(\omega^{\sharp})^{-1}\omega^{\sharp}(x)\rangle
=\lambda\langle s_{\omega},\zeta\otimes\eta\rangle
.\end{align*}
Thus, Eq.~(\ref{Nd5}) holds if and only if Eq.~(\ref{Fs}) holds.
 \end{proof}
 
\begin{cor} \label{Fb3} Let $(A, \succ,\prec,P,\omega)$ be a quadratic Rota–Baxter anti-pre-Novikov algebra of weight 0.
 Then there is a triangular
anti-pre-Novikov bialgebra $(A, \succ,\prec, \Delta_{\succ,s},\Delta_{\prec,s})$ with $\Delta_{\succ,s},\Delta_{\prec,s}$ given
 by Eqs.~(\ref{CB01})-(\ref{CB02}), where $s\in A\otimes A$ given by 
$T_{s}(\zeta)=P(\omega^{\sharp})^{-1}$ for all $\zeta\in A^{*}$.
\end{cor}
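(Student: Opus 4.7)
The plan is to assemble the conclusion from Lemma~\ref{QF2}, Proposition~\ref{QF1}, and Theorem~\ref{BY}, using the tensor $s \in A\otimes A$ determined by $T_s = P\circ(\omega^\sharp)^{-1}$. Since $(A,\succ,\prec,\omega)$ is a quadratic anti-pre-Novikov algebra, Lemma~\ref{Fb1} tells us that $s_\omega$ is symmetric and invariant, which makes the translation between bilinear forms and tensors via $T_s$ and $T_{s_\omega}$ valid throughout.

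First I would check that $s$ is skew-symmetric. By hypothesis $P$ satisfies Eq.~\eqref{Fs} with $\lambda = 0$, i.e., $\omega(P(x),y) + \omega(x,P(y)) = 0$ for all $x,y \in A$. Applying Lemma~\ref{QF2} with $\lambda = 0$, this is equivalent to $s + \tau(s) = 0$, so $s$ is skew-symmetric. In particular, $s + \tau(s) = 0$ is trivially invariant, which is the precondition needed to invoke Proposition~\ref{QF1}.

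Next I would verify that $s$ solves the APN-YBE. By Proposition~\ref{QF1}, this is equivalent to $P$ satisfying Eqs.~\eqref{Nd3}-\eqref{Nd4}. Because $s + \tau(s) = 0$, we have $T_{s+\tau(s)}\omega^\sharp(y) = 0$, so the extra subtracted terms disappear and Eqs.~\eqref{Nd3}-\eqref{Nd4} collapse to
\begin{align*}
P(x)\succ P(y) &= P\bigl(P(x)\succ y + x\succ P(y)\bigr),\\
P(x)\prec P(y) &= P\bigl(P(x)\prec y + x\prec P(y)\bigr),
\end{align*}
which are precisely the defining identities for a Rota-Baxter operator of weight $0$ on $(A,\succ,\prec)$ and so hold by assumption. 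Hence $s$ is a skew-symmetric solution of the APN-YBE.

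Finally, Theorem~\ref{BY} applied to this skew-symmetric $s$ (taking $s_\succ = s_\prec = s$) produces the triangular anti-pre-Novikov bialgebra $(A,\succ,\prec,\Delta_{\succ,s},\Delta_{\prec,s})$ with comultiplications given by Eqs.~\eqref{CB01}-\eqref{CB02}, which is the desired conclusion. There is no genuine obstacle here; the proof is a clean three-step chain of previously established equivalences, and the only thing to be careful about is confirming that the weight-$0$ hypothesis is exactly what makes the skew-symmetric reduction in Proposition~\ref{QF1} collapse to the ordinary (unweighted) Rota-Baxter identity.
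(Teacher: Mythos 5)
Your proposal is correct and follows essentially the same route as the paper's own (very terse) proof: the weight-$0$ condition together with Lemma~\ref{QF2} forces $s+\tau(s)=0$, and then Proposition~\ref{QF1} reduces the APN-YBE for $s$ to the ordinary Rota--Baxter identity for $P$, which holds by hypothesis. Your write-up simply makes explicit the collapse of Eqs.~(\ref{Nd3})--(\ref{Nd4}) and the final appeal to Theorem~\ref{BY}, both of which the paper leaves implicit.
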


 \begin{proof} Since $s+\tau(s)=-\lambda s_{\omega}=0$, $s$ is skew-symmetric.
 In the light of Proposition \ref{QF1} and Lemma \ref{QF2}, we get the conclusion. \end{proof}
 
\begin{thm} \label{Fb3} Let $(A, \succ,\prec, \Delta_{\succ,s},\Delta_{\prec,s})$ be a factorizable 
anti-pre-Novikov bialgebra
with $s\in A\otimes A$. Then $(A, \succ,\prec,P,\omega)$
 is a quadratic Rota-Baxter anti-pre-Novikov algebra of weight $\lambda$ with $P$ given by Eq. (\ref{Nd2}), and $\omega$ is given by
\begin{equation} \label{Nd6} \omega(x,y)=-\lambda\langle T_{s+\tau(s)}^{-1}(x),y \rangle,~~\forall~x,y\in A.\end{equation}
Conversely, let $(A, \succ,\prec,P,\omega)$ be a quadratic Rota-Baxter anti-pre-Novikov algebra of weight $\lambda~(\lambda\neq 0)$.
Then there is a factorizable anti-pre-Novikov bialgebra $(A, \succ,\prec,\Delta_{\succ,s},\Delta_{\prec,s})$
 with $\Delta_{\succ,s},\Delta_{\prec,s}$ defined by Eqs.~(\ref{CB01})-(\ref{CB02}), where $s\in A\otimes A$ is
given through the operator form $T_s=P(\omega^{\sharp})^{-1}$.
\end{thm}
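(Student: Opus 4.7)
The plan is to reduce both directions to the technical lemmas \ref{Fb1}, \ref{QF2}, and Proposition \ref{QF1}, with the bookkeeping bridge being the identity
\[
T_{s+\tau(s)}\,\omega^{\sharp} = -\lambda\, I_A,
\]
which is built into the definition \eqref{Nd6} of $\omega$ (and symmetrically into $T_s=P(\omega^{\sharp})^{-1}$ for the converse).

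For the forward direction, I would start from the factorizability assumption: $T_{s+\tau(s)}:A^{*}\to A$ is a linear isomorphism, and $s+\tau(s)$ is invariant by Definition \ref{Qt1}. Thus $\omega$ given by \eqref{Nd6} is non-degenerate, and since $s+\tau(s)$ is symmetric (so $T_{s+\tau(s)}^{*}=T_{s+\tau(s)}$) the bilinear form $\omega$ is symmetric. Next, a direct comparison shows $s_{\omega}=-\tfrac{1}{\lambda}(s+\tau(s))$, so the invariance of $s+\tau(s)$ yields invariance of $s_{\omega}$; by Lemma \ref{Fb1}, $(A,\succ,\prec,\omega)$ is a quadratic anti-pre-Novikov algebra. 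The condition \eqref{Nd5} then reads $s+\tau(s)=-\lambda s_{\omega}$, which is visibly satisfied, so by Lemma \ref{QF2} the map $P=T_{s}\omega^{\sharp}$ satisfies the symmetry condition \eqref{Fs}. Finally, since $s$ solves the APN-YBE, Proposition \ref{QF1} gives \eqref{Nd3}-\eqref{Nd4}; substituting $T_{s+\tau(s)}\omega^{\sharp}(y)=-\lambda y$ converts these to the Rota-Baxter identities of weight $\lambda$. Hence $(A,\succ,\prec,P,\omega)$ is a quadratic Rota-Baxter anti-pre-Novikov algebra of weight $\lambda$.

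For the converse, assume $(A,\succ,\prec,P,\omega)$ is a quadratic Rota-Baxter anti-pre-Novikov algebra of weight $\lambda\neq 0$, and define $s\in A\otimes A$ through $T_{s}=P(\omega^{\sharp})^{-1}$. The plan is to run the previous argument backwards. Since $\omega$ is non-degenerate, symmetric, and invariant, Lemma \ref{Fb1} shows $s_{\omega}$ is symmetric and invariant. The compatibility \eqref{Fs} combined with Lemma \ref{QF2} gives $s+\tau(s)=-\lambda s_{\omega}$; consequently $s+\tau(s)$ is invariant and $T_{s+\tau(s)}=-\lambda(\omega^{\sharp})^{-1}$ is an isomorphism because $\lambda\neq 0$. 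The Rota-Baxter identities of weight $\lambda$, rewritten using $T_{s+\tau(s)}\omega^{\sharp}=-\lambda I$, become exactly \eqref{Nd3}-\eqref{Nd4}, and Proposition \ref{QF1} then asserts that $s$ is a solution of the APN-YBE. Consequently $(A,\succ,\prec,\Delta_{\succ,s},\Delta_{\prec,s})$ defined via \eqref{CB01}-\eqref{CB02} is a quasi-triangular anti-pre-Novikov bialgebra, and since $T_{s+\tau(s)}$ is invertible, it is in fact factorizable.

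The main obstacle is purely organizational rather than computational: every nontrivial identity has already been isolated in Lemmas \ref{Fb1}, \ref{QF2} and Proposition \ref{QF1}, so the proof is essentially a careful translation between the operator-theoretic data $(P,\omega)$ and the tensor-theoretic data $(s,s+\tau(s))$ through the dictionary $T_{s}=P(\omega^{\sharp})^{-1}$ and $s_{\omega}=-\tfrac{1}{\lambda}(s+\tau(s))$. The only point that requires genuine care is tracking the scalar $\lambda$ consistently, in particular checking that the sign and factor arising from $T_{s+\tau(s)}\omega^{\sharp}=-\lambda I$ convert the ``$-x\succ T_{s+\tau(s)}\omega^{\sharp}(y)$'' term in \eqref{Nd3} into the ``$+\lambda\, x\succ y$'' term of the Rota-Baxter identity of weight $\lambda$ (and analogously for $\prec$), and that the hypothesis $\lambda\neq 0$ in the converse is precisely what guarantees factorizability.
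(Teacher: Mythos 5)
Your proposal is correct and follows essentially the same route as the paper: both directions are reduced to Lemma \ref{Fb1}, Lemma \ref{QF2} and Proposition \ref{QF1} via the dictionary $\omega^{\sharp}=-\lambda T_{s+\tau(s)}^{-1}$ (equivalently $s_{\omega}=-\tfrac{1}{\lambda}(s+\tau(s))$). Your write-up is in fact slightly more explicit than the paper's, since it spells out the symmetry and invariance of $s_{\omega}$ needed before Proposition \ref{QF1} can be invoked, and verifies the sign conversion of the weight term.
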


\begin{proof} On the one hand, since $(A, \succ,\prec, \Delta_{\succ,s},\Delta_{\prec,s})$ is a factorizable 
anti-pre-Novikov bialgebra, $s+\tau(s)$ is invariant and $T_{s+\tau(s)}$ is a linear isomorphism. 
By Proposition \ref{QF1} and Lemma \ref{QF2}, we get that $(A, \succ,\prec,P,\omega)$
 is a quadratic Rota-Baxter anti-pre-Novikov algebra of weight $\lambda$, where $\omega^{\sharp}=-\lambda T_{s+\tau(s)}^{-1}$. 
 Conversely, assume that $(A, \succ,\prec,P,\omega)$ is a quadratic Rota-Baxter anti-pre-Novikov algebra of weight $\lambda~(\lambda\neq 0)$.
 In the light of Lemma \ref{Fb1},  Lemma \ref{QF2} and Proposition \ref{QF1}, 
 $s+\tau(s)$ is invariant, $T_{s+\tau(s)}=-\lambda (\omega^{\sharp})^{-1}$ is a linear isomorphism
 and $s$ is a solution of the APN-YBE in $(A, \succ,\prec)$. Thus,
  $(A, \succ,\prec,\Delta_{\succ,s},\Delta_{\prec,s})$ is a factorizable anti-pre-Novikov bialgebra.
\end{proof}

\begin{pro} Let $(A, \succ,\prec,P)$ be a Rota–Baxter anti-pre-Novikov algebra of weight $\lambda$. Then 
$(A\ltimes A^{*},\omega,P-(P^{*}+\lambda I))$
is a quadratic Rota–Baxter anti-pre-Novikov algebra of weight $\lambda$, where the bilinear form $\omega$ on $A\oplus A^{*}$
is given by
\begin{equation*} \omega(x+\zeta,y+\eta)=\langle x,\eta\rangle+\langle y,\zeta\rangle,~~\forall~x,y\in A,~\zeta,\eta\in A^{*}.\end{equation*}
Then $(A\ltimes A^{*}, \Delta_{\succ,s},\Delta_{\prec,s})$
is a factorizable anti-pre-Novikov bialgebra
with $\Delta_{\succ,s},\Delta_{\prec,s}$ defined by Eqs.~(\ref{CB01})-(\ref{CB02}) with $s$ given by $T_s=P(\omega^{\sharp})^{-1}$.
  Explicitly, assume that $\{e_1,\cdot\cdot\cdot, e_n\}$
is a basis of $A$ and $\{e_{1}^{*},\cdot\cdot\cdot, e^{*}_n\}$
is the dual basis, where $s=\sum_{i}e_{i}^{*}\otimes P(e_{i})-(P+\lambda I)(e_{i})\otimes e_{i}^{*}$
 
\end{pro}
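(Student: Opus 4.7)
The plan is to reduce the statement to Theorem~\ref{Fb3} by first building a quadratic Rota--Baxter anti-pre-Novikov algebra structure on $A\oplus A^{*}$ with the operator $\widehat{P}:=P\oplus\bigl(-(P^{*}+\lambda I)\bigr)$, and then reading off the resulting tensor $s$ from the correspondence $T_{s}=\widehat{P}\circ(\omega^{\sharp})^{-1}$ supplied by that theorem.

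First I would invoke Proposition~\ref{zr} to equip $A^{*}$ with the dual representation $(-L_{\star}^{*},-R_{\succ}^{*},R_{\odot}^{*},R_{\circ}^{*})$ of $(A,\succ,\prec)$, and use the semi-direct product construction to give $A\ltimes A^{*}$ its anti-pre-Novikov algebra structure. The form $\omega$ is manifestly non-degenerate and symmetric; its invariance (Eq.~\eqref{C2}) is a short calculation where, for $X=x+\zeta$, $Y=y+\eta$, $Z=z+\theta$, expanding $\omega(X\prec Y,Z)$ via the semi-direct product formula and using $-L_{\star}+R_{\odot}=-L_{\odot}$ on $A$ produces exactly $-\omega(X,Z\circ Y)$, and the $\succ$-side is analogous. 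This identifies $(A\ltimes A^{*},\omega)$ as a quadratic anti-pre-Novikov algebra.

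The main obstacle is proving that $\widehat{P}$ is a Rota--Baxter operator of weight $\lambda$ on $A\ltimes A^{*}$. I would split into cases according to where the two arguments lie. When both sit in $A$, the identity is exactly the hypothesis on $P$; when both sit in $A^{*}$, which is an abelian ideal in the semi-direct product, both sides vanish. The essential case is the mixed one: pairing both sides of $\widehat{P}(x)\succeq\widehat{P}(\eta)=\widehat{P}\bigl(\widehat{P}(x)\succeq\eta+x\succeq\widehat{P}(\eta)+\lambda\,x\succeq\eta\bigr)$ against an arbitrary $z\in A$ and using $l_{\succ}=-L_{\star}^{*}$ together with $\langle P^{*}\zeta,y\rangle=\langle\zeta,P(y)\rangle$ collapses the identity---after the $\lambda$-terms from $-(P^{*}+\lambda I)$ cancel---to
\[
P(x)\star P(z)=P\bigl(P(x)\star z+x\star P(z)+\lambda\,x\star z\bigr),
\]
which is obtained by symmetrising the two weight-$\lambda$ Rota--Baxter identities for $P$ on $(A,\circ)$. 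The opposite orientation ($\eta\in A^{*}$ on the left) and the $\preceq$-version follow by the same template.

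A direct calculation using $\omega(x+\zeta,y+\eta)=\langle x,\eta\rangle+\langle y,\zeta\rangle$ and $\langle P^{*}\zeta,y\rangle=\langle\zeta,P(y)\rangle$ then yields $\omega(\widehat{P}X,Y)+\omega(X,\widehat{P}Y)+\lambda\,\omega(X,Y)=0$, since the two cross-terms $\langle P(x),\eta\rangle,\langle\zeta,P(y)\rangle$ appear with opposite signs and the $\lambda$-contributions cancel. Theorem~\ref{Fb3} now applies and produces the advertised factorizable anti-pre-Novikov bialgebra on $A\ltimes A^{*}$. To extract the closed form of $s$, I identify $(A\ltimes A^{*})^{*}$ with $A^{*}\oplus A$ so that $\omega^{\sharp}$ becomes the swap $x+\zeta\mapsto\zeta+x$; then $(\omega^{\sharp})^{-1}$ sends the basis element paired with $e_{i}^{*}$ to $e_{i}$ and the one paired with $e_{i}$ to $e_{i}^{*}$. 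Applying $\widehat{P}$ and reassembling $s=\sum_{k}u_{k}\otimes T_{s}(u_{k}^{*})$ over the basis $\{e_{i}\}\cup\{e_{i}^{*}\}$ recovers the stated formula $s=\sum_{i}e_{i}^{*}\otimes P(e_{i})-(P+\lambda I)(e_{i})\otimes e_{i}^{*}$.
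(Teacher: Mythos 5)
Your proposal is correct and follows essentially the same route as the paper: the paper likewise verifies by direct computation that $(A\ltimes A^{*},\omega, P-(P^{*}+\lambda I))$ is a quadratic Rota--Baxter anti-pre-Novikov algebra of weight $\lambda$, invokes the correspondence of Theorem~\ref{Fb3}, and then reads off $s$ from $T_{s}(x+\zeta)=P(x)-(P^{*}+\lambda I)(\zeta)$ exactly as you do; your write-up merely spells out the mixed-case Rota--Baxter verification (reducing to the $\star$-identity obtained by symmetrising the weight-$\lambda$ identities for $P$) that the paper leaves implicit. The only caveat, which concerns the statement rather than your argument, is that the factorizability conclusion requires $\lambda\neq 0$, since for $\lambda=0$ one has $s+\tau(s)=0$ and the bialgebra is only triangular.
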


\begin{proof} By direct computations, $(A\ltimes A^{*},\omega,P-(P^{*}+\lambda I))$
is a quadratic Rota–Baxter anti-pre-Novikov algebra of weight $\lambda$.
For all $x\in A$ and $\zeta\in A^{*}$, $\omega^{\sharp}(x+\zeta)=x+\zeta$. By Corollary \ref{Fb3}, we have a linear map
$T_{s}:A\oplus A^{*}\longrightarrow A\oplus A^{*}$ defined by
\begin{equation*} 
T_{s}(x+\zeta)=(P-(P^{*}+\lambda I))(\omega^{\sharp})^{-1}(x+\zeta)=P(x)-(P^{*}+\lambda I)(\zeta).\end{equation*}
Thus, 
\begin{align*}&
\sum_{i,j}\langle s,e_{i}\otimes e_{j}^{*}\rangle=\sum_{i,j}\langle T_{s}(e_{i}), e_{j}^{*}\rangle=\sum_{i,j}\langle P(e_{i}), e_{j}^{*}\rangle
\\&\sum_{i,j}\langle s,e_{i}^{*}\otimes e_{j}\rangle=\sum_{i,j}\langle T_{s}(e_{i}^{*}), e_{j}\rangle=-\sum_{i,j}\langle (P^{*}+\lambda I)(e_{i}^{*}), e_{j}\rangle=-\sum_{i,j}\langle e_{i}^{*}, (P+\lambda I)(e_{j})\rangle.\end{align*}
It follows that $s=\sum_{i}e_{i}^{*}\otimes P(e_{i})-(P+\lambda I)(e_{i})\otimes e_{i}^{*}$.
\end{proof}

According to Theorem \ref{Fb3}, quadratic Rota-Baxter anti-pre-Novikov algebras of non-zero weight $\lambda$ 
are one to one correspondence to factorizable anti-pre-Novikov bialgebras. Combining  Theorem \ref{Fb0}, 
symmetric Rota-Baxter quasi-Frobenius Novikov algebras of non-zero weight correspond to factorizable anti-pre-Novikov bialgebras.

In view of Proposition \ref{Fb2} and Theorem \ref{Fb3}, 
since $(A,\succ,\prec,P,\omega)$ is a quadratic Rota–Baxter anti-pre-Novikov algebra of non-zero weight corresponds to a 
factorizable anti-pre-Novikov bialgebra, then the quadratic Rota–Baxter anti-pre-Novikov algebra $(A,\succ,\prec,-\lambda I-P,\omega)$
of non-zero weight also corresponds to a factorizable anti-pre-Novikov bialgebra.
In the light of Proposition \ref{Qf} and Theorem \ref{Fb3}, if $(A, \succ,\prec, \Delta_{\succ,s},\Delta_{\prec,s})$ is a factorizable 
anti-pre-Novikov bialgebra, then the factorizable 
anti-pre-Novikov bialgebra $(A, \succ,\prec, \Delta_{\succ,\tau(s)},\Delta_{\prec,\tau(s)})$ gives rise to a 
quadratic Rota–Baxter anti-pre-Novikov algebra of non-zero weight $\lambda$.
In fact, we have

\begin{pro}
Let $(A, \succ,\prec, \Delta_{\succ,s},\Delta_{\prec,s})$ be a factorizable 
anti-pre-Novikov bialgebra which corresponds to a quadratic Rota-Baxter anti-pre-Novikov algebras of non-zero weight $\lambda$. 
Then the factorizable 
anti-pre-Novikov bialgebra $(A, \succ,\prec, \Delta_{\succ,\tau(s)},\Delta_{\prec,\tau(s)})$ corresponds to the 
quadratic Rota–Baxter anti-pre-Novikov algebra $(A,\succ,\prec,-\lambda I-P,\omega)$ of non-zero weight $\lambda$. 
\end{pro}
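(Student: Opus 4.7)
The plan is to unwind the correspondence established in Theorem \ref{Fb3} and verify directly that swapping $s$ with $\tau(s)$ swaps $P$ with $-\lambda I - P$, while leaving $\omega$ unchanged. First I would invoke Proposition \ref{Qf} to observe that $(A, \succ,\prec, \Delta_{\succ,\tau(s)},\Delta_{\prec,\tau(s)})$ is again a factorizable anti-pre-Novikov bialgebra, so by Theorem \ref{Fb3} it corresponds to some quadratic Rota-Baxter anti-pre-Novikov algebra $(A,\succ,\prec,P',\omega')$ of non-zero weight $\lambda$. The task then reduces to identifying $\omega' = \omega$ and $P' = -\lambda I - P$.

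Next I would exploit the elementary identity $\tau(s)+\tau(\tau(s)) = \tau(s)+s = s+\tau(s)$, which at the operator level reads $T_{\tau(s)+\tau(\tau(s))} = T_{s+\tau(s)}$. Since by Eq.~(\ref{Nd6}) the non-degenerate symmetric bilinear form attached to the bialgebra $(A, \succ,\prec, \Delta_{\succ,\tau(s)},\Delta_{\prec,\tau(s)})$ is determined by $(\omega')^{\sharp} = -\lambda T_{\tau(s)+\tau(\tau(s))}^{-1}$, this immediately gives $(\omega')^{\sharp} = -\lambda T_{s+\tau(s)}^{-1} = \omega^{\sharp}$, hence $\omega' = \omega$.

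For the Rota-Baxter operator, Theorem \ref{Fb3} gives $P' = T_{\tau(s)}\omega^{\sharp}$. Using the splitting $T_{\tau(s)} = T_{s+\tau(s)} - T_s$ together with the relation $T_{s+\tau(s)}\omega^{\sharp} = -\lambda I$ (which follows from $\omega^{\sharp} = -\lambda T_{s+\tau(s)}^{-1}$), one computes
\[
P' \;=\; T_{s+\tau(s)}\omega^{\sharp} - T_s\omega^{\sharp} \;=\; -\lambda I - P,
\]
which is exactly the desired Rota-Baxter operator.

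I do not anticipate any real obstacle here: the argument is tautological once Theorem \ref{Fb3} and Proposition \ref{Qf} are in hand, and the content amounts to a two-line operator-theoretic identity. As a sanity check one may verify consistency with Proposition \ref{Fb2}, which independently guarantees that $(A,\succ,\prec,-\lambda I-P,\omega)$ is indeed a quadratic Rota-Baxter anti-pre-Novikov algebra of weight $\lambda$, confirming that the factorizable bialgebra $(A, \succ,\prec, \Delta_{\succ,\tau(s)},\Delta_{\prec,\tau(s)})$ produced via $\tau(s)$ matches the one produced from $(A,\succ,\prec,-\lambda I-P,\omega)$ via the reverse direction of Theorem \ref{Fb3}.
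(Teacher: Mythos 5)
Your proposal is correct and follows essentially the same route as the paper: both invoke Proposition \ref{Qf} and Theorem \ref{Fb3} to obtain a quadratic Rota--Baxter structure $(P',\omega')$ from $\tau(s)$, identify $\omega'=\omega$ via $\tau(s)+\tau(\tau(s))=s+\tau(s)$, and compute $P'=T_{\tau(s)}\omega^{\sharp}=(T_{s+\tau(s)}-T_s)\omega^{\sharp}=-\lambda I-P$. No gaps.
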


\begin{proof} 
By Proposition \ref{Qf} and Theorem \ref{Fb1}, the factorizable 
anti-pre-Novikov bialgebra $(A, \succ,\prec, \Delta_{\succ,\tau(s)},\Delta_{\prec,\tau(s)})$ corresponds to a
quadratic Rota–Baxter anti-pre-Novikov algebra $(A,$ \ \ \ $\succ,\prec,P',\omega')$ of non-zero weight $\lambda$.
By Theorem \ref{Fb3},
\begin{equation*} 
\omega'(x,y)=-\lambda\langle T_{s+\tau(s)}^{-1}(x),y \rangle=\omega(x,y).\end{equation*}
Using Eqs.~(\ref{Nd2}) and (\ref{Nd6}), 
\begin{align*} 
P'(x)= T_{\tau(s)}{\omega^{'}}^{\sharp}(x)=T_{\tau(s)}\omega^{\sharp}(x)
&=-\lambda T_{\tau(s)}T_{s+\tau(s)}^{-1}(x)=\lambda (T_{s}-T_{s+\tau(s)})T_{s+\tau(s)}^{-1}(x)
\\&=\lambda T_{s}T_{s+\tau(s)}^{-1}(x)-\lambda (x)
=-T_{s}\omega^{\sharp}(x)-\lambda (x)=-P(x)-\lambda (x)
.\end{align*}
Thus, the factorizable 
anti-pre-Novikov bialgebra $(A, \succ,\prec, \Delta_{\succ,\tau(s)},\Delta_{\prec,\tau(s)})$ generates a
quadratic Rota-Baxter anti-pre-Novikov algebra $(A,\succ,\prec,-\lambda I-P,\omega)$ of non-zero weight $\lambda$.
Analogously, the converse part holds.
\end{proof} 


\begin{center}{\textbf{Acknowledgments}}
\end{center}
This work was supported by the Natural Science
Foundation of Zhejiang Province of China (LY19A010001), the Science
and Technology Planning Project of Zhejiang Province
(2022C01118).

\begin{center} {\textbf{Statements and Declarations}}
\end{center}
 All datasets underlying the conclusions of the paper are available
to readers. No conflict of interest exits in the submission of this
manuscript.


\end {document}